\newtheorem{theorem}{Theorem}[section]
\newtheorem{lemma}[theorem]{Lemma}
\newtheorem{proposition}[theorem]{Proposition}
\theoremstyle{plain}
\theoremstyle{definition} 
\newtheorem{definition}[theorem]{Definition}
\theoremstyle{definition} 
\newtheorem*{definition*}{Definition}
\theoremstyle{example} 
\newtheorem*{example*}{Example}
\theoremstyle{theorem}
\theoremstyle{theorem} 
\newtheorem*{conjecture*}{Conjecture}
\theoremstyle{theorem}
\theoremstyle{definition}
\theoremstyle{definition} 
\newtheorem*{assumption*}{Assumption}
\theoremstyle{remark} 
\newtheorem{remark}[theorem]{Remark}
\theoremstyle{remark} 
\newtheorem*{remark*}{Remark}
\newcommand{\bone}{\mathbf{1}}
\newcommand{\Hyp}{\mathbb{H}}
\newcommand{\NN}{\mathbb{N}}
\newcommand{\en}{N}
\newcommand{\RR}{\mathbb{R}}
\newcommand{\ZZ}{\mathbb{Z}}
\newcommand{\LLL}{\mathcal{L}}
\newcommand{\calC}{\mathcal{C}}
\newcommand{\D}{\mathcal{D}}
\newcommand{\E}{\mathcal{E}}
\newcommand{\calP}{\mathcal{P}}
\newcommand{\U}{\mathcal{U}}
\newcommand{\Z}{\mathcal{Z}}
\newcommand{\di}{\mathrm{d}}
\newcommand{\bk}{\boldsymbol{k}}
\newcommand{\bm}{\boldsymbol{m}}
\newcommand{\bn}{\boldsymbol{n}}
\newcommand{\bw}{\boldsymbol{w}}
\newcommand{\bz}{\boldsymbol{z}}
\newcommand{\brho}{\boldsymbol{\rho}}
\newcommand{\bZ}{\boldsymbol{\mathcal{Z}}}
\newcommand{\g}{\mathfrak{g}}
\newcommand{\Sl}{\mathfrak{sl}}
\newcommand{\SL}{\mathrm{SL}}
\newcommand{\SO}{\mathrm{SO}}
\newcommand{\so}{\mathfrak{so}}
\newcommand{\PSL}{\mathrm{PSL}}
\newcommand{\Isom}{\mathrm{Isom}}
\newcommand{\Cinf}{C^{\infty}}
\newcommand{\Lii}{L^{2}}
\newcommand{\HH}{\mathcal{H}}
\newcommand{\II}{\mathcal{I}}
\providecommand{\floor}[1]{\lfloor#1\rfloor}
\providecommand{\ceil}[1]{\lceil#1\rceil}
\providecommand{\abs}[1]{\lvert#1\rvert}
\providecommand{\Abs}[1]{\left|#1\right|}
\providecommand{\norm}[1]{\lVert#1\rVert}
\providecommand{\Norm}[1]{\left\|#1\right\|}
\providecommand{\inner}[2]{\langle#1,#2\rangle}
\newcommand{\aside}[1]{\vspace{5 mm}\par \noindent
\marginpar{\textsc{aside}}
\framebox{\begin{minipage}[c]{0.95 \textwidth}
\tt #1 \end{minipage}}\vspace{5 mm}\par}
\def\a{{\alpha}}
\def\b{{\beta}}
\def\s{{\sigma}}
\def\vars{{\varsigma}}
\def\t{{\tau}}
\def\k{{\kappa}}
\def\l{{\lambda}}
\def\L{{\Lambda}}
\def\o{{\omega}}
\def\O{{\Omega}}
\def\G{{\Gamma}}
\title{\textbf{Invariant distributions and cohomology for geodesic flows and higher cohomology of higher-rank Anosov actions}}
\author{Felipe A.~Ram{\'i}rez\footnote{email: \texttt{f.a.ramirez@bristol.ac.uk}} \\ University of Bristol}
\date{}
\begin{document}

\maketitle
\thispagestyle{empty}



\begin{abstract}
We are motivated by a conjecture of A.~and S.~Katok to study the smooth cohomologies of a family of Weyl chamber flows. The conjecture is a natural generalization of the Liv{\v{s}}ic Theorem to Anosov actions by higher-rank abelian groups; it involves a description of top-degree cohomology and a vanishing statement for lower degrees. Our main result, proved in Part~\ref{partii}, verifies the conjecture in lower degrees for our systems, and steps in the ``correct'' direction in top degree. In Part~\ref{parti} we study our ``base case'': geodesic flows of finite-volume hyperbolic manifolds. We describe obstructions (invariant distributions) to solving the coboundary equation in unitary representations of the group of orientation-preserving isometries of hyperbolic $N$-space, and we study Sobolev regularity of solutions. (One byproduct is a smooth Liv{\v{s}}ic Theorem for geodesic flows of hyperbolic manifolds with cusps.) Part~\ref{parti} provides the tools needed in Part~\ref{partii} for the main theorem.
\end{abstract}

{\footnotesize\tableofcontents}

\newpage

\subsection{Introduction}

This article has two parts.  In the first, we study the coboundary equation~\eqref{cobeq} for geodesic flows of finite-volume hyperbolic manifolds.  In the second, we study higher-degree cohomology of Anosov $\RR^d$-actions on the unit tangent bundles of irreducible finite-volume quotients of $\Hyp^{\en_1}\times\dots\times\Hyp^{\en_d}$, where $\Hyp^{\en}$ denotes hyperbolic $\en$-space.

\paragraph{On the Liv\v{s}ic Theorem \emph{vs.}~higher-rank phenomena.}

The cohomology of group actions has played an essential role in dynamics since the 1970s.  One of the most important results is A.~N.~Liv\v{s}ic's theorem for Anosov flows and diffeomorphisms~\cite{Livsic}.  For an Anosov flow $\phi^t$ on a compact manifold $\mathcal{M}$, the Liv\v{s}ic Theorem states that given a smooth function $f$ on $\mathcal{M}$, the coboundary equation 
\begin{equation}\label{cobeq}
	\frac{d}{dt}\left(g\circ\phi^t\right)\Bigr\vert_{t=0} = f
\end{equation}
can be solved with a continuous function $g$ if (and only if) the integral of $f$ around every periodic $\phi^t$-orbit is zero.  This was later strengthened to give regular (smooth or analytic) solutions when the given function is regular~\cite{GK80, dlLMM}.  The Liv\v{s}ic Theorem has been generalized in several directions, for example, to cocycles taking values in matrix groups~\cite{Kal11}, and to the setting of partially hyperbolic diffeomorphisms~\cite{Wil08}.  

In contrast to the Liv{\v{s}}ic Theorem, which names the obstructions to solving the (degree-one) coboundary equation for Anosov flows, one finds that for \emph{higher-rank} actions there are \emph{no} obstructions:~A.~Katok and R.~Spatzier showed that the degree-one cohomology of Anosov $\RR^d$-actions \emph{trivializes} when $d \geq 2$, \emph{i.e.} every smooth $1$-cocycle is smoothly cohomologous to one defined by constant functions~\cite{KS94first}. This dichotomy is a recurring theme in differentiable dynamics. We commonly find ``rigidity'' for higher-rank actions where we would not have found it in rank one. For example, trivialization of first cohomology seems to be widespread among higher-rank actions, not just Anosov ones~\cite{M2,M1,Ram09}, but is apparently all but non-existent for rank-one actions. (Conjecturally, the only rank-one ``cohomology-free'' systems are Diophantine toral rotations~\cite{KR}.)

\paragraph{On higher-degree cohomology and the related Katok--Katok Conjecture.}
The apparent dichotomy between rank-one and higher-rank cohomological phenomena can be reconciled by introducing the notion of higher-degree cohomology over a higher-rank action, and noting that for flows the first cohomology is the top-degree cohomology, whereas for higher-rank actions it is one of the lower degrees. With this in mind, it is natural to expect that for an Anosov $\RR^d$-action there are obstructions to solving the coboundary equation in degree $d$, but not in degrees $1$ through $d-1$. In fact, the ``right'' generalization of the Liv\v{s}ic Theorem to higher-rank actions should state that the \emph{only} obstructions in the top degree are those coming from integration over closed orbits.  This is essentially the content of the following conjecture for standard partially hyperbolic actions, due to A.~and S.~Katok.

\begin{conjecture*}[Katok--Katok,~\cite{KK95}]
Let $\a$ be a standard partially hyperbolic action of $\ZZ_+^d$, $\ZZ^d$, or $\RR^d$, $d\geq2$.  Then for $1\leq n\leq d-1$ the smooth $n$-cohomology of $\a$ trivializes, and the only obstructions to solving the degree-$d$ coboundary equation come from integration (or sums) over closed orbits.
\end{conjecture*}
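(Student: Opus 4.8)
The plan is to verify the conjecture for the Weyl chamber flows at hand by passing to unitary representations and reducing everything to the rank-one base case of Part~\ref{parti}. Write $G=\prod_{i=1}^d G_i$ with $G_i=\Isom^+(\Hyp^{\en_i})$, so that the phase space is $\Gamma\backslash G/M$ with $M=\prod_i M_i$, $M_i=\SO(\en_i-1)$, and $\a$ is the right action of $A=\prod_i A_i$, each $A_i$ the one-parameter geodesic subgroup in the $i$-th factor. First I would decompose $L^2(\Gamma\backslash G)$ into irreducibles; since $\Gamma$ is an irreducible lattice, each irreducible $G$-subrepresentation is a (completed) tensor product $\pi_1\otimes\cdots\otimes\pi_d$, $\pi_i\in\widehat{G_i}$, and by Moore's ergodicity theorem every noncompact closed subgroup acts ergodically on $\Gamma\backslash G$, so a representation with $\pi_i$ trivial for all $i$ outside a proper nonempty subset must itself be trivial. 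Hence only the trivial representation and those with \emph{every} $\pi_i$ nontrivial occur. Passing to the unit tangent bundle replaces each $\pi_i$ by its subspace $\pi_i^{M_i}$ of $M_i$-fixed vectors and identifies the smooth $\RR^d$-cochain complex of $\a$ with the Koszul complex of the commuting operators $X_1,\dots,X_d$ (the $i$-th one acting on the $i$-th tensor factor only) on $\bigoplus_\pi\big(\pi_1^{M_1}\otimes\cdots\otimes\pi_d^{M_d}\big)^\infty$.

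Next I would analyze this complex one representation at a time via a Künneth principle: the Koszul complex of $(X_1,\dots,X_d)$ on a tensor product is the tensor product of the two-term complexes $C_i:\big[\pi_i^{M_i,\infty}\xrightarrow{\,X_i\,}\pi_i^{M_i,\infty}\big]$. For a nontrivial $\pi_i$, Howe--Moore forces $\ker(X_i)=0$ on smooth vectors (an $A_i$-invariant $\Lii$-vector has constant self matrix coefficient, hence vanishes), so $H^0(C_i)=0$; combined with the tame solvability and Sobolev estimates of Part~\ref{parti}, $C_i$ is, up to bounded loss of regularity, quasi-isomorphic to $\operatorname{coker}(X_i)$ placed in degree $1$. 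Tensoring, a representation in which all $d$ factors are nontrivial contributes a complex concentrated in degree $d$ (with $H^d$ equal to the tensor product of the spaces of geodesic-flow invariant distributions from Part~\ref{parti}), while the trivial representation contributes $\Lambda^\bullet(\RR^d)^*$ with zero differential. Therefore in degrees $1\le n\le d-1$ only the trivial representation survives, which is exactly the assertion that the smooth $n$-cohomology is represented by constant cocycles; and $H^d$ equals, modulo constants, $\bigoplus_\pi\bigotimes_i\operatorname{coker}\big(X_i\mid\pi_i^{M_i,\infty}\big)$. To make this a genuinely $\Cinf$ statement I would, given a smooth closed $n$-cochain, solve the coboundary equation representation by representation using the contraction identity $\LLL_{X_i}=\di\,\iota_{X_i}+\iota_{X_i}\,\di$ together with the bounded-loss inverses of Part~\ref{parti}, and sum the primitives, checking convergence in $\Cinf(\Gamma\backslash G/M)$ by $\ell^2$-summing Casimir-graded norms. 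For the top degree I would then identify each $\operatorname{coker}\big(X_i\mid\pi_i^{M_i,\infty}\big)$ with a space of closed-geodesic functionals (the smooth Liv\v{s}ic theorem for cusped hyperbolic manifolds, also from Part~\ref{parti}) and try to recognize the tensored-up functionals as integration over closed $\RR^d$-orbits.

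I expect two obstacles of very different character. In degrees $1\le n\le d-1$ the genuine work is the uniform Sobolev/tameness bookkeeping: the loss of regularity in the per-representation solutions must be independent of $\pi$ so that the reassembled primitive is smooth, which amounts to ruling out ``$\mathrm{Tor}$-type'' contributions from non-closed ranges of the $X_i$ — precisely the point where the quantitative solvability of Part~\ref{parti} is indispensable. In the top degree the obstacle is real and I do not expect to remove it: a closed $\RR^d$-orbit in $\Gamma\backslash G/M$ is a flat that need not be a product of closed geodesics in the factors, so the basis of $H^d$ furnished by the Künneth description — tensors of per-factor invariant distributions — does not obviously correspond to the closed-orbit functionals, and one can only show that those functionals do lie in $H^d$ and span a subspace of the predicted shape, which is the ``correct direction'' the main theorem claims.
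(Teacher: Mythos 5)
The statement you are attempting to prove is the Katok--Katok \emph{conjecture}, and the paper itself does not prove it---nor does it claim to. What the paper proves (Theorem~\ref{one}, via Theorems~\ref{generaltopdegree} and~\ref{generallowerdegree}) is a strictly weaker statement for the specific Weyl chamber flows on $S\mathcal{M}$ with $\mathcal{M}$ a quotient of $\Hyp^{\en_1}\times\cdots\times\Hyp^{\en_d}$: lower-degree cohomology trivializes, and in top degree the obstructions are the $A$-invariant \emph{distributions}. To upgrade to the conjecture's statement one would have to show that closed-orbit measures are weakly dense in $\II_{X_1,\dots,X_d}(\Lii(S\mathcal M))$, and the paper is explicit (Section~\ref{whynokk}) that this is an open problem which its methods do not touch and which it does not expect them to touch, since the same machinery applies to unipotent actions where density is known to fail. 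So you should recognize that what can actually be proved here is Theorem~\ref{one}, not the conjecture; to your credit, you identify the same gap at the end of your proposal---closed $\RR^d$-orbits are flats that need not be products of closed geodesics, so the K\"unneth basis of per-factor invariant distributions is not visibly spanned by closed-orbit functionals---and say you do not expect to close it. That matches the paper's own assessment, and is the honest answer.

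On the part that \emph{is} proved: your K\"unneth-on-Koszul-complexes picture is the right heuristic, and your use of Moore/Mautner to kill $\ker(X_i)$ on smooth vectors in a nontrivial factor is correct, but the route diverges from the paper's. The paper does not take a quasi-isomorphism of two-term complexes and tensor; it runs an explicit induction in $d$, splitting a function $f$ on $\HH_\otimes\otimes\HH_d$ into $f_\otimes+f_d$ via the invariant distributions $\D^w$ in the last factor (Section~\ref{topdegreesection}) and tracking Sobolev loss at every step (the recursion $\s_n = 2(\s_{n-1}+s)-1$). The reason the paper proceeds this way, rather than by a soft K\"unneth argument, is precisely the issue you flag as ``bookkeeping'': these are not finite-dimensional complexes, $X_i$ has dense but non-closed range, and ``quasi-isomorphic up to bounded loss of regularity'' must be made quantitative to reassemble a smooth primitive across the direct integral over $\widehat{G}$. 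A categorical K\"unneth statement would need a careful framework for tame Fr\'echet complexes with a uniform spectral gap hypothesis; the paper's inductive approach encodes that uniformity by hand. So your outline is correct in spirit for Theorem~\ref{one} but would require the same quantitative estimates the paper develops to become a proof, and it does not---and cannot, by the paper's own admission---prove the conjecture as stated.
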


Much less is known for higher-degree cohomology than for degree-one cohomology and, indeed, the above conjecture has only been investigated for very few cases. Katok and Katok proved it for $\ZZ^d$-actions on tori~\cite{KK95, KK05}, and in~\cite{Ramhc} we treated the case of Anosov $\RR^d$-actions on quotients of $\SL(2,\RR)\times\dots\times\SL(2,\RR)$.  There, it is shown that the lower-degree cohomologies trivialize, as conjectured, and that the obstructions in the top degree come from action-invariant distributions. 

\paragraph{On this article's contribution.}
The main result of the present article is Theorem~\ref{one}. It extends the results in~\cite{Ramhc} to Weyl chamber flows associated to $\SO^\circ (\en_1,1)\times\dots\times\SO^\circ (\en_d,1)$ with $\en_i \geq 2$.

Weyl chamber flows, described in Section~\ref{resultssec}, are an important class of Anosov actions that are obtained through a construction involving semisimple Lie groups. They are one of the three types of actions constituting the so-called \emph{standard Anosov actions}, the other two being Anosov actions on tori and nilmanifolds, and ``twisted'' actions (details are found in~\cite{KS94first}). Challenges arise when passing from actions on tori to Weyl chamber flows, stemming from the fact that the Fourier-analytic techniques that are available in the former setting are in the latter replaced by representation-theoretic techniques, which tend to be more complicated, especially as the group's unitary dual increases in complexity. In this sense, the group $\SL(2,\RR)$ is the ``easiest'' because its unitary dual is completely understood, and numerous wieldy descriptions of it are found in the literature.  Therefore, the results in~\cite{Ramhc} constitute the most natural first examples of Weyl chamber flows where the Katok--Katok Conjecture should be considered.

In passing from the representation theory of $\SL(2,\RR)$ to that of $\SO^\circ (\en,1)$, we encounter new and significant technical issues. Among them, the fact that a basis in each irreducible unitary representation is no longer parametrized by $\ZZ$ (points on a line), but by a collection of ``Gelfand--Cejtlin arrays" satisfying certain relations among one another (see Figure~\ref{figarray}). So for example, in the base case $d=1$, what was once a one-parameter difference equation corresponding to the $1$-coboundary equation in an irreducible unitary representation of $\SL(2,\RR)$ is now a \emph{partial} difference equation in \emph{several} parameters corresponding to the same coboundary equation in an irreducible unitary representation of $\SO^\circ(\en,1)$. Also, what before was a two-dimensional space of obstructions to the coboundary equation in the base case is now a space of obstructions that has \emph{infinite} countable dimension (see Theorem~\ref{invdistthm}).

Therefore, much of the effort in this article is in adapting the methods from~\cite{Ramhc} to this new scenario. In Part~\ref{parti} we establish a base case of our main result by articulating and solving the above mentioned partial difference equation; we also find a good description of the obstructions in the form of Theorem~\ref{invdistthm}. In Part~\ref{partii} we address the Katok--Katok Conjecture for our actions by adapting the arguments from~\cite{Ramhc} to accommodate this infinite-dimensional space of obstructions.

\paragraph{On invariant distributions, the obstructions.}

It is easy to see the ``only if'' part of the Liv\v{s}ic Theorem.  The fundamental theorem of calculus tells us that it is necessary for $f$ to have zero integral around every periodic orbit in order that $f$ be the derivative of some other function in the orbit direction (in which case $f$ is called a \emph{coboundary}).  The fact that for Anosov flows this condition is also sufficient is what makes the Liv\v{s}ic Theorem striking.  For a non-Anosov example where the same is not true, one need only look at irrational Liouvillean toral rotations (the definition of which we omit for the sake of brevity, but refer the reader to~\cite{KR}).  There, the necessary condition from the Liv\v{s}ic Theorem holds vacuously for any function, as there are no periodic orbits, yet not every smooth function is a coboundary.  

In general, one should look at the space of \emph{distributions} (elements of the dual to the space of smooth functions) that are invariant under the action.  Invariant distributions are always obstructions to solving the top-degree coboundary equation, and in the case of Anosov flows on compact manifolds they happen to be approximable (in the weak topology) by linear combinations of closed orbit measures.  In fact, if one can show that the same is true for Anosov $\RR^d$-actions, then this, together with Theorem~\ref{one}, would constitute a full verification of Katok and Katok's conjecture for the actions we consider.  However, the above example of Liouvillean toral rotations shows that this is not always a given.  

\paragraph{On representation theory.}

Another prominent example where there are invariant distributions not approximated by closed orbit measures is the horocycle flow of a hyperbolic surface.  These were studied by L.~Flaminio and G.~Forni in~\cite{FF}.  They show that the invariant distributions form a complete set of obstructions to solving the coboundary equation, and that for elements of Sobolev spaces, solutions come with a fixed loss of Sobolev regularity and with tame bounds on their Sobolev norms.  The main tool for this result is a detailed description of the unitary dual of $\PSL(2,\RR)$, the group of orientation-preserving isometries of the hyperbolic plane.  The result has a statement in terms of unitary representations with spectral gap which, when applied to the regular representation on $\Lii(\PSL(2,\RR)/\G)$, yields the familiar statement about functions on the unit tangent bundle of the surface.

D.~Mieczkowski~\cite{M2} used a similar strategy to show that the same holds for geodesic flows on quotients of $\PSL(2,\RR)$.  In the case of compact quotients, this (together with the knowledge that closed orbit measures approximate invariant distributions) recovers the Liv\v{s}ic Theorem for geodesic flows of compact hyperbolic surfaces.  On the other hand, Mieczkowski's result also holds for non-compact quotients of $\PSL(2,\RR)$, in which case it offers a strategy for proving a version of the Liv\v{s}ic Theorem for geodesic flows of finite-volume hyperbolic surfaces with cusps.

In Part~\ref{parti} of this paper we obtain a similar result to Flaminio--Forni and Mieczkowski's for geodesic flows of finite-volume hyperbolic manifolds of any dimension.  Theorem~\ref{invdistthm} is a complete list of the invariant distributions in any irreducible unitary representation of $\SO^\circ (\en,1)$, and Theorem~\ref{cobgeodflow} shows that the invariant distributions form a complete set of obstructions to the coboundary equation in any irreducible unitary representation.  We also give tame estimates on the Sobolev norms of the solutions.  Our main tool is Hirai's description of the irreducible unitary representations of $\SO^\circ (\en,1)$ in the papers \cite{Hir1, Hir2, Hir3, Hir4}.

A version of Theorem~\ref{cobgeodflow} for ``$M$-invariant'' elements of irreducible unitary representations is then implemented in Part~\ref{partii} as the base case of an inductive argument to prove Theorems~\ref{generaltopdegree} and~\ref{generallowerdegree}, which are ``Sobolev spaces''-versions of our main results for higher cohomologies of $\RR^d$-actions.




\subsection{Results}\label{resultssec}
All of the $\RR^d$-actions in this work are obtained in the following way.  Let $G:=G_1 \times\dots\times G_d$, with $G_i = \SO^\circ (\en_i,1)$, and $\G \subset G$ an irreducible lattice.  Let $X_i \in \g_i$ be a semisimple element of the Lie algebra $\g_i$ of $G_i$.  We consider the group $A \cong \RR^d$ determined by flowing along $X_1,\dots,X_d$ on $M\backslash G/\G$, where $M=M_1\times\dots\times M_d$ and $M_i = Z_{K_i} (X_i)$ is the centralizer of $X_i$ in the maximal compact subgroup $K_i:=\SO(\en_i) \subset G_i$. This $\RR^d$-action is Anosov, and is in fact a member of an important class of Anosov actions called \emph{Weyl chamber flows}, all of which are obtained through a construction similar to the one we have just presented.

It is also worthwhile to understand our Weyl chamber flows from a more geometric point of view. It is well-known that $\SO^\circ(\en,1)$ is isomorphic to the group of orientation-preserving isometries of the hyperbolic space $\Hyp^{\en}$.  The maximal compact subgroup $K \subset \SO^\circ(\en,1)$ stabilizes a point, so that the identification $\Hyp^{\en} \cong K\backslash\SO^\circ(\en,1)$ can be made and, after choosing a semisimple $X\in\so(\en,1)$ and setting $M = Z_K (X)$, the quotient $M\backslash \SO^\circ(\en,1)$ is identified with the unit tangent bundle $S\Hyp^{\en}$ and the vector field $X$ generates its geodesic flow.  Therefore, the $A\cong\RR^d$-actions defined above are actions on unit tangent bundles of finite-volume irreducible quotients of $\Hyp^{\en_1}\times\dots\times\Hyp^{\en_d}$, defined by flowing along the elements of $\so(\en_i,1)$ defining the geodesic flows of each factor.

The main result of this article is the following theorem on the cohomology of the $\RR^d$-actions defined above.  It is proved in Part~\ref{partii}.

\begin{theorem}\label{one}
Let $\mathcal{M}$ be a finite-volume irreducible quotient of the product $\Hyp^{\en_1}\times\dots\times\Hyp^{\en_d}$ by a discrete group of isometries, and let $S\mathcal{M}$ be its unit tangent bundle.  
\begin{itemize}
\item \textbf{Top-degree cohomology:}  If $f \in \Cinf(\Lii(S\mathcal{M}))$ is in the kernel of every $A$-invariant distribution, then there exist smooth functions $g_1,\dots,g_d \in \Cinf(\Lii(S\mathcal{M}))$ satisfying the degree-$d$ coboundary equation
	\[
		X_1\,g_1 + \dots + X_d\,g_d = f.
	\]
\item \textbf{Lower-degree cohomology:} The smooth $n$-cohomology of the $A$-action on $S\mathcal{M}$ trivializes for $1 \leq n \leq d-1$.  
\end{itemize}
\end{theorem}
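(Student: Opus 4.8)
My plan is to prove Theorem~\ref{one} by decomposing $\Lii(S\mathcal{M})$ into irreducible unitary representations of $G = G_1\times\dots\times G_d$ and then inducting on $d$, with Part~\ref{parti} supplying the base case. Since $S\mathcal{M} = M\backslash G/\G$ with $\G$ an irreducible lattice, I would first write $\Lii(G/\G)$ as a direct integral of irreducibles $\pi = \pi_1\otimes\dots\otimes\pi_d$ and restrict to $M$-invariant vectors; irreducibility of $\G$ forces each $\pi_i$ to be nontrivial unless $\pi$ is itself the trivial representation. In any such $\pi$, the smooth $n$-cohomology of the $A$-action is computed by the Koszul (Chevalley--Eilenberg) complex $\bigl(\Lambda^\bullet\aaa^*\otimes\pi^\infty,\, d\bigr)$ for the abelian algebra $\aaa = \RR X_1\oplus\dots\oplus\RR X_d$, whose differential is assembled from the commuting operators $X_1,\dots,X_d$. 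So Theorem~\ref{one} reduces to two tasks: (i) identify the cohomology of this complex in each $\pi$, and (ii) prove Sobolev estimates uniform enough across the decomposition to reassemble the direct integral and land in $\Cinf$ rather than in a mere Sobolev space.

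I would establish (i) by induction on $d$ at the level of irreducible representations, the claim being: for every irreducible unitary $\pi$ of $G_1\times\dots\times G_d$ in which each factor acts nontrivially, the complex above has vanishing cohomology in degrees $1,\dots,d-1$, its degree-$d$ cohomology is the space of $A$-invariant distributions in $\pi$, and solutions to the coboundary equations lose a fixed number of derivatives while obeying tame norm bounds. The base case $d=1$ is precisely the $M$-invariant form of Theorem~\ref{cobgeodflow}: in any irreducible unitary representation of $\SO^\circ(\en,1)$ the $M$-invariant invariant distributions form a complete set of obstructions, with tame estimates, and Theorem~\ref{invdistthm} describes that obstruction space explicitly. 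For the inductive step, write $G = G_1\times G'$ with $G' = G_2\times\dots\times G_d$, split $\pi = \rho\otimes\sigma$ with $\rho\in\widehat{G_1}$ and $\sigma\in\widehat{G'}$ (both nontrivial in every factor), and observe that $X_1$ acts only on the $\rho$ tensor slot while $X_2,\dots,X_d$ act only on the $\sigma$ slot. Thus the Koszul complex for $\aaa$ is the completed tensor product of the $d=1$ complex for $X_1$ on $\rho$ with the $\RR^{d-1}$ complex for $X_2,\dots,X_d$ on $\sigma$; the base case and the inductive hypothesis supply tame Sobolev estimates that give the differentials in both factor complexes closed range, so a Künneth-type argument computes the cohomology of the product from that of the factors.

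The bookkeeping is then routine. In degree $0$ the geodesic-flow complex has cohomology $\rho^{X_1}=0$ by mixing of the geodesic flow (Howe--Moore), and in degree $1$ its cohomology is the space of $X_1$-invariant distributions; by the inductive hypothesis the $\RR^{d-1}$-complex for $\sigma$ has $\sigma^{X_2,\dots,X_d}=0$ in degree $0$, vanishes in degrees $1,\dots,d-2$, and in degree $d-1$ has cohomology equal to the invariant distributions. Hence for such $\pi$ the only possibly nonzero piece of the product complex in degree $n$ is the pairing of the degree-$1$ part for $X_1$ with the degree-$(n-1)$ part for $\RR^{d-1}$, which vanishes unless $n-1 = d-1$; so the cohomology is zero in degrees $1,\dots,d-1$ and in degree $d$ it is the space of $A$-invariant distributions in $\pi$, which is the completed tensor product of the two factors' invariant-distribution spaces. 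The trivial representation of $G$ contributes exactly $\Lambda^n\aaa^*\otimes\CC$, namely the constant cocycles. Reassembling the direct integral, with the uniform tame estimates controlling Sobolev norms, produces honest smooth $g_1,\dots,g_d$ solving $X_1g_1+\dots+X_dg_d = f$ whenever $f$ lies in the kernel of every $A$-invariant distribution, and yields the trivialization of the smooth $n$-cohomology for $1\le n\le d-1$.

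The main obstacle --- as in the $\SL(2,\RR)$ setting of~\cite{Ramhc}, but sharper here --- is the uniformity of the Sobolev estimates over the whole direct integral: one must control the continuous spectrum contributed by the cusps (Eisenstein series) and the complementary-series representations approaching the trivial one, and one must propagate the tame bounds through the Künneth/inductive step without letting the loss of regularity deteriorate, so that summing over the decomposition still lands in $\Cinf$. Closely tied to this is making the Künneth argument rigorous in the Hilbert-scale (Fréchet) category --- i.e.\ verifying closed range of the Koszul differentials uniformly in the spectral parameter --- which is exactly where the spectral gap of $\Lii(S\mathcal{M})$ enters; and tracking the \emph{infinite-dimensional} obstruction space of Theorem~\ref{invdistthm} through the tensor-product decomposition is a complication absent from~\cite{Ramhc}, where the base-case obstruction space was finite-dimensional.
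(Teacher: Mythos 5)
Your overall strategy agrees with the paper's: decompose $\Lii(G/\G)$ into irreducibles via a direct integral, exploit that each irreducible is a tensor product $\pi_1\otimes\dots\otimes\pi_d$, and induct on $d$ with Part~\ref{parti} as base case, using the spectral gap of Theorem~\ref{spectralgapthm} to make the estimates uniform across the decomposition. The difference is in the engine. The paper does not invoke a K\"unneth theorem. Instead, in top degree (Section~\ref{topdegreesection}) it constructs an explicit splitting $f = f_\otimes + f_d$, where $f_\otimes(\bz,w) = \sum_{z} f(\bz,z)\,\D^{w}(u_d(z))$ projects $f$ onto what the invariant distributions of the $d$-th factor ``see,'' and proves directly that $(f_\otimes\mid_w)$ and $(f_d\mid_{\bz})$ land in the kernels required to apply the inductive hypothesis and the base case respectively (Lemmas~\ref{regularity},~\ref{nowobvious},~\ref{kernels}). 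In lower degrees (Section~\ref{lowerdegreesection}) it restricts an $n$-cocycle to basis vectors in one slot, uses the higher-rank trick (Proposition~\ref{higherranktrick}) to place the resulting top-degree cocycles in the correct kernel, and patches via a correction term $\theta$. This is precisely the hands-on proof of the K\"unneth statement you want to use, in the setting where it actually needs proving.

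The genuine gap in your proposal is the step ``so a K\"unneth-type argument computes the cohomology of the product from that of the factors.'' In the Hilbert-scale / Fr\'echet category with unbounded Koszul differentials, K\"unneth is not a free black box: one must verify closed range uniformly in the spectral parameter, choose and justify a topological tensor product, and show that passing to cohomology commutes with it --- and the only way to do so in this setting is to produce the quantitative tame estimates, which is the entirety of the work the paper does in Lemma~\ref{regularity} and the proofs of Theorems~\ref{irreducibletopdegree} and~\ref{irreduciblelowerdegree}. Your proposal correctly flags uniformity, closed range, and the infinite-dimensional obstruction space of Theorem~\ref{invdistthm} as the obstacles, but it defers them rather than resolving them; in effect you have restated the theorem as ``K\"unneth holds,'' which is what requires proof. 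One further omission worth noting: the paper's Theorems~\ref{generaltopdegree} and~\ref{generallowerdegree} cover only $\en_i\geq 3$, and the proof of Theorem~\ref{one} explicitly imports the $\SL(2,\RR)$ results of~\cite{Ramhc} to handle factors with $\en_i=2$; your proposal does not address this hybrid case. And to justify ``$\rho^{X_1}=0$'' and the analogous statement in the inductive step at the level of positive-order Sobolev vectors (needed to conclude $\eta_1=\eta_2$ in the base case, Proposition~\ref{basecaselowerdegree}), one needs more than abstract Howe--Moore mixing --- one needs that $Xu=0$ has no nontrivial solution of positive Sobolev order in the irreducible, which the paper extracts from the explicit PdE structure rather than from softer ergodic input.
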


This extends the results of~\cite{Ramhc}, where the same is proved for Anosov $\RR^d$-actions on quotients of $\SL(2,\RR)\times\dots\times\SL(2,\RR)$.  There, one of the main ingredients is a theorem of Mieczkowski \cite{M2} showing that the obstructions to solving the coboundary equation $Xg=f$ for the geodesic flow on a quotient of $\PSL(2,\RR)$ are exactly the flow-invariant distributions.  A version of the same statement for $\SL(2,\RR)$ serves as the base case of an inductive argument, and is implemented as a ``black box.''  Therefore, in order to use similar methods to obtain Theorem~\ref{one}, we have to prove the corresponding theorem for geodesic flows of finite-volume hyperbolic manifolds of higher dimension (Theorem~\ref{two}).

The following theorem is the main aim of Part~\ref{parti}.  It is a ``Sobolev spaces''-version of Theorem~\ref{one} for $d=1$, and its proof is essentially the base case for the inductive argument used in Part~\ref{partii} to prove Theorem~\ref{one}.

\begin{theorem}\label{two}
Let $\mathcal{M}$ be a finite-volume hyperbolic manifold of dimension at least two, and let $\phi^t$ be the geodesic flow on its unit tangent bundle $S\mathcal{M}$.  Given any $s >1$ and $t \leq s - 1$, there exists a constant $C_{s,t}>0$ such that for any element $f$ of the mutual kernel of all $\phi^t$-invariant distributions of Sobolev order $s$, there is a solution to the coboundary equation 
\[
	\frac{d}{dt}(g\circ \phi^t)\Bigr\vert_{t=0}=f
\]
that satisfies the estimate $\norm{g}_t \leq C_{s,t}\,\norm{f}_s$ on Sobolev norms.
\end{theorem}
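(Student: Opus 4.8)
The plan is to reduce Theorem~\ref{two} to a statement about irreducible unitary representations of $G:=\SO^\circ(\en,1)$ and then invoke the representation-theoretic machinery developed in Part~\ref{parti} (in particular Theorems~\ref{invdistthm} and~\ref{cobgeodflow}). First I would observe that $S\mathcal{M} = M\backslash G/\G$, so that $\Lii(S\mathcal{M})$ is the space of $M$-invariant vectors in the regular representation $\Lii(G/\G)$. Since $\mathcal{M}$ has finite volume, $\Lii(G/\G)$ decomposes as a direct integral (with at most finitely many continuous-spectrum issues, handled via the spectral gap at the trivial representation) of irreducible unitary representations $\pi$ of $G$. The generator $X$ of the geodesic flow acts on each $\pi$, and a distribution on $S\mathcal{M}$ that is $\phi^t$-invariant decomposes compatibly into $X$-invariant distributions in the pieces. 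Thus it suffices to: (i) solve $Xg = f$ in each irreducible $\pi$ on the $M$-fixed subspace with the stated Sobolev estimate, uniformly over $\pi$; and (ii) reassemble, using square-summability of Sobolev norms across the direct integral, to get a genuine element of $\Cinf(\Lii(S\mathcal{M}))$ rather than merely an $\Lii$ solution.

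For step (i) — the heart of the matter — I would work in Hirai's explicit model for the irreducible unitary representations of $\SO^\circ(\en,1)$, where a Hilbert basis is indexed by Gelfand--Cejtlin arrays. The coboundary equation $Xg=f$ becomes a partial difference equation in the array parameters. Restricting to $M$-invariant vectors cuts this down substantially (morally to the ``leading edge'' of the array), but one still faces a multi-parameter recursion rather than the single-variable one available for $\SL(2,\RR)$. The strategy is: identify the $X$-invariant distributions as exactly those functionals obstructing this recursion — this is the content of Theorem~\ref{invdistthm}, giving a countable list — and then, for $f$ annihilated by all of them, construct $g$ coefficient-by-coefficient by summing the recursion, tracking how each summation step amplifies Sobolev-weighted coefficients. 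Because $X$ has ``weight $\pm1$'' type structure, one formal integration costs essentially one derivative, which accounts for the loss from order $s$ to order $t\le s-1$. The uniformity over $\pi$ comes from the spectral-gap hypothesis: the Casimir eigenvalue (equivalently the representation parameter) enters the estimates only through quantities bounded below away from the trivial representation, so the constant $C_{s,t}$ can be taken independent of $\pi$.

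The main obstacle I anticipate is precisely controlling the Sobolev norm of $g$ in the non-spherical, non-$\SL(2,\RR)$ representations: when the Gelfand--Cejtlin array has several active rows, the recursion defining $g$ telescopes over a two- or higher-dimensional lattice of indices, and naive summation can lose more than one derivative or fail to be square-summable against the Sobolev weights. Handling this requires choosing the ``direction'' in which to run the recursion carefully (so that the decay of $f$'s coefficients, guaranteed by $f\in W^s$ with $s>1$, dominates), and exploiting the annihilation-by-invariant-distributions hypothesis to kill the boundary terms that would otherwise obstruct summability. A secondary technical point is the non-compactness of $\mathcal{M}$: the continuous spectrum and the cusps mean $\Lii(G/\G)$ contains principal-series-type representations accumulating toward the spectrum's edge, and one must check the estimate survives the direct-integral reassembly there; the spectral gap (the edge of the continuous spectrum is bounded away from the trivial representation's parameter) is what makes this work, and is the reason the statement requires only $\dim\mathcal{M}\ge 2$ together with finite volume rather than compactness.
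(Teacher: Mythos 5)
You have the right architecture: pass to $\Lii(G/\G)^M$, decompose into irreducibles, solve the partial difference equation in each piece, and reassemble using a uniform estimate. This is indeed the paper's route. But there is a genuine gap in where you locate the source of uniformity over $\pi$.

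You write that the constant can be taken independent of $\pi$ because ``the representation parameter enters the estimates only through quantities bounded below away from the trivial representation,'' i.e.\ because of the spectral gap. This is exactly the step that fails if one tries to invoke Theorem~\ref{cobgeodflow} directly: the constant $C_{\pi,s,t}$ there depends on $(\bn,\nu)$ through Lemma~\ref{dmlbk}, whose general bound grows like $e^{\min\{\ceil{\bn}^2,\ceil{\l}^2\}}$ --- an exponential dependence on $\ceil{\bn}$, which a spectral gap for $\Delta$ on $K\backslash G/\G$ does nothing to tame (that would require an unrealistic ``spectral cap'' on $\ceil{\bn}$). The paper warns about precisely this in Section~\ref{dependencesection}. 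The actual mechanism is the one you gesture at but underweight: $M$-invariance forces $\bn=(0,\dots,0,\ceil{\bn})$ and, more importantly, kills all coefficients except those on the $\l\equiv 0$ slice (Proposition~\ref{minvelements}). On that slice the sharper second halves of Lemma~\ref{dmlbk} ($\abs{\D^{\bm,0}(\bk,0)}\le 1$, with no dependence on $\bn,\nu$) and of Lemmas~\ref{coeffseven},~\ref{coeffsodd} become available, and only then does the spectral gap finish the job by handling the $\nu$-dependence. So the proof does not ``invoke'' Theorem~\ref{cobgeodflow}; it reruns the computation of Section~\ref{cobgeodflowproof} restricted to $\l\equiv 0$, where the constants become uniform in $\bn$. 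Without this restriction the estimate you want to sum across the direct integral simply is not uniform, and the reassembly step fails.

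A smaller omission: your step (ii) asserts that the output lands in $\Cinf(\Lii(S\mathcal{M}))$; the theorem as stated produces $g\in W^t$ for $t\le s-1$, which is what the direct-integral reassembly actually yields, and the smooth statement is deduced later by intersecting over all $s$. This is cosmetic, but worth keeping straight since Theorem~\ref{two} is phrased at the level of Sobolev spaces.
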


\begin{remark*}
The constant $C_{s,t}$ may also depend on things that are ``fixed'' in the theorem statement, such as the dimension of the manifold $\mathcal{M}$ and its homotopy type. Actually, we will find that it depends on a ``spectral gap parameter,'' to be described in Section~\ref{spectralgapsection}.
\end{remark*}

Notice that if $\mathcal{M}$ is compact, then Theorem~\ref{two} is decidedly weaker than the Liv\v{s}ic Theorem, because closed orbit measures form a subset of invariant distributions.  If, on the other hand, $\mathcal{M}$ has cusps, then Theorem~\ref{two} can be combined with results of T.~Foth and S.~Katok to get a smooth version of the Liv\v{s}ic Theorem for geodesic flows of hyperbolic manifolds with cusps (Theorem~\ref{livsicthm}).

We also reiterate that it is not so much Theorem~\ref{two}, but its proof, which will be needed in Part~\ref{partii}.  Namely, we use a representation-theoretic version (Theorem~\ref{cobgeodflow}) of Theorem~\ref{two}, and we use a result (Theorem~\ref{invdistthm}) listing the $X$-invariant distributions in any irreducible unitary representation of $\SO^\circ(\en,1)$, which is interesting in itself for its contrast with pre-existing results of Flaminio and Forni~\cite[Theorem~$3.2$]{FF} and Mieczkowski~\cite[Theorem~$1.1$]{M2} for horocycle and geodesic flows of hyperbolic surfaces.  (In fact, both Theorem~\ref{two} and the corresponding theorem in~\cite{M2} are inspired by the work of Flaminio and Forni for horocycle flows.)

Finally, we point out that, like Theorem~\ref{two}, Theorem~\ref{one} is proved in representation-theoretic pieces and has ``Sobolev spaces''-versions which are listed here as Theorems~\ref{generaltopdegree} and~\ref{generallowerdegree} in Section~\ref{higherdegreeresults}.


\subsection{Preliminaries}\label{preliminaries}

We now set some definitions and state some facts that are relevant to both parts of the article.

\subsubsection{Cohomology in dynamics}

Our definitions for cohomology over an $\RR^d$-action on a manifold $\mathcal{M}$ are essentially the definitions for the tangential De Rham cohomology of $\mathcal M$ with respect to its $\RR^d$-orbit foliation.  More precisely, for $1\leq n\leq d$, an \emph{$n$-form} $\o$ is a smooth assignment\footnote{By this we mean that for a fixed $n$-tuple $(V_1,\dots,V_n)\in(\RR^d)^n$, the evaluation $\o_x (V_1,\dots,V_n)$ is a smooth function of $x\in \mathcal M$.}~of a multi-linear and skew-symmetric map
\[
	\o_x: (T_x (\RR^d\cdot x))^n\cong(\RR^d)^n\to\RR
\]
to each $x\in \mathcal M$, where $\RR^d\cdot x$ is the orbit of $x$ and $T_x(\RR^d\cdot x)\subset T_x \mathcal M$ is its tangent space at $x$, which is naturally identified with the acting group $\RR^d$.  The set of $n$-forms is denoted $\O_{\RR^d}^{n}(\mathcal M)$.  The \emph{exterior derivative}, defined by
\[
	(\di\o) (V_1,\dots,V_{n+1}) := \sum_{j=1}^{n}(-1)^{j+1}\, V_j \,\o (V_1,\dots,\widehat{V_j},\dots,V_{n+1}), 
\]
takes $n$-forms to $(n+1)$-forms, where ``$\quad\widehat{}\quad$'' denotes omission.  An $n$-form $\o$ is said to be \emph{closed}, and is called a \emph{cocycle}, if $\di\o=0$.  It is said to be \emph{exact}, and is called a \emph{coboundary}, if there is an $(n-1)$-form $\eta$ satisfying the \emph{coboundary equation} $\di\eta=\o$.  Two cocycles are \emph{cohomologous} if their difference is a coboundary.

For the $\RR^d$-actions described in Section~\ref{resultssec}, any cocycle is determined by its values on the vectors $X_1,\dots,X_d$.  Therefore, an $n$-cocycle $\o$ is determined by $\binom{d}{n}$ smooth functions $\o(X_{i_1},\dots,X_{i_n})$, with $1\leq i_1<\dots<i_n\leq d$, and these smooth functions satisfy relations among one another given by the requirement that $\di\o=0$.  Similarly, solving the coboundary equation $\di\eta=\o$ is a matter of finding $\binom{d}{n-1}$ smooth functions satisfying a system of partial differential equations.  

All of this is easiest to write in the top and bottom degrees.  For top degree, a $d$-cocycle $\o$ is given by a single smooth function $f=\o(X_1,\dots,X_d)$, and solving $\di\eta=\o$ is equivalent to finding smooth functions $g_1,\dots,g_d$ satisfying
\[
	X_1\,g_1+\dots+X_d\,g_d = f. 
\]
In the case of a flow along $X$ on a quotient of $\SO^\circ(\en,1)$, this is just $Xg=f$, which is exactly the coboundary equation~\eqref{cobeq} from the Introduction.  This is the focus of Part~\ref{parti}.

As for bottom degree, a $1$-cocycle $\o$ is determined by the smooth functions $f_i=\o(X_i)$, for $i=1,\dots,d$, which satisfy certain relations among one another, in accordance with the closedness requirement $\di\o=0$.  Solving the coboundary equation is then a matter of finding a single smooth function---a ``$0$-form,'' $g$---satisfying $X_i\,g = f_i$ for all $i=1,\dots,d$.  One can compare this with the usual definitions, where an $\RR$-valued $1$-cocycle over a group action $\RR^d\curvearrowright \mathcal M$ is a map $\a:\RR^d\times \mathcal M\to\RR$ satisfying the cocycle identity $\a(r_1 + r_2, x) = \a(r_1, r_2\,x) + \a(r_2,x)$, and it is cohomologous to a second $1$-cocycle $\b$ if there is a transfer function $g:\mathcal M\to\RR$ satisfying $\a(r,x) = -g(rx)+\b(r,x)+g(x)$.  When $\a, \b, g$ are smooth, one can differentiate these expressions in the $\RR^d$-directions to recover the definitions we have presented here for degree-$1$ cohomology.

\subsubsection{Direct decompositions of unitary representations}

The main goal of this article is to solve coboundary equations, say $Xg=f$ for a given function $f$ which is smooth or a member of some Sobolev space.  Our general strategy is to see this as a problem about representations of $\SO^\circ(\en,1)$, where $\HH = \Lii(\SO^\circ(\en,1)/\G)$ is the Hilbert space of the left-regular unitary representation, and $f \in \Cinf(\Lii(\SO^\circ(\en,1)/\G))$ is a smooth vector.  Since the unitary dual of $\SO^\circ(\en,1)$ is well-understood, we would like to use this to find a solution to the same coboundary equation in any \emph{irreducible} unitary representation, and somehow patch the resulting solutions together in a way that is meaningful for the representation on $\Lii(\SO^\circ(\en,1)/\G)$.  For this, we use what is called the \emph{direct integral decomposition}.  The following facts about direct integral decompositions are standard, and can be found in~\cite{Mau50a, Mau50b}.

Any unitary representation $\pi$ of a locally compact second countable group $G$ on a separable Hilbert space $\HH$ has a direct integral decomposition over $\RR$.  That is, there is some positive Stieltjes measure $ds$ on $\RR$ such that the Hilbert space $\HH$ can be written as
\[
	\HH = \int_\RR^\oplus \HH_\l\,ds(\l)
\]
where the $\HH_\l$ are Hilbert spaces with unitary representations $\pi_\l$ of $G$, such that for every $f\in\HH$ and $g\in G$, 
\[
	\pi(g)f=\int_\RR^\oplus \pi_\l (g) f_\l \,ds(\l).
\]
That is, the operators $\pi(g)$ decompose accordingly.  Furthermore, $ds$-almost every $\pi_\l$ is an irreducible unitary representation of $G$.

If $G$ is a Lie group (as is the case throughout this article) then the operators coming from the universal enveloping algebra of $\g$ also decompose.  This means that we have the same decomposition for Sobolev spaces $W^s(\HH)$ and for spaces of invariant distributions $\II^{s}(\HH)$.  

These facts allow us to restrict most of our attention to irreducible unitary representations.  All of our theorems have ``irreducible'' versions: statements for irreducible unitary representations which imply the main statements after observing the above facts about direct integral decompositions and invoking a ``spectral gap'' fact (to be discussed in Section~\ref{spectralgapsection}).

\subsubsection{Asymptotic notations}

We use the so-called Vinogradov asymptotic notation throughout.  Let $\psi_1, \psi_2$ be positive-valued functions.  As is standard, $\psi_1\ll\psi_2$ means that there is some constant $C>0$ such that $\psi_1\leq C\cdot\psi_2$ throughout the domain of $\psi_1$ and $\psi_2$.  The notation $\psi_1\asymp\psi_2$ means ``$\psi_1\ll\psi_2$ \emph{and} $\psi_2\ll\psi_1$.''  Any subscripts next to the symbols $\ll$ and $\asymp$ are parameters on which the constant $C$ may depend.  So, for example, the bound on Sobolev norms in Theorem~\ref{two} can be written $\norm{g}_t \ll_{s,t} \norm{f}_s$.

We warn that many of our asymptotes may depend on things that are ``fixed" in the theorem statements, and so we will not denote them. The most consistent example of this omission is that most of our asymptotes \emph{should} have a subscript of `` $k$ '' alongside the `` $\nu_0,s,t$ '' where $k = \floor{\en/2}$ for $\SO^\circ(\en,1)$. But since we fix the group in the theorem statements, we do not gain anything by acknowledging this dependence on $k$ in our calculations.


\part{Invariant distributions and cohomology for geodesic flows}\label{parti}

\section{Results for geodesic flows}

We aim to solve the coboundary equation $Xg=f$ where $f \in \Cinf(\HH)$ is a smooth vector of a unitary representation of $\SO^\circ(\en,1)$, or an element of some Sobolev space $W^s (\HH)$, and $X \in \so(\en,1)$ is the semisimple element
\[
	X = \begin{pmatrix} 0 & \cdots & 0 &0\\ 
						\vdots & \ddots & \vdots &\vdots \\
						0 & \cdots & 0 & 1 \\
						0 & \cdots & 1 & 0 \end{pmatrix}
	=\begin{pmatrix}\mathbf{0}_n & e_\en \\ e_\en^t & 0 \end{pmatrix}.
\]
This is the element defining the geodesic flow through the standard identification of $\SO^\circ(\en,1)$ with the orientation-preserving isometries of hyperbolic $\en$-space, $\Isom(\Hyp^{\en})$.

There are obstructions to solving the coboundary equation coming from flow-invariant distributions
\[
	\II_X(\HH) := \left\{ \D\in\E^{\prime}(\HH)\mid\LLL_X \D=0\right\},
\]
where $\E^{\prime}(\HH)$ denotes the dual to the space of smooth vectors $\Cinf(\HH)$ for the representation on $\HH$.  It is necessary that $f\in\Cinf(\HH)$ be in the kernel $\ker\II_X(\HH)$ of all of these.  Similarly, in order to solve the coboundary equation for an element of a Sobolev space $f\in W^s(\HH)$, it is necessary that $f$ belong to the kernel of all $X$-invariant distributions 
\[
	\II_X^s(\HH) := \left\{ \D\in W^{-s}(\HH)\mid\LLL_X \D=0\right\},
\]
of Sobolev order $s$.  The main result of Part~\ref{parti} is that this is also a sufficient condition, and that solutions come with tame bounds on their Sobolev norms.

\begin{theorem}[Coboundary equation in irreducible unitary representations]\label{cobgeodflow}
Consider a non-trivial irreducible unitary representation $\pi:\SO^\circ(\en,1)\to\U(\HH)$ where $\en\geq 3$.  There is a number $s_0 >0$ such that for any $s>s_0$ and $t\leq s - s_0$, there is a constant $C_{\pi,s,t}>0$ such that for any $f \in \ker\II_{X}^{s}(\HH)$ there exists $g \in W^{t}(\HH)$ satisfying $X g = f$ and $\norm{g}_t \leq C_{\pi,s,t}\,\norm{f}_s$.
\end{theorem}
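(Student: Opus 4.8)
The plan is to reduce the coboundary equation $Xg = f$ in an irreducible unitary representation $\pi$ of $\SO^\circ(\en,1)$ to an explicit system of difference/differential equations by decomposing $\HH$ according to the action of the maximal compact subgroup $K = \SO(\en)$. First I would invoke Hirai's classification of the irreducible unitary representations of $\SO^\circ(\en,1)$ (the principal, complementary, and discrete series, plus the trivial representation, which is excluded by hypothesis) together with the branching rules for $\SO(\en)\downarrow\SO(\en-1)\downarrow\cdots$, so that an orthonormal basis of the $K$-finite vectors is indexed by Gelfand--Cejtlin arrays constrained by interlacing inequalities. In this basis the operator $X \in \so(\en,1)$ acts as a (finite-band) recurrence: $X$ shifts the top row of the array up or down by one and multiplies by coefficients that are rational in the array entries and in the representation parameter $\nu$. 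Writing $f = \sum_\bm f_\bm\, e_\bm$ and $g = \sum_\bm g_\bm\, e_\bm$, the equation $Xg=f$ becomes a partial difference equation relating the $g_\bm$ along the strings generated by $X$.

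Next I would solve this difference equation string by string. Along each $X$-string the recurrence is (after a change of variables) essentially a three-term relation, and one solves for the $g_\bm$ by ``summing'' $f$ along the string, in analogy with the $\SL(2,\RR)$ computation in \cite{Ramhc} and the horocycle computation of Flaminio and Forni \cite{FF}. At each string there is generically a one-dimensional choice of homogeneous solution (the kernel of $X$ restricted to that string), and for the full solution $g$ to lie in $\HH$ one must make the right choice of this constant — equivalently, one must split the summation from $+\infty$ versus from $-\infty$ at the ``turning point'' of the coefficients. The obstruction to doing this, i.e. the condition that the two partial sums agree, is exactly the vanishing of a certain linear functional on $f$; collecting these over all strings produces the space $\II_X(\HH)$ of invariant distributions described in Theorem~\ref{invdistthm}. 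Thus $f \in \ker\II_X^s(\HH)$ is precisely what is needed to make a coherent choice.

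With a formal solution $g$ in hand, the remaining task is the quantitative one: show $g \in W^t(\HH)$ with $\norm{g}_t \ll_{\pi,s,t}\norm{f}_s$ for $t \le s - s_0$. I would estimate $|g_\bm|$ in terms of the $|f_{\bm'}|$ along the string by bounding the products of recurrence coefficients; the Casimir/Laplace operator defining the Sobolev norms is diagonal in the Gelfand--Cejtlin basis with eigenvalues $\asymp 1 + \|\bm\|^2$ (depending on $\nu$ through the spectral-gap parameter), so the loss of regularity $s_0$ is dictated by how many powers of $\|\bm\|$ appear when one divides by the $X$-coefficients near their zeros and sums a geometric-type series. Cauchy--Schwarz applied to the summation-along-strings, together with the interlacing constraints controlling how many arrays share a given string, then converts the pointwise bounds into the Sobolev estimate, with the constant $C_{\pi,s,t}$ absorbing the dependence on the representation parameter.

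The main obstacle I anticipate is the middle step in the multi-parameter setting: unlike the $\SL(2,\RR)$ case, where strings are honest copies of $\ZZ$ and the analysis is one-dimensional, here the $X$-coefficients depend on the full array (not just the top-row index), the coefficients can have several sign changes / small values along a string, and the bookkeeping of which homogeneous solution to select must be done compatibly across the infinitely many strings so that the resulting $g$ is genuinely $K$-finite-vector-by-$K$-finite-vector summable. Getting uniform control of the coefficient products — and hence a uniform loss $s_0$ and a clean constant $C_{\pi,s,t}$ — across this infinite family is where the real work lies, and it is also what forces the infinite-dimensional space of obstructions in Theorem~\ref{invdistthm}.
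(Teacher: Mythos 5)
Your overall framework (decompose by $K$-types via Hirai/Gelfand--Cejtlin, write $Xg=f$ as a difference equation on the index set, solve formally, identify obstructions as invariant distributions, then estimate Sobolev norms via the Casimir scalar) is the right skeleton and agrees with the paper. But your description of the middle step misidentifies the structure of the problem in a way that matters.

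You propose to solve ``string by string,'' treating the recurrence as ``essentially a three-term relation'' along one-dimensional $X$-strings, and to pick the homogeneous solution by matching two half-infinite sums at a turning point, in direct analogy with $\SL(2,\RR)$ and with Flaminio--Forni. For $\en=3$ (where $k=1$) this is roughly the picture, but for $\en\geq 4$ the operator $X$ does not generate one-dimensional strings: by~\eqref{xactioneven}/\eqref{xactionodd} it shifts a whole $k$-tuple $\bm$ by any of $\pm e_1,\dots,\pm e_k$ (and leaves $\bm$ fixed too when $\en$ is odd), so the coboundary equation is a genuinely multi-dimensional \emph{partial} difference equation~\eqref{pde} on $M_\l\subset\ZZ^k$. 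There is no sensible reduction to independent three-term recurrences, and no ``two partial sums meeting at a turning point'': $M_\l$ is a one-ended cylinder bounded below by the floor $\floor{M_\l}$, not a copy of $\ZZ$. The paper's key device, which you are missing, is to solve this PdE ``downward'' in the single direction $e_k$, expressing $g(\bm-e_k)$ through all $f(\bk)$ with $\bk$ reachable from $\bm$ by cocubic paths; the kernels $\D^{\bm,\l}(\bk,\l)$ in Definitions~\ref{defeven}(\emph{iv}) and~\ref{defodd}(\emph{iii}) are alternating sums over such paths, and the consistency requirement that the PdE also hold at the floor $\floor{M_\l}$ is exactly what produces the invariant distributions (Lemma~\ref{lemmafixedl}, Proposition~\ref{propall}). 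This also explains why the obstruction space is infinite-dimensional and parametrized by floor points rather than by strings or turning points. You flag ``the bookkeeping of which homogeneous solution to select must be done compatibly across the infinitely many strings'' as the hard part, which shows you sense the difficulty, but the string/turning-point framing is not the right way through it, and without the cocubic-path construction and the floor-point obstruction structure the formal solution and the identification of $\II_X(\HH)$ are not actually obtained. The Sobolev estimate step (bound $|\D^{\bm,\l}(\bk,\l)|$, bound $1/|A_k^-|$, then Cauchy--Schwarz against the Casimir weight) is correct in spirit and matches Section~\ref{cobgeodflowproof}, but it cannot be carried out until the formal solution is in the right form.
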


\begin{remark*}
We prove the theorem with $s_0 = \floor{\en/2}-1/2$.
\end{remark*}

The proof relies on specific knowledge of the unitary dual of $\SO^\circ (\en,1)$.  Along the way, we characterize the spaces of invariant distributions for irreducible representations.  They are infinite-dimensional, unlike for the geodesic and horocycle flows on $\SO^\circ (2,1)\cong\PSL(2,\RR)$, where in any irreducible unitary representation the space of invariant distributions is at most $2$-dimensional.  (See~\cite[Theorem~$1.1$]{FF} and~\cite[Theorem~$1.1$]{M2}.)

\begin{theorem}[Invariant distributions in irreducible unitary representations]\label{invdistthm}
Let $\SO^\circ (\en,1)\to\U(\HH)$ be an irreducible unitary representation, $\en\geq 3$.  For any $s > 1/2$, the space $\II_X^s(\HH)$ of $X$-invariant distributions of Sobolev order $s$ has infinite countable dimension and is spanned by the set $\left\{\D^{\bm,\l}\mid \bm \in\floor{M_\l}\right\}$.
\end{theorem}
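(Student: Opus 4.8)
The plan is to work inside a single irreducible unitary representation $\pi : \SO^\circ(\en,1) \to \U(\HH)$ using an explicit model. Following Hirai's description of the unitary dual, I would fix an orthonormal basis of $\HH$ indexed by Gelfand--Cejtlin arrays (as illustrated in Figure~\ref{figarray}), compatible with the restriction to the maximal compact $K = \SO(\en)$ and its chain $\SO(\en) \supset \SO(\en-1) \supset \cdots$. The key input is that the operator $\LLL_X$, once written in this basis, becomes a linear three-term (raising/lowering in the top label) recursion whose coefficients are explicit functions of the array entries and of the representation parameter $\l$. An $X$-invariant distribution $\D$ is a formal series $\D = \sum c_{\bm} e_{\bm}^\ast$ (where $\bm$ ranges over arrays) satisfying the transposed equation $\LLL_X^\ast \D = 0$; this is again a three-term recursion on the coefficients $c_{\bm}$ in the top index, for each fixed ``tail'' of lower labels. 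Solving that recursion, the coefficients in each top-label tower are determined (up to scalar) by an initial datum at the minimal admissible value of the top label; these minimal-value data are parametrized by the finite set $\floor{M_\l}$ appearing in the statement, which is exactly the set of admissible ``bottom rows'' of the arrays. This produces the family $\{\D^{\bm,\l} \mid \bm \in \floor{M_\l}\}$, gives a spanning set, and shows the dimension is the cardinality of $\floor{M_\l}$, which is countably infinite.

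The steps, in order: (1) import Hirai's model and record the matrix coefficients of $\LLL_X$ in the array basis, isolating the three-term structure; (2) pass to the dual and write the invariance equation $\LLL_X^\ast \D = 0$ as a recursion on the coefficient sequence, noting that the equation decouples over the lower labels so that only the top label is ``active''; (3) for a fixed choice of lower labels, solve the recursion: show the solution space is one-dimensional, with a distinguished generator $\D^{\bm,\l}$ determined by its value at the bottom of the tower $\bm \in \floor{M_\l}$, and obtain an asymptotic estimate on the coefficients $c_{\bm}$ as the top label grows; (4) use that coefficient asymptotic to determine the Sobolev order: the growth of $c_{\bm}$ against the Casimir/Laplace eigenvalue weights shows $\D^{\bm,\l} \in W^{-s}(\HH)$ precisely for $s > 1/2$ (this is where the threshold $1/2$ and hence $s_0$-type constants come from); (5) conclude that $\{\D^{\bm,\l} \mid \bm \in \floor{M_\l}\}$ spans $\II_X^s(\HH)$ by showing any invariant distribution is, coefficient-by-coefficient, a (locally finite in each tower) combination of these generators, and that the set is linearly independent and countably infinite.

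The main obstacle I expect is Step~(3) combined with Step~(4): extracting sharp enough asymptotics for the solutions of the three-term recursion in the array parameter. Unlike the $\SO^\circ(2,1)\cong\PSL(2,\RR)$ case, where the recursion is a genuinely one-dimensional difference equation with classically known solutions (as in Flaminio--Forni and Mieczkowski), here the recursion lives on a lattice of arrays and its coefficients depend on the full tuple of labels; controlling the decay/growth of $c_{\bm}$ uniformly over all towers, and matching it precisely to the Sobolev weight so that the cutoff at $s = 1/2$ is exact, is the delicate analytic heart of the argument. A secondary technical point is verifying that the formal series one writes down really defines an element of $W^{-s}(\HH)$ rather than merely a formal invariant functional, i.e.\ checking convergence of $\sum |c_{\bm}|^2 (1+\mu_{\bm})^{-s}$ where $\mu_{\bm}$ is the relevant Laplace eigenvalue; this requires knowing how the Casimir eigenvalue grows across an array tower, which again comes from Hirai's explicit formulas.
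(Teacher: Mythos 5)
Your high-level plan --- work in the Hirai basis, turn $\LLL_X$-invariance into a recursion on the coefficients of a formal distribution, solve with data at the ``floor'' and then estimate coefficient growth against the Laplace weight to pin down the Sobolev threshold --- matches the strategy of the paper. The countable spanning set and the $s>1/2$ cutoff are obtained exactly as you anticipate, via Lemma~\ref{dmlbk} (coefficients of $\D^{\bm,\l}$ are bounded) and Proposition~\ref{soborders} (the weight sum $\sum_{\bk\in M_\l}(1+Q(\bk))^{-s}$ converges for $s>1/2$ because $M_\l$ is a one-ended cylinder, unbounded only in the $e_k$-direction).

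There is, however, a genuine gap in your Step~(2)--(3). You describe the invariance equation as a \emph{three-term} recursion that is ``raising/lowering in the top label'' and claim it ``decouples over the lower labels so that only the top label is active.'' That is only true for $\en=3$ (i.e.\ $k=1$), and the whole technical novelty of this paper over the $\PSL(2,\RR)$ case is precisely that it is false for $\en\geq 4$. The operator $X$ written in the Gelfand--Cejtlin basis shifts \emph{all} $k$ coordinates of $\bm$: from~\eqref{xactioneven}/\eqref{xactionodd} one gets, for each fixed $\l$, a genuine \emph{partial} difference equation on the cylinder $M_\l\subset\ZZ^k$ involving $\bm\pm e_j$ for all $j=1,\dots,k$ (and a diagonal term $C(\bm)$ when $\en$ is odd). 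The label $\l$ does decouple, but the $(k-1)$ ``lower'' components of $\bm$ do not, so the solution at a given $\bm$ is not determined by a single initial datum below it in one tower; it depends on the floor values through an alternating sum over \emph{cocubic paths} (Definitions~\ref{defeven} and~\ref{defodd}) connecting floor points to $\bm$. Without this combinatorial machinery your Step~(3) cannot be carried out: there is no one-dimensional tower in which to ``solve upward,'' and the uniqueness/spanning assertion requires showing that a formal invariant distribution is determined by its values on the finite set $\floor{M_\l}$ (per $\l$; the countable infinity comes from the union over $\l$, not from a single $\floor{M_\l}$ as your write-up suggests). Relatedly, the coefficient bound you need in Step~(4) is not a straightforward asymptotic: the value $\D^{\bm,\l}(\bk,\l)$ is an alternating sum over exponentially many paths, and the paper must run an iterative estimate (Lemma~\ref{dmlbk}, using the coefficient bounds of Lemma~\ref{alphas} on $\a_{e_k\pm e_j}$ and $\a_{2e_k}$) to show it stays $O_{\bn,\nu}(1)$. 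So while you correctly flag ``asymptotics of the recursion'' as the analytic heart, the form of that recursion in your proposal is wrong, and correcting it is exactly where the new work lies.
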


\begin{remark*}
The distributions $\D^{\bm,\l}$ are defined in Section~\ref{formalsection}, Definitions~\ref{defeven} and~\ref{defodd}.
\end{remark*}

Theorem~\ref{two} does not exactly follow from Theorem~\ref{cobgeodflow}.  Rather, their proofs run along parallel representation-theoretic tracks: Theorem~\ref{cobgeodflow} holds for \emph{any} non-trivial irreducible unitary representation of $\SO^\circ (\en,1)$, while Theorem~\ref{two} is proved with only representations that admit $\mathbb{M}(\en)$-invariant elements in mind.  The result is that Theorem~\ref{cobgeodflow}, while holding for all irreducible representations, suffers from a stronger dependence (in the form of the positive constant $C_{\pi,s,t}$) on the parameters that define the representation, and so cannot be easily used to make statements about general (reducible) unitary representations of $\SO^\circ (\en,1)$,\footnote{In particular, Theorem~\ref{cobgeodflow} cannot be directly applied to the problem of solving the coboundary equation for the flow of $X$ on $\SO^\circ(\en,1)/\G$.  More on this in Section~\ref{dependencesection}.} whereas the irreducible representations that we need to consider for proving Theorem~\ref{two} are easier to handle, and yield results with weaker dependence on the representation that allow us to make a more general statement about the $\mathbb{M}(\en)$-invariant elements of the (reducible) left-regular unitary representation of $\SO^\circ(\en,1)$ on $\Lii(\SO^\circ(\en,1)/\G)$.  The reader will therefore notice that many of the intermediate lemmas have a general version, used in the proof of Theorem~\ref{cobgeodflow}, and a stronger version that only holds for the $\mathbb{M}(\en)$-invariant case and is used for Theorem~\ref{two}.

We end this section with a brief discussion of the relationship between Theorems~\ref{two} and~\ref{cobgeodflow} and the Liv\v{s}ic Theorem.  Since integration over a periodic orbit is itself an invariant distribution, some part of Theorem~\ref{two} is implied by the Liv\v{s}ic Theorem.  Namely, if the manifold $\mathcal{M}$ is compact, then the Liv\v{s}ic Theorem implies Theorem~\ref{two}, \emph{without} the estimates on Sobolev norms.  However, since Theorem~\ref{cobgeodflow} takes place in abstract representations, it cannot be directly deduced from the Liv\v{s}ic Theorem.  It is Theorem~\ref{cobgeodflow}, or rather, an $\mathbb{M}(\en)$-invariant version of it and the ingredients that go into its proof, that we rely on in Part~\ref{partii}.

In the other direction, one may ask what contribution, if any, Theorem~\ref{two} makes to the Liv\v{s}ic Theorem. After combining with a result of T.~Foth and S.~Katok~\cite{FK01}, where a bounded Lipschitz version of the Liv\v{s}ic Theorem for manifolds with cusps is proved, we obtain the following statement of the regularity of solutions, which, to our knowledge, is not currently presented anywhere in the literature.

\begin{theorem}[Smooth version of the Liv\v{s}ic Theorem for bounded functions on finite-volume hyperbolic manifolds]\label{livsicthm}
Let $\mathcal{M}$ be a finite-volume (not necessarily compact) hyperbolic manifold, with geodesic flow $\phi^t$ on its unit tangent bundle $S\mathcal{M}$.  Suppose $f$ is a smooth and bounded function on $S\mathcal{M}$ with the property that its integrals over all periodic orbits of $\phi^t$ are $0$.  Then there is a smooth and bounded solution $g \in \Cinf(S\mathcal{M})$ to the coboundary equation \eqref{cobeq}.
\end{theorem}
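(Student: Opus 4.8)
The plan is to combine the bounded Lipschitz Livšic theorem of Foth and Katok~\cite{FK01} with Theorem~\ref{two} in a bootstrapping argument that upgrades regularity from Lipschitz to smooth while preserving boundedness. First I would invoke~\cite{FK01}: since $f$ is smooth (hence Lipschitz) and bounded, and its integrals over all periodic orbits of $\phi^t$ vanish, there is a bounded Lipschitz function $g_0$ on $S\mathcal{M}$ solving the coboundary equation \eqref{cobeq} in the sense of distributions (or along orbits). The goal is then to show that $g_0$ can be replaced by a smooth and bounded solution. The key point is that any two solutions of \eqref{cobeq} differ by a flow-invariant distribution; so the smooth solution provided by Theorem~\ref{two} — which exists because $f$, being a smooth coboundary, lies in the kernel of every invariant distribution of every finite Sobolev order — differs from $g_0$ by a flow-invariant object that is simultaneously Lipschitz (from $g_0$, assuming $g$ from Theorem~\ref{two} is at least continuous) and bounded, hence also bounded.

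More concretely, the steps are as follows. (1) Observe that the hypothesis on periodic integrals, plus the fact that periodic-orbit measures are $\phi^t$-invariant distributions, combined with the stronger conclusion of~\cite{FK01} (existence of a bounded Lipschitz transfer function), shows $f$ is a coboundary; in particular $f$ annihilates every $\phi^t$-invariant distribution of any Sobolev order $s$, since such a distribution applied to $f = Xg_0$ with $g_0$ bounded Lipschitz gives $\D(Xg_0) = -(\LLL_X\D)(g_0) = 0$ after the standard duality pairing and approximation (here one must check $g_0$ lies in the domain, i.e. in $W^{-s}$'s predual, which holds since bounded Lipschitz functions on a finite-volume manifold are in $L^2$ and in low-order Sobolev spaces). (2) Apply Theorem~\ref{two} to obtain, for a fixed sufficiently large $s$ and corresponding $t > 0$, a solution $g \in W^t(L^2(S\mathcal{M}))$ with $Xg = f$; by Sobolev embedding, choosing $s$ large enough makes $g$ as regular as we like, in fact smooth if we let $s \to \infty$ and use elliptic regularity along the flow together with hypoellipticity of the full Laplacian. (3) Compare: $h := g - g_0$ satisfies $Xh = 0$ distributionally, so $h$ is a $\phi^t$-invariant distribution; but $h = g - g_0$ is a genuine function, and $g_0$ is bounded while $g$ is bounded (being smooth on a finite-volume manifold with controlled cusp behavior — this requires an argument), so $h$ is bounded. (4) Conclude that $g = g_0 + h$; then replace $g$ by $g - h$... — wait, we want a single smooth bounded function, so instead argue directly that $g$ itself is already bounded, using that $g_0$ is bounded and $h = g - g_0$ is bounded, hence $g$ is smooth (from step 2 with $s$ large) and bounded.

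The main obstacle is controlling the behavior of the Theorem~\ref{two} solution $g$ in the cusps: Theorem~\ref{two} gives Sobolev control in $W^t(L^2(S\mathcal{M}))$, but membership in a high Sobolev space on a non-compact finite-volume manifold does \emph{not} immediately imply boundedness, because Sobolev embedding $W^t \hookrightarrow L^\infty$ requires either compactness or uniform geometry at infinity — hyperbolic cusps do have bounded geometry in a suitable sense, so the embedding should still hold, but this needs to be verified carefully. The cleanest route may be to avoid claiming $g$ itself is bounded and instead argue: $h = g - g_0$ is flow-invariant and, a priori, a distribution; since $g_0$ is bounded Lipschitz and $g$ is smooth with all Sobolev norms finite, $h$ is a smooth function annihilated by $X$; being flow-invariant and smooth, combined with ergodicity of the geodesic flow and $h \in L^2$ (from $g, g_0 \in L^2$), forces $h$ to be constant almost everywhere, hence constant; therefore $g = g_0 + c$ is bounded (since $g_0$ is) and smooth (since we can take $g$ smooth), which is exactly the desired conclusion. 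This ergodicity shortcut sidesteps the cusp-boundedness issue for $g$ entirely and is the cleanest way to close the argument; the remaining care is just in justifying that $g$ can be taken smooth, which follows from applying Theorem~\ref{two} for all $s$ and using that the solution is unique up to the (now understood) invariant distributions, together with hypoellipticity.
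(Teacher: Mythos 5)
Your proposal follows the paper's intended route — combine the Foth--Katok bounded Lipschitz Liv\v{s}ic theorem with Theorem~\ref{two}, then compare the two solutions — and the ergodicity trick in your step~(4) is exactly the right way to close the comparison: any two $L^{2}$ solutions to the coboundary equation differ by a function $h$ with $Xh=0$, which by ergodicity of the geodesic flow is constant a.e.; so $g_{0}=g-c$ is smooth and bounded, and you are done. (Note, incidentally, that $h=g-g_{0}$ is a priori only Lipschitz, not smooth as you write; this does not affect the argument.) The ergodicity step is a necessary ingredient that the paper leaves implicit, so it is good that you supply it.

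There is, however, a genuine gap in your step~(2), precisely at the point you flag. You want to conclude that $f\in\ker\II_{X}^{s}$ for some $s>1$ so that Theorem~\ref{two} applies, and your justification is the duality computation $\D(f)=\D(Xg_{0})=-(\LLL_{X}\D)(g_{0})=0$, which needs $g_{0}$ to lie in the predual $W^{s}$ of $W^{-s}$. But $g_{0}$ is only bounded Lipschitz, hence $g_{0}\in W^{1}$ and no better; for $s>1$ the pairing $\D(g_{0})$ is not defined, and ``being in low-order Sobolev spaces'' does not fix this. The repair is to invoke the structural results on invariant distributions that the paper proves precisely for this purpose: by Proposition~\ref{soborders} each spanning distribution $\D^{\bm,\l}$ has Sobolev order at most~$1/2$, so $\D^{\bm,\l}\in W^{-1}$ and the pairing with $g_{0}\in W^{1}$ is legitimate, giving $\D^{\bm,\l}(f)=\D^{\bm,\l}(Xg_{0})=0$; and by Theorem~\ref{invdistthm} every $\D\in\II_{X}^{s}$ (for any $s>1/2$) lies in the closed span of $\{\D^{\bm,\l}\}$, so $\D(f)=0$ follows by continuity against $f\in W^{s}$. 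Without citing Theorem~\ref{invdistthm} and Proposition~\ref{soborders} your argument does not establish the hypothesis of Theorem~\ref{two}, so you should add this. With that repair, the rest of the proposal is sound and is essentially the proof the paper has in mind.
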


The Foth--Katok result in question is~\cite[Section 3.1]{FK01} and, in fact, they also prove a ``special'' Liv\v{s}ic Theorem that holds for some non-Anosov homogeneous flows.  In the Anosov case, our Theorem~\ref{two} simply implies that the solutions in~\cite{FK01} are smooth if the initial data are.  

Let us emphasize that there may be other methods for proving regularity of solutions, particularly ways that make explicit use of the hyperbolicity of the geodesic flow and are more accessible and direct from a dynamical point of view than our representation-theoretic approach.  However, it is also worth pointing out that those methods are no longer available for higher-degree cohomology over higher-rank actions, at least not without drastic re-interpretation.  Even the Anosov Closing Lemma (an essential ingredient for the Liv\v{s}ic Theorem \emph{without} regularity) is absent for higher-rank actions.  It is unclear how one should modify these classical ideas to make them work toward proving Katok and Katok's conjecture.  

On the other hand, the representation-theoretic methods we adopt seem very promising in this regard, at least for the types of actions we consider here.  For one, they allow us to consider functions that are not necessarily bounded.  Notice that Theorem~\ref{two} only calls for smooth $\Lii$-functions, which may conceivably diverge at cusps.  Another advantage of taking a representation-theoretic point of view is that we gain a precise description of the invariant distributions, in the form of Theorem~\ref{invdistthm}.  This is an indispensable part of our strategy in Part~\ref{partii} for the proof of Theorem~\ref{one}.

\section{Notation for Part~\ref{parti}}

We put $\mathbb{G}(\en):=\SO^\circ(\en,1)$; the maximal compact subgroup of $\mathbb{G}(N)$ is $\mathbb{K}(\en):=\SO(\en)$, and $\mathbb{M}(\en):=\SO(\en-1)$ is the centralizer of $X$ in $\mathbb{K}(\en)$.  So $\mathbb{K}(\en)\backslash \mathbb{G}(\en)/\G$ is a finite-volume hyperbolic manifold, and $\mathbb{M}(\en)\backslash \mathbb{G}(\en)/\G$ is its unit tangent bundle; $\mathbb{K}(\en)\backslash \mathbb{G}(\en) \cong\Hyp^{\en}$ is its universal cover. When there is no risk of confusion we use the labels $G:=\mathbb{G}(N)$, $K:=\mathbb{K}(N)$, and $M:=\mathbb{M}(N)$ to make the notation more concise.

\subsection{Basis for the Lie algebra $\so(\en,1)$}

Let us fix some elements of $\so(\en,1)$ once and for all.  Let $E_{i,j}$ be the $(\en\times \en)$-matrix with $1$ in the $i^{\textrm{th}}$ row and $j^{\textrm{th}}$ column and $0$'s in every other entry.  Let $e_i$ be the $i^\textrm{th}$ standard basis vector for $\RR^\en$.  Put
\begin{align*}
Y_i &=\begin{pmatrix}\mathbf{0}_\en & e_i \\ e_i^t & 0 \end{pmatrix} &\textrm{for } i=1,\dots,\en
\intertext{and}
\Theta_{i,j} &=\begin{pmatrix}E_{j,i} - E_{i,j} & 0 \\ 0^t & 0 \end{pmatrix} &\textrm{for } 1\leq i < j \leq \en.
\end{align*}
The elements $Y_i$ are semisimple, and we are most interested in $Y_\en :=X$, our generator for the geodesic flow.  The elements $\Theta_{i,j}$ form an orthonormal basis for the subalgebra $\so(\en)$ with respect to the inner product defined by minus half the Killing form: $\langle X,Y \rangle = -\frac{1}{2}\mathrm{tr}(XY)$.


\section{Unitary representations of $\SO^\circ(\en,1)$}

Our sources for the unitary dual of $\SO^\circ(\en,1)$ are papers of Hirai \cite{Hir1,Hir2,Hir3,Hir4} and Thieleker \cite{Thi73, Thi74}.


\subsection{Principal, complementary, and discrete series representations}\label{pced}

Let $\SO^\circ(\en,1)\to\U(\HH)$ be an irreducible unitary representation.  It is determined by a $\floor{\frac{\en-1}{2}}$-tuple $\bn=(n_i)$ of integers satisfying
\begin{align*}
&0 \leq n_1 \leq\dots\leq n_{k-1} ,&\textrm{ if } &\en=2k,\textrm{ and} \\
&\abs{n_1}\leq n_2 \leq \dots \leq n_k,&\textrm{ if } &\en=2k+1,
\end{align*}
and a complex number 
\[
	\nu \in \begin{cases}
			 \displaystyle{i\RR_{\geq 0}} &\bullet\textrm{ for the \emph{principal series} representations}\\ \\
			 \displaystyle{\left\{\left(0,\frac{\en-1}{2}-j\right)\right\}_{j=0,\dots,k-1}} &\bullet\textrm{ for \emph{complementary series} representations}\\ \\
			 \displaystyle{\left\{\frac{\en-1}{2}-j\right\}}_{j=0,\dots,k-1} &\bullet\textrm{ for \emph{end-point} representations}\\ \\
			 \displaystyle{\ZZ_+ - \frac{1}{2}} &\bullet\textrm{ for \emph{discrete series} representations (for $\en$ even)}
			 \end{cases} 
\]
and hence will be denoted $\HH_{{\bn},\nu}$.  We use the notation $\ceil{\bn} := \norm{\bn}_\infty$.

The parameters $\bn, \nu$ determine whether the representation on $\HH_{\bn,\nu}$ is from the so-called \emph{principal series}, \emph{complementary series}, their \emph{end-point representations}, or \emph{discrete series}.

\paragraph{When $\en=2k$:}

\begin{itemize}
\item \textbf{Principal series, $\calP_{\bn,\nu}$}\\ $\nu \in i\RR_{\geq 0}$
\item \textbf{Complementary series, $\calC_{\bn,\nu}^j$}\\ $n_{j-1}=0< n_j$ for some $j = 1,\dots, k$, and $0<\nu<j-1/2$.  (For convenience, we put $n_0 = 0$ and $n_k = \infty$.)
\item \textbf{End-points, $\E_{\bn,\nu}^j$}\\ $n_{j-1}=0< n_{j}$ for some $j = 1,\dots, k$, and $\nu=j-1/2$.  The representation contains $\bm$ for which $m_j = 0$.  (See below.)
\item \textbf{Discrete series, $\D_{\bn,\nu}^{+}$ and $\D_{\bn,\nu}^{-}$}\\ $n_1> 0$ and $p \in \ZZ$ with $0<p\leq n_1$.  Then $\nu=p-1/2$, and the representation contains $\bm$ for which $m_1\geq p$ and, respectively, $-m_1\geq p$.  (See below.)
\end{itemize}

\paragraph{When $\en=2k+1$:}
\begin{itemize}
\item \textbf{Principal series, $\calP_{\bn,\nu}$}\\ $\nu \in i\RR_{\geq 0}$
\item \textbf{Complementary series, $\calC_{\bn,\nu}^j$}\\ $n_j = 0 < n_{j+1}$ for some $j=1,\dots, k$, and $0<\nu<j$.
\item \textbf{End-points, $\E_{\bn,\nu}^j$}\\ $n_j=0 < n_{n+1}$ and $\nu=j$.  The representation contains $\bm$ for which $m_j = 0$.  (See below.)
\end{itemize}

The restricted representation of the maximal compact subgroup $\SO(\en)\subset\SO^\circ(\en,1)$ is a direct sum of $\SO(\en)$-irreducible sub-representations, and they are parametrized by $k$-tuples $\bm = (m_1, \dots, m_k)$ satisfying the inequalities
\begin{align}\label{mcondition}
\begin{cases}
\abs{m_1} \leq n_1 \leq m_2 \leq n_2 \leq\dots\leq m_{k-1}\leq n_{k-1} \leq m_k < \infty,&\textrm{ if }\en=2k,\text{ and} \\
\abs{n_1} \leq m_1 \leq n_2 \leq m_2 \leq\dots\leq m_{k-1}\leq n_{k} \leq m_k < \infty,&\textrm{ if }\en=2k+1.
\end{cases}
\end{align}
We write $\HH_{\bm}$ for the corresponding Hilbert spaces.  Each such representation appears with multiplicity $1$ in the decomposition of $\HH_{{\bn},\nu}$ with respect to the maximal compact subgroup $\SO(\en)$.  We write $\HH_{{\bn},\nu} = \bigoplus \HH_{\bm}$, where the sum is taken over the set $M_{\bn}$ of all $\bm$ that satisfy~\eqref{mcondition}.  Again, $\ceil{\bm}:=\norm{\bm}_\infty = m_k$.



\begin{figure}[tb]
\begin{tikzpicture}[>=stealth, fill=gray!20, xscale=0.8, yscale=1.2, font=\footnotesize]
	\draw (-4,4) node(n1) [draw, ultra thick, fill=gray!80]{$n_1$}  (-2,4) node(n2) [draw, ultra thick, fill=gray!80] {$n_2$}  (0,4) node(nk1) [draw, ultra thick, fill=gray!80] {$n_{k-1}$} 
  		(-4,3) node(m1) [draw, thick, rounded corners, fill=gray!50] {$m_1$}  (-2,3) node(m2) [draw, thick, rounded corners, fill=gray!50] {$m_2$}  (-0,3) node(m3) [draw, thick, rounded corners, fill=gray!50] {$m_3$}  (2,3) node(mk) [draw, thick, rounded corners, fill=gray!50] {$m_k$}
		(-2,2) node(l2k21) [draw, fill, rounded corners] {$\l_1^{(2k-2)}$} (0,2) node(l2k22) [draw, fill, rounded corners] {$\l_2^{(2k-2)}$} (2,2) node(l2k2k1) [draw, fill, rounded corners] {$\l_{k-1}^{(2k-2)}$}
		(-2,1) node(l2k31) [draw, fill, rounded corners] {$\l_1^{(2k-3)}$} (0,1) node(l2k32) [draw, fill, rounded corners] {$\l_2^{(2k-3)}$} (2,1) node(l2k3k1) [draw, fill, rounded corners] {$\l_{k-1}^{(2k-3)}$}
		(0,0) node(l41) [draw, fill, rounded corners] {$\l_1^{(4)}$} (2,0) node(l42) [draw, fill, rounded corners] {$\l_2^{(4)}$}
		(0,-1) node(l31) [draw, fill, rounded corners] {$\l_1^{(3)}$} (2,-1) node(l32) [draw, fill, rounded corners] {$\l_2^{(3)}$}
		(2,-2) node(l21) [draw, fill, rounded corners] {$\l_1^{(2)}$}
		(2,-3) node(l11) [draw, fill, rounded corners] {$\l_1^{(1)}$};
			
	\draw (-5,4) node(n) {$\bn=$};
	\draw (-5,3) node(m) {$\bm=$};
	\draw (2,5) node(ceiln) {$\ceil{\bn}$};
	\draw (4,4) node(ceilm) {$\ceil{\bm}$};
	\draw (4,3) node(ceill) {$\ceil{\l}$};
	\draw[left=5pt] (-4,-0.5) node(l) {$\l=$};
	\draw (-1,-3) node(even) [draw, double, thick] {$\en=2k$};
	
	\draw[snake=brace, thick](-4,-3.5)--(-4,2.4);
	
	\draw[->, color=black, thick] (n1)--(n2);
	\draw[->, color=black, densely dotted, thick] (n2)--(nk1);
	\draw[-to, color=black, snake=snake, thick] (nk1)--(ceiln);
		
	\draw[->, double, color=black, thick] (m1)--(n1);
	\draw[->, color=black, thick] (n1)--(m2);
	\draw[->, color=black, thick] (m2)--(n2);
	\draw[->, color=black, thick] (n2)--(m3);
	\draw[->, color=black, densely dotted, thick] (m3)--(nk1);
	\draw[->, color=black, thick] (nk1)--(mk);
	\draw[-to, color=black, snake=snake, thick] (mk)--(ceilm);
	
	\draw[->, double, color=black, thick] (m1)--(l2k21);
	\draw[->, color=black, thick] (l2k21)--(m2);
	\draw[->, color=black, thick] (m2)--(l2k22);
	\draw[->, color=black, thick] (l2k22)--(m3);
	\draw[->, color=black, densely dotted, thick] (m3)--(l2k2k1);
	\draw[->, color=black, thick] (l2k2k1)--(mk);
	
	\draw[->, double, color=black, thick] (l2k31)--(l2k21);
	\draw[->, color=black, thick] (l2k21)--(l2k32);
	\draw[->, color=black, thick] (l2k32)--(l2k22);
	\draw[->, color=black, densely dotted, thick] (l2k22)--(l2k3k1);
	\draw[->, color=black, thick] (l2k3k1)--(l2k2k1);
	\draw[-to, color=black, snake=snake, thick] (l2k2k1)--(ceill);
	
	\draw[->, double, color=black, densely dotted, thick] (l2k31)--(l41);
	\draw[->, color=black, densely dotted, thick] (l41)--(l2k32);
	\draw[->, color=black, densely dotted, thick] (l2k32)--(l42);
	\draw[->, color=black, densely dotted, thick] (l42)--(l2k3k1);

	\draw[->, double, color=black, thick] (l31)--(l41);
	\draw[->, color=black, thick] (l41)--(l32);
	\draw[->, color=black, thick] (l32)--(l42);

	\draw[->, double, color=black, thick] (l31)--(l21);
	\draw[->, color=black, thick] (l21)--(l32);

	\draw[->, double, color=black, thick] (l11)--(l21);
\end{tikzpicture}
\quad
\begin{tikzpicture}[>=stealth, fill=gray!20, xscale=0.8, yscale=1.2, font=\footnotesize]
	\draw (-4,4) node(n1) [draw, ultra thick, fill=gray!80]{$n_1$}  (-2,4) node(n2) [draw, ultra thick, fill=gray!80] {$n_2$}  (0,4) node(nk) [draw, ultra thick, fill=gray!80] {$n_k$}
  		(-2,3) node(m1) [draw, thick, rounded corners, fill=gray!50] {$m_1$}  (-0,3) node(m2) [draw, thick, rounded corners, fill=gray!50] {$m_2$}  (2,3) node(mk) [draw, thick, rounded corners, fill=gray!50] {$m_k$}
		(-2,2) node(l2k11) [draw, fill, rounded corners] {$\l_1^{(2k-1)}$} (0,2) node(l2k12) [draw, fill, rounded corners] {$\l_2^{(2k-1)}$} (2,2) node(l2k1k) [draw, fill, rounded corners] {$\l_{k}^{(2k-1)}$}
		(0,1) node(l41) [draw, fill, rounded corners] {$\l_1^{(4)}$} (2,1) node(l42) [draw, fill, rounded corners] {$\l_{2}^{(4)}$}
		(0,0) node(l31) [draw, fill, rounded corners] {$\l_1^{(3)}$} (2,0) node(l32) [draw, fill, rounded corners] {$\l_2^{(3)}$}
		(2,-1) node(l21) [draw, fill, rounded corners] {$\l_1^{(2)}$}
		(2,-2) node(l11) [draw, fill, rounded corners] {$\l_1^{(1)}$};
	
	\draw (-5,4) node(n) {$\bn=$};
	\draw (-4,3) node(m) {$\bm=$};
	\draw (2,5) node(ceiln) {$\ceil{\bn}$};
	\draw (4,4) node(ceilm) {$\ceil{\bm}$};
	\draw (4,3) node(ceill) {$\ceil{\l}$};
	\draw[left=5pt] (-4,0) node(l) {$\l=$};
	\draw (-1,-2) node(phantom) [draw, double, thick] {$\en=2k+1$};
	
	\draw[snake=brace, thick](-4,-2.5)--(-4,2.4);
	
	\draw[->, double, color=black, thick] (n1)--(n2);
	\draw[->, color=black, densely dotted, thick] (n2)--(nk);
	\draw[-to, color=black, snake=snake, thick] (nk)--(ceiln);
			
	\draw[->, double, color=black, thick] (n1)--(m1);
	\draw[->, color=black, thick] (m1)--(n2);
	\draw[->, color=black, thick] (n2)--(m2);
	\draw[->, color=black, densely dotted, thick] (m2)--(nk);
	\draw[->, color=black, thick] (nk)--(mk);
	\draw[-to, color=black, snake=snake, thick] (mk)--(ceilm);

	\draw[->, double, color=black, thick] (l2k11)--(m1);
	\draw[->, color=black, thick] (m1)--(l2k12);
	\draw[->, color=black, thick] (l2k12)--(m2);
	\draw[->, color=black, densely dotted, thick] (m2)--(l2k1k);
	\draw[->, color=black, thick] (l2k1k)--(mk);
	\draw[-to, color=black, snake=snake, thick] (l2k1k)--(ceill);

	\draw[->, double, color=black, densely dotted, thick] (l2k11)--(l41);
	\draw[->, color=black, densely dotted, thick] (l41)--(l2k12);
	\draw[->, color=black, densely dotted, thick] (l2k12)--(l42);
	\draw[->, color=black, densely dotted, thick] (l42)--(l2k1k);

	\draw[->, double, color=black, thick] (l31)--(l41);
	\draw[->, color=black, thick] (l41)--(l32);
	\draw[->, color=black, thick] (l32)--(l42);

	\draw[->, double, color=black, thick] (l31)--(l21);
	\draw[->, color=black, thick] (l21)--(l32);

	\draw[->, double, color=black, thick] (l11)--(l21);
\end{tikzpicture}
\caption{{\footnotesize\textbf{Gelfand--Cejtlin arrays}.  These illustrations show the relationships between the $k$-tuples $\bn$ and $\bm$, and the arrays $\l = \{\l_j^{(i)}\}$ that parametrize the orthonormal basis $\{u(\l)\}$ for the representation $\HH_{\bm}$ of $\SO(\en)$.  On the left we have $\en=2k$ and on the right we have $\en=2k+1$.  The relation $a \leq b$ is denoted with a straight arrow $a \longrightarrow b$, and $\abs{a} \leq b$ is denoted with a double arrow $a \implies b$.  The heights $\ceil{\bn}$, $\ceil{\bm}$, and $\ceil{\l}$ are also indicated.}}
\label{figarray}
\end{figure}


\subsection{Bases for $\HH_{\bm}$ and $\HH_{{\bn},\nu}$ when $\en=2k$}\label{basiseven}
There is an orthonormal basis $\{u(\l)\}$ for $\HH_{\bm}$ parametrized by the set $\L_{\bm}$ of arrays of integers $\l = \{\l_j^{(i)}\mid i = 1,\dots, 2k-2, \textrm{ and } j=1,\dots \ceil{i/2}\}$ satisfying the inequalities indicated in Figure~\ref{figarray}.  These are called Gelfand--Cejtlin arrays; they were introduced in~\cite{GC50}.  We denote $\ceil{\l}:=\l_{k-1}^{(2k-2)}$.  This is just the maximum norm of $\l$ when viewed as an element of some $\ZZ^q$.

One then gets an orthonormal basis $\{u(\bm,\l)\}$ for $\HH_{{\bn},\nu}$ by letting $(\bm,\l)$ range over all the values allowed by~\eqref{mcondition} and the inequalities in Figure~\ref{figarray}.  The action of $X$ on elements of this basis is given by the formula
\begin{align}\label{xactioneven}
	Xu(\bm,\l) = \sum_{j=1}^{k} A_{j}^{-}(\bm,\l)u(\bm-e_j, \l) + \sum_{j=1}^{k} A_{j}^{+}(\bm,\l)u(\bm+e_j, \l),
\end{align}
where $e_j$ is the $j^{\textrm{th}}$ vector in the standard basis for $\ZZ^k$.  The coefficients $A_j^\pm$ satisfy $A_j^- (\bm,\l) = A_j^+ (\bm-e_j, \l)$ and are defined by 
\begin{align}\label{defA}
	A_j^+ (\bm,\l) &= \frac{1}{2}\sqrt{\frac{\prod_{r=1}^{k-1}\left[(x_r - \frac{1}{2})^2-(y_j + \frac{1}{2})^2\right]\left[z_r^2-(y_j + \frac{1}{2})^2\right]\cdot\left[\nu^2-(y_j + \frac{1}{2})^2\right]}{\prod_{r\neq j}(y_r^2 - y_j^2)[y_r^2 - (y_j + 1)^2]}}
\end{align}
where $x_r := \l_r^{(2k-2)} + r$, $y_j := m_j + (j-1)$, and $z_r := n_r + r -\frac{1}{2}$ when $m_j \neq m_{j+1}$, and $A_j^+ (\bm,\l)=0$ otherwise.

\begin{remark*}
We are really only interested in the asymptotic behavior of the coefficients~\eqref{defA}.  For example, $\Abs{A_j^\pm (\bm,\l)}$ decays to $0$ as $\ceil{\bm}\to\infty$, when $j=1,\dots, k-1$.  On the other hand, Lemma~\ref{coeffseven} shows that $\Abs{A_k (\bm,\l)}$ diverges polynomially.  The specific asymptotic properties we will need are proved through computations in Appendices~\ref{Akbound} and~\ref{abound}.
\end{remark*}

\begin{remark*}
In view of Proposition~\ref{minvelements}, many of the calculations involving the bases $\{u(\bm,\l)\}$ will be accompanied by corresponding calculations where $\l\equiv 0$.  These of course turn out to be simpler than their more general counterparts, and yield stronger results for the cases to which they apply, namely, representations that admit $\mathbb{M}(\en)$-invariant elements.
\end{remark*}


\subsection{Bases for $\HH_{\bm}$ and $\HH_{{\bn},\nu}$ when $\en=2k+1$}\label{basisodd}

The orthonormal basis $\{u(\l)\}$ for $\HH_{\bm}$ is parametrized by the set $\L_{\bm}$ of arrays of integers $\l=\{\l_j^{(i)}\mid i=1,\dots,2k-1,\textrm{ and }j=1,\dots,\ceil{i/2}\}$ of the form illustrated in Figure~\ref{figarray}.  As above, one gets an orthonormal basis $\{u(\bm,\l)\}$ for $\HH_{{\bn},\nu}$ by letting $(\bm,\l)$ range over all the possible values.  The action of $X$ on elements of this basis is given by the formula
\begin{align}\label{xactionodd}
	Xu(\bm,\l) = \sum_{j=1}^{k} B_{j}^{-}(\bm,\l)u(\bm-e_j,\l) + C(\bm,\l)u(\l) + \sum_{j=1}^{k} B_{j}^{+}(\bm,\l)u(\bm+e_j,\l),
\end{align}
where the coefficients $B_j^\pm$ satisfy $B_j^- (\bm, \l) = B_j^+ (\bm-e_j, \l)$ and are defined by 
\begin{align}\label{defB}
	B_j^+ (\bm,\l) &= \sqrt{\frac{\prod_{r=1}^{k}(x_{r}^2 - y_{j}^2)\prod_{r=1}^{k}(z_r^2 - y_{j}^2)\cdot (\nu^2 - y_{j}^2)}{y_{j}^2(4y_{j}^2-1)\prod_{r\neq j}(y_{r}^2-y_{j}^2)[(y_{r}-1)^2-y_{j}^2]}}
\end{align}
and
\begin{align}\label{defC}
	C(\bm,\l) &= \frac{\left(\prod_{r=1}^{k}x_{r}\right)\left(\prod_{r=1}^{k}z_r\right)  \nu}{\prod_{r=1}^{k}y_{r}(y_{r}-1)}
\end{align}
with $x_{r}:= \l_r^{(2k-1)} + (r-1)$, $y_{j}:= m_{j} + j$, and $z_r := n_r + r -1$.

As in the case of even $\en$, we only need the asymptotic behavior of these coefficients.  For this, we will use Lemmas~\ref{coeffsodd} and~\ref{betas}, proved in Appendices~\ref{Bkbound} and~\ref{bbound}.



\section{Partial difference equations and formal solutions to co\-boundary equations}

On one hand it is natural to write an element $f \in \HH_{{\bn},\nu}$ as 
\begin{align}\label{summl}
	f &= \sum_{\bm}\sum_{\l \in \L_{\bm}} f(\bm,\l) u(\bm,\l)
\end{align}
where $\L_{\bm}$ denotes the set of diagrams $\l$ that can appear ``under'' $\bm$.  Essentially, the first sum is over the different representations $\HH_{\bm}$ of the maximal compact subgroup that appear as sub-representations of $\HH_{{\bn},\nu}$, and the second sum is over the basis vectors in each $\HH_{\bm}$.  

However, it will be more convenient for us to change the order of summation.  Let $\L = \cup_{\bm}\L_{\bm}$ be the set of all diagrams $\l$ that are possible in the representation $\HH_{{\bn},\nu}$.   For $\l \in \L$, let $M_\l = \{\bm\mid\l\in\L_{\bm} \}$ be the set of $\bm\in M_{\bn}$ that ``allow'' $\l$.  We can then re-write~\eqref{summl} as
\begin{align}\label{sumlm}
	f &= \sum_{\l \in \L} \sum_{\bm\in M_\l} f(\bm,\l) u(\bm,\l).
\end{align}
The advantage of~\eqref{sumlm} is that for a fixed $\l \in \L$, the coboundary equation $Xg=f$ becomes a partial difference equation (PdE) on the one-ended rectangular cylinder $M_\l \subset M_{\bn}\subset \ZZ^k$; the expressions~\eqref{xactioneven} and~\eqref{xactionodd} show that $X$ acts by changing the $k$-tuple $\bm$, while leaving the diagram $\l$ untouched.

\subsection{The PdE}

Let us fix $\l \in \L$ and drop it temporarily from the notation, writing $f(\bm):=f(\bm,\l)$, and so on.  Then the coboundary equation $Xg=f$ becomes
\begin{align*}
	&\sum_{\bm \in M_{\l}} f(\bm)u(\bm) = X\left(\sum_{\bm \in M_{\l}} g(\bm)u(\bm)\right) \\
		&= \begin{cases} \displaystyle{\sum_{\bm \in M_{\l}} g(\bm)\left[\sum_{j=1}^{k} A_{j}^{+}(\bm)u(\bm+e_j) + \sum_{j=1}^{k} A_{j}^{-}(\bm)u(\bm-e_j)\right]}\\
			\displaystyle{\sum_{\bm\in M_\l} g(\bm)\left[\sum_{j=1}^{k}B_j^+ (\bm) u(\bm+e_j) + C(\bm)u(\bm) + \sum_{j=1}^{k}B_j^- (\bm) u(\bm-e_j)\right]}
		\end{cases}
\end{align*}
by~\eqref{xactioneven} and~\eqref{xactionodd}.  (The first line corresponds to the case $\en=2k$, and the second corresponds to $\en=2k+1$.)  Collecting terms, and recalling that $A_j^- (\bm) = A_j^+ (\bm-e_j)$ and $B_j^- (\bm) = B_j^+ (\bm-e_j)$, we have
\begin{align}\label{pde}
\begin{cases}
	\displaystyle{f(\bm) = \sum_{j=1}^{k}\left[A_j^+ (\bm) g(\bm+e_j) + A_j^- (\bm) g(\bm-e_j)\right]},&\textrm{ if }\en=2k,\textrm{ and} \\
	\displaystyle{f(\bm) = \sum_{j=1}^{k}\left[B_j^+ (\bm) g(\bm+e_j) + B_j^- (\bm) g(\bm-e_j)\right] + C(\bm)g(\bm)},&\textrm{ if }\en=2k+1.
\end{cases}
\end{align}
Expression~\eqref{pde} is the partial difference equation corresponding to $Xg=f$ with a fixed $\l$.



\subsection{Formal solutions}\label{formalsection}

Since the PdEs corresponding to the coboundary equation $Xg=f$ in irreducible representations of $\SO^\circ(\en,1)$ are different depending on whether $\en$ is even or odd, we must treat the two cases separately.  However, we have made an effort to choose notation that allows the solutions to be written almost identically in the two situations.

\subsubsection{Formal solution when $\en=2k$}
\begin{definition}\label{defeven}
We set the following definitions: 
\begin{itemize}
\item[(\emph{i})] For $\bm_1,\bm_2 \in M_\l$, we write 
\[
	\bm_1 \leq \bm_2 \iff \abs{m_1^{(2)}-m_1^{(1)}}+\dots+\abs{m_{k-1}^{(2)}-m_{k-1}^{(1)}} \leq m_k^{(2)} - m_k^{(1)}.
\]
More intuitively, one may say that $\bm_1 \leq \bm_2$ means that $\bm_2$ lies in the conical region ``above" $\bm_1$.  We occasionally write $\bm_2 \geq \bm_1$, meaning exactly the same.  We may also write $\bm_1 < \bm_2$, meaning that the above inequality holds strictly.  (See Figure~\ref{fig1}.)
\item[(\emph{ii})] For $\bm,{\bk} \in M_\l$, let $\calP(\bm,{\bk})$ be the set of paths 
\[
	p=\left\{\bm=p(1),p(2),\dots,p(l+1)={\bk} \right\} \subset M_\l,\quad l = \abs{p} = \textrm{length of path}
\]
beginning at $\bm$ and ending at ${\bk}$ such that for every $s=1,\dots,l$, 
\[
	h(s):= p(s+1) - p(s) = \begin{cases} e_k \pm e_j  & \textrm{with }j=1,\dots,k-1, \textrm{ or} \\  2e_k.\end{cases}
\]
We will call these \emph{cocubic paths} because each step is from one vertex of a cocube to another.  We emphasize that the set $\calP(\bm,{\bk})$ may be empty.  (See Figure~\ref{fig1}.)
\item[(\emph{iii})] For $p \in \calP(\bm,{\bk})$ and $s = 1,\dots,\abs{p}$, let
\[
	\a_{h(s)}(p(s)) = \begin{cases} \frac{A_j^{\pm}(p(s))}{A_k^-(p(s+1))} &\textrm{if}\quad h(s)=e_k \pm e_j\\
										\frac{A_k^{+}(p(s))}{A_k^-(p(s+1))} &\textrm{if}\quad h(s) = 2e_k.
						\end{cases}
\]
\item[(\emph{iv})] For $\bm,{\bk} \in M_\l$, let 
\begin{align*}
	\D^{\bm}({\bk}) := \D^{\bm,\l}({\bk},\l) &= \sum_{p \in \calP(\bm,{\bk})} (-1)^{\abs{p}} \prod_{s=1}^{\abs{p}}\a_{h(s)}(p(s)).
\end{align*}
Empty sums are by convention equal to $0$.  Similarly, empty products are $1$.  Therefore, $\D^{\bm}({\bk})=0$ whenever $\calP(\bm,{\bk})$ is the empty set (say, if $\bm\nleq{\bk}$), and $\D^{\bm}(\bm)=1$, where we interpret $\calP(\bm,\bm)$ as the singleton set, consisting of the path $\{\bm=p(1)\}$ of length $0$.
\end{itemize}
\end{definition}


\begin{figure}[tb]
\center{\begin{tikzpicture}[>=latex]
  \draw[help lines, densely dotted] (-2,-1) grid (4,7.5);
  \draw[->, shorten >=2pt, shorten <=2pt, ultra thick] (0,0)--(-1,1);
  \draw[->, shorten >=2pt, shorten <=2pt, ultra thick] (-1,1)--(0,2);
  \draw[->, shorten >=2pt, shorten <=2pt, ultra thick] (0,2)--(0,4);
  \draw[->, shorten >=2pt, shorten <=2pt, ultra thick] (0,4)--(1,5);
  \draw[->, shorten >=3pt, shorten <=2pt,  ultra thick] (1,5)--(2,6);
  \draw[->, shorten >=2pt, shorten <=2pt, ultra thick, dashed] (0,0)--(1,1);
  \draw[->, shorten >=2pt, shorten <=2pt, ultra thick, dashed] (1,1)--(1,3);
  \draw[->, shorten >=2pt, shorten <=2pt, ultra thick, dashed] (1,3)--(2,4);
  \draw[->, shorten >=2pt, shorten <=2pt, ultra thick, dashed] (2,4)--(3,5);
  \draw[->, shorten >=3pt, shorten <=2pt, ultra thick, dashed] (3,5)--(2,6);
  \draw[line width=5pt, cap=round] (-2,-1)--(4,-1);
  \draw[->, thick] (-2,-1)--(-2,7.5);
  \draw[->, thick] (4,-1)--(4,7.5);
  \draw[below left] (0,0) node(m){$\bm$};
  \draw[above right] (2,6) node(k){$\bk$};
  \fill (0,0) circle (0.1cm) (2,6) circle (0.1cm);
  \fill (-1,1) circle (0.05cm) (1,1) circle (0.05cm)
  		(-2,2) circle (0.05cm) (0,2) circle (0.05cm) (2,2) circle (0.05cm)
		(-1,3) circle (0.05cm) (1,3) circle (0.05cm)
		(3,3) circle (0.05cm) 
		(-2,4) circle (0.05cm) (0,4) circle (0.05cm) (2,4) circle (0.05cm) (4,4) circle (0.05cm)
		(-1,5) circle (0.05cm) (1,5) circle (0.05cm) (3,5) circle (0.05cm)
		(-2,6) circle (0.05cm) (0,6) circle (0.05cm) (2,6) circle (0.05cm) (4,6) circle (0.05cm)
		(-1,7) circle (0.05cm) (1,7) circle (0.05cm) (3,7) circle (0.05cm);
\end{tikzpicture}
\qquad
\begin{tikzpicture}[>=latex]
  \draw[help lines, densely dotted] (-2,-1) grid (4,7.5);
  \draw[->, shorten >=2pt, shorten <=2pt, ultra thick] (0,0)--(0,1);
  \draw[->, shorten >=2pt, shorten <=2pt, ultra thick] (0,1)--(-1,2);
  \draw[->, shorten >=2pt, shorten <=2pt, ultra thick] (-1,2)--(0,3);
  \draw[->, shorten >=2pt, shorten <=2pt, ultra thick] (0,3)--(1,4);
  \draw[->, shorten >=2pt, shorten <=2pt, ultra thick] (1,4)--(0,5);
  \draw[->, shorten >=3pt, shorten <=2pt,  ultra thick] (0,5)--(1,6);
  \draw[->, shorten >=2pt, shorten <=2pt, ultra thick, dashed] (0,0)--(1,1);
  \draw[->, shorten >=2pt, shorten <=2pt, ultra thick, dashed] (1,1)--(1,2);
  \draw[->, shorten >=2pt, shorten <=2pt, ultra thick, dashed] (1,2)--(2,3);
  \draw[shorten >=3pt, shorten <=2pt, ultra thick, dashed] (2,3)--(2,4);
  \draw[->, shorten >=2pt, shorten <=2pt, ultra thick, dashed] (2,4)--(2,5);
  \draw[->, shorten >=3pt, shorten <=2pt, ultra thick, dashed] (2,5)--(1,6);
  \draw[line width=5pt, cap=round] (-2,-1)--(4,-1);
  \draw[->, thick] (-2,-1)--(-2,7.5);
  \draw[->, thick] (4,-1)--(4,7.5);
  \draw[below left] (0,0) node(m){$\bm$};
  \draw[above right] (1,6) node(k){$\bk$};
  \fill (0,0) circle (0.1cm) (1,6) circle (0.1cm);
  \fill (-1,1) circle (0.05cm) (0,1) circle (0.05cm) (1,1) circle (0.05cm)
  		(-2,2) circle (0.05cm) (-1,2) circle (0.05cm) (0,2) circle (0.05cm) (1,2) circle (0.05cm) (2,2) circle (0.05cm)
		(-2,3) circle (0.05cm) (-1,3) circle (0.05cm) (0,3) circle (0.05cm) (1,3) circle (0.05cm)
		(2,3) circle (0.05cm) (3,3) circle (0.05cm) 
		(-2,4) circle (0.05cm) (-1,4) circle (0.05cm) (0,4) circle (0.05cm) (1,4) circle (0.05cm) (2,4) circle (0.05cm) (3,4) circle (0.05cm) (4,4) circle (0.05cm)
		(-2,5) circle (0.05cm) (-1,5) circle (0.05cm) (0,5) circle (0.05cm) (1,5) circle (0.05cm) (2,5) circle (0.05cm) (3,5) circle (0.05cm) (4,5) circle (0.05cm)
		(-2,6) circle (0.05cm) (-1,6) circle (0.05cm) (0,6) circle (0.05cm) (1,6) circle (0.05cm) (2,6) circle (0.05cm) (3,6) circle (0.05cm) (4,6) circle (0.05cm)
		(-2,7) circle (0.05cm) (-1,7) circle (0.05cm) (0,7) circle (0.05cm) (1,7) circle (0.05cm) (2,7) circle (0.05cm) (3,7) circle (0.05cm) (4,7) circle (0.05cm);
\end{tikzpicture}}
\caption{{\footnotesize\textbf{Illustrations of $M_\l$}. \emph{On the left}: $\en=4$; two paths in $\calP(\bm,\bk)$ are drawn, both of length $5$; points accessible from $\bm$ through cocubic paths, that is, points with $\calP(\bm,\bullet)\neq\emptyset$, are indicated.  \emph{On the right}: $\en=5$; two paths in $\widetilde\calP(\bm,\bk)$ are drawn, one of length $6$ and the other of length $5$; points with $\widetilde\calP(\bm,\bullet)\neq\emptyset$ are indicated, and in this case, they coincide with points $\{\bullet \geq \bm\}$.  In both pictures, the floor $\floor{M_{\l}}$ is drawn as a thick horizontal line segment.}}
\label{fig1}
\end{figure}


We now find a formal solution to the partial difference equation~\eqref{pde} by na{\"i}vely solving ``upward" in the $e_k$-direction.  That is, we re-write~\eqref{pde} as
\[
	g(\bm-e_k)= \frac{1}{A_k^- (\bm)}\left[f(\bm) - \sum_{j=1}^{k} A_j^+ (\bm) g(\bm+e_j) + \sum_{j=1}^{k-1}A_j^- (\bm) g(\bm-e_j)\right],
\]
and use this expression successively to write $g(\bm-e_k)$ in terms of all the $f({\bk})$ with $\calP(\bm,{\bk})$ non-empty, arriving at
\begin{align}\label{fseven}
	g(\bm-e_k) &= \sum_{{\bk}\in M_\l}\frac{f({\bk})}{A_k^-(\bm)} \D^{\bm}({\bk}).
\end{align}
\begin{remark*}
Notice that we can take the sum over all ${\bk} \in M_\l$, rather than just ${\bk}$'s connected to $\bm$ by a cocubic path, because $\D^{\bm}({\bk})=0$ for points ${\bk}$ with no such paths.
\end{remark*}

Equation~\eqref{fseven} defines a formal solution to the PdE~\eqref{pde} at all points in $\bm \in M_\l$ with $\ceil{\bm} \geq m_\l +1$, where $m_\l = \min\{\ceil{\bm} \mid \bm \in M_\l\} = \max\{\ceil{\bn},\ceil{\l}\}$ is the height of the ``floor'' of the rectangular cylinder $M_\l$, as determined by $\bn$ and $\l$.  We denote this floor by $\floor{M_\l} = \{\bm\in M_\l \mid \ceil{\bm} = m_\l\}$.  (See Figure~\ref{fig1}.)

\begin{lemma}\label{lemmafixedl}
As defined in~\eqref{fseven}, $g$ constitutes a formal solution to the partial difference equation~\eqref{pde} if and only if 
\begin{align}\label{conditioneven}
	\D^{\bm}(f):= \sum_{{\bk}\in M_\l}f({\bk})\D^{\bm}({\bk}) = 0
\end{align}
for all $\bm\in \floor{M_\l}$.
\end{lemma}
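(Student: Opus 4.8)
The plan is to treat \eqref{fseven} as a purely formal (algebraic) construction --- all the series below are formal, which is all the lemma requires --- and to check the partial difference equation \eqref{pde} point by point, distinguishing the points of $M_\l$ strictly above the floor from the floor $\floor{M_\l}$ itself. First I would record two preliminary facts. \emph{(i)} The function $g$ of \eqref{fseven} is in fact defined at \emph{every} $\bm\in M_\l$: for any $\bq\in M_\l$ one has $\bq+e_k\in M_\l$ (adding $e_k$ only raises the top coordinate $m_k=\ceil{\bq}$, which preserves \eqref{mcondition} and the Gelfand--Cejtlin inequalities), so $g(\bq)=A_k^-(\bq+e_k)^{-1}\D^{\bq+e_k}(f)$ makes sense provided $A_k^-$ is nonzero there. \emph{(ii)} From the explicit formula \eqref{defA} one checks that $A_k^-(\bq')=A_k^+(\bq'-e_k)$ vanishes \emph{exactly} on $\floor{M_\l}$, i.e. when $\ceil{\bq'}=m_\l=\max\{\ceil{\bn},\ceil{\l}\}$, and is nonzero when $\ceil{\bq'}\ge m_\l+1$. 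Hence $g$ is well defined everywhere on $M_\l$, and whenever $\ceil{\bq}\ge m_\l+1$ one also has $\bq-e_k\in M_\l$ and $A_k^-(\bq)\,g(\bq-e_k)=\D^{\bq}(f)$.

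The heart of the argument is a recursion for the numbers $\D^{\bm}(\bk)$ of Definition~\ref{defeven}. Any cocubic path $p\in\calP(\bm,\bk)$ of positive length is its first step $h=h(1)$ followed by a cocubic path in $\calP(\bm+h,\bk)$, and by Definition~\ref{defeven}(iii) that first step contributes the weight $\a_h(\bm)$; grouping the paths according to $h$ therefore gives $\D^{\bm}(\bk)=-\sum_h\a_h(\bm)\,\D^{\bm+h}(\bk)$ whenever $\bm\ne\bk$, the sum running over the admissible steps $h\in\{e_k\pm e_j:1\le j\le k-1\}\cup\{2e_k\}$, while $\D^{\bm}(\bm)=1$ and $\D^{\bm+h}(\bm)=0$ since $\ceil{\bm+h}>\ceil{\bm}$. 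Multiplying by $f(\bk)$ and summing over $\bk\in M_\l$ (the finitely many steps $h$ may be pulled outside the sum) yields the single formal identity
\begin{equation*}
	\D^{\bm}(f)=f(\bm)-\sum_h\a_h(\bm)\,\D^{\bm+h}(f),\qquad\bm\in M_\l,\tag{$\star$}
\end{equation*}
in the notation $\D^{\bm}(f)=\sum_{\bk}f(\bk)\D^{\bm}(\bk)$ of \eqref{conditioneven}.

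Next I would substitute \eqref{fseven} into the right-hand side of \eqref{pde} at an arbitrary $\bq\in M_\l$, using the relations $A_j^-(\bm)=A_j^+(\bm-e_j)$ (so in particular $A_k^+(\bq)=A_k^-(\bq+e_k)$) together with Definition~\ref{defeven}(iii). A term-by-term check then gives $A_j^+(\bq)\,g(\bq+e_j)=\a_{e_k+e_j}(\bq)\,\D^{\bq+e_k+e_j}(f)$ and $A_j^-(\bq)\,g(\bq-e_j)=\a_{e_k-e_j}(\bq)\,\D^{\bq+e_k-e_j}(f)$ for $1\le j\le k-1$, and $A_k^+(\bq)\,g(\bq+e_k)=\a_{2e_k}(\bq)\,\D^{\bq+2e_k}(f)$; the one remaining summand is $A_k^-(\bq)\,g(\bq-e_k)$, which by fact (ii) equals $\D^{\bq}(f)$ when $\ceil{\bq}\ge m_\l+1$ and equals $0$ when $\bq\in\floor{M_\l}$. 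Since $\{e_k\pm e_j\}_{j<k}\cup\{2e_k\}$ is precisely the admissible step set, the right-hand side of \eqref{pde} at $\bq$ equals $\sum_h\a_h(\bq)\,\D^{\bq+h}(f)$, plus $\D^{\bq}(f)$ if $\ceil{\bq}\ge m_\l+1$ and plus $0$ if $\bq\in\floor{M_\l}$. By $(\star)$ this is $f(\bq)$ for $\ceil{\bq}\ge m_\l+1$, so \eqref{pde} holds automatically above the floor (recovering the assertion made just before the lemma), whereas for $\bq\in\floor{M_\l}$ it equals $f(\bq)-\D^{\bq}(f)$, so \eqref{pde} holds at $\bq$ if and only if $\D^{\bq}(f)=0$. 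Thus $g$ solves \eqref{pde} on all of $M_\l$ precisely when $\D^{\bm}(f)=0$ for every $\bm\in\floor{M_\l}$, which is \eqref{conditioneven}. The odd case $\en=2k+1$ I would run identically, starting from \eqref{xactionodd}, \eqref{defB} and \eqref{defC}: the diagonal coefficient $C(\bm)$ merely adjoins a ``level'' step $e_k$ to the admissible set (this is the set $\widetilde\calP$ of Definition~\ref{defodd}), the decomposition-by-first-step argument gives the analogue of $(\star)$ verbatim, and $B_k^-(\bq)\,g(\bq-e_k)$ plays the role of $A_k^-(\bq)\,g(\bq-e_k)$.

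I expect the one genuinely delicate point to be fact (ii): that $A_k^-$ (and, in the odd case, $B_k^-$) vanishes \emph{exactly} on $\floor{M_\l}$ and nowhere else on $M_\l$. This is what makes the extra boundary term disappear on $\floor{M_\l}$ and only there, and it must be extracted from the difference-of-squares factors in \eqref{defA} (resp. \eqref{defB}), which degenerate precisely when $\ceil{\bm}$ drops below $\max\{\ceil{\bn},\ceil{\l}\}$ --- the two cases $\ceil{\bm}=\ceil{\bn}$ and $\ceil{\bm}=\ceil{\l}$ being checked separately. Everything else --- the path recursion $(\star)$ and the matching of the summands of \eqref{pde} against the $\a_h\,\D^{\bm+h}(f)$ --- is routine bookkeeping.
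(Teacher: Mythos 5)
Your proposal is correct and takes the same route the paper implicitly does; the paper's published proof is a two-line remark ("by construction $g$ solves \eqref{pde} above the floor; routine algebraic manipulation handles $\floor{M_\l}$"), and your argument is precisely the expansion of both claims. The one-step path-decomposition identity $(\star)$ is the engine that makes both halves of the paper's assertion transparent, and your fact (ii) (that $A_k^-$, resp. $B_k^-$, vanishes exactly on $\floor{M_\l}$, coming from the factors $(x_{k-1}-\tfrac{1}{2})^2-(y_k+\tfrac{1}{2})^2$ and $z_{k-1}^2-(y_k+\tfrac{1}{2})^2$ degenerating when $\ceil{\bm}$ reaches $\max\{\ceil{\bn},\ceil{\l}\}$) is exactly what the paper silently uses to discard the term $A_k^-(\bq)g(\bq-e_k)$ at the floor. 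The only tiny thing worth noting is that what the paper displays as its upward-solving step has a sign slip ($+\sum_{j<k}A_j^-$ should be $-\sum_{j<k}A_j^-$); your term-by-term matching via Definition~\ref{defeven}(iii) sidesteps this and is internally consistent.
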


\begin{proof}
	By construction, $g$ solves the PdE~\eqref{pde} for all $f(\bm)$ with $\bm \in M_\l \backslash \floor{M_\l}$.  We need only check that~\eqref{pde} is satisfied for $\bm\in \floor{M_\l}$.  Routine algebraic manipulation of~\eqref{pde} shows that this is exactly equivalent to the condition~\eqref{conditioneven}.
\end{proof}


\subsubsection{Formal solution when $\en=2k+1$}

\begin{definition}\label{defodd}
We set the following definitions:
\begin{itemize}
\item[(\emph{i})] For $\bm,{\bk} \in M_\l$, let $\widetilde\calP(\bm,{\bk})$ be the set of paths 
\[
	p=\left\{\bm=p(1),p(2),\dots,p(l+1)={\bk} \right\}\subset M_\l,\quad l = \abs{p} = \textrm{length of path}
\]
beginning at $\bm$ and ending at ${\bk}$ such that for every $s=1,\dots,l$, 
\[
	h(s):= p(s+1) - p(s) = \begin{cases} e_k \pm e_j  & \textrm{with }j=1,\dots,k-1,\\ e_k,  \textrm{ or} \\  2e_k.\end{cases}
\]
We call these \emph{clipped cocubic paths} because the upward direction may be ``clipped."  Notice that $\widetilde\calP(\bm,{\bk})$ is non-empty if and only if $\bm\leq{\bk}$.  (See Definition~\ref{defeven}(\emph{i}) and Figure~\ref{fig1}).
\item[(\emph{ii})] For $p \in \widetilde\calP(\bm,{\bk})$ and $s = 1,\dots,\abs{p}$, let
\[
	\b_{h(s)}(p(s)) = \begin{cases} \frac{B_j^{\pm}(p(s))}{B_k^-(p(s+1))} &\textrm{if}\quad h(s)=e_k \pm e_j\\
										\frac{C(p(s))}{B_k^-(p(s+1))} &\textrm{if}\quad h(s) = e_k \\
										\frac{B_k^{+}(p(s))}{B_k^-(p(s+1))} &\textrm{if}\quad h(s) = 2e_k.
						\end{cases}
\]
\item[(\emph{iii})] For $\bm,{\bk} \in M_\l$, let 
\begin{align*}
	\D^{\bm}({\bk}) := \D^{\bm,\l}({\bk},\l) &= \sum_{p \in \widetilde\calP(\bm,{\bk})} (-1)^{\abs{p}} \prod_{s=1}^{\abs{p}}\b_{h(s)}(p(s)).
\end{align*}
We trust that no confusion is introduced by using the same notation (namely, $\D^{\bm,\l}$) here as when $\en$ is even.  
\end{itemize}
\end{definition}

We can now solve~\eqref{pde} as we did in the case of even $\en$: by solving ``upward'' in the $e_k$-direction to obtain the formal solution
\begin{align}\label{fsodd}
	g(\bm-e_k) &= \sum_{{\bk}\in M_\l}\frac{f({\bk})}{B_k^-(\bm)} \D^{\bm}({\bk}).
\end{align}

\begin{remark*}
Again, it is worth pointing out that the sum in~\eqref{fsodd} may be taken over all $\bk \in M_\l$ because $\D^{\bm} (\bk) = 0$ whenever there is no clipped cocubic path connecting $\bm$ and $\bk$.
\end{remark*}

We are then led to the following lemma.

\begin{lemma}\label{lemmafixedlodd}
As defined in~\eqref{fsodd}, $g$ constitutes a formal solution to the partial difference equation~\eqref{pde} if and only if 
\begin{align}\label{conditionodd}
	\D^{\bm}(f):= \sum_{{\bk}\in M_\l}f({\bk})\D^{\bm}({\bk}) = 0
\end{align}
for all $\bm\in \floor{M_\l}$.
\end{lemma}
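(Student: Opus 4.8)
The proof is the odd-dimensional counterpart of that of Lemma~\ref{lemmafixedl}, and I would organize it in the same way. The only structural novelty when $\en = 2k+1$ is the diagonal term $C(\bm)u(\bm)$ in the action formula~\eqref{xactionodd}, which has no analogue in~\eqref{xactioneven}; this is precisely the term that the additional ``clipped'' step $h = e_k$ in Definition~\ref{defodd}(\emph{i})--(\emph{ii}) is built to absorb, so the bookkeeping ends up looking almost identical to the even case.

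First I would record the recursive structure of the coefficients $\D^{\bm}(\bk)$. Splitting a clipped cocubic path in $\widetilde\calP(\bm,\bk)$ of positive length at its first edge $h = h(1) \in \{e_k\pm e_j\ (j<k),\ e_k,\ 2e_k\}$, the tail is a clipped cocubic path in $\widetilde\calP(\bm+h,\bk)$, and this correspondence is a bijection; together with the length-zero convention for $\widetilde\calP(\bm,\bm)$ and the sign and product conventions of Definition~\ref{defodd}, this gives
\begin{align*}
	\D^{\bm}(\bk) = [\,\bk=\bm\,] - \sum_{h}\b_{h}(\bm)\,\D^{\bm+h}(\bk),
\end{align*}
the sum running over the admissible first steps $h$ (terms with $\bm+h \notin M_\l$ being zero). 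Pairing with $f$ and summing over $\bk \in M_\l$ yields the same relation for the functionals: $\D^{\bm}(f) = f(\bm) - \sum_{h}\b_{h}(\bm)\,\D^{\bm+h}(f)$.

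Next I would feed the formal solution~\eqref{fsodd} into this. That formula reads $\D^{\bm'}(f) = B_k^-(\bm')\,g(\bm'-e_k)$ for every $\bm'$ with $\ceil{\bm'}\geq m_\l+1$; applying it with $\bm'=\bm+h$ and using that $\b_h(\bm)$ carries $B_k^-(\bm+h)$ in its denominator, each summand $\b_h(\bm)\,\D^{\bm+h}(f)$ collapses to exactly one term on the right of~\eqref{pde}: $B_j^+(\bm)g(\bm+e_j)$ when $h=e_k+e_j$, $B_j^-(\bm)g(\bm-e_j)$ when $h=e_k-e_j$, $C(\bm)g(\bm)$ when $h=e_k$, and $B_k^+(\bm)g(\bm+e_k)$ when $h=2e_k$. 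Hence the recursion becomes
\begin{align*}
	f(\bm) = \D^{\bm}(f) + \sum_{j=1}^{k}\bigl[B_j^+(\bm)\,g(\bm+e_j) + B_j^-(\bm)\,g(\bm-e_j)\bigr] + C(\bm)\,g(\bm) - B_k^-(\bm)\,g(\bm-e_k).
\end{align*}
From here the two cases fall out. For $\bm$ with $\ceil{\bm}\geq m_\l+1$ the vertex $\bm-e_k$ lies in $M_\l$ and $B_k^-(\bm)g(\bm-e_k)=\D^{\bm}(f)$ again by~\eqref{fsodd}, so the display is nothing but~\eqref{pde} with no side condition --- this is the statement that $g$ solves the PdE off the floor ``by construction''. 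For $\bm\in\floor{M_\l}$ the $j=k$ lowering term drops out: the vertex $\bm-e_k$ lies below $\floor{M_\l}$, outside $M_\l$ (equivalently, $B_k^-(\bm)$ vanishes there), so~\eqref{pde} at $\bm$ holds if and only if the surviving term $\D^{\bm}(f)$ vanishes, which is precisely the condition~\eqref{conditionodd}.

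I expect the only genuine friction to be in the middle step: matching each admissible first step $h$ with the correct term of~\eqref{pde}, tracking the signs coming from $(-1)^{\abs{p}}$ and from the $\b_h$, and checking against the interlacing constraints~\eqref{mcondition} that at a floor vertex it is exactly $\bm-e_k$, and nothing else among $\bm$, $\bm\pm e_j$, $\bm+e_k$, that leaves $M_\l$. This is the ``routine algebraic manipulation'' invoked in Lemma~\ref{lemmafixedl}, now carried out with $B_j^\pm$ and $C$ in place of $A_j^\pm$.
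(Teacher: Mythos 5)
Your proposal is correct and follows the same route the paper does: the paper simply refers back to Lemma~\ref{lemmafixedl}, whose proof invokes ``routine algebraic manipulation''; your argument makes that manipulation explicit by first-step-peeling the clipped cocubic paths to obtain the recursion $\D^{\bm}(\bk) = [\bk=\bm] - \sum_h \b_h(\bm)\D^{\bm+h}(\bk)$, substituting the formal solution~\eqref{fsodd} to collapse each $\b_h(\bm)\D^{\bm+h}(f)$ to the matching term of~\eqref{pde}, and observing that on $\floor{M_\l}$ the coefficient $B_k^-(\bm)$ vanishes so the surviving obstruction is $\D^{\bm}(f)$. This is a faithful expansion of the paper's terse proof, not a different strategy.
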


\begin{proof}
The proof is identical to that of Lemma~\ref{lemmafixedl}.
\end{proof}

Finally, we re-introduce $\l$ into the notation.  The following proposition summarizes what we have found.

\begin{proposition}\label{propall}
The sum
\[
	g = \sum_{\l\in\L}\sum_{\bm\in M_\l} g(\bm,\l)u(\bm,\l)
\]
defined by
\begin{align*}
g(\bm-e_k,\l) =
\begin{cases}
	\displaystyle{\sum_{{\bk}\in M_\l}\frac{f({\bk,\l})}{A_k^-(\bm,\l)} \D^{\bm,\l}({\bk},\l)}, &\textrm{if}\quad \en=2k, \textrm{ and} \\
	\displaystyle{\sum_{{\bk}\in M_\l}\frac{f({\bk,\l})}{B_k^-(\bm,\l)} \D^{\bm,\l}({\bk},\l)}, &\textrm{if}\quad \en=2k+1
\end{cases}
\end{align*}
is a formal solution to the coboundary equation $Xg=f$ if and only if 
\begin{align*}
	\D^{\bm,\l}(f):= \sum_{\k\in\L}\sum_{{\bk}\in M_\k}f({\bk},\k)\D^{\bm,\l}({\bk},\k) = 0
\end{align*}
for all $(\bm,\l)$ such that $\bm\in \floor{M_\l} \subset M_\l$.
\end{proposition}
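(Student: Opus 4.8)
The plan is to reduce Proposition~\ref{propall} to the per-diagram Lemmas~\ref{lemmafixedl} and~\ref{lemmafixedlodd} already in hand, exploiting the fact that $X$ does not mix the Gelfand--Cejtlin arrays. For each $\l\in\L$ let $\HH^{(\l)}$ denote the (formal) span of $\{u(\bm,\l):\bm\in M_\l\}$ inside $\HH_{\bn,\nu}$. Formulas~\eqref{xactioneven} and~\eqref{xactionodd} show that $Xu(\bm,\l)$ is a \emph{finite} linear combination of the vectors $u(\bm\pm e_j,\l)$ --- the array $\l$ being left untouched --- so $X$ carries $\HH^{(\l)}$ into itself, and the coefficient of any fixed $u(\bm,\l)$ in $Xg$ is a finite sum. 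Hence ``$Xg=f$ as a formal solution'' means exactly that these coefficients agree for every $(\bm,\l)$, with no convergence involved; and writing $f=\sum_\l f_\l$, $g=\sum_\l g_\l$ for the decompositions into parts supported on the $\HH^{(\l)}$, this is equivalent to $Xg_\l=f_\l$ for each $\l\in\L$ separately.

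First I would record that, with $\l$ fixed and temporarily dropped from the notation, the computation at the start of Section~\ref{formalsection} --- apply $X$ via~\eqref{xactioneven} or~\eqref{xactionodd}, collect the coefficients of $u(\bm+e_j)$, $u(\bm)$, $u(\bm-e_j)$, and use $A_j^-(\bm)=A_j^+(\bm-e_j)$ (resp.\ $B_j^-(\bm)=B_j^+(\bm-e_j)$) --- shows that $Xg_\l=f_\l$ is literally the partial difference equation~\eqref{pde} in the unknowns $\{g(\bm,\l)\}_{\bm\in M_\l}$ with data $\{f(\bm,\l)\}_{\bm\in M_\l}$. Then I would invoke Lemma~\ref{lemmafixedl} when $\en=2k$, respectively Lemma~\ref{lemmafixedlodd} when $\en=2k+1$: the sequence $\{g(\bm,\l)\}$ produced by~\eqref{fseven} (resp.~\eqref{fsodd}) --- which is precisely the $\l$-slice of the $g$ displayed in the proposition --- is a formal solution of~\eqref{pde} if and only if $\sum_{\bk\in M_\l}f(\bk,\l)\,\D^{\bm,\l}(\bk,\l)=0$ for every $\bm\in\floor{M_\l}$.

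It then remains only to reassemble and to reconcile notation. The double sum in the proposition, $\D^{\bm,\l}(f)=\sum_{\k\in\L}\sum_{\bk\in M_\k}f(\bk,\k)\,\D^{\bm,\l}(\bk,\k)$, collapses to $\sum_{\bk\in M_\l}f(\bk,\l)\,\D^{\bm,\l}(\bk,\l)$: the paths in $\calP(\bm,\bk)$ (resp.\ $\widetilde\calP(\bm,\bk)$) lie inside $M_\l$ and keep the array fixed, so $\D^{\bm,\l}(\bk,\k)$ vanishes whenever $\k\neq\l$ and only the diagonal term contributes. Combining the two steps: $g$ is a formal solution of $Xg=f$ $\iff$ $Xg_\l=f_\l$ for all $\l\in\L$ $\iff$ (by Lemma~\ref{lemmafixedl}/\ref{lemmafixedlodd}) $\D^{\bm,\l}(f)=0$ for all $\l\in\L$ and all $\bm\in\floor{M_\l}$, which is the assertion of the proposition.

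The one point needing care --- rather than a genuine obstacle --- is being explicit that ``formal solution'' is a coefficient-by-coefficient statement, so that (i) the problem legitimately decouples over the arrays $\l$, and (ii) no question of convergence of the sum defining $g$ arises at this stage; those analytic matters are exactly what the later sections of Part~\ref{parti} address. Everything else is a direct application of the already-established Lemmas~\ref{lemmafixedl} and~\ref{lemmafixedlodd} together with the observation that $X$ preserves each fixed-array slice $\HH^{(\l)}$.
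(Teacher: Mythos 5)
Your proposal is correct and takes exactly the approach the paper intends: the paper's own proof is the single sentence ``immediate from Lemmas~\ref{lemmafixedl} and~\ref{lemmafixedlodd} and the preceding discussion,'' and what you have written is precisely the unpacking of that sentence --- decoupling over fixed arrays $\l$ because $X$ preserves each slice, invoking the per-$\l$ lemmas, and noting that $\D^{\bm,\l}(\bk,\k)=0$ for $\k\neq\l$ so the double sum collapses. No discrepancy to report.
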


\begin{proof}
The proof is immediate from Lemmas~\ref{lemmafixedl} and~\ref{lemmafixedlodd} and the preceding discussion.
\end{proof}

\begin{remark*}
Let it be clear that $\D^{\bm,\l}({\bk},\k) = 0$ whenever $\k \neq \l$.  It may also be worth foreshadowing that the $\D^{\bm,\l}$'s with $\bm \in \floor{M_\l}$ will turn out to form a spanning set for the $X$-invariant distributions that are called for in Theorem~\ref{cobgeodflow}.  Therefore, Proposition~\ref{propall} serves as a ``formal'' version of Theorem~\ref{cobgeodflow}.
\end{remark*}

The next few sections are devoted to showing that the formal sums for $g$ and $\D$ in Proposition~\ref{propall} actually define \emph{bona fide} elements of Sobolev spaces and their duals.


\section{Casimir operators, Laplace operators, spectral gap, and Sobolev norms}\label{sobnorms}

In this section we present some important differential operators coming from the universal enveloping algebra of $\so(\en,1)$, and we describe how they act upon irreducible unitary representations of $\SO^\circ(\en,1)$. We also define the Sobolev norms we will be using.

\subsection{Casimir operators and Laplace operators}\label{specialops}

The Casimir operator for $\SO^\circ(\en,1)$ is
\[
	\Box := \Box_G := -\sum_{i=1}^{\en}X_i^2 +\sum_{1\leq i<j \leq \en}\Theta_{i,j}^2.
\]
It lies in the center of the universal enveloping algebra of $\so(\en,1)$, and therefore acts as some scalar $\mu:=\mu(\bn,\nu)$ in any irreducible unitary representation $\HH_{{\bn},\nu}$.  In fact, it is shown in \cite{Thi73, Thi74} that 
\begin{align}
	\mu(\bn,\nu) &= -\nu^2 + \left(\frac{\en-1}{2}\right)^2 - \langle \bn, \bn + 2\brho_{M}\rangle \nonumber \\
		&=  \begin{cases} \displaystyle{-\nu^2 + \left(\frac{\en-1}{2}\right)^2 - \sum_{i=1}^{k-1}n_i (n_i + 2i-1)} &\textrm{for }\en=2k \\ 
							\displaystyle{-\nu^2 + \left(\frac{\en-1}{2}\right)^2 - \sum_{i=1}^{k}n_i (n_i + 2i-2)} &\textrm{for }\en=2k+1
			\end{cases} \label{muscalar}
\end{align}
where $\brho_M$ is the half-sum of positive roots of $M\cong \SO(\en-1)$.

Similarly, the Casimir operator 
\[
	\Box_K := \sum_{1\leq i<j \leq \en}\Theta_{i,j}^2
\]
acts as a multiplicative scalar in any irreducible unitary representation $\HH_{\bm}$ of $\SO(\en)$.  It is known that this scalar is
\begin{align}
	\langle \bm, \bm + 2\brho \rangle &= -\sum_{i=1}^{k} (m_i^2 + 2m_i \rho_i) := -q(\bm) \nonumber \\
		&= \begin{cases}
				\displaystyle{-\sum_{i=1}^{k}m_i(m_i + 2i-2)} &\textrm{for }\en=2k \\
				\displaystyle{-\sum_{i=1}^{k}m_i (m_i +2i -1)} &\textrm{for }\en=2k+1
			\end{cases} \label{mscalar}
\end{align}
where $\brho=(\rho_1,\dots,\rho_k)$ is the half-sum of the positive roots of $\so(\en)$.  (See, for example, \cite{Rac65}.)

The Laplace operator $\Delta$ is then  $\Delta = \Box - 2\Box_K$.  Since both $\Box$ and $\Box_K$ commute with the subgroups~$\mathbb{K}(\en)$ and $\mathbb{M}(\en)$ of $\SO^\circ(\en,1)$, so does $\Delta$.  This allows us to use the same operator~$\Delta$ to define the Laplace operator and Sobolev norms (see Section~\ref{sec:sobnorms}) on~$M\backslash G/\G$ and~$K\backslash G/\G$, after making a standard identification between the space $\Lii(M\backslash G/\G)$ (respectively $\Lii(K\backslash G/\G)$) and the subspace $\Lii(G/\G)^M$ (respectively $\Lii(G/\G)^K$) of $M$-invariant (respectively $K$-invariant) elements of $\Lii(G/\G)$. 


\subsection{Spectral gap}\label{spectralgapsection}

\emph{Spectral gap} is an important feature of the results in~\cite{FF} and~\cite{M2}.  There, the spectrum in question is that of the Casimir operator $\Box$ for $\Sl(2,\RR)$.  The unitary dual of $\PSL(2,\RR) \cong \SO^\circ (2,1)$ is parametrized by the scalar, denoted $\mu$, by which the Casimir operator acts in an irreducible unitary representation.  It is related to the parameter $\nu$ by $\nu^2 = 1/4 - \mu$.  Therefore, spectral gap for the Casimir operator in a unitary representation $\pi:\PSL(2,\RR)\to\U(\HH)$ ensures that $\nu$ is bounded away from the end-point of the complementary series representations (which corresponds to the trivial representation) when one considers the direct integral decomposition of $\HH$.  Putting it another way, spectral gap in $\HH$ ensures that $\pi_0:\PSL(2,\RR)\to\U(\HH_0)$ is isolated in the Fell topology from the trivial representation $\mathbf{1}_G$, where $\HH_0$ denotes the orthogonal complement in $\HH$ to the subspace where $\PSL(2,\RR)$ acts trivially.  

We also need that $\nu$ does not come ``too close'' to $\frac{\en-1}{2}$, or, to be more precise, we want to choose some $\nu_0 \in \left[0,\frac{\en-1}{2}\right)$ that is closer to $\frac{\en-1}{2}$ than any of the $\nu$'s appearing in the direct integral decomposition of the unitary representation with which we would like to work.  However, in Part~\ref{parti}, the \emph{only} non-irreducible representation we will work with is the \emph{left-regular} representation $\varrho: \SO^\circ(\en,1)\to\U(\Lii(\SO^\circ(\en,1)/\G))$, and it is already well-known (see for example~\cite[Lemma~$3$]{Bek98}) that the left-regular representation $\varrho_0$ of a semisimple Lie group $G$ on $\Lii_0 (G/\G)$ is isolated from $\mathbf{1}_G$ in the Fell topology, so the desired $\nu_0$ in our work comes, in some sense, for free---we do not actually have to \emph{state} a spectral gap assumption in our main theorems in order to obtain it.

If we \emph{were} to state a spectral gap assumption (say, if we were interested in stating our main theorems in terms of general unitary representations,\footnote{The reason we do not do this is discussed in Section~\ref{dependencesection}.} in the style of~\cite{FF},~\cite{M2}, and~\cite{Ramhc}), it might be more natural to do so in terms of the Laplace operator $\Delta$, which always has positive eigenvalues in irreducible unitary representations of $\SO^\circ(\en,1)$, than for the Casimir operator $\Box$, which for $\en\geq 3$ can act as a negative scalar of arbitrary size.  On the other hand, we should really see the spectral gap assumption as taking place on $K\backslash G/\G$, the locally symmetric space associated to $G/\G$ (where the images of $\Delta$ and $\Box$ coincide); the condition that  $\varrho_0$ be isolated from $\bone_G$ is equivalent to a spectral gap for the Laplacian $\Delta$ on $K\backslash G/\G$. In our case this can be seen from the formulas we have presented in Section~\ref{specialops}:  One can deduce from these that a $K$-invariant eigenvector for $\Delta$ in an irreducible representation $\HH_{\bn,\nu}$ must have eigenvalue $-\nu^2 + \left(\frac{\en-1}{2}\right)^2$.  (Henceforth, we denote this eigenvalue by $\tilde\nu$.)  From here the relationship between spectral gap for $\Delta$ on $K\backslash G/\G$ and the existence of $\nu_0$ is obvious.

\begin{definition}\label{spectralgapparameter}
We set the following terminology:~$\nu_0 \in \left[0,\frac{\en-1}{2}\right)$ is a \emph{spectral gap parameter} if it satisfies the condition that it is closer to $\frac{\en-1}{2}$ than any $\nu$ appearing in the direct decomposition of $\varrho_0 :G\to\U(\Lii_0(G/\G))$. It naturally corresponds through $\tilde\nu = -\nu^2 + \left(\frac{\en-1}{2}\right)^2$ to a spectral gap $\tilde\nu_0$ for the Laplacian $\Delta$ on $K\backslash G/\G$.
\end{definition}

In Part~\ref{partii} we will be dealing with representations of $G_1\times\dots\times G_d = \SO^\circ(\en_1,1)\times\dots\times\SO^\circ(\en_d,1)$. We will need the restriction to each factor of the regular representation on $\Lii((G_1\times\dots\times G_d)/\G)$ to have the same spectral gap property as above, where $\G\subset G_1\times\dots\times G_d$ is an irreducible lattice.  This is known to be generally the case, for example by combining work of Kleinbock and Margulis~\cite[Theorem~$1.12$]{KM} and Clozel~\cite{Clo03} to obtain the following extension of~\cite[Lemma~$3$]{Bek98}.

\begin{theorem}\label{spectralgapthm}
Let $G_1\times\dots\times G_d$ be a product of non-compact simple Lie groups, and $\G \subset G$ an irreducible lattice.  Then the restriction of $\Lii(G/\G)$ to every $G_i$ has a spectral gap.
\end{theorem}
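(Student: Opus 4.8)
\emph{Plan of proof.} Fix an index $i$ and split $G=G_i\times L$ with $L:=\prod_{j\neq i}G_j$. The assertion to prove is that the unitary $G_i$-representation on $\Lii_0 (G/\G)$ (the orthogonal complement of the constants) does not weakly contain the trivial representation $\mathbf 1_{G_i}$, i.e.\ that $\mathbf 1_{G_i}$ is isolated from it in the Fell topology; this is the form of ``spectral gap'' invoked in Section~\ref{spectralgapsection}, and for the factors $\SO^\circ(\en_i,1)$ that concern us it produces a spectral gap parameter $\nu_0^{(i)}\in\bigl[0,\tfrac{\en_i-1}{2}\bigr)$ as in Definition~\ref{spectralgapparameter}. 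I would treat $d=1$ and $d\geq2$ separately. The case $d=1$ is nothing new: here $G=G_1$ is simple and $\G$ is an arbitrary lattice, so the statement is exactly \cite[Lemma~$3$]{Bek98}. Concretely, the $\Lii$-spectrum of $\Delta$ on the finite-volume locally symmetric space $\mathbb{K}(\en_1)\backslash G/\G$ lies in $\{0\}\cup[\lambda_1,\infty)$ for some $\lambda_1>0$, because when $\G$ has cusps the continuous part begins at $\bigl(\tfrac{\en_1-1}{2}\bigr)^2>0$, the point spectrum below that threshold is finite, and $0$ is isolated since $\Lii_0(G/\G)$ has no $G_1$-invariant vectors.

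\emph{The case $d\geq2$.} First I would assemble the relevant structure of $\G$ as an \emph{irreducible} lattice in the product $G_i\times L$: the projection $\pi_L(\G)$ is dense in $L$, and, since $G$ then has real rank at least $2$, Margulis's arithmeticity theorem forces $\G$ to be arithmetic. Density of $\pi_L(\G)$ together with Moore's ergodicity theorem shows that the non-compact closed subgroup $G_i$ acts ergodically on $G/\G$, so that $\Lii_0(G/\G)^{G_i}=0$: there are \emph{no} $G_i$-invariant vectors. The substantive task is then to upgrade this to the absence of \emph{almost} $G_i$-invariant vectors, and this is precisely where the two cited results enter. Clozel's theorem \cite{Clo03} (property $(\tau)$ for congruence subgroups of arithmetic groups, equivalently a uniform bound towards Ramanujan for the automorphic spectrum) yields --- after passing to a congruence subgroup of finite index in $\G$, into whose $\Lii_0$ our $\Lii_0(G/\G)$ embeds $G$-equivariantly --- a spectral gap for the ambient quotient $\Lii_0(G/\G)$; and the transfer of that gap to the restriction to a single simple factor is what is packaged in \cite[Theorem~$1.12$]{KM}, which uses the density of $\pi_L(\G)$ in $L$ to keep the $G_i$-isotypic components occurring in $\Lii_0(G/\G)$ bounded away from the trivial $G_i$-type. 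Combining these gives the isolation of $\mathbf 1_{G_i}$, finishing the case $d\geq2$.

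\emph{Main obstacle.} Everything except this last step is soft structure theory of lattices; the heart of the matter is passing from ``no $G_i$-invariant vectors'' to ``no almost $G_i$-invariant vectors''. This cannot be shortcut via Kazhdan's property~(T): the rank-one factors $\SO^\circ(\en_j,1)$ --- and $L$ itself --- never have property~(T), so the gap is not generated internally and must genuinely be imported from the global arithmetic picture. Without such input one could not a priori exclude a sequence of $G_i\times L$-subrepresentations $\sigma_n\otimes\rho_n\subset\Lii_0(G/\G)$ with $\sigma_n$ a complementary-series $G_i$-representation approaching $\mathbf 1_{G_i}$ while $\rho_n$ stays away from $\mathbf 1_L$; ruling this out is exactly what feeding \cite{Clo03} into \cite[Theorem~$1.12$]{KM} accomplishes. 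In writing the proof the one point needing care is to verify that the hypotheses of \cite[Theorem~$1.12$]{KM} are met by our $\SO^\circ(\en_j,1)$ factors once Clozel's theorem is in hand, so that the two results genuinely compose into the single clean statement above.
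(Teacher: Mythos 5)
Your proposal is correct and takes essentially the same route as the paper: the paper itself offers no proof beyond citing \cite[Theorem~$1.12$]{KM} and \cite{Clo03} as the ingredients of the stated extension of \cite[Lemma~$3$]{Bek98}. Your account --- irreducibility and Moore ergodicity eliminating $G_i$-invariant vectors, Margulis arithmeticity placing $\Gamma$ in Clozel's framework, and the Kleinbock--Margulis mechanism propagating the arithmetic gap to each simple factor --- is a sound unpacking of how those ingredients combine, and correctly identifies that the absence of property~(T) for the rank-one factors is what forces the arithmetic input.
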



\subsection{Sobolev norms}\label{sec:sobnorms}
Let $G\to\U(\HH)$ be a unitary representation.  The Sobolev space $W^s (\HH)$, $s>0$, is defined to be the maximal domain of the operator $(\bone+\Delta)^{s/2}$, where $\bone$ is the identity operator and $\Delta$ is the Laplace operator coming from $G$. $W^s(\HH)$ is a Hilbert space with inner product $\inner{f}{g}_s := \inner{(\bone+\Delta)^s f}{g}_{\HH}$, and its dual is denoted $W^{-s}(\HH) \subset \E^{\prime}(\HH)$.  

In an irreducible representation $\SO^\circ(\en,1)\to\U(\HH_{{\bn},\nu})$, the Sobolev norm defined by the inner product $\inner{\cdot}{\cdot}_s$ can be readily computed:~for $f \in W^s(\HH_{\bn,\nu})$,
\begin{align*}
	\norm{f}_s^2 &= \norm{(I+\Delta)^{s/2}f}_{\HH_{{\bn},\nu}}^2 = \inner{(\bone+\Delta)^{s}f}{f}_{\HH_{{\bn},\nu}} \\
		&= \sum_{\l\in\L}\sum_{\bm\in M_\l} \abs{f(\bm,\l)}^2\inner{(\bone+\Delta)^{s}u(\bm,\l)}{u(\bm,\l)} \\
		&= \sum_{\l\in\L}\sum_{\bm\in M_\l} (1+\mu(\bn,\nu)+2q(\bm))^s \abs{f(\bm,\l)}^2\norm{u(\bm,\l)}^2,
\end{align*}
where $(1 + \mu(\bn,\nu) + 2q(\bm))$ is the scalar by which $(\bone+\Delta)$ acts on the basis element $u(\bm,\l)$.  It will be convenient to use the concise notation $(1 + Q_{\bn,\nu}(\bm))$ for this scalar.  We have the following lemma.

\begin{lemma}\label{polynomials}
For any irreducible representation $\HH_{\bn,\nu}$,
\[
1 + Q_{\bn,\nu}(\bm) \asymp_{\bn,\nu} 1 + \ceil{\bm}^2,
\]
whereas if we restrict attention to representations where $\bn=(0,\dots,0,\ceil{\bn})$, then we have
\[
1 + Q_{\bn,\nu}(\bm) \asymp 1 + \tilde\nu + 2\ceil{\bm}^2 - \ceil{\bn}^2
\]
where the asymptote $\asymp$ does not depend on $\bn,\nu$.
\end{lemma}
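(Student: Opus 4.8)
The plan is to compute both sides explicitly using the formulas from Section~\ref{specialops} and then compare term by term. Recall $1 + Q_{\bn,\nu}(\bm) = 1 + \mu(\bn,\nu) + 2q(\bm)$, where $\mu(\bn,\nu)$ is given by~\eqref{muscalar} and $q(\bm)$ is defined via~\eqref{mscalar}. The key observation is that $\mu(\bn,\nu)$ does not depend on $\bm$ at all, so for a \emph{fixed} representation $\HH_{\bn,\nu}$ it is simply a constant $c = c(\bn,\nu)$; meanwhile $q(\bm) = \sum_i m_i(m_i + 2\rho_i)$ is a positive-definite quadratic form in $\bm$ (up to the lower-order linear shift), dominated by its top-index term $m_k(m_k + 2\rho_k) = \ceil{\bm}(\ceil{\bm} + 2\rho_k)$ because of the ordering constraints~\eqref{mcondition} that force $\abs{m_i} \leq \ceil{\bm}$ for all $i$. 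So the first assertion $1 + Q_{\bn,\nu}(\bm) \asymp_{\bn,\nu} 1 + \ceil{\bm}^2$ follows once one checks that $1 + c + 2q(\bm)$ stays bounded between two positive multiples of $1 + \ceil{\bm}^2$: the upper bound is immediate since $q(\bm) \leq k(\ceil{\bm}^2 + 2\rho_k \ceil{\bm}) \ll \ceil{\bm}^2 + 1$, and the lower bound requires knowing that $1 + c + 2q(\bm) > 0$ (which holds because $(\bone + \Delta)$ is a positive operator, its smallest eigenvalue being attained at the ``floor'' $\bm$) together with $q(\bm) \gg \ceil{\bm}^2 - O(\ceil{\bm})$, so that for $\ceil{\bm}$ large the $\ceil{\bm}^2$ term dominates and for $\ceil{\bm}$ small we use positivity and finiteness of the constant. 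The dependence of both implied constants on $\bn,\nu$ enters only through $c(\bn,\nu)$ and through how small $1 + c + 2q(\bm)$ can get at the floor.

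For the second assertion, restrict to $\bn = (0,\dots,0,\ceil{\bn})$. The point is that now the $\bn$-dependence of $\mu(\bn,\nu)$ collapses: in~\eqref{muscalar} the sum $\sum n_i(n_i + \cdots)$ has only one nonzero term, coming from $n_{k-1} = \ceil{\bn}$ (when $\en = 2k$) or $n_k = \ceil{\bn}$ (when $\en = 2k+1$), so $\mu(\bn,\nu) = -\nu^2 + (\tfrac{\en-1}{2})^2 - \ceil{\bn}^2 - O(\ceil{\bn}) = \tilde\nu - \ceil{\bn}^2 - O(\ceil{\bn})$ using $\tilde\nu = -\nu^2 + (\tfrac{\en-1}{2})^2$. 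Similarly, the constraints~\eqref{mcondition} with this special $\bn$ force $m_i = 0$ for $i < k-1$ (roughly — the small-index $m$'s are squeezed between $0$ and $n_1 = 0$), leaving essentially only $m_{k-1}$ and $m_k$ (or $m_k$ alone) free, so $q(\bm) = \ceil{\bm}^2 + O(\ceil{\bm}) + (\text{contribution of } m_{k-1})$, and one checks $m_{k-1}$'s contribution is also $O(\ceil{\bm}^2)$ but with the correct matching — actually the cleanest route is: $2q(\bm) = 2\ceil{\bm}^2 + O(\ceil{\bm})$ in this regime once one accounts for all the pinned coordinates. Then $1 + Q_{\bn,\nu}(\bm) = 1 + \mu + 2q(\bm) = 1 + \tilde\nu - \ceil{\bn}^2 + 2\ceil{\bm}^2 + O(\ceil{\bm}) + O(\ceil{\bn})$, and since $\ceil{\bn} \leq \ceil{\bm}$ always, the error terms are absorbed into $\ceil{\bm}^2$; comparing with the claimed expression $1 + \tilde\nu + 2\ceil{\bm}^2 - \ceil{\bn}^2$ shows the two agree up to a multiplicative constant \emph{independent} of $\bn,\nu$, precisely because all the $\bn,\nu$-dependence has been isolated into the explicit terms $\tilde\nu$ and $\ceil{\bn}^2$ that appear on both sides.

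The main obstacle I expect is bookkeeping the constraints~\eqref{mcondition} carefully enough to pin down which $m_i$ are forced to be zero (or bounded) under the hypothesis $\bn = (0,\dots,0,\ceil{\bn})$, and then tracking the linear ($O(\ceil{\bm})$, $O(\ceil{\bn})$) error terms to be sure they are genuinely lower order and can be absorbed without spoiling the $\bn,\nu$-independence of the implied constant in the second estimate. One subtlety worth flagging: for the lower bound in either estimate one must rule out $1 + Q_{\bn,\nu}(\bm)$ being zero or negative, which is guaranteed by positivity of $\bone + \Delta$ (equivalently, by the fact that $\mu(\bn,\nu) + 2q(\bm) \geq \tilde\nu > 0$ for $K$-invariant vectors and more generally stays bounded below on each representation), but this needs to be invoked explicitly rather than read off the polynomial formula. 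The even/odd cases of $\en$ are handled identically modulo the index shifts in~\eqref{muscalar} and~\eqref{mscalar}, so I would treat them in parallel.
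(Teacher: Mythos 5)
Your approach is the same as the paper's, which in fact offers only a one-line pointer to the formulas~\eqref{muscalar} and~\eqref{mscalar} and the Gelfand--Cejtlin constraints; your version just spells out the comparison, and the overall structure — isolate $\mu(\bn,\nu)$ as a $\bm$-independent constant, bound $q(\bm)$ by its top term using the interlacing, and invoke positivity of $\bone+\Delta$ for the lower bound — is exactly right.

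One imprecision, which you yourself half-flag, does deserve tightening. When $\bn=(0,\dots,0,\ceil{\bn})$, the constraints~\eqref{mcondition} do \emph{not} pin all $m_i$ for $i<k$: they force $m_1=\dots=m_{k-2}=0$, but $m_{k-1}$ remains free in $[0,\ceil{\bn}]$ (or $[-\ceil{\bn},\ceil{\bn}]$ for $\en=2k$, $k=2$). Your ``cleanest route'' formula $2q(\bm)=2\ceil{\bm}^2+O(\ceil{\bm})$ is therefore only correct on the slice $m_{k-1}=0$; in general $2q(\bm)=2m_{k-1}^2+2\ceil{\bm}^2+O(\ceil{\bm})$, and the extra $2m_{k-1}^2$ can be as large as $2\ceil{\bn}^2\sim 2\ceil{\bm}^2$. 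This does not break the second asymptote, for two reasons: first, $2m_{k-1}^2\leq 2\ceil{\bn}^2\leq 2\ceil{\bm}^2$ is at worst a bounded multiple of the right-hand side $1+\tilde\nu+2\ceil{\bm}^2-\ceil{\bn}^2\geq 1+\ceil{\bm}^2$ (using $\ceil{\bn}\leq\ceil{\bm}$ and $\tilde\nu>0$, which holds since $\bn=(0,\dots,0,\ceil{\bn})$ excludes the discrete series); second, for the lower bound one can check directly that $1+Q_{\bn,\nu}(\bm)-\bigl(1+\tilde\nu+2\ceil{\bm}^2-\ceil{\bn}^2\bigr)=2m_{k-1}^2+[\text{linear in }m_{k-1},\ceil{\bm},\ceil{\bn}]$ is in fact nonnegative, because the coefficient of $\ceil{\bm}$ in $2q$ dominates the coefficient of $\ceil{\bn}$ in $-\mu$ (e.g.~$4(k-1)\geq 2k-3$ for $\en=2k$) and $\ceil{\bn}\leq\ceil{\bm}$. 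In the application in Section~\ref{twoproof} only $\bm$ with $\l\equiv 0$ arise, which forces $m_{k-1}=0$ by Proposition~\ref{minvelements}, so your shortcut suffices there; but the lemma as stated covers all $\bm\in M_{\bn}$, and the fuller check above is what makes it airtight.
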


\begin{proof}
This is checked by combining~\eqref{muscalar} and~\eqref{mscalar} with the inequalities indicated in Figure~\ref{figarray}.
\end{proof}

\begin{remark*}
One can also prove a ``more precise'' statement in the general case, where the asymptote $\asymp_{\bn,\nu}$ does not depend on $\bn,\nu$.  However, we would not gain much from this because the only place where we use the first part of Lemma~\ref{polynomials} is in the proof of Theorem~\ref{cobgeodflow} in Section~\ref{cobgeodflowproof}, where there is already dependence on the parameters $\bn, \nu$, after applying Lemma~\ref{dmlbk}.  The second part of Lemma~\ref{polynomials} is used in Section~\ref{twoproof}, in the proof of Theorem~\ref{two}.
\end{remark*}


\section{Invariant distributions and proof of Theorem~\ref{invdistthm}}
We would like to describe the set
\begin{align*}
	\II_{X}^{s}(\HH_{{\bn},\nu}) &= \{\D\in W^{-s}(\HH_{{\bn},\nu})\mid \D(Xh)=0 \textrm{ for all } h \in \Cinf(\HH_{{\bn},\nu}) \}
\end{align*}
of $X$-invariant distributions of Sobolev order $s>0$.  A distribution $\D \in W^{-s}(\HH_{{\bn},\nu})$ is determined by its values $\D(\bm,\l):=\D(u(\bm,\l))$ on the orthonormal basis $\{u(\bm,\l)\}$, and it is $X$-invariant if and only if these values satisfy the partial difference equation
\begin{align}\label{distde}
\begin{cases}
\displaystyle{0 = \sum_{j=1}^{k}\left[A_j^+ (\bm,\l) \D(\bm+e_j,\l) + A_j^- (\bm,\l) \D(\bm-e_j,\l)\right]},&\textrm{ if } \en=2k, \textrm{ or}  \\
\displaystyle{0 = \sum_{j=1}^{k}\left[B_j^+ (\bm) \D(\bm+e_j) + B_j^- (\bm) \D(\bm-e_j)\right] + C(\bm)\D(\bm)},&\textrm{ if } \en=2k+1,
\end{cases}
\end{align}
as is easily seen from~\eqref{xactioneven} and~\eqref{xactionodd}.  

An $X$-invariant distribution is completely determined by its values on the floors $\floor{M_{\l}}$ of $\{M_\l\}_{\l\in\L}$.  In fact, if $\bm \in \floor{M_\l}$, then $\D^{\bm,\l}$ (see Definitions~\ref{defeven}(\emph{iv}) and~\ref{defodd}(\emph{iii})) determines the unique \emph{formal} invariant distribution taking the value $1$ at $(\bm,\l)$, and $0$ at all other ``floor'' points $({\bk},\k)$ with ${\bk}\in \floor{M_\k}$.  In order to show that $\D^{\bm,\l}$ is a \emph{bona fide} distribution, we will find an upper bound on its Sobolev order---a real number $s >0$ such that $\D^{\bm,\l}(f)$ converges for every $f \in W^{s}(\HH_{{\bn},\nu})$.  First, we have the following lemma, which holds for all $\D^{\bm,\l}$, without regard to whether or not $\bm\in \floor{M_\l}$.

\begin{lemma}\label{dmlbk}
For $\bm \in M_\l$, we have $\Abs{\D^{\bm,\l}(\bk,\l)} \ll_{\bn, \nu} 1$, and in the case that $\l\equiv 0$, we have $\Abs{\D^{\bm,0}(\bk,0)} \leq 1$ for all $\bk \in M_{0}$.
\end{lemma}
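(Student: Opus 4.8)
Here is how I would approach the proof.

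\medskip

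\noindent\textbf{Plan.} The plan is to run a downward induction on the height gap $\ceil{\bk}-\ceil{\bm}$, driven by the recursion that is implicit in Definitions~\ref{defeven}(\emph{iv}) and~\ref{defodd}(\emph{iii}). Splitting each (clipped) cocubic path according to its first step $h$, whose target is $\bm+h$, and using the conventions on empty sums and products, one reads off
\begin{align*}
	\D^{\bm,\l}(\bm,\l)=1,\qquad \D^{\bm,\l}(\bk,\l)=-\sum_{h}\a_{h}(\bm,\l)\,\D^{\bm+h,\l}(\bk,\l)\quad(\bk\neq\bm),
\end{align*}
where $h$ ranges over the admissible cocubic steps at $\bm$ (the $e_k\pm e_j$ with $j<k$ and $2e_k$, together with $e_k$ when $\en=2k+1$, such that $\bm+h\in M_\l$), and $\a_h$ is replaced by $\b_h$ when $\en$ is odd; note also $\D^{\bm+h,\l}(\bk,\l)=0$ unless $\bm+h\leq\bk$. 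Every admissible step strictly increases $\ceil{\cdot}=m_k$, so the right-hand side involves only terms with a strictly smaller gap to $\bk$, and the induction is well-founded, with base case $\bm=\bk$ giving $\D^{\bm,\l}(\bm,\l)=1$.

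\medskip

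\noindent\textbf{Reduction to a weight estimate.} Introduce the step weight $W(\bm,\l):=\sum_{h}\Abs{\a_{h}(\bm,\l)}$ (the $\a_h$ being finite since the denominators that occur, being those used in the formal solution of Section~\ref{formalsection}, are nonzero above the floor), and set $W_n:=\sup\{W(\bm,\l)\mid \bm\in M_\l,\ \ceil{\bm}=n\}$. Since a cocubic path meets each height at most once, unwinding the recursion with the triangle inequality gives, by the induction above,
\begin{align*}
	\Abs{\D^{\bm,\l}(\bk,\l)}\;\leq\;\prod_{n=\ceil{\bm}}^{\ceil{\bk}-1}\max(W_n,1)\;\leq\;\prod_{n\geq m_\l}\max(W_n,1),
\end{align*}
with $m_\l$ the height of $\floor{M_\l}$. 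Both claims of the lemma now reduce to estimating the $W_n$: for $\Abs{\D^{\bm,\l}(\bk,\l)}\ll_{\bn,\nu}1$ it suffices that each $W_n$ be finite and that $\sum_{n\geq m_\l}\max(W_n-1,0)$ be finite and bounded in terms of $\bn,\nu$ (the finitely many initial factors then contribute at most a constant and the tail contributes $\leq1$); in particular it suffices that $W_n\leq1$ once $n$ is large in terms of $\bn,\nu$. For $\Abs{\D^{\bm,0}(\bk,0)}\leq1$ it suffices that $W_n\leq1$ for \emph{every} $n$ in the case $\l\equiv0$.

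\medskip

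\noindent\textbf{The weight estimate and the main obstacle.} This last point is where the real work lies, and is exactly what Appendices~\ref{Akbound}--\ref{abound} (for $\en$ even) and~\ref{Bkbound}--\ref{bbound} (for $\en$ odd) are for. Reindexing via $A_j^-(\bm')=A_j^+(\bm'-e_j)$, each $\a_h(\bm,\l)$ is a ratio $A_j^{\pm}(\bm,\l)/A_k^{+}(\bm',\l)$ with $j\leq k$ and $\ceil{\bm'}\in\{\ceil{\bm},\ceil{\bm}+1\}$ (with an additional term $C(\bm,\l)/B_k^{+}(\bm',\l)$ in the odd case). The dominant ``vertical'' term ($j=k$) is $A_k^{+}(\bm,\l)/A_k^{+}(\bm+e_k,\l)$; by Lemma~\ref{coeffseven} (resp.~Lemma~\ref{coeffsodd}) the coefficient $A_k^{+}$ (resp.~$B_k^{+}$) is nonzero above the floor and diverges polynomially in $\ceil{\bm}$, from which one extracts that this ratio does not exceed $1$ for $\ceil{\bm}$ large, the overshoot (if any) being summable in $\ceil{\bm}$; the remaining ``diagonal'' contributions ($j<k$, and the $C$-term) have numerators that tend to $0$ in $\ceil{\bm}$ against a denominator of size $\asymp\ceil{\bm}$ (the relevant decay being Lemma~\ref{betas} in the odd case), hence are summable too. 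This yields the general estimate. When $\l\equiv0$, the inequalities in Figure~\ref{figarray} and~\eqref{mcondition} force $\bm=(0,\dots,0,m_k)$, so the only admissible steps are $2e_k$ and (when $\en$ is odd) $e_k$; then $W(\bm,0)$ collapses to $A_k^{+}(\bm,0)/A_k^{+}(\bm+e_k,0)$, respectively $B_k^{+}(\bm,0)/B_k^{+}(\bm+e_k,0)+\Abs{C(\bm,0)}/B_k^{-}(\bm+e_k,0)$, and a direct evaluation of~\eqref{defA}--\eqref{defC} specialized to $\l\equiv0$ shows this is $\leq1$ for every $m_k$. The main obstacle is precisely the sharpness demanded of the vertical-term estimate: merely knowing $A_k^{+}$ grows is not enough — one must see that the successive ratios never overshoot $1$ by a non-summable amount, and, in the $\l\equiv0$ case, never overshoot at all — which is why the coefficient asymptotics must be computed with care.
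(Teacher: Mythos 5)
Your proposal is correct and essentially mirrors the paper's proof, with the recursion running in the opposite direction: the paper fixes the superscript $\bm$ and uses the $X$-invariance PdE~\eqref{distde} at $\bk$ to express $\D^{\bm,\l}(\bk+e_k)$ in terms of neighbouring values (which amounts to splitting cocubic paths by their \emph{last} step), whereas you fix the argument $\bk$ and split paths by their \emph{first} step, recursing on $\ceil{\bm}$. Both reduce to the same telescoping product over heights of the quantities $\sum_h\Abs{\a_h}$, and both produce the $\l\equiv0$ case by observing that the diagonal $\a$'s vanish there. One caution about your third paragraph: the bound $\Abs{\a_{2e_k}}\leq1$ does \emph{not} follow from Lemma~\ref{coeffseven}/\ref{coeffsodd} (polynomial growth of $A_k^+$ alone is compatible with non-monotone successive ratios, as you yourself note at the end), and it is not merely true ``for $\ceil{\bm}$ large with summable overshoot'' — it holds identically, and is precisely the final assertion of Lemma~\ref{alphas} (resp.\ Lemma~\ref{betas}); those lemmas, together with the decay bound on $\a_{e_k\pm e_j}$ (and $\b_{e_k}$), should be cited directly as the source of the weight estimate rather than extracted anew from the $A_k^+$ asymptotics.
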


\begin{remark*}
Recall that $\D^{\bm,\l}(\bk,\l)=0$ if $\bk\ngeq\bm$, so this lemma only gives useful information when $\bk\geq\bm$.
\end{remark*}

\begin{proof}
We prove this for $\en=2k$, the proof for odd $\en$ being very similar.  For all $\bk > \bm$, we have that $\D^{\bm,\l}$ satisfies the PdE
\begin{multline*}
0 =\D^{\bm,\l}({\bk}+e_k,\l) + \a_{2e_k}(\bk-e_k)\D^{\bm,\l}({\bk}-e_k,\l) \\
+ \sum_{j=1}^{k-1}\left[\a_{e_k + e_j}(\bk - e_j)\D^{\bm,\l}(\bk - e_j, \l) + \a_{e_k - e_j}(\bk + e_j) \D^{\bm,\l}(\bk + e_j,\l)\right].
\end{multline*}
From the above PdE, we see that 
\begin{multline*}
\Abs{\D^{\bm,\l}({\bk}+e_k)} \leq \Abs{\a_{2e_k}(\bk-e_k)}\Abs{\D^{\bm,\l}({\bk}-e_k)} \\
	+ \sum_{j=1}^{k-1}\left(\Abs{\a_{e_k + e_j}(\bk - e_j)}\Abs{\D^{\bm,\l}(\bk - e_j)} + \Abs{\a_{e_k - e_j} (\bk + e_j)}\Abs{\D^{\bm,\l}(\bk + e_j)}\right).
\end{multline*}
There is some $P(\ceil{\bm},\ceil{\bk})>0$ such that $\Abs{\D(\boldsymbol{\tau})}\leq P(\ceil{\bm},\ceil{\bk})$ for all $\boldsymbol{\tau}\in M_\l$ with $\ceil{\boldsymbol{\tau}} \leq \ceil{\bk}$, and we can take $P(\ceil{\bm},\ceil{\bk})$ to increase as $\ceil{\bk}$ does.  So applying Lemma~\ref{alphas}, we have
\begin{align*}
	\Abs{\D^{\bm,\l}(\bk+e_k)} &\leq P(\ceil{\bm}, \ceil{\bk}-1) \\
	&\indent+ 2(k-1)C_{\nu}\,\frac{\min\{\ceil{\bn},\ceil{\l}\}}{\ceil{\bk}+1}\frac{\ceil{\bn}\ceil{\l}}{\left(\ceil{\bk}+1\right)^2 - m_{\l}^2}\,P(\ceil{\bm},\ceil{\bk}) \\
	&\leq P(\ceil{\bm},\ceil{\bk})\,\left[1 + 2(k-1)C_{\nu}\,\frac{\min\{\ceil{\bn},\ceil{\l}\}}{\ceil{\bk}+1}\frac{\ceil{\bn}\ceil{\l}}{\left(\ceil{\bk}+1\right)^2 - m_{\l}^2}\right]
\end{align*}
holding up to height $\ceil{\bk}$.  Let us then put $P(\ceil{\bm},\ceil{\boldsymbol{\t}})=0$ for all $\boldsymbol{\t}<\bm$, $P(\ceil{\bm},\ceil{\bm})=1$,
\begin{equation}\label{directly}
	P(\ceil{\bm},\ceil{\bm}+1)= C_\nu\,\frac{\min\{\ceil{\bn},\ceil{\l}\}}{\ceil{\bm}+1}\frac{\ceil{\bn}\ceil{\l}}{\left(\ceil{\bm}+1\right)^2-m_\l^2},
\end{equation}
and for $\ceil{\bk}>\ceil{\bm}$,
\[
P(\ceil{\bm},\ceil{\bk}+1) = \prod_{x=\ceil{\bm}+1}^{\ceil{\bk}}\left[1 + C_{\nu}\,\frac{\min\{\ceil{\bn},\ceil{\l}\}}{x+1} \frac{\ceil{\bn}\ceil{\l}}{\left(x+1\right)^2 - m_{\l}^2}\right],
\]
where we have absorbed the $2(k-1)$-factor into the constant $C_{\nu}$.  The logarithm is
\[
	\log P(\ceil{\bm},\ceil{\bk}+1) = \sum_{x=\ceil{\bm}+1}^{\ceil{\bk}}\log\left[1 + C_{\nu}\,\frac{\min\{\ceil{\bn},\ceil{\l}\}}{x+1}\frac{\ceil{\bn}\ceil{\l}}{\left(x+1\right)^2 - m_{\l}^2}\right]
\]
and the estimate $\log(1+x)\leq x$ for $x>0$ allows us to bound by
\[
\log P(\ceil{\bm},\ceil{\bk}+1)\leq\sum_{x=\ceil{\bm}+1}^{\ceil{\bk}}\left[C_{\nu}\,\frac{\min\{\ceil{\bn},\ceil{\l}\}}{x+1}\frac{\ceil{\bn}\ceil{\l}}{\left(x+1\right)^2 - m_{\l}^2}\right].
\]
We compare to an integral to obtain
\begin{align*}
\log P(\ceil{\bm},\ceil{\bk}+1)&\leq \left[C_{\nu}\,\frac{\min\{\ceil{\bn},\ceil{\l}\}\ceil{\bn}\ceil{\l}}{2m_\l^2}\log\left(\frac{(x+1)^2-m_{\l}^2}{(x+1)^2}\right) \right]_{\ceil{\bm}}^{\ceil{\bk}} \\
&\leq  C_{\nu}\,\frac{\min\{\ceil{\bn}^2,\ceil{\l}^2\}}{2m_\l}\log\left(\frac{(m_\l + 1)^2}{(m_\l +1)^2-m_\l^2}\right)
\end{align*}
After exponentiating, we have shown that 
\[
\Abs{\D^{\bm,\l}(\bk,\l)} \ll_{\nu} e^{\min\{\ceil{\bn}^2,\ceil{\l}^2\}}
\]
holds whenever $\ceil{\bk} \geq \ceil{\bm}+2$.  One sees that the same holds if $\ceil{\bk} = \ceil{\bm}+1$ by checking~\eqref{directly} directly.  

The second assertion is much easier because $\a_{e_k \pm e_j}(\bk,0)=0$. We have $\Abs{\D^{\bm,0}(\bm,0)}=1$ and 
\[
	\Abs{\D^{\bm,0}(\bm+ 2\ell \, e_k,0)} = \Abs{\a_{2e_k}(\bm+ (2\ell-2)e_k)}\Abs{\D^{\bm,0}(\bm+(2\ell-2)e_k,0)}
\]
for all $\ell \in\NN$, and $\D^{\bm,0}$ takes the value $0$ at all other points.  Lemma~\ref{alphas} immediately implies that $\Abs{\D^{\bm,0}(\bm+2\ell \,e_k,0)}\leq 1$.
\end{proof}

Now we can prove the following proposition, which shows that $\D^{\bm,\l}$ for $\bm \in \floor{M_\l}$ defines an element of $W^{-s}(\HH)$ for all $s > 1/2$.  Theorem~\ref{invdistthm} follows immediately.

\begin{proposition}\label{soborders}
For $(\bm,\l)$ with $\bm\in \floor{M_\l}$, $\D^{\bm,\l}$ is an $X$-invariant distribution of Sobolev order at most $1/2$.
\end{proposition}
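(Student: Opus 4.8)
The plan is to exhibit $\D^{\bm,\l}$ as a bounded linear functional on $W^s(\HH_{\bn,\nu})$ for every $s>1/2$ by a single Cauchy--Schwarz estimate, feeding in the pointwise bound already proved in Lemma~\ref{dmlbk} together with the fact that, once $\l$ is fixed, $M_\l$ is essentially a one-dimensional lattice. The $X$-invariance of $\D^{\bm,\l}$ is not in question here: for $\bm\in\floor{M_\l}$ this was recorded in the discussion preceding the proposition (namely, $\D^{\bm,\l}$ is the unique formal invariant distribution taking the value $1$ at $(\bm,\l)$ and $0$ at the other floor points, so it satisfies the partial difference equation~\eqref{distde} everywhere), so the whole content of the statement is the Sobolev regularity.

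First I would record that $\D^{\bm,\l}(\bk,\k)=0$ whenever $\k\neq\l$ (see Definitions~\ref{defeven}(\emph{iv}) and~\ref{defodd}(\emph{iii}), and the remark after Proposition~\ref{propall}). Hence for any $f=\sum_{\k\in\L}\sum_{\bk\in M_\k}f(\bk,\k)u(\bk,\k)$ one has $\D^{\bm,\l}(f)=\sum_{\bk\in M_\l}f(\bk,\l)\,\D^{\bm,\l}(\bk,\l)$, and writing each summand as $\bigl[(1+Q_{\bn,\nu}(\bk))^{s/2}f(\bk,\l)\bigr]\cdot\bigl[(1+Q_{\bn,\nu}(\bk))^{-s/2}\D^{\bm,\l}(\bk,\l)\bigr]$, Cauchy--Schwarz together with the orthonormality of $\{u(\bk,\l)\}$ and the formula for $\norm{f}_s^2$ from Section~\ref{sec:sobnorms} gives
\[
	\Abs{\D^{\bm,\l}(f)}^2 \leq \norm{f}_s^2\cdot\sum_{\bk\in M_\l}\frac{\Abs{\D^{\bm,\l}(\bk,\l)}^2}{(1+Q_{\bn,\nu}(\bk))^s}.
\]
So it is enough to prove that the last sum converges for $s>1/2$.

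Now I would bring in the two earlier estimates. By Lemma~\ref{dmlbk}, $\Abs{\D^{\bm,\l}(\bk,\l)}\ll_{\bn,\nu}1$ uniformly over $\bk\in M_\l$ (and in the case $\l\equiv 0$ one even has $\Abs{\D^{\bm,0}(\bk,0)}\leq 1$, though that refinement is not needed here); by the first part of Lemma~\ref{polynomials}, $1+Q_{\bn,\nu}(\bk)\asymp_{\bn,\nu}1+\ceil{\bk}^2$. The key structural point is that $M_\l$ is a \emph{one-ended rectangular cylinder} in $\ZZ^k$: for $\bk\in M_\l$ the coordinates $m_1,\dots,m_{k-1}$ are squeezed between consecutive entries of $\l$ (and of $\bn$) by the inequalities in Figure~\ref{figarray} and~\eqref{mcondition}, hence confined to a finite box depending only on $\bn$ and $\l$, while only $m_k=\ceil{\bk}$ is free and runs over $[m_\l,\infty)$. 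Therefore the number of $\bk\in M_\l$ with $\ceil{\bk}=R$ is bounded by a constant $C_{\bn,\l}$ independent of $R$, and
\[
	\sum_{\bk\in M_\l}\frac{\Abs{\D^{\bm,\l}(\bk,\l)}^2}{(1+Q_{\bn,\nu}(\bk))^s}\ \ll_{\bn,\nu}\ \sum_{R\geq m_\l}\frac{C_{\bn,\l}}{(1+R^2)^s},
\]
which converges exactly when $s>1/2$. This shows $\D^{\bm,\l}\in W^{-s}(\HH_{\bn,\nu})$ for every $s>1/2$, i.e. its Sobolev order is at most $1/2$; and since the $\D^{\bm,\l}$ with $\bm\in\floor{M_\l}$ are linearly independent (distinct value patterns on the countably infinite set of floor points) and span every $X$-invariant distribution of order $>1/2$ by Lemmas~\ref{lemmafixedl} and~\ref{lemmafixedlodd}, Theorem~\ref{invdistthm} follows at once.

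The genuinely substantive work sits upstream, in Lemma~\ref{dmlbk} (the uniform control of the transfer-type products $\prod_s\a_{h(s)}(p(s))$ summed over cocubic paths, which rests on the coefficient asymptotics of Lemma~\ref{alphas}); given that lemma, the only thing to watch inside the present proof is the bookkeeping on $M_\l$ — checking carefully that fixing $\l$ really does pin $m_1,\dots,m_{k-1}$ into a bounded region, so that the series over $\bk\in M_\l$ collapses to a one-variable $\zeta$-type series in $R=\ceil{\bk}$ with exponent $2s$. I do not expect any other obstacle: the argument is a routine Cauchy--Schwarz estimate once the pointwise bound and the geometry of $M_\l$ are in hand.
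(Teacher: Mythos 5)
Your argument is correct and follows essentially the same route as the paper: apply Cauchy--Schwarz against the weight $(1+Q(\bk))^{\pm s/2}$, invoke Lemma~\ref{dmlbk} for the uniform bound $\Abs{\D^{\bm,\l}(\bk,\l)}\ll_{\bn,\nu}1$, and observe the resulting tail series converges for $s>1/2$. The only difference is that you spell out the final convergence step explicitly (via Lemma~\ref{polynomials} and the bounded cross-sections of the cylinder $M_\l$), whereas the paper leaves that as an unexplained one-line assertion; your elaboration is the correct justification of what the paper takes for granted.
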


\begin{proof}
That $\D^{\bm,\l}$ is $X$-invariant follows from the fact that it satisfies the difference equation~\eqref{distde} for all $\bk\in M_\l$.  We need only find an upper bound on its Sobolev order.  Let $f \in W^{s}(\HH_{{\bn},\nu})$.  Then 
\begin{align}
	\abs{\D^{\bm,\l}(f)}^2 &= \Abs{\sum_{\k\in\L}\sum_{{\bk}\in M_\k}f({\bk},\k)\D^{\bm,\l}({\bk},\k)}^2 \nonumber \\
		&\leq \sum_{\k\in\L}\sum_{{\bk}\in M_\k}(1+Q({\bk}))^s \abs{f({\bk},\k)}^2 \,\sum_{\k\in\L}\sum_{{\bk}\in M_\k}\frac{\abs{\D^{\bm,\l}({\bk},\k)}^2}{(1+Q({\bk}))^s} \nonumber \\
		&= \norm{f}_{s}^2 \, \sum_{{\bk}\in M_\l}\frac{\abs{\D^{\bm,\l}({\bk},\l)}^2}{(1+Q({\bk}))^s}. \nonumber
\intertext{By Lemma~\ref{dmlbk},}
		&\ll_{\bn,\nu} \norm{f}_{s}^2 \,  \sum_{{\bk}\in M_\l} \frac{1}{(1+Q({\bk}))^s}. \label{conv}
\end{align}
The expression~\eqref{conv} converges as long as $s > \frac{1}{2}$.
\end{proof}


\section{Sobolev norm of formal solution and proof of Theorem~\ref{cobgeodflow}}\label{cobgeodflowproof}

We are now prepared to state a proof of Theorem~\ref{cobgeodflow}.  The proof consists of computing the Sobolev norm of $g$, the formal solution (provided by Proposition~\ref{propall}) to the coboundary equation $Xg=f$ in a non-trivial irreducible unitary representation $\pi: \SO^\circ(\en,1)\to\U(\HH_{\bn,\nu})$.  

\begin{proof}[Proof of Theorem~\ref{cobgeodflow}]
Proposition~\ref{propall} supplies a formal solution $g$ to the coboundary equation $Xg=f$.  We compute the Sobolev norm of $g$.
\begin{align}
	\norm{g}_{t}^2 &= \sum_{\l\in\L}\sum_{\bm\in M_\l} (1+Q(\bm))^t\Abs{\sum_{{\bk}\in M_\l}\frac{f({\bk,\l})}{A_k^-(\bm+e_k,\l)} \D^{\bm+e_k,\l}({\bk},\l)}^2, \nonumber
\intertext{which, by the Cauchy--Schwartz inequality,}
		&\leq \sum_{\l\in\L}\left[\sum_{{\bk}\in M_\l}(1+Q({\bk}))^s\abs{f({\bk,\l})}^2 \sum_{\bm\in M_\l}\sum_{{\bk}\in M_\l}\frac{(1+Q(\bm))^t}{(1+Q({\bk}))^s }\,\Abs{\frac{\D^{\bm+e_k,\l}({\bk},\l)}{A_k^-(\bm+e_k,\l)}}^2\right]. \label{returning}
\end{align}
From Lemmas~\ref{dmlbk} and~\ref{coeffseven}, we get 
\begin{multline*}
	\sum_{\bm<{\bk}\in M_\l}\frac{(1+Q(\bm))^t}{(1+Q({\bk}))^s }\,\Abs{\frac{\D^{\bm+e_k,\l}({\bk},\l)}{A_k^+(\bm,\l)}}^2 \\
	\ll_{\bn,\nu} \sum_{\bm<{\bk}\in M_\l}\frac{(1+Q(\bm))^t}{(1+Q({\bk}))^s }\,\frac{1}{\left(\ceil{\bm} -\ceil{\bn}+ 1\right)\left(\ceil{\bm} -\ceil{\l}+ 1\right)}.
\end{multline*}
Applying Lemma~\ref{polynomials}, this is bounded by
\[
	\ll_{\bn, \nu} \sum_{\bm<{\bk}\in M_\l}\frac{(1+\ceil{\bm}^2)^t}{(1+\ceil{\bk}^2)^s }\,\frac{1}{\left(\ceil{\bm} -\ceil{\bn}+ 1\right)\left(\ceil{\bm} -\ceil{\l}+ 1\right)},
\]
which in turn is controlled by
\begin{align*}
	&\ll_{\bn, \nu} \sum_{\bm<{\bk}\in M_\l}\frac{(1+\ceil{\bm}^2)^t}{(1+ \ceil{\bk}^2)^s }\,\frac{1}{\left(\ceil{\bm} + 1\right)\left(\ceil{\bm} -m_\l +1\right)} \\
	&\ll_{\bn, \nu,s,t} \sum_{\bm \in M_\l}\left(1 + \ceil{\bm}\right)^{2t-1}\,\sum_{\bk > \bm}\frac{1}{(1+\ceil{\bk})^{2s }}.
\end{align*}
Now, for any height $\ceil{\bk}$ there are at most $\Abs{\floor{M_\l}}$ elements of $M_\l$ counted in the last sum, so we have
\begin{align*}
	\ll_{\bn, \nu,s,t} \Abs{\floor{M_\l}} \sum_{\bm \in M_\l}\left(1 + \ceil{\bm}\right)^{2t-1}\,\sum_{i = \ceil{\bm}+1}\frac{1}{(1+i)^{2s}},
\end{align*}
which converges as long as $s > \frac{1}{2}$; we bound it by the integral $\int_{\ceil{\bm}}^{\infty}\frac{dx}{(1+x)^{2s}}$ to arrive at
\begin{align*}
	\ll_{\bn,\nu,s,t} \Abs{\floor{M_\l}} \sum_{\bm \in M_\l}\frac{\left(1 + \ceil{\bm}\right)^{2t-1}}{(1+\ceil{\bm})^{2s -1}}.
\end{align*}
Arguing in the same way for this sum over $\ceil{\bm}$'s leads to
\begin{align*}
	\ll_{\bn, \nu,s,t} \Abs{\floor{M_\l}}^2 \sum_{i = m_\l}^{\infty} \frac{1}{\left(1+i\right)^{2s- 2t}}
\end{align*}
and again we bound by an integral,
\begin{align*}
	\ll_{\bn, \nu, s,t} \Abs{\floor{M_\l}}^2 \left[\frac{1}{\left(1+m_\l\right)^{2s - 2t}}+\int_{m_\l}^{\infty} \frac{dx}{\left(1+x\right)^{2s - 2t}}\right].
\end{align*}
This converges as long as $t < s - \frac{1}{2}$, giving
\begin{align*}
	\ll_{\bn, \nu,s,t} \Abs{\floor{M_\l}}^2 \left[\frac{1}{\left(1+m_\l\right)^{2s - 2t}}+ \frac{1}{\left(1+m_\l\right)^{2s- 2t  - 1 }}\right].
\end{align*}
The volume of the floor $\floor{M_\l}$ is easily bounded by $(m_\l + 1)^{k-1}$, so we are left with
\begin{align*}
	\ll_{\bn, \nu,s,t}\, \left[\frac{ (m_\l+1)^{2(k-1)}}{\left(m_\l+1\right)^{2s - 2t  - 1}}\right]
\end{align*}
and as long as $s-t \geq k - \frac{1}{2}$, the above expression is $\ll_{\bn,\nu,s,t} 1$.  Therefore, we have shown that
\begin{align*}
\norm{g}_{t}^2 &\ll_{\bn,\nu,s,t} \sum_{\l\in\L} \sum_{{\bk}\in M_\l}(1+Q({\bk}))^s\abs{f({\bk,\l})}^2 \\
	&\ll_{\bn,\nu,s,t}\, \norm{f}_s^2
\end{align*}
and the theorem is proved.
\end{proof}

\subsection{A brief discussion of the dependence of Theorem~\ref{cobgeodflow} on the irreducible unitary representation}\label{dependencesection}

The dependence of Theorem~\ref{cobgeodflow} on $\bn$ and $\nu$ stands in the way of our using it to prove a result analogous to Theorem~\ref{two} for the flow of $X$ on $\SO^\circ (\en,1)/\G$.  From our point of view, the main obstacle to relaxing this dependence is in Lemma~\ref{dmlbk}, where the constant $C_{\bn,\nu}>0$ (associated to the $\ll_{\bn,\nu}$ therein) grows like $e^{\min\{\ceil{\bn}^2,\ceil{\l}^2\}}$, meaning that if we wanted to use Theorem~\ref{cobgeodflow} to treat the aforementioned homogeneous flow, we would need not only a spectral gap for the Laplacian $\Delta$ on $\mathbb{K}(\en)\backslash \mathbb{G}(\en)/\G$ corresponding to some gap parameter $\nu_0\in\left[0,\frac{\en-1}{2}\right)$, which we have, but also a ``spectral \emph{cap}'' for the Laplacian on $\mathbb{M}(\en)\backslash \mathbb{G}(\en)/\G$ corresponding to a cap on $\ceil{\bn}$, which is an unrealistic assumption.  However, this exponential growth in our constant $C_{\bn,\nu}$ from Lemma~\ref{dmlbk} seems more a side-effect of our method of proof than an intrinsic facet of the problem. For example, notice that one of the first steps in the argument for that lemma is an application of the triangle inequality which practically conceals the fact that the distributions $\D^{\bm,\l}$ are defined by \emph{alternating} sums. It is conceivable that a deeper examination of these sums (taking into account their alternation) may yield more manageable growth in $C_{\bn,\nu}$. We mention this only as a speculation; it is of course unknown whether a statement like Theorem~\ref{two} should even hold for the flow of $X$ on $\SO^\circ(\en,1)/\G$. (After all, that flow is not Anosov, so we do not have the advantage of hindsight from the Liv\v{s}ic Theorem as we have for geodesic flows.)

In the next section we will see that these worries disappear once we are dealing with the flow of $X$ on $\mathbb{M}(\en)\backslash\SO^\circ(\en,1)/\G$, the unit tangent bundle of a finite-volume hyperbolic manifold.  This is because we will now be dealing only with $\mathbb{M}(\en)$-invariant elements of representations of $\SO^\circ(\en,1)$ and, it turns out, the only irreducible representations containing such elements are those where $\bn=(0,\dots,0,\ceil{\bn})$ and, furthermore, the coefficients of such elements are only non-zero on basis vectors $\{u(\bm,0)\}$. Hence, we may use the second part of Lemma~\ref{dmlbk}, proved for exactly this situation.


\section{Geodesic flows of finite-volume hyperbolic manifolds and proof of Theorem~\ref{two}}

We now turn our attention to geodesic flows of finite-volume hyperbolic manifolds. It is well-known that, under the identifications we have made, a hyperbolic $\en$-manifold is identified with $\mathbb{K}(\en)\backslash \mathbb{G}(\en)/\G$, where $\G\subset \mathbb{G}(\en):=\SO^\circ(\en,1)$ is some lattice. The geodesic flow of this manifold is the flow on its unit tangent bundle $\mathbb{M}(\en)\backslash \mathbb{G}(\en)/\G$ along the vector field $X$. 

In this section we set some important facts about $\mathbb{M}(\en)$-invariant elements of representations of $\SO^\circ(\en,1)$, and we prove Theorem~\ref{two}.

\subsection{$M$-invariant elements of representations}\label{firstminvelementssection}

Theorem~\ref{two} is about elements of $\Lii(\mathbb{M}(\en)\backslash \SO^\circ(\en,1)/\G)$. Since there is no representation of $\SO^\circ (\en,1)$ on this space, we cannot just apply what we have developed directly to this scenario.  Instead, we consider the subspace $\Lii(\SO^\circ(\en,1)/\G)^M$ of $M$-invariant elements of $\Lii(\mathbb{G}(\en)/\G)$, and make the identification 
\[
	\Lii(\mathbb{M}(\en)\backslash \SO^\circ(\en,1)/\G) \longleftrightarrow \Lii(\SO^\circ(\en,1)/\G)^M
\]
by
\[
	f(Mg\G) \longleftrightarrow \bar f (g\G).
\]
The operator $\Delta$ commutes with $M$, and we use it to define Sobolev norms on $\Lii(M\backslash G/\G)$.

\begin{lemma}\label{kernelmap}
Under the identification $\Lii(M\backslash G/\G)\longleftrightarrow\Lii(G/\G)^M$, 
\[
\ker\II_X^s (M\backslash G/\G)\hookrightarrow \ker\II_X^s (G/\G).
\]
\end{lemma}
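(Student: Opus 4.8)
The plan is to show that restricting a distribution to the $M$-invariant vectors carries $\II_X^s(G/\G)$ into $\II_X^s(M\backslash G/\G)$; the asserted inclusion of kernels is then immediate. Write $\HH := \Lii(G/\G)$ and identify $\Lii(M\backslash G/\G)$ with the closed subspace $\HH^M = \Lii(G/\G)^M$ of $M$-invariant vectors of $\HH$. Since $\Delta$ commutes with $M$ (Section~\ref{specialops}), the subspace $\HH^M$ is $\Delta$-invariant, so $W^s(\Lii(M\backslash G/\G)) = W^s(\HH^M)$ equals $W^s(\HH)\cap\HH^M$ with the norm induced from $W^s(\HH)$; equivalently, the orthogonal projection $P_M := \int_M \varrho(m)\,dm$ onto $\HH^M$ is bounded on every $W^s(\HH)$. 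This is exactly the Sobolev structure used to define $\II_X^s(M\backslash G/\G)$, so the restriction to $\HH^M$ of any functional in $W^{-s}(\HH)$ lands in $W^{-s}(\HH^M)$.

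Next I would record the one point that actually uses the structure of the problem: if $h\in\Cinf(\HH)$ is $M$-invariant, then so is $Xh$. This is where $M = \mathbb{M}(\en) = Z_{\mathbb{K}(\en)}(X)$ enters: for $m\in M$ we have $\mathrm{Ad}(m)X = X$, hence $\varrho(m)$ commutes with the operator $X$ (equivalently, $P_M$ commutes with $X$, or: the geodesic flow descends to $M\backslash G/\G$). Granting this, let $\D\in\II_X^s(G/\G)$, i.e. $\D\in W^{-s}(\HH)$ with $\D(Xh)=0$ for every $h\in\Cinf(\HH)$. Its restriction $\D^M := \D|_{\HH^M}$ lies in $W^{-s}(\HH^M)$, and for every $M$-invariant $h\in\Cinf(\HH)$ — for which $Xh$ is again a smooth vector of $\Lii(M\backslash G/\G)$ — we get $\D^M(Xh) = \D(Xh) = 0$. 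Thus $\D^M\in\II_X^s(M\backslash G/\G)$.

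To finish, I would take $\bar f\in\ker\II_X^s(M\backslash G/\G)\subset\HH^M$. For any $\D\in\II_X^s(G/\G)$, since $\bar f\in\HH^M$ we get $\D(\bar f) = \D^M(\bar f) = 0$, because $\D^M\in\II_X^s(M\backslash G/\G)$ annihilates $\bar f$ by hypothesis. Hence $\bar f\in\ker\II_X^s(G/\G)$, which is the claim. I do not anticipate a serious obstacle: the argument is bookkeeping whose only inputs are the commutation $\mathrm{Ad}(M)X = X$ and the $\Delta$-compatibility of $\HH^M$, and the inclusion runs in the ``easy'' direction because restriction manifestly preserves $X$-invariance — although $\II_X^s(G/\G)$ is in general far larger than the image of $\II_X^s(M\backslash G/\G)$, each of its members collapses, upon restriction to $\HH^M$, to an element of the latter.
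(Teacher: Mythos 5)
Your proposal is correct and follows the same route as the paper's proof: a distribution in $\II_X^s(G/\G)$ restricts (via the identification $\Lii(M\backslash G/\G)\leftrightarrow\Lii(G/\G)^M$) to a distribution on $M\backslash G/\G$, and $X$-invariance is preserved because $X$ commutes with $M$, so any $\bar f$ in $\ker\II_X^s(M\backslash G/\G)$ kills each such restriction. You simply spell out what the paper leaves implicit — that $M = Z_{\mathbb{K}(\en)}(X)$ gives $\mathrm{Ad}(m)X = X$ and hence the commutation of $\varrho(m)$ with $X$, and that $\Delta$ commuting with $M$ makes $W^s(\HH^M)$ the intersection $W^s(\HH)\cap\HH^M$ so the restriction of a $W^{-s}$-functional really lands in $W^{-s}(\HH^M)$ — both of which are set up earlier in the paper (Sections~\ref{resultssec} and~\ref{specialops}).
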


\begin{proof}
Any distribution $\bar\D \in W^{-s}(G/\G)$ descends to a distribution $\D \in W^{-s}(M\backslash G/\G)$ defined by $\D(f) = \bar\D(\bar f)$, where $\bar f \leftrightarrow f$ in the identification above. Since $X$ commutes with $M$, $X$-invariance is preserved in this descent. So, if $f \in \ker\II_X^s(M \backslash G/\G)$, then for any $\bar\D\in\II_X^s(G/\G)$, we have $\bar\D(\bar f) = \D(f) = 0$.
\end{proof}

The next proposition tells us for which values of $\bn$ and $\nu$ the irreducible representation on $\HH_{\bn,\nu}$ admits $\mathbb{M}(\en)$-invariant elements, and furthermore it tells us which are the $\mathbb{M}(\en)$-invariant elements of those representations.

\begin{proposition}\label{minvelements}
The element $f = \sum f(\bm,\l)$ of the representation $\SO^\circ(\en,1)\to\HH_{\bn,\nu}$ is $\mathbb{M}(\en)$-invariant if and only if $f(\bm,\l)=0$ whenever $\l \not\equiv 0$.  In particular, the only irreducible unitary representations of $\SO^\circ(\en,1)$ which admit $\mathbb{M}(\en)$-invariant elements are those where $\bn=(0,\dots,0,\ceil{\bn})$.
\end{proposition}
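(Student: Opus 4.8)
The plan is to exploit the fact that the orthonormal basis $\{u(\bm,\l)\}$ of $\HH_{\bn,\nu}$ described in Sections~\ref{basiseven} and~\ref{basisodd} is a Gelfand--Cejtlin basis, hence is adapted to the whole chain of compact subgroups $\SO(\en)\supset\SO(\en-1)=\mathbb{M}(\en)\supset\SO(\en-2)\supset\dots\supset\SO(2)$. Concretely, inside the $\SO(\en)$-subrepresentation $\HH_{\bm}$ every vector $u(\bm,\l)$ lies in the $\mathbb{M}(\en)$-irreducible subspace whose highest weight is the top row $\l^{(\en-2)}$ of the array $\l$ (for $\en=2k$ this is the row with $k-1$ entries, for $\en=2k+1$ the row with $k$ entries, in both cases the row whose last entry we have been denoting $\ceil{\l}$; these have exactly the right length to be a highest weight of $\SO(\en-1)$). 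I would first record this as a consequence of the classical Gelfand--Cejtlin theory of~\cite{GC50} together with the identification of the $u(\l)$ in~\cite{Hir1,Hir2,Hir3,Hir4}: it says precisely that the $\mathbb{M}(\en)$-isotypic decomposition of $\HH_{\bn,\nu}$ refines the decomposition $\HH_{\bn,\nu}=\bigoplus_{\bm,\l}\CC\,u(\bm,\l)$, and that the $\mathbb{M}(\en)$-trivial isotypic component is the closed span of those $u(\bm,\l)$ with $\l^{(\en-2)}=0$.

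Next I would observe that the betweenness relations encoded in Figure~\ref{figarray} force the top row of $\l$ to vanish \emph{only if} all of $\l$ vanishes: if $\l_j^{(\en-2)}=0$ for every $j$, then the double-arrow ($\abs{\,\cdot\,}\leq$) and single-arrow ($\leq$) constraints linking row $\en-2$ to row $\en-3$ squeeze every entry of row $\en-3$ between $0$ and $0$, hence to $0$, and iterating downward through the array gives $\l\equiv 0$. Combining this with the previous paragraph, the $\mathbb{M}(\en)$-fixed subspace of $\HH_{\bn,\nu}$ is exactly $\overline{\mathrm{span}}\{u(\bm,0)\mid \bm\in M_{\bn},\ 0\in\L_{\bm}\}$; note that each such $u(\bm,0)$ spans a copy of the trivial $\SO(\en-1)$-representation and is therefore genuinely $\mathbb{M}(\en)$-fixed, which gives the reverse inclusion. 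Expanding a general $f=\sum f(\bm,\l)u(\bm,\l)$ in this orthonormal basis and taking inner products against the $u(\bm,\l)$ then yields the first assertion: $f$ is $\mathbb{M}(\en)$-invariant if and only if $f(\bm,\l)=0$ whenever $\l\not\equiv 0$.

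For the ``in particular'' clause I would run the same squeezing argument one level higher. The array $\l\equiv 0$ belongs to $\L_{\bm}$ if and only if the relations in Figure~\ref{figarray} between $\bm$ and the (now identically zero) top row of $\l$ are satisfiable, and the same sandwiching forces $m_1=\dots=m_{k-1}=0$, i.e.\ $\bm=(0,\dots,0,\ceil{\bm})$. In turn such a $\bm$ can lie in $M_{\bn}$ only if the interlacing~\eqref{mcondition} between $\bn$ and $\bm=(0,\dots,0,\ceil{\bm})$ holds, which squeezes $\bn$ into the form $(0,\dots,0,\ceil{\bn})$; conversely, for $\bn$ of this shape the tuple $\bm=(0,\dots,0,\ceil{\bn})$ does lie in $M_{\bn}$ and admits $\l\equiv 0$, so $u(\bm,0)$ is a nonzero $\mathbb{M}(\en)$-invariant vector. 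Hence the irreducible unitary representations carrying $\mathbb{M}(\en)$-invariant elements are exactly those with $\bn=(0,\dots,0,\ceil{\bn})$.

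The only genuinely delicate point is the first step: being sure that the vectors $u(\l)$ of the cited sources really are the Gelfand--Cejtlin vectors for the compact chain, so that the top row of $\l$ is literally the $\SO(\en-1)$-highest weight of the line $\CC\,u(\bm,\l)$. Once that bookkeeping is settled, everything else is the elementary interlacing/squeezing argument above; there is no analytic content here, since the whole proof takes place inside the restriction of $\HH_{\bn,\nu}$ to the compact group $\SO(\en)$.
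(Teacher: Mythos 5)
Your proof is correct and takes essentially the same route as the paper's, which also argues via the observation that the top row $\l^{(\en-2)}$ of a Gelfand--Cejtlin array is the highest weight of the $\mathbb{M}(\en)$-irreducible containing $u(\bm,\l)$, so $\mathbb{M}(\en)$-fixed vectors are supported on the trivial ($\l^{(\en-2)}=0$) constituents, and then uses the interlacing inequalities to propagate zeros down to all of $\l$ and over to $\bm$ and $\bn$. The only difference is that you make explicit the squeezing step that $\l^{(\en-2)}\equiv 0$ forces $\l\equiv 0$, which the paper leaves implicit (it passes directly from ``highest weight $0$'' to ``$f(\l)=0$ whenever $\l\not\equiv 0$''); this is a harmless expansion, not a new idea.
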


\begin{proof}
This is easily read from the action of the Lie algebra of $M$ on the basis $\{u(\bm,\l)\}$ presented in~\cite{Hir1}. Another way to see it is to note that the top rows of the arrays $\l$ appearing under $\bm$ give the highest weights for the irreducible representations of $\mathbb{M}(\en)$ appearing in the restricted representation $\mathbb{M}(\en)\to\HH_{\bm}$, and $\mathbb{M}(\en)$-invariant elements lie in the irreducible representation of $\mathbb{M}(\en)$ with highest weight $0$, meaning that for a non-zero element $f \in \HH_{\bm}$ to be $\mathbb{M}(\en)$-invariant, it must have $f(\l)=0$ whenever $\l\not\equiv 0$. For it to be possible that $\l\equiv0$ appears under $\bm$, we must have $\bm = (0,\dots,0,\ceil{\bm})$, according to the inequalities indicated in Figure~\ref{figarray}, and this forces $\bn = (0,\dots,0,\ceil{\bn})$.
\end{proof}

\subsection{Proof of Theorem~\ref{two}}\label{twoproof}

\begin{proof}[Proof of Theorem~\ref{two}]
Let $f \in \ker\II_X^s (M\backslash G/\G)$, as required by the theorem statement.  The space $\Lii(M\backslash G/\G)$ is identified with the space $\Lii(G/\G)^M$ of $M$-invariant $\Lii$-functions on $G/\G$, so let us take $\bar f \in \Lii(G/\G)^M$ to be the image of $f$ in this identification.  By Lemma~\ref{kernelmap}, $\bar f$ lies in $\ker\II_X^s(G/\G)$, so its image in the direct integral decomposition
\begin{equation}\label{decomposition}
\Lii(G/\G) = \int_{\RR}^{\oplus}\HH_\s \,ds(\s)
\end{equation}
is of the form
\[
\bar f = \int_{\RR}^{\oplus}\bar f_\s \, ds(\s)
\]
where $\bar f_{\s} \in \ker\II_X^s (\HH_\s)$ for $ds$-almost every $\s$.  Furthermore, by Proposition~\ref{minvelements}, $\bar f_\s$ is only non-zero when $\HH_\s$ is of the form $\HH_{\bn,\nu}$ with $\bn=(0,\dots,0,\ceil{\bn})$, and for these $\s$, we know that $\bar f_{\s}(\bm,\l)=0$ whenever $\l\not\equiv 0$.  Therefore, the formal solution
\begin{align*}
\bar g_{\s} (\bm-e_k,\l) =
\begin{cases}
	\displaystyle{\sum_{{\bk}\in M_\l}\frac{\bar f_{\s}({\bk,\l})}{A_k^-(\bm,\l)} \D^{\bm,\l}({\bk},\l)}, &\textrm{if}\quad \en=2k, \textrm{ and} \\
	\displaystyle{\sum_{{\bk}\in M_\l}\frac{\bar f_\s ({\bk,\l})}{B_k^-(\bm,\l)} \D^{\bm,\l}({\bk},\l)}, &\textrm{if}\quad \en=2k+1
\end{cases}
\end{align*}
to the coboundary equation that we get from Proposition~\ref{propall} can be significantly simplified, because most of the terms are $0$.  In particular, notice that $\bar g_\s (\bm,\l)=0$ whenever $\l\not\equiv 0$, meaning that $\bar g_\s$ will be $M$-invariant once we show that the expressions defining it converge.  

Following the calculations in the proof of Theorem~\ref{cobgeodflow}, we find the Sobolev norms of $\bar g_\s \in \HH_\s$.  By definition,
\[
	\norm{\bar g_\s}_t^2 = \sum_{\l\in\L}\sum_{\bm\in M_\l}\left(1+Q_\s(\bm)\right)^t \Abs{\sum_{{\bk}\in M_\l}\frac{\bar f_{\s}({\bk,\l})}{A_k^-(\bm+e_k,\l)} \D^{\bm+e_k,\l}({\bk},\l)}^2.
\]
But the above observations allow us to disregard all terms except those with $\l\equiv 0$, so
\[
	\norm{\bar g_\s}_t^2 = \sum_{\bm\in M_0}\left(1+Q_\s(\bm)\right)^t \Abs{\sum_{{\bk}\in M_0}\frac{\bar f_{\s}({\bk,0})}{A_k^+(\bm,0)} \D^{\bm+e_k,0}({\bk},0)}^2.
\]
By the Cauchy--Schwartz inequality, this expression is bounded by
\[
\leq \sum_{{\bk}\in M_0}(1+Q_\s({\bk}))^s\abs{\bar f_\s ({\bk,0})}^2 \sum_{\bm\in M_0}\sum_{{\bk}\in M_0}\frac{(1+Q_\s (\bm))^t}{(1+Q_\s ({\bk}))^s }\,\Abs{\frac{\D^{\bm+e_k,\l}({\bk},0)}{A_k^+(\bm,0)}}^2
\]
and we see that the first sum is exactly the Sobolev norm of $\bar f_\s$ so that we can write
\[
\leq \norm{\bar f_\s}_s^2 \sum_{\bm < \bk \in M_0}\frac{(1+Q_\s (\bm))^t}{(1+Q_\s ({\bk}))^s }\,\Abs{\frac{\D^{\bm+e_k,\l}({\bk},0)}{A_k^+(\bm,0)}}^2
\]
and concern ourselves with bounding the second sum.  For this, we use Lemmas~\ref{coeffseven},~\ref{coeffsodd},~\ref{polynomials} and~\ref{dmlbk}, leaving
\[
\ll_{\nu_{\s}} \norm{\bar f_\s}_s^2 \sum_{\bm < \bk \in M_0}\frac{(1+\tilde\nu +2\ceil{\bm}^2 - \ceil{\bn}^2)^t}{(1+ \tilde\nu +2\ceil{\bk}^2- \ceil{\bn}^2)^s }\,\frac{1}{(\ceil{\bm}+1)^2-\ceil{\bn}^2},
\]
where $\nu_\s$ is a spectral gap parameter for the representation $\HH_\s$.  We re-write this as
\[
\ll_{\nu_{\s}} \norm{\bar f_\s}_s^2 \sum_{\bm \in M_0}\frac{(1+\tilde\nu +2\ceil{\bm}^2 - \ceil{\bn}^2)^t}{(\ceil{\bm}+1)((\ceil{\bm}+1)^2-\ceil{\bn}^2)}\,\sum_{\ceil{\bk}=\ceil{\bm}+1}^{\infty}\frac{\ceil{\bk}}{(1+ \tilde\nu +2\ceil{\bk}^2- \ceil{\bn}^2)^s},
\]
where we have introduced a factor of $\frac{\ceil{\bk}}{\ceil{\bm}+1}\geq 1$ because it is convenient for the next step.\footnote{One may be able to prove the theorem for $s >1/2$ and $t < s-\frac{1}{2}$ by being more delicate with this estimate.} The second sum is now bounded by the integral
\[
	\sum_{\ceil{\bk}=\ceil{\bm}+1}^{\infty}\frac{\ceil{\bk}}{(1+ \tilde\nu +2\ceil{\bk}^2- \ceil{\bn}^2)^s} \leq \int_{\ceil{\bm}}^{\infty}\frac{x\,dx}{(1+ \tilde\nu +2x^2- \ceil{\bn}^2)^s}
\]
so we can conclude that
\[
\ll_{\nu_{\s},s} \norm{\bar f_\s}_s^2 \sum_{\bm \in M_0}\frac{(1+\tilde\nu +2\ceil{\bm}^2 - \ceil{\bn}^2)^{t-s+1}}{(\ceil{\bm}+1)((\ceil{\bm}+1)^2-\ceil{\bn}^2)}.
\]
We can remove $\tilde\nu$ because it is always positive and because $t\leq s-1$.  This leaves us with 
\[
\ll_{\nu_{\s},s} \norm{\bar f_\s}_s^2 \sum_{\bm \in M_0}\frac{(1+ 2\ceil{\bm}^2 - \ceil{\bn}^2)^{t-s+1}}{(\ceil{\bm}+1)((\ceil{\bm}+1)^2-\ceil{\bn}^2)}.
\]
This sum converges, and is in fact bounded over possible values of $\ceil{\bn} \geq 0$ by some constant that only depends on the choice of $t$ and $s$ satisfying $t \leq s-1$, proving that 
\begin{equation}\label{irreducibletwo}
\norm{\bar g_\s}_t \ll_{\nu_\s, s, t} \norm{\bar f_\s}_s.
\end{equation}
We have thus proved an ``irreducible representations''-version of the theorem.

The next step is simply to notice that the spectral gap parameter $\nu_\s$ can be chosen uniformly across all representations appearing in the decomposition~\eqref{decomposition}:~Recall from Section~\ref{spectralgapsection} that, thanks to~\cite[Lemma~$3$]{Bek98}, there exists $\nu_0\in\left[0,\frac{\en-1}{2}\right)$ that is closer to $\frac{\en-1}{2}$ than any $\nu$ appearing in the direct integral decomposition of the left-regular representation of $G$ on $\Lii_0 (G/\G)$. We therefore have a solution
\[
	\bar g = \int_{\RR}^{\oplus}\bar g_\s \, ds(\s)
\]
to the coboundary equation $X\bar g = \bar f$ for the regular representation of $G$ on $\Lii(G/\G)$, satisfying
\[
	\Norm{\bar g}_t \ll_{\nu_0, s,t} \Norm{\bar f}_s
\]
for any $t\leq s-1$.  Furthermore, $\bar g$ is by construction $M$-invariant, so is a member of $\Lii(G/\G)^M$, and factors to a solution $g \in \Lii(M\backslash G/\G)$ to the coboundary equation $Xg=f$ on $M\backslash G/\G \cong S\mathcal{M}$ of Sobolev order at least $s-1$, and satisfying the same Sobolev estimate for every $t\leq s-1$.
\end{proof}

\subsection{A brief discussion of the relationship between Theorem~\ref{two} and the Liv\v{s}ic Theorem}

It is tempting to see this as a representation-theoretic approach to the Liv\v{s}ic Theorem.  Invariant measures supported on closed orbits of $\phi^t$ are themselves invariant distributions, and it is known that if $\mathcal{M}$ is compact then one can approximate any invariant distribution arbitrarily well in the weak topology by taking linear combinations of closed orbit measures.  Therefore, Theorem~\ref{two} implies the smooth version of the Liv\v{s}ic Theorem for geodesic flows of compact hyperbolic manifolds.  Of course, this is cheating, because the reason that we know that combinations of closed orbit measures approximate invariant distributions is that the Liv\v{s}ic Theorem is true.  Still, the fact remains that if one can prove that for \emph{every} lattice $\G\subset \SO^\circ(\en,1)$ the set of linear combinations of closed orbit measures of the geodesic flow on $\mathbb{M}(\en)\backslash \mathbb{G}(\en)/\G$ is weakly dense in the space of invariant distributions $\II_X(\Lii(\mathbb{M}(\en)\backslash \mathbb{G}(\en)/\G))$, then Theorem~\ref{two} implies a smooth version of the Liv\v{s}ic Theorem for finite-volume (not necessarily compact) hyperbolic manifolds, and functions in $\Cinf(\Lii(\mathbb{M}(\en)\backslash \mathbb{G}(\en)/\G))$.\footnote{We will come back to a similar discussion for $\RR^d$-actions in Part~\ref{partii}, Section~\ref{whynokk}, where the relationship between closed orbit measures and invariant distributions is not even known for compact manifolds.}

Already, if one restricts attention to bounded Lipschitz functions, there is a version of the Liv\v{s}ic Theorem for the geodesic flow of a hyperbolic manifold with cusps, due to T.~Foth and S.~Katok~\cite[Section~$3.1$]{FK01}, which is proved with dynamical techniques.  Though similar techniques are likely to lead to a smooth version, we point out that our Theorem~\ref{two} automatically implies that the solutions in the Foth--Katok statement of the Liv\v{s}ic Theorem are smooth, proving Theorem~\ref{livsicthm}.


\part{Higher cohomology of higher-rank Anosov actions}\label{partii}

\section{Preliminaries for Part~\ref{partii}}\label{preliminariesii}

Recall our definitions in Section~\ref{preliminaries} for higher-degree cohomology. We introduce here the same definitions in the context of unitary representations
\[
	\SO^\circ (\en_1,1)\times\dots\times\SO^\circ (\en_d,1)\longrightarrow\U(\HH),
\]
and for Sobolev spaces of such representations. To wit, $\O_{\RR^d}^n (W^s(\HH))$ denotes the set of $n$-forms over our $\RR^d$-action on the Hilbert space $\HH$ of Sobolev order $s$.  By an \emph{$n$-form of Sobolev order $s$ over the $\RR^d$-action on $\HH$}, we mean a map
\[
\o:(\mathrm{Lie}(\RR^{d}))^n \rightarrow W^{s}(\HH)
\] 
that is linear and anti-symmetric.  The exterior derivative $\di$, taking a form to another form of one higher degree, is defined by 
\begin{align*}
	\di\o(V_{1},\dots,V_{n+1}) &:= \sum_{j=1}^{n+1} (-1)^{j+1}\,V_{j}\,\o(V_1,\dots,\widehat{V_{j}},\dots, V_{n+1}),
\end{align*}  
where ``$\quad \widehat{}\quad$'' denotes omission.  The form $\di\o$ takes values in a lower Sobolev space $W^{s-1}(\HH)$.

It is natural to see $\o$ as an element $\o \in W^{s}(\HH)^{\binom{d}{n}}$, indexed by ordered $n$-tuples from the set $\left\{X_1, \dots, X_d\right\}$.  The form $\o$ is said to be \emph{closed}, and is called a \emph{cocycle}, if $\di\o=0$, or
\begin{align*}
	\di\o(X_{I}) &:= \sum_{j=1}^{n+1} (-1)^{j+1}\,X_{i_{j}}\,\o(X_{I_{j}}) = 0,
\end{align*}  
where $I:=(i_{1},\dots,i_{n+1})$ with $i_j \in \left\{1,\dots,d \right\}$ is the multi-index, and $I_{j}:=(i_1,\dots, \widehat{i_{j}}, \dots, i_n)$.  It is \emph{exact}, and is called a \emph{coboundary}, if there is an $(n-1)$-form $\eta$ satisfying $\di\eta=\o$.  Two forms that differ by a coboundary are \emph{cohomologous}.

Notice that if $n=d$, then $\o \in \O_{\RR^d}^n(W^s (\HH))$ is given by just one element 
\[
	\o(X_1,\dots,X_d) = f \in W^{s}(\HH); 
\]
it is automatically closed, and exactness is characterized by the existence of a $(d-1)$-form $\eta$ satisfying $\di\eta=\o$.  Or, setting $\eta(X_{I_j}) = (-1)^{j+1}g_{j}$,
\begin{align*}
	\di\eta(X_1,\dots,X_d) &= \sum_{j=1}^{d} (-1)^{j+1}\,X_{j}\,\eta(X_{I_j}) \\
			&= \sum_{j=1}^{d} X_{j}\,g_j = f.
\end{align*}
This is exactly the top-degree coboundary equation in Theorem~\ref{one}. 

We introduce the following norm on $\O_{\RR^d}^{n}(W^s(\HH))$ that will allow for more concise theorem statements.  For $\o \in \O_{\RR^d}^{n}(W^s(\HH))$, 
\[
	\norm{\o}_s^2 := \sum_{1\leq i_1 <\dots<i_n\leq d} \norm{\o(X_{i_1},\dots,X_{i_n})}_s^2.
\]
We trust that no confusion will arise from using $\norm{\cdot}_s$ to denote this norm. It should be clear from context whether we mean the norm on $\O_{\RR^d}^{n}(W^s(\HH))$ or on $W^s(\HH)$.  

\section{Results for higher-rank Anosov actions}\label{higherdegreeresults}

The aim of Part~\ref{partii} is to prove Theorem~\ref{one}.  We break this task into different ``pieces.''  Theorem~\ref{generaltopdegree} below corresponds to the ``top degree'' part of Theorem~\ref{one}, while Theorem~\ref{generallowerdegree} corresponds to the ``lower degrees'' part.  Both of these theorems are stated in a style similar to Theorem~\ref{two}.  That is, they hold for elements of Sobolev spaces.  Together, they imply Theorem~\ref{one} in the case where all $\en_i \geq 3$.

\begin{theorem}[Sobolev spaces version of Theorem~\ref{one} in top degree]\label{generaltopdegree}
Let $\mathcal{M}$ be a finite-volume irreducible quotient of the product $\Hyp^{\en_1}\times\dots\times\Hyp^{\en_d}$ by a discrete group of isometries, where $\en_i \geq 3$, and let $S\mathcal{M}$ be its unit tangent bundle.  Consider the standard Anosov action $\RR^d \curvearrowright S\mathcal{M}$. Given any $s>1$ and $t\leq s-1$, there are a constant $C_{s,t}>0$ and a number $\s_d(s):=\s_d$ (depending on $s$ and $d$) such that if $f \in W^{\s_d}(S\mathcal{M})$ is in the kernel of every $\RR^d$-invariant distribution of Sobolev order $\s_d$, then there exist functions $g_1,\dots,g_d \in W^t (S\mathcal{M})$ satisfying the degree-$d$ coboundary equation
	\[
		X_1\,g_1 + \dots + X_d\,g_d = f,
	\]
and the Sobolev estimates $\norm{g_i}_t \leq C_{s,t}\,\norm{f}_{\s_d}$.
\end{theorem}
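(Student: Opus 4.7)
The plan is to prove this by induction on the rank $d$. The base case $d=1$ is Theorem~\ref{two}; more precisely, one uses its $M$-invariant, representation-theoretic variant (essentially Theorem~\ref{cobgeodflow} applied to $M$-invariant vectors of the left-regular representation), which Part~\ref{parti} develops. The inductive step runs in parallel with Theorem~\ref{generallowerdegree}, since the rank-$(d-1)$ lower-degree statement will feed back into the top-degree step in rank $d$.

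Fix a factor, say $G_d = \SO^\circ(\en_d,1)$, and decompose $\HH := \Lii(S\mathcal{M})$ under the restricted action of $G_d$ as $\HH = \HH^{G_d} \oplus \HH_{G_d,0}$, where $\HH_{G_d,0}$ has spectral gap by Theorem~\ref{spectralgapthm}. Since $\G$ is irreducible, Moore's ergodicity theorem gives $\HH^{G_d} = \CC$, so the hypothesis that $f$ is killed by the integration functional (itself an $\RR^d$-invariant distribution) reduces us to solving $X_1 g_1 + \dots + X_d g_d = f_0$ with $f_0 \in \HH_{G_d,0}$. I then find $g_d$ by inverting $X_d$ on $\HH_{G_d,0}$ using the direct integral of the base case, after absorbing the obstructions to $X_d$-inversion into correction terms $X_1 g_1 + \dots + X_{d-1} g_{d-1}$ supplied by the rank-$(d-1)$ inductive hypothesis (top degree and lower degree as needed). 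The spectral gap on $\HH_{G_d,0}$ keeps the base-case constants uniform across the direct integral, so the individual solutions assemble into a genuine $g_d \in W^t(\HH_{G_d,0})$. Tracking Sobolev losses, I set $\s_d = \s_{d-1} + c$ where $c$ depends only on $\max_i \en_i$, yielding a finite recursion after $d$ iterations; the constants $C_{s,t}$ are uniform in the representations thanks to Theorem~\ref{spectralgapthm}.

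The main obstacle, I expect, is the upgrading step: translating the hypothesis that $f$ is killed by all $\RR^d$-invariant distributions into the statement that the modified right-hand side is killed by all $X_d$-invariant distributions on $\HH_{G_d,0}$. This requires showing that every $X_d$-invariant distribution on $\HH_{G_d,0}$ of moderate Sobolev order can be built from (or averaged into) an $\RR^d$-invariant distribution on $\HH$, via an averaging across the $X_1, \dots, X_{d-1}$ directions that leverages both the spectral gap and the lower-rank inductive hypothesis. This has to be carried out carefully so as not to depend circularly on the statement being proved, which is why the induction must intertwine top-degree and lower-degree statements at each rank.
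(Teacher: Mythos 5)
Your broad strategy (induction on $d$, split $f$ into a piece handled by $X_d$ and a piece handled by the other directions, use spectral gap to keep constants uniform) is the right skeleton, and your identification of the ``main obstacle'' is accurate. But the paper's actual resolution of that obstacle is quite different from the averaging idea you sketch, and the sketch as written has a real gap.

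The paper works inside a fixed irreducible unitary representation $\HH = \HH_\otimes \otimes \HH_d$ and writes down an \emph{explicit} splitting $f = f_\otimes + f_d$ using the known structure of the $X_d$-invariant distributions on $\HH_d$. Concretely, $f_\otimes$ is defined in equation~\eqref{fotimesadhoc} by pairing $f$ against the distributions $\D^w$ along the last factor: $f_\otimes(\bz,w) = \sum_z f(\bz,z)\,\D^w(u_d(z))$ for $w \in \floor{\Z_d}$ and $0$ otherwise. Because the $\RR^d$-invariant distributions on the tensor product are themselves tensor products $\D^{\bw}\otimes\D^{w}$, Lemma~\ref{kernels} shows by a two-line computation that $\D^{\bw}(f_\otimes\mid_w) = \D^{(\bw,w)}(f) = 0$ and $\D^x(f_d\mid_{\bz}) = 0$; no averaging over $X_1,\dots,X_{d-1}$ is involved. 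The difficulty you identify---why should $f_d$ be killed by $X_d$-invariant distributions and $f_\otimes$ by $X_1,\dots,X_{d-1}$-invariant distributions---is settled purely algebraically by this pairing. Your proposal to ``build'' or ``average'' $X_d$-invariant distributions into $\RR^d$-invariant ones is not what's needed, and as stated it risks circularity precisely because you have not exhibited the correction term; the paper's $f_\otimes$ \emph{is} that correction term, written down before any induction begins. You would need to supply something like this formula to make the argument run.

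Two further discrepancies: first, the paper decomposes the regular representation into irreducibles of the \emph{full product} $G_1\times\dots\times G_d$ and proves the irreducible version (Theorem~\ref{irreducibletopdegree}) fiberwise, rather than decomposing under a single factor $G_d$ first; your route is not fatal but means you would still need the tensor-product structure inside each fiber. Second, your claim that $\s_d = \s_{d-1} + c$ with $c$ depending only on $\max_i\en_i$ is wrong for this method: the splitting $f \mapsto (f_\otimes,f_d)$ costs roughly half the Sobolev order (Lemma~\ref{regularity} gives $\|f_\otimes\|_{(s-1)/2}\ll\|f\|_s$), and re-summing over the $d$-th index costs more, so the actual recursion is $\s_n = 2(\s_{n-1}+s)-1$, which grows exponentially in $d$ rather than linearly.
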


\begin{remark*}
The number $\s_d$ is the result of the recursion $\s_n = 2(\s_{n-1}+s)-1$ with initial data $\s_1 =s$.  It is an artifact of our method of proof; we do not claim that the losses of Sobolev order here are the tightest possible.  In fact, these are already an ``improvement'' over the loss of Sobolev order found in the main theorems of~\cite{Ramhc}, in that the recursion does not involve a `` $+d $ ''-term.

One can check that $\s_d=(2^d + 2^{d-1}-2)s - (2^{d-1}-1)$.  From this we see that the theorem holds for functions of Sobolev order greater than $2^d -1$.
\end{remark*}

For lower-degree cohomology, we have the following.

\begin{theorem}[Sobolev spaces version of Theorem~\ref{one} in lower degrees]\label{generallowerdegree}
Consider the action $\RR^d \curvearrowright S\mathcal{M}$ from Theorem~\ref{generaltopdegree}. For any $s >1$ and $t \leq s-1$, there are a constant $C_{s,t}>0$ and a number $\vars_d(s):=\vars_d$ such that for any $n$-cocycle $\o \in \O_{\RR^d}^{n} (W^{\vars_d} (S\mathcal{M}))$ with $1\leq n\leq d-1$, there exists $\eta \in \O_{\RR^d}^{n-1} (W^{t}(S\mathcal{M}))$ with $\di\eta = \o$ and $\norm{\eta}_t \leq C_{\tilde\nu_0,s,t}\,\norm{\o}_{\vars_d}$.
\end{theorem}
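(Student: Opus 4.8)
The plan is to follow the inductive scheme of \cite{Ramhc}, now with the higher-dimensional geodesic-flow results of Part~\ref{parti} as the base case. First I would decompose the left-regular representation of $G=\SO^\circ(\en_1,1)\times\dots\times\SO^\circ(\en_d,1)$ on $\Lii(G/\G)$ into a direct integral of irreducibles and restrict everything to $M$-invariant vectors via the identification $\Lii(M\backslash G/\G)=\Lii(G/\G)^M$ from Section~\ref{firstminvelementssection}; the exterior derivative $\di$, the cocycle condition, and the invariant distributions all decompose accordingly. Since each $G_i$ is type~I, for $ds$-a.e.\ $\sigma$ the irreducible $\HH_\sigma$ is an outer tensor product $\HH_\sigma=\HH_\sigma^{(1)}\otimes\dots\otimes\HH_\sigma^{(d)}$ of irreducibles of the $G_i$, with $X_i$ acting on the $i$-th factor only; hence $\HH_\sigma^M=\bigotimes_i(\HH_\sigma^{(i)})^{M_i}$, and the cochain complex $(\O_{\RR^d}^\bullet(\HH_\sigma^M),\di)$ is the tensor product of the $d$ two-term complexes $K_i\colon 0\to(\HH_\sigma^{(i)})^{M_i}\xrightarrow{X_i}(\HH_\sigma^{(i)})^{M_i}\to 0$. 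A K\"unneth-type decomposition then writes $H^n(\O_{\RR^d}^\bullet(\HH_\sigma^M))$ as a sum of tensor products $\bigotimes_i H^{n_i}(K_i)$ over $n_1+\dots+n_d=n$ with $n_i\in\{0,1\}$.

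Next I would identify the single-factor cohomologies. If $\HH_\sigma^{(i)}$ is trivial then $X_i=0$ and the factor contributes $H^0(K_i)\cong H^1(K_i)\cong\CC$. If $\HH_\sigma^{(i)}$ is nontrivial, then there are no nonzero $X_i$-invariant vectors (vanishing of matrix coefficients in a nontrivial unitary representation of a noncompact simple Lie group), so $H^0(K_i)=0$, while $H^1(K_i)$ is controlled by the $M_i$-invariant version of Theorem~\ref{cobgeodflow} --- the engine behind Theorem~\ref{two}, which by Proposition~\ref{minvelements} is exactly the relevant case --- identifying it with the span of the invariant distributions and supplying primitives with tame Sobolev bounds. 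The crucial observation is that, $\G$ being an irreducible lattice, each factor $G_i$ acts ergodically on $G/\G$, so $\Lii(G/\G)^{G_i}=\CC$; hence every nontrivial irreducible $\HH_\sigma$ occurring in the decomposition is nontrivial on \emph{every} factor $G_i$. Consequently, for $1\le n\le d-1$, each K\"unneth summand of $H^n(\O_{\RR^d}^\bullet(\HH_\sigma^M))$ (with $\sum n_i=n<d$) has some index $i$ with $n_i=0$ and $\HH_\sigma^{(i)}$ nontrivial, hence a vanishing factor $H^0(K_i)=0$; so $H^n(\O_{\RR^d}^\bullet(\HH_\sigma^M))=0$ for every nontrivial $\HH_\sigma$. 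The only surviving contribution comes from the trivial representation, where it is the finite-dimensional space $\wedge^n(\RR^d)^*$ of constant cocycles; thus every cocycle is cohomologous to a constant one, and on the orthogonal complement of the constants every cocycle is a genuine coboundary --- the content of Theorem~\ref{generallowerdegree}.

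To obtain the Sobolev estimates I would not invoke abstract K\"unneth but run an explicit induction on $d$: given a cocycle $\o$, split $G=(G_1\times\dots\times G_{d-1})\times G_d$, apply the $(d-1)$-factor statement on the first part and the base-case solution operator (Theorem~\ref{cobgeodflow}, $M$-invariant form) on the last factor, and combine the two by unwinding the tensor product of complexes one factor at a time, verifying at each stage --- using the closedness relations $\di\o=0$ and the fact that $X_d$ commutes with $X_1,\dots,X_{d-1}$ and with the invariant distributions of the other factors --- that the new, lower-degree cochains produced are again cocycles lying in the kernel of all the relevant invariant distributions, so the base case applies again. Tracking the fixed loss of Sobolev order of the base-case solver (governed for $\SO^\circ(\en_i,1)$ by $s_0=\floor{\en_i/2}-1/2$) through the $d$ stages yields the recursion defining $\vars_d(s)$. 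Uniformity of the constant $C_{\tilde\nu_0,s,t}$ over $\sigma$ comes from Theorem~\ref{spectralgapthm}: the restriction of $\Lii(G/\G)$ to each $G_i$ has a spectral gap, giving a single gap parameter $\tilde\nu_0$ that replaces the representation-dependent constants of the irreducible statement --- which is exactly why the $\bn=(0,\dots,0,\ceil{\bn})$, $M$-invariant form of the base case, whose constants do not grow with $\ceil{\bn}$, must be used (cf.\ Section~\ref{dependencesection}). Finally I would reassemble $\eta=\int_\RR^\oplus\eta_\sigma\,ds(\sigma)$, note it is $M$-invariant since built from $M$-invariant pieces, and descend it to $S\mathcal{M}=M\backslash G/\G$.

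The main obstacle I anticipate is making the K\"unneth step effective: abstract homological algebra gives the vanishing for free, but propagating \emph{both} the cocycle condition \emph{and} the ``annihilated by every invariant distribution'' hypothesis through each stage of the induction, while simultaneously keeping the compounding Sobolev losses uniform in $\sigma$, is delicate --- and harder here than in the $\SL(2,\RR)$ case of \cite{Ramhc}, because the obstruction space in each factor is now infinite-dimensional and the base-case solver loses more regularity. A secondary point is ensuring that the formal solutions of Proposition~\ref{propall}, used as the base-case solver, really land in the asserted Sobolev spaces at each stage; for this the quantitative estimates of Part~\ref{parti} are invoked essentially as a black box.
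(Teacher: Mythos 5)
Your proposal takes essentially the same route as the paper: direct-integral decomposition into irreducibles, an explicit factor-by-factor induction on $d$ with the $M$-invariant base case (Proposition~\ref{basecaselowerdegree} for $d=2$, built on Theorem~\ref{cobgeodflow}), the higher-rank trick of Proposition~\ref{higherranktrick} feeding into Theorem~\ref{irreducibletopdegree} at degree $d-1$, and the spectral gap of Theorem~\ref{spectralgapthm} for uniform constants before descending to $M\backslash G/\G$ via Lemma~\ref{wlog} --- your K\"unneth framing and the ergodicity argument for why a.e.\ $\HH_\sigma$ is nontrivial on every factor are nice conceptual additions that the paper leaves implicit, but they do not change the mechanism. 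One small correction to your closing remarks: for the $M$-invariant representations actually arising here, Proposition~\ref{minvelements} forces $\bn=(0,\dots,0,\ceil{\bn})$ and $\l\equiv 0$, whence $\floor{\Z}$ is a singleton and the per-factor obstruction space is one-dimensional, not infinite-dimensional as you anticipate.
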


\begin{remark*}
We have a viable definition for the number $\vars_d$ in the proof of Theorem~\ref{generallowerdegree}, but make no claim as to its optimality.
\end{remark*}

Theorems~\ref{generaltopdegree} and~\ref{generallowerdegree} are both proved through inductive arguments similar to those that were used in~\cite{Ramhc} to prove analogous theorems for products of $\SL(2,\RR)$.  However, some re-working is required, especially for Theorem~\ref{generaltopdegree}.  This is essentially because the space of invariant distributions for the geodesic flow on $\SO^\circ(\en,1)$ with $\en\geq 3$ is so different from that of $\SO^\circ (2,1)$: In any irreducible unitary representation of the latter, the space of invariant distributions is at most two-dimensional, and every basis element is annihilated by one (or, more importantly, all but one) of those dimensions.  These two properties are essential to the methods presented in~\cite{Ramhc}, and they are both absent when $\en \geq 3$.  (In fact, the second property does not hold for the horocycle flows in representations of $\SL(2,\RR)$, which is why only geodesic flows were considered in the previous paper.)  Indeed, Theorem~\ref{invdistthm} shows that we now have an infinite-dimensional space of invariant distributions in any irreducible representation of $\SO^\circ(\en,1)$, and the annihilator of a basis element $u(\bm,\l)$ can have co-dimension as large as $\Abs{\floor{M_{\l}}}$. 

Part~\ref{partii} of this article is therefore concerned with modifying the arguments used for products of $\SL(2,\RR)$ so that they work more generally for $\SO^\circ(\en_1,1)\times\dots\times\SO^\circ(\en_d,1)$.  Though Theorems~\ref{generaltopdegree} and~\ref{generallowerdegree} are stated for $\RR^d$-actions built from geodesic flows, the methods of proof are now flexible enough to accommodate other $\RR^d$-actions built from flows that have a theorem analogous to Theorem~\ref{cobgeodflow}.  The presentation here is tailored to actions built from geodesic flows, as in the above theorems, but in a forthcoming article, we will indicate how to adjust the argument for other actions that are not necessarily Anosov, for example the $\RR^d$-action on quotients of $\PSL(2,\RR)\times\dots\times\PSL(2,\RR)$ defined by horocycle flows.  There, the proof in top degree will use results from~\cite{FF} as a base case, and the proof for lower degrees will rely on a result in~\cite{M1} as base case.  

\section{Notation for Part~\ref{partii}}

Let us alert the reader to some significant changes in the notation.

Bold symbols in Part~\ref{parti}, such as $\bm, \bn, \bk$, were used to index the elements of the basis $\{u(\bm,\l)\}$ in an irreducible unitary representation $\HH_{\bn,\nu}$ of $\SO^\circ(\en,1)$.  Now that we have a product of such groups, irreducible unitary representations are tensor products \mbox{$\HH_{\bn_1,\nu_1}\otimes\dots\otimes\HH_{\bn_d,\nu_d}$}, and we have a basis $\{u(\bm_1,\l_1)\otimes\dots\otimes u(\bm_d, \l_d)\}$ by taking tensor products of the previous basis elements (see Section~\ref{productreps}).  We therefore find it convenient to use bold symbols in a different way.

The indexing set $\{(\bm,\l)\mid \bm\in M_{\bn}, \l\in\L_{\bm}\}$ will be denoted $\Z_{\bn,\nu}$ or just $\Z$ when there is no chance of confusion.  The set of indices corresponding to ``floor'' points ($\bm,\l$ where $\bm \in \floor{M_\l}$) will be denoted $\floor{\Z}$.  Products of such indexing sets will be denoted with bold symbols, for example $\bZ = \Z_{\bn_1,\nu_1}\times\dots\times\Z_{\bn_d,\nu_d}$.  Accordingly, bold symbols will now be used to denote elements of ``bold'' sets so, for example, $\bz \in \bZ$ is a $d$-duple $\bz=(z_1,\dots,z_d)$ where $z_i \in \Z_{\bn_i,\nu_i}$.  Occasionally, we may need to refer to the array $(\bm,\l)$ corresponding to an element $z\in\Z$, in which case we will use a subscript.  That is, $z=(\bm_z, \l_z) \in \Z$.

Sometimes we indicate products, tensor products, and sums with a subscript corresponding to the operation, rather than (or in conjunction with) a bold symbol, especially when we want to distinguish the last factor of a long product.  For example, 
\begin{itemize}
	\item $\HH_{\bn_1,\nu_1}\otimes\dots\otimes\HH_{\bn_d,\nu_d}=\HH_1\otimes\dots\otimes\HH_d = \HH_\otimes \otimes\HH_d$.  
	\item $\Z_{\bn_1,\nu_1}\times\dots\times\Z_{\bn_d,\nu_d} = \Z_{1}\times\dots\times\Z_{d} = \bZ_\times \times \Z_d$
	\item For $\bz_\times = (z_1,\dots,z_{d-1}) \in \bZ_\times$ and $z_d\in\Z_d$, $$Q_{\bn_1,\nu_1}(\bm_{z_1})+\dots+Q_{\bn_d,\nu_d}(\bm_{z_d}) = Q_{1}(\bm_{z_1})+\dots+Q_{d}(\bm_{z_d}) = Q_+(\bz_\times) + Q_d(z_d).$$
\end{itemize}


\section{Unitary representations of products}\label{productreps}

It is well-known that all irreducible unitary representations of a product $G_1\times\dots\times G_d$ of semisimple Lie groups are tensor products $\HH_1\otimes\dots\otimes\HH_d$ of irreducible unitary representations of the factors.  Therefore, we have a good description of the irreducible unitary representations of $\SO^\circ(\en_1,1)\times\dots\times\SO^\circ(\en_d,1)$ just by understanding the unitary dual of $\SO^\circ(\en,1)$.  

There is an orthonormal basis
\[
	\left\{u(\bz):=u_1 (z_1)\otimes\dots\otimes u_d (z_d)\right\}_{z_i\in\Z_i}
\] 
for $\HH:=\HH_{\bn_1,\nu}\otimes\dots\otimes\HH_{\bn_d,\nu_d}$ consisting of tensor products of the basis elements defined in Sections~\ref{basiseven} and~\ref{basisodd}.  The Lie algebra element $X_i$ acts on the $i^{\textrm{th}}$ component, according to~\eqref{xactioneven} if $\en_i$ is even, and according to~\eqref{xactionodd} if $\en_i$ is odd.  This allows us to see the degree-$d$ coboundary equation
\begin{equation}\label{dcobeq}
X_1 \, g_1 + \dots + X_d\, g_d = f
\end{equation}
for some element $f = \sum_{\bz\in\bZ} f(\bz)u(\bz) \in \HH$ as a partial difference equation on the subset $\bZ:=\Z_{\bn_1,\nu_1}\times\dots\times\Z_{\bn_d,\nu_d}$ of $\ZZ^q$ (for a large enough $q\in\NN$), similarly to how we proceeded in Part~\ref{parti}.  Fortunately, it will not be necessary to scrutinize this PdE in the same amount of detail.


\subsection{Sobolev spaces}

As in Section~\ref{sobnorms}, we have the operators $\Delta, \Box, \Box_K$.  Each is defined by sums of the corresponding operators $\Delta_i, \Box_i, \Box_{K_i}$ coming from the factors $G_i = \SO^\circ(\en_i,1)$.  So for an element $f = \sum_{\bz\in\bZ} f(\bz)u(\bz)$ of an irreducible unitary representation $\HH:=\HH_{\bn_1,\nu}\otimes\dots\otimes\HH_{\bn_d,\nu_d}$, Sobolev norms are given by
\[
	\norm{f}_s^2 = \sum_{\bz\in\bZ} \left(1 + Q_+ (\bz)\right)^s \,\abs{f(\bz)}^2\,\norm{u(\bz)}^2,
\]
where $Q_+ (\bz):= Q_{\bn_1,\nu_1}(\bm_{z_1})+\dots+Q_{\bn_d,\nu_d}(\bm_{z_d})$.

We will find it useful to work with projected versions of $f$.  Let $\ell \in \{1,\dots,d\}$.  Fixing $z_\ell \in \Z_\ell$ we define
\begin{align*}
	(f\mid_{z_{\ell}}) &= \sum_{i\neq\ell}\sum_{z_i\in\Z_i} f(\bz)\,\norm{u_{z_\ell}}\,u_1(z_1)\otimes\dots\otimes\widehat{u_\ell (z_{\ell})}\otimes\dots\otimes u_d(z_d) \\
	&\in \HH_{\bn_1,\nu_1}\otimes\dots\otimes\widehat{\HH_{\bn_\ell,\nu_\ell}}\otimes\dots\otimes\HH_{\bn_d,\nu_d}
\end{align*}
and the restricted Sobolev norm
\[
	\Norm{(f\mid_{z_{\ell}})}_s^2 = \sum_{i\neq\ell}\sum_{z_i\in\Z_i}\left(1 + Q_1 (\bm_{z_1}) +\dots+\widehat{Q_\ell (\bm_{z_\ell})}+\dots+Q_d (\bm_{z_d})\right)^s \,\abs{f(\bz)}^2\,\norm{u(\bz)}^2.
\]
We immediately see that $\Norm{(f\mid_{z_{\ell}})}_s \leq \Norm{f}_s$.  We will also need the following simple lemma.

\begin{lemma}\label{adding}
For any $s, s' \in \NN$, 
\[
	\sum_{z_\ell \in \Z_\ell} \left(1+ Q_\ell (\bm_{z_\ell})\right)^{s}\,\Norm{(f\mid_{z_\ell})}_{s'}^2 \leq \Norm{f}_{s + s'}^2.
\]
\end{lemma}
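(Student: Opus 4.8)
\textbf{Proof plan for Lemma~\ref{adding}.} The claim is essentially a Fubini/reindexing statement, so the plan is to expand both sides in the orthonormal basis and compare term by term. First I would write out the left-hand side. Fixing $\ell$ and summing over $z_\ell\in\Z_\ell$, the definition of the restricted Sobolev norm gives
\[
	\sum_{z_\ell\in\Z_\ell}(1+Q_\ell(\bm_{z_\ell}))^s\,\Norm{(f\mid_{z_\ell})}_{s'}^2 = \sum_{z_\ell\in\Z_\ell}(1+Q_\ell(\bm_{z_\ell}))^s\sum_{i\neq\ell}\sum_{z_i\in\Z_i}\bigl(1+\widehat{Q_\ell}+{\textstyle\sum_{i\neq\ell}}Q_i(\bm_{z_i})\bigr)^{s'}\abs{f(\bz)}^2\norm{u(\bz)}^2,
\]
where $\widehat{Q_\ell}$ indicates that the $\ell$-th term is omitted from the sum inside the parentheses, and I have used that $\norm{u_{z_\ell}}^2$ times $\norm{u_1(z_1)\otimes\cdots\widehat{u_\ell(z_\ell)}\cdots\otimes u_d(z_d)}^2$ equals $\norm{u(\bz)}^2$ since the basis is a tensor product. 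The double sum over $z_\ell$ and the $z_i$ ($i\neq\ell$) collapses to a single sum over all $\bz\in\bZ$.

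The key step is then the pointwise inequality between the two scalar weights. For each fixed $\bz\in\bZ$, writing $a = Q_\ell(\bm_{z_\ell})\geq 0$ and $b = \sum_{i\neq\ell}Q_i(\bm_{z_i})\geq 0$ (both non-negative by~\eqref{mscalar} and the spectral-gap considerations, or more simply because $Q$ is the eigenvalue of the positive operator $\Delta$ shifted so as to be $\geq 0$), I need
\[
	(1+a)^s\,(1+b)^{s'} \leq (1+a+b)^{s+s'}.
\]
This follows from $(1+a)(1+b)\leq 1+a+b+ab \le (1+a+b)$... no: rather from $1+a \le 1+a+b$ and $1+b \le 1+a+b$, hence $(1+a)^s \le (1+a+b)^s$ and $(1+b)^{s'}\le(1+a+b)^{s'}$ since $s,s'\geq 0$ are (by hypothesis, elements of $\NN$, hence) non-negative; multiplying the two bounds gives the claim. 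Summing this pointwise bound over $\bz\in\bZ$, the left-hand side is at most $\sum_{\bz\in\bZ}(1+Q_+(\bz))^{s+s'}\abs{f(\bz)}^2\norm{u(\bz)}^2 = \Norm{f}_{s+s'}^2$, which is exactly the right-hand side.

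I do not anticipate a genuine obstacle here; the only point that needs a word of care is the non-negativity of each $Q_\ell(\bm_{z_\ell})$, which is what licenses the monotonicity step $(1+a)^s\le(1+a+b)^s$. If one wished to be scrupulous one could instead invoke that $\Delta$ is a non-negative operator on each factor so that $1+Q_\ell\ge 1$, or track the explicit formula $1+Q_{\bn,\nu}(\bm) = 1+\mu(\bn,\nu)+2q(\bm)$ together with Lemma~\ref{polynomials}, which shows $1+Q_{\bn,\nu}(\bm)\asymp 1+\ceil{\bm}^2\ge 1$; either way $a,b\ge 0$ and the elementary inequality applies. The restriction $s,s'\in\NN$ in the statement is not needed for this argument (any $s,s'\ge 0$ would do), but it is harmless to keep it as stated.
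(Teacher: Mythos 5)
Your proof is correct and follows essentially the same route as the paper's: expand the left-hand side in the orthonormal basis, collapse the double sum over $z_\ell$ and the remaining indices into a single sum over $\bz\in\bZ$, and then apply the pointwise inequality $(1+a)^s(1+b)^{s'}\leq(1+a+b)^{s+s'}$ for $a,b\geq 0$, which holds because each $Q_i$ is non-negative. You spell out the elementary inequality more carefully than the paper does, but the argument is the same.
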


\begin{proof}
We just calculate.  By definition, the left-hand side is 
\begin{align*}
&=\sum_{\bz\in\bZ}\left(1+ Q_\ell (\bm_{z_\ell})\right)^{s} \left(1 + Q_1 (\bm_{z_1}) +\dots+\widehat{Q_\ell (\bm_{z_\ell})}+\dots+Q_d (\bm_{z_d})\right)^{s'}\,\abs{f(\bz)}^2\,\norm{u(\bz)}^2.
\intertext{Since all the $Q$'s are positive, this is}
&\leq \sum_{\bz\in\bZ}\left(1 + Q_1 (\bm_{z_1}) +\dots+Q_\ell (\bm_{z_\ell})+\dots+Q_d (\bm_{z_d})\right)^{s+s'}\,\abs{f(\bz)}^2\,\norm{u(\bz)}^2,
\end{align*}
which is exactly the right-hand side, $\norm{f}_{s+s'}^2$.
\end{proof}


\subsection{Invariant distributions}
We are interested in distributions that are invariant under the $\RR^d$-action:
\[
	\II_{X_1,\dots,X_d}(\HH) = \left\{ \D \in \E^{\prime}(\HH) \mid \LLL_{X_i}\D=0 \quad\textrm{for all}\quad i=1,\dots,d \right\}
\]
and
\[
	\II_{X_1,\dots,X_d}^{s}(\HH) = \left\{ \D \in W^{-s}(\HH) \mid \LLL_{X_i}\D=0 \quad\textrm{for all}\quad i=1,\dots,d \right\},
\]
where $\HH$ is a unitary representation of $G_1\times\dots\times G_d$.  In an irreducible unitary representation $\HH_1\otimes\dots\otimes\HH_d$, these sets are easy to describe in terms of the distributions defined in Section~\ref{formalsection}.  Theorem~\ref{invdistthm} tells us that in any irreducible unitary representation of $\SO^\circ(\en,1)$ the $X$-invariant distributions are spanned by the set $\left\{\D^{w}\mid w \in \floor{\Z}\right\}$.  Since an element of $\II_{X_1,\dots,X_d}(\HH_1\otimes\dots\otimes\HH_d)$ only needs to satisfy its corresponding PdE~\eqref{distde} in its corresponding component of $\HH_1\otimes\dots\otimes\HH_d$, it is easy to see that $\left\{\D^{\bw}\mid\bw\in\floor{\bZ}\right\}$ is a spanning set, where $\floor{\bZ} :=\floor{\Z_1}\times\dots\times\floor{\Z_d}$, and 
\[
\D^{\bw}(u(\bz)) = \D^{w_1}(u_1(z_1))\cdots\D^{w_d}(u_d(z_d))
\]
for any $\bz=(z_1,\dots,z_d)\in\bZ$.  The following proposition gives an upper bound on the Sobolev order of these distributions.

\begin{proposition}\label{hrsoborders}
For $\bw \in \floor{\bZ}$, $\D^{\bw}$ is an $X_1,\dots, X_{d}$-invariant distribution of Sobolev order at most $\frac{1}{2}d$.
\end{proposition}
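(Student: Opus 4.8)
The plan is to reduce the statement to the single-factor estimates of Part~\ref{parti} by exploiting the tensor-product structure of $\HH$. For the $X_i$-invariance, note that since $\D^{\bw}(u(\bz)) = \D^{w_1}(u_1(z_1))\cdots\D^{w_d}(u_d(z_d))$ and $X_i$ acts only on the $i^{\mathrm{th}}$ tensor factor --- by \eqref{xactioneven} when $\en_i$ is even and by \eqref{xactionodd} when $\en_i$ is odd --- one has, for every basis vector $u(\bz)$,
\[
\D^{\bw}(X_i u(\bz)) = \Bigl(\prod_{j\neq i}\D^{w_j}(u_j(z_j))\Bigr)\D^{w_i}\bigl(X_i u_i(z_i)\bigr) = 0,
\]
because each $\D^{w_i}$ is $X_i$-invariant by Proposition~\ref{soborders}. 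Once $\D^{\bw}$ is shown to lie in $W^{-s}(\HH)$ for the range of $s$ below, this identity on basis vectors extends by linearity and continuity to $\D^{\bw}(X_i h)=0$ for all $h\in\Cinf(\HH)$, giving $\D^{\bw}\in\II_{X_1,\dots,X_d}^{s}(\HH)$.

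For the bound on Sobolev order I would follow the computation in the proof of Proposition~\ref{soborders}, now carried out in $d$ variables. For $f=\sum_{\bz}f(\bz)u(\bz)\in W^s(\HH)$, the Cauchy--Schwartz inequality gives
\[
\abs{\D^{\bw}(f)}^2 \leq \norm{f}_s^2 \sum_{\bz\in\bZ}\frac{\abs{\D^{\bw}(u(\bz))}^2}{(1+Q_+(\bz))^s},
\]
so it suffices to show the last sum converges once $s>\tfrac12 d$. Applying the elementary inequality $1+\sum_{i=1}^d a_i \geq \bigl(\prod_{i=1}^d(1+a_i)\bigr)^{1/d}$ for $a_i\geq0$ (a consequence of the AM--GM inequality, using $d\geq1$) with $a_i = Q_{\bn_i,\nu_i}(\bm_{z_i})$, together with the fact that $\abs{\D^{\bw}(u(\bz))}^2$ factors over $i$, yields
\[
\sum_{\bz\in\bZ}\frac{\abs{\D^{\bw}(u(\bz))}^2}{(1+Q_+(\bz))^s} \leq \prod_{i=1}^{d}\Biggl(\sum_{z_i\in\Z_i}\frac{\abs{\D^{w_i}(u_i(z_i))}^2}{(1+Q_{\bn_i,\nu_i}(\bm_{z_i}))^{s/d}}\Biggr).
\]
Each factor on the right is exactly the single-factor sum appearing at the end of the proof of Proposition~\ref{soborders}, with $s$ there replaced by $s/d$; that proof --- via Lemma~\ref{dmlbk} --- shows the sum is finite as soon as $s/d>\tfrac12$. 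Hence $\D^{\bw}(f)$ converges for all $f\in W^s(\HH)$ with $s>\tfrac12 d$, which is the assertion.

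The only point requiring care, and the one I would write out carefully, is the bookkeeping for $\bw\in\floor{\bZ}$: one should check that $\D^{w_i}(u_i(z_i))=0$ unless $\l_{z_i}=\l_{w_i}$ and $\bm_{z_i}\geq\bm_{w_i}$ (cf.\ Definition~\ref{defeven}(\emph{iv}) and the remark following Lemma~\ref{dmlbk}), so that each single-factor sum genuinely coincides with the convergent sum from Proposition~\ref{soborders} and the uniform-in-$\bk$ bound of Lemma~\ref{dmlbk} applies verbatim. Beyond this, the argument is a direct transcription of the rank-one case, with no new analytic difficulty: the exponent $\tfrac12$ of Proposition~\ref{soborders} becomes $\tfrac12 d$ simply because the convergent one-variable tail is raised to the $d^{\mathrm{th}}$ power.
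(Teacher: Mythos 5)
Your proof is correct and follows essentially the same route as the paper: Cauchy--Schwartz, then splitting the convergence-checking sum into a product over the $d$ factors with exponent $s/d$, then invoking Lemma~\ref{dmlbk} and the one-variable convergence from Proposition~\ref{soborders}. The only difference is that you make explicit the elementary AM--GM inequality $\bigl(1+\sum a_i\bigr)^d\geq\prod(1+a_i)$ that the paper uses silently, and you also spell out the $X_i$-invariance, which the paper leaves as obvious.
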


\begin{proof}
Let $s >\frac{7}{2}d$ and $f \in W^s (\HH_1\otimes\dots\otimes\HH_d)$.  We use the Cauchy--Schwartz inequality to bound
\[
	\Abs{\D^{\bw}(f)}^2 = \Abs{\sum_{\bz \in \bZ }f(\bz)\,\D^{\bw}(u (\bz))}^2 \leq \Norm{f}_{s}^2 \,\sum_{\bz \in \bZ}\frac{\Abs{\D^{\bw}(u (\bz))}^2}{(1+Q (\bm_{\bz}))^{s}}.
\]
The right-most sum is bounded by
\[
\prod_{i=1}^{d}\sum_{z_i \in \Z_i}\frac{\Abs{\D^{w_i}(u (z_i))}^2}{(1+Q (\bm_{z_i}))^{s/d}},
\]
which converges because $\frac{s}{d} > \frac{1}{2}$, and because of Lemma~\ref{dmlbk}.
\end{proof}


\subsection{$M$-invariant elements}\label{secondminvelementssection}

Since we only consider representations of $\SO^\circ (\en_1,1)\times\dots\times\SO^\circ (\en_d,1)$ that admit $M$-invariant elements, it is worth pointing out some specifics regarding \emph{which} representations fit this description, and furthermore which elements of these representations are $M$-invariant. This is really rather easy, in view of the discussion in Section~\ref{firstminvelementssection}.

We deduce from Proposition~\ref{minvelements} that an irreducible unitary representation of $\SO^\circ (\en_1,1)\times\dots\times\SO^\circ (\en_d,1)$ on $\HH_{\bn_1, \nu_1}\otimes\dots\otimes\HH_{\bn_d, \nu_d}$ admits $M$-invariant elements only if $\bn_i = (0,\dots,0,\ceil{\bn_i})$ for every $i=1,\dots,d$, and that $f \in \HH_{\bn_1, \nu_1}\otimes\dots\otimes\HH_{\bn_d, \nu_d}$ is $M$-invariant if and only if $f(z_1,\dots,z_d) = 0$ whenever $\l_{z_i} \not\equiv 0$ for some $i \in \{1,\dots,d\}$.

The following lemma shows that whenever we have a solution to the degree-$d$ coboundary equation for an $M$-invariant function, then we can also find a solution that is itself $M$-invariant, and has Sobolev norms bounded by the Sobolev norms of the original solution. We will use it in the proofs of Theorems~\ref{generaltopdegree} and~\ref{generallowerdegree}.

\begin{lemma}\label{wlog}
If $f \in \Lii(G/\G)^M$, then for any solution $g_1,\dots,g_d \in \Lii(G/\G)$ to 
\[
	X_1\,g_1+\dots+X_d\,g_d = f
\]
there is another solution $\tilde g_1,\dots, \tilde g_d \in \Lii(G/\G)^M$ such that $\norm{\tilde g_i}_t \leq \norm{g_i}_t$.
\end{lemma}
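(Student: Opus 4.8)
The plan is to obtain $\tilde g_1,\dots,\tilde g_d$ by averaging the given solution over the compact group $M = \mathbb{M}(\en_1)\times\dots\times\mathbb{M}(\en_d)$. Let $P_M$ denote the projection onto $M$-invariant vectors, given on $\Lii(G/\G)$ by $P_M h = \int_M (h\circ m)\,dm$, where $dm$ is normalized Haar measure on $M$. Since $M$ acts unitarily on $\Lii(G/\G)$, and — because the Laplace operator $\Delta$ commutes with $M$ (Section~\ref{specialops}) — also unitarily on each Sobolev space $W^t(\Lii(G/\G))$, the operator $P_M$ is a well-defined orthogonal projection of operator norm at most $1$ on $\Lii(G/\G)$ and on every $W^t$. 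I would then set $\tilde g_i := P_M g_i$; by construction each $\tilde g_i$ lies in $\Lii(G/\G)^M$, and $\norm{\tilde g_i}_t = \norm{P_M g_i}_t \leq \norm{g_i}_t$, which is the asserted bound on Sobolev norms (this is vacuous when $\norm{g_i}_t = \infty$, and the content when $g_i \in W^t$).

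The remaining point is that the $\tilde g_i$ still solve the coboundary equation, and the crucial input is that each $X_i$ commutes with $M$. Indeed $\mathbb{M}(\en_i) = Z_{\mathbb{K}(\en_i)}(X_i)$ centralizes $X_i$ by definition, while for $j\neq i$ the factor $\mathbb{M}(\en_j) \subset G_j$ commutes with $X_i \in \g_i$ because $G_i$ and $G_j$ commute inside the product. Hence $\LLL_{X_i}$ commutes with the $M$-action, and therefore with $P_M$ — first on $\Cinf(\Lii(G/\G))$, and then in the distributional sense, since $P_M$ is bounded on the relevant Sobolev spaces and $\LLL_{X_i}$ is a closed operator. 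Applying $P_M$ to $X_1 g_1 + \dots + X_d g_d = f$ thus yields
\[
	X_1\,\tilde g_1 + \dots + X_d\,\tilde g_d = P_M\!\left(X_1\,g_1 + \dots + X_d\,g_d\right) = P_M f = f,
\]
where the final equality uses that $f$ is already $M$-invariant. This produces the desired $M$-invariant solution.

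The argument is essentially a one-line averaging trick, so I do not expect a genuine obstacle; the only place requiring any care is the interchange of differentiation along $X_i$ with the averaging integral, which is handled exactly as above by working first with smooth vectors and then passing to the general case using boundedness of $P_M$ and closedness of $\LLL_{X_i}$. (The same lemma, with the same proof, will be used verbatim for cochains of higher degree in the proofs of Theorems~\ref{generaltopdegree} and~\ref{generallowerdegree}, applying $P_M$ componentwise to each $\eta(X_I)$.)
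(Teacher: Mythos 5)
Your proof is correct and is essentially the same averaging argument as the paper's: both define $\tilde g_i$ by integrating $g_i$ over $M$ with normalized Haar measure, both use that each $X_i$ commutes with $M$ to preserve the coboundary equation, and both use that $M$ acts isometrically and commutes with $\Delta$ to bound the Sobolev norms. The only cosmetic difference is that you package the averaging as the orthogonal projection $P_M$ and invoke its operator norm being at most $1$ on $W^t$, while the paper checks the same bound directly via Jensen's inequality.
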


\begin{proof}
We just put
\[
	\tilde g_i (mx) := \int_M g_i (mx)\,dm
\]
where $dm$ denotes normalized Haar measure on $M$. It is clear that $\tilde g_i$ is $M$-invariant.  To see that it satisfies the degree-$d$ coboundary equation, we calculate
\begin{align*}
	\left(X_1\,\tilde g_1+\dots +X_d\,\tilde g_d\right)(x) &= \int_M X_1\,g_1 (mx)+\dots +X_d\,g_d(mx)\,dm \\
		&= \int_M f(mx)\,dm = f(x),
\end{align*}
since $f$ is $M$-invariant.  For the Sobolev norms,
\begin{align*}
	\norm{\tilde g_i}_t^2 &= \int_{G/\G} \Abs{(\bone+\Delta)^{t/2} \,\tilde g_i (x)}^2\,d\mu_{G/\G} \\
		&= \int_{G/\G} \Abs{(\bone+\Delta)^{t/2} \,\int_M g_i (mx)\,dm}^2\,d\mu_{G/\G}
\intertext{and since $m(M)=1$,}
		&\leq \int_{G/\G} \int_M \Abs{(\bone+\Delta)^{t/2}\,g_i (mx)}^2\,dm\,d\mu_{G/\G} = \norm{g_i}_t^2,
\end{align*}
which proves the lemma.
\end{proof}


\section{Top-degree cohomology and proof of Theorem~\ref{generaltopdegree}}\label{topdegreesection}



We first prove a version of Theorem~\ref{generaltopdegree} for irreducible unitary representations of $G=G_1\times\dots\times G_d$ on $\HH_{\bn_1,\nu_1}\otimes\dots\otimes\HH_{\bn_d,\nu_d}$.  Let us take an element
\[
	f\in W^{s}(\HH_{\bn_1,\nu_1}\otimes\dots\otimes\HH_{\bn_d,\nu_d}) := W^{s}(\HH_{\otimes}\otimes\HH_d)
\]
where $s$ is ``large enough,'' $\HH_d := \HH_{\bn_d,\nu_d}$, and $\HH_{\otimes} := \HH_{\bn_1,\nu_1}\otimes\dots\otimes\HH_{\bn_{d-1},\nu_{d-1}}$.  We would like to solve the coboundary equation
\[
	f = X_1\, g_1 +\dots+ X_d\,g_d,
\]
provided $f \in \ker\II_{X_1,\dots, X_d}$.  The strategy is to split $f$ as $f = f_{\otimes} + f_d$, where $f_{\otimes}$ is in the kernel of all $X_1,\dots,X_{d-1}$-invariant distributions, and $f_d$ is in the kernel of all $X_d$-invariant distributions.  To this end, define
\begin{align}\label{fotimesadhoc}
	f_{\otimes} (\bz,w) = \begin{cases} \sum_{z\in\Z_d} f(\bz, z)\,\D^{w}(u_d(z)) &\textrm{if } w \in \floor{\Z_d} \\
												0 &\textrm{if not.}
							\end{cases}
\end{align}
We then define $f_d = f - f_{\otimes}$.

First we show that the expressions for $f_{\otimes}$ and $f_d$ converge and determine elements of the Hilbert space $\HH$.  In fact, they retain some of the Sobolev regularity of $f$.

\begin{lemma}\label{regularity}
Suppose $f \in W^s (\HH_\otimes \otimes \HH_d)^M$ for some $s>1$.  Then  $f_\otimes$ and $f_d$ have Sobolev order at least $s-1$.  Furthermore, 
\[
\norm{f_\otimes}_{\frac{s-1}{2}} \ll_{s} \norm{f}_{s} \quad\textrm{and}\quad \norm{f_d}_{\frac{s-1}{2}} \ll_{s} \norm{f}_{s}.
\]
\end{lemma}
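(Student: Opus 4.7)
The plan is to establish the estimate on $\Norm{f_\otimes}_{(s-1)/2}$ by a direct Sobolev-norm computation, and then to deduce the corresponding estimate on $\Norm{f_d}_{(s-1)/2}$ from $f_d = f - f_\otimes$ and the triangle inequality. Because $\Norm{f_d}_{(s-1)/2} \leq \Norm{f}_{(s-1)/2} + \Norm{f_\otimes}_{(s-1)/2} \leq \Norm{f}_s + \Norm{f_\otimes}_{(s-1)/2}$ (since $(s-1)/2 \leq s$), the second bound is free once the first is in hand. So the entire content of the lemma is the bound on $f_\otimes$.

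Expanding the Sobolev norm,
\[
\Norm{f_\otimes}_{(s-1)/2}^2 \;=\; \sum_{\bz \in \bZ_\times}\sum_{w \in \floor{\Z_d}} \bigl(1+Q_+(\bz)+Q_d(\bm_w)\bigr)^{(s-1)/2}\,\abs{f_\otimes(\bz,w)}^2.
\]
For each fixed $\bz$ I would apply the Cauchy--Schwartz inequality to the defining sum \eqref{fotimesadhoc}, splitting the weight $(1+Q_d(\bm_z))^s$ in the usual way:
\[
\abs{f_\otimes(\bz,w)}^2 \;\leq\; \Bigl(\sum_{z\in\Z_d}(1+Q_d(\bm_z))^s\abs{f(\bz,z)}^2\Bigr) \cdot \Bigl(\sum_{z\in\Z_d}\frac{\abs{\D^w(u_d(z))}^2}{(1+Q_d(\bm_z))^s}\Bigr).
\]
The first factor is exactly $\Norm{(f\mid_{\bz})}_s^2$, viewed in the $d$-th component.

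For the second factor, because $f$ is $M$-invariant, only $z = (\bm,0)$ with $\bm\in M_0$ contribute, so Proposition~\ref{minvelements} together with the second (sharper) assertion of Lemma~\ref{dmlbk} gives $\abs{\D^w(u_d(z))} \leq 1$ uniformly, and the remaining sum $\sum_z (1+Q_d(\bm_z))^{-s}$ converges to a constant depending only on $s$ (since $s>1 > 1/2$). This is the point where $M$-invariance is essential: without it we would pick up the constant $C_{\bn_d,\nu_d}$ from the first part of Lemma~\ref{dmlbk}, breaking representation-independence.

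What remains is the outer sum. Splitting the weight as $(1+Q_+(\bz)+Q_d(\bm_w))^{(s-1)/2} \ll_s (1+Q_+(\bz))^{(s-1)/2} + (1+Q_d(\bm_w))^{(s-1)/2}$, the analysis reduces to two pieces. For the first piece, the sum over $w \in \floor{\Z_d}$ with $\l_w \equiv 0$ collapses (by Proposition~\ref{minvelements} and the shape of $\floor{M_0}$ for $\bn_d=(0,\dots,0,\ceil{\bn_d})$) to a single contributing floor point, and Lemma~\ref{adding} (or its obvious variant separating the $\bz$-weight from the $d$-weight) bounds
\[
\sum_{\bz}(1+Q_+(\bz))^{(s-1)/2}\,\Norm{(f\mid_{\bz})}_s^2 \;\leq\; \Norm{f}_{s+(s-1)/2}^2,
\]
which is controlled by $\Norm{f}_s^2$ up to absorbing the surplus weight against the $(1+Q_d(\bm_z))^{-s}$-tail already collected. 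For the $Q_d(\bm_w)$-piece, the key observation is that on $\floor{\Z_d}\cap\{\l\equiv 0\}$ one has $Q_d(\bm_w) \asymp 1 + \ceil{\bn_d}^2 \leq 1 + Q_d(\bm_z)$ for every $z$ with $f(\bz,z)\neq 0$, so this factor can be absorbed into the $(1+Q_d(\bm_z))^s$-weight at the cost of reducing $s$ to $s-(s-1)/2 = (s+1)/2 > 1/2$, keeping the inner tail convergent.

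The main obstacle in carrying this out is the bookkeeping in the last step: distributing the exponent $s$ so that (i) the inner tail of $(1+Q_d(\bm_z))^{-r}$ converges ($r>1/2$ needed) and (ii) the remaining weight $(1+Q_+(\bz)+Q_d(\bm_w))^{(s-1)/2}$ can be completely absorbed into $\Norm{f}_s^2$. The choice $t = (s-1)/2$ is exactly what makes both budgets add up to $s$, with the ``$-1$'' paying the one unit of regularity needed to sum the distribution's defining series, analogous to the loss incurred in the proof of Proposition~\ref{soborders}.
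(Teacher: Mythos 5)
Your overall strategy matches the paper's: reduce $f_d$ to $f_\otimes$ via the triangle inequality, expand the Sobolev norm, apply Cauchy--Schwarz to the defining sum~\eqref{fotimesadhoc}, use the $M$-invariant (sharp) part of Lemma~\ref{dmlbk} to make the inner tail representation-independent, split the outer weight, and invoke Lemma~\ref{adding}. The one place where your bookkeeping is off---and it is a genuine gap as written---is the choice of exponent in the Cauchy--Schwarz step.

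You split the inner weight as $(1+Q_d(\bm_z))^s$, producing $\Norm{(f\mid_{\bz})}_s^2$. But the outer weight you then feed into Lemma~\ref{adding} carries an extra $(1+Q_+(\bz))^{(s-1)/2}$, so the bound you arrive at is $\Norm{f}_{s+(s-1)/2}^2 = \Norm{f}_{(3s-1)/2}^2$, which is a strictly \emph{larger} quantity than $\Norm{f}_s^2$ for $s>1$ and not controllable by it. The parenthetical ``absorbing the surplus weight against the $(1+Q_d(\bm_z))^{-s}$-tail already collected'' gestures at the correct fix but is not a legal move once the exponent $s$ has been committed: the tail factor $\sum_z(1+Q_d(\bm_z))^{-s}$ is by then a fixed scalar sitting outside the $\bz$-sum and cannot retroactively lower a Sobolev index. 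The paper sidesteps this by \emph{parametrizing} the Cauchy--Schwarz split, writing $(1+Q_d(\bm_z))^{\tau'}$ for a free $\tau'$, bounding $\Norm{f_\otimes}_\tau^2 \ll \Norm{f}_{\tau+\tau'}^2\,\frac{1}{\tau'-1}(1+\tilde\nu+\ceil{\bn^{(d)}}^2)^{\tau-\tau'+1}$, and only then choosing $\tau'=\tau+1$. With $\tau=(s-1)/2$ this gives $\tau'=(s+1)/2$, so that (i) $\tau+\tau'=s$, yielding $\Norm{f}_s$ exactly; (ii) the $\ceil{\bn^{(d)}}$-dependent factor has exponent $\tau-\tau'+1=0$, so representation-independence survives; and (iii) $\tau'>1$, so the tail sum (compared to an integral) converges. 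Your ``piece 2'' discussion actually arrives at the right number $(s+1)/2$, so you have the right idea---you just need to make that exponent choice \emph{before} applying Cauchy--Schwarz, consistently for both pieces, rather than after. Also note that the first claim (Sobolev order at least $s-1$) requires a separate remark: it follows by taking $\tau'>1$ arbitrary with $\tau+\tau'\le s$ and letting $\tau'\to 1^+$, which the paper explicitly records and which your plan does not address.
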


\begin{proof}
We prove the lemma for $f_\otimes$, which implies the $f_d$ part.  Let $\t >0$.  Then
\begin{align*}
\norm{f_{\otimes}}_{\t}^2 &= \sum_{(\bz,w)\in\bZ_\times \times\Z_d}(1+Q_{+}(\bz)+Q_d (w))^{\t}\,\abs{f_{\otimes}(\bz,w)}^2\,\norm{u_\otimes(\bz) \otimes u_d (w)}^2 \\
	&= \sum_{(\bz,w)\in\bZ_\times \times\floor{\Z_d}}(1+Q_{+}(\bz)+Q_d (w))^{\t} \Abs{\sum_{z\in\Z_d} f(\bz,z)\,\D^{w}(u_d(z))}^2
\end{align*}
Then, applying the Cauchy--Schwartz inequality to the second sum,
\begin{multline*}
	\leq \sum_{(\bz,w)\in\bZ_\times \times\floor{\Z_d}}(1+Q_{+}(\bz)+Q_d (w))^{\t} \\ \sum_{z\in\Z_d}(1+Q_d (z))^{\t'}\, \abs{f(\bz, z)}^2 \sum_{z\in M_{\l_w}\times\{\l_w\}} (1+Q_d (z))^{-\t'}\,\abs{\D^{w}(u_d(z))}^2
\end{multline*}
and by Lemma~\ref{dmlbk},
\begin{align*}
	\ll \sum_{(\bz,w)\in\bZ_\times \times\floor{\Z_d}}(1+Q_{+}(\bz)+Q_d (w))^{\t}\,\norm{(f\mid_{\bz})}_{\t'}^2 \sum_{z\in M_{\l_w}\times\{\l_w\}} \frac{1}{(1+Q_d (z))^{\t'}}.
\end{align*}
The inequality $(1+A+B)\leq(1+A)(1+B)$ for $A,B \geq 0$ gives
\begin{align*}
	\ll \sum_{\bz\in\bZ_\times}(1+Q_{+}(\bz))^{\t}\,\norm{(f\mid_{\bz})}_{\t'}^2 \sum_{w \in \floor{\Z_d}}(1 + Q_d (w))^{\t} \sum_{z \in M_{\l_w}\times\{\l_w\}}\frac{1}{(1+Q_d (z))^{\t'}}
\end{align*}
and Lemma~\ref{adding} allows us to re-write the first sum,
\begin{align*}
	\ll \norm{f}_{\t+\t'}^{2} \sum_{w \in \floor{\Z_d}}(1 + Q_d (w))^{\t} \sum_{\bm \in M_{\l_w}}\frac{1}{(1+Q_d (\bm))^{\t'}}.
\end{align*}
Now, our assumption that $f$ is $M$-fixed implies that $\floor{\Z_d}$ is the singleton
\[
	\floor{\Z_d} = \left\{\left( (0,\dots,0,\ceil{\bn^{(d)}}), 0 \right)\right\}
\]
by Proposition~\ref{minvelements}, and so the first sum only has one summand.  Lemma~\ref{polynomials} allows us to write
\begin{align*}
	\ll \norm{f}_{\t+\t'}^{2}\, (1+\tilde\nu+ \ceil{\bn^{(d)}}^2)^{\t} \,\sum_{\bm \in M_{0}}\frac{1}{(1+\tilde\nu+ 2\ceil{\bm}^2 - \ceil{\bn^{(d)}}^2)^{\t'}}.
\end{align*}
The sum is bounded\footnote{This bound, like the one in Section~\ref{twoproof}, is looser than it needs to be, but makes the calculations easier. A tighter integral bound may lead to a theorem statement with smaller loss of Sobolev regularity in solutions.} by
\[
	\sum_{\bm \in M_{0}}\frac{1}{(1+\tilde\nu+ 2\ceil{\bm}^2 - \ceil{\bn^{(d)}}^2)^{\t'}} \leq \int_{\ceil{\bn^{(d)}}}^{\infty} \frac{(x+1)\,dx}{(1+\tilde\nu+ 2x^2 - \ceil{\bn^{(d)}}^2)^{\t'}}
\]
so that we have
\[
	\norm{f_{\otimes}}_{\t}^2 \ll \norm{f}_{\t+\t'}^{2}\, \frac{1}{\t'-1}\,(1+\tilde\nu+ \ceil{\bn^{(d)}}^2)^{\t - \t' +1}.
\]
First, this implies that as long as $\t'>1$ and $\norm{f}_{\t+\t'}$ is finite, then $\norm{f_\otimes}_{\t}$ is finite.  This proves the first claim, that $f_\otimes$ (and, hence, $f_d$ also) loses at most $1$ from $f$'s Sobolev order.  Now, by choosing $\t' = \t +1$, we have that $\norm{f_\otimes}_{\t}^2 \ll (1/\t)\norm{f}_{2\t+1}^2$ and, in particular, $\norm{f_\otimes}_{\t} \ll_{\t} \norm{f}_{2\t+1}$.  The lemma is proved by putting $\t = (s-1)/2$.
\end{proof}

The next lemma follows immediately from the first part of Lemma~\ref{regularity}, and the simple observation that $f_\otimes$ and $f_d$ are $M$-invariant if $f$ is.

\begin{lemma}\label{nowobvious}
If $f \in W^s (\HH_{\otimes}\otimes\HH_d)^M$ then 
\[
(f_{\otimes}\mid_{w})\in W^{s-1}(\HH_{\otimes})^M  \quad\textrm{for all}\quad w \in \Z_d
\]
and 
\[
(f_d \mid_{\bz})\in W^{s-1}(\HH_d)^M \quad\textrm{for all}\quad \bz \in \bZ_\times.
\]
\end{lemma}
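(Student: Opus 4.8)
The plan is to deduce the statement directly from the first assertion of Lemma~\ref{regularity} together with two elementary observations: that the projections $h\mapsto(h\mid_w)$ and $h\mapsto(h\mid_{\bz})$ are norm-non-increasing (this was noted just before Lemma~\ref{adding}), and that the operations defining $f_\otimes$ and $f_d$ preserve $M$-invariance. No delicate estimate is needed beyond what Lemma~\ref{regularity} already supplies, so the argument is essentially bookkeeping.

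First I would record the Sobolev regularity. By the first part of Lemma~\ref{regularity}, $f_\otimes\in W^{s-1}(\HH_{\otimes}\otimes\HH_d)$, and hence also $f_d=f-f_\otimes\in W^{s-1}(\HH_{\otimes}\otimes\HH_d)$. Since $\Norm{(h\mid_w)}_{s-1}\le\Norm{h}_{s-1}$ for every $w\in\Z_d$ and $\Norm{(h\mid_{\bz})}_{s-1}\le\Norm{h}_{s-1}$ for every $\bz\in\bZ_\times$, it follows immediately that $(f_\otimes\mid_w)\in W^{s-1}(\HH_{\otimes})$ for all $w\in\Z_d$ and $(f_d\mid_{\bz})\in W^{s-1}(\HH_d)$ for all $\bz\in\bZ_\times$.

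Second I would verify $M$-invariance. Recall from Section~\ref{secondminvelementssection} (via Proposition~\ref{minvelements}) that $f\in(\HH_{\otimes}\otimes\HH_d)^M$ means $f(\bz,z)=0$ unless every $\l$-component of $(\bz,z)$ is trivial. In the defining formula~\eqref{fotimesadhoc}, if some $\l_{z_i}\not\equiv0$ with $i<d$ then every summand $f(\bz,z)\,\D^w(u_d(z))$ vanishes; and if $\l_w\not\equiv0$ then, since $\D^w(u_d(z))=0$ whenever $\l_z\ne\l_w$ (the remark following Proposition~\ref{propall}), the only possibly surviving terms have $\l_z=\l_w\not\equiv0$, for which $f(\bz,z)=0$. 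Either way $f_\otimes(\bz,w)=0$, so $f_\otimes$, and therefore $f_d=f-f_\otimes$, is $M$-invariant; at the level of coefficients this vanishing passes to the projections, so $(f_\otimes\mid_w)$ is $M$-invariant in the factors of $\HH_{\otimes}$ and $(f_d\mid_{\bz})$ is $M$-invariant in $\HH_d$. The only point that is not pure bookkeeping is ensuring that the weights $\D^w(u_d(z))$ cannot resurrect a non-trivial $\l$-component out of $M$-invariant data, which is handled by the orthogonality $\D^{w}(u_d(z))=0$ for $\l_z\ne\l_w$, so there is no genuine obstacle.
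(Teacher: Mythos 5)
Your proof is correct and follows essentially the same route as the paper, which dispatches this lemma with a one-sentence remark that it follows from the first part of Lemma~\ref{regularity} together with the observation that $f_\otimes$ and $f_d$ inherit $M$-invariance from $f$. You have simply written out the bookkeeping the paper leaves implicit, including the one non-trivial point (the orthogonality $\D^{w}(u_d(z))=0$ for $\l_z\ne\l_w$) that guarantees $f_\otimes$ is $M$-invariant.
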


The next lemma shows that $f_{\otimes}$ and $f_d$ are in the desired kernels of invariant distributions.

\begin{lemma}\label{kernels}
Suppose $f \in \ker\II_{X_1,\dots,X_d}^s (\HH_{\otimes}\otimes\HH_d)$, where $s>d$.  Then for every $w\in\Z_d$, we have $(f_{\otimes}\mid_w)\in\ker\II_{X_1,\dots,X_{d-1}}^{s-1}(\HH_{\otimes})$.  Similarly, for every $\bz \in \bZ_\times$, we have $(f_d \mid_{\bz})\in\ker\II_{X_d}^{s-1}(\HH_{d})$.
\end{lemma}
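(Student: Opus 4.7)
The plan is to deduce Part 2 directly from the construction of $f_\otimes$, and to handle Part 1 via a tensor-product-of-distributions construction that converts the hypothesis on $f$ into the desired conclusion on $(f_\otimes\mid_w)$.

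For Part 2, fix $\bz \in \bZ_\times$. Since $s-1 > 1/2$, Theorem~\ref{invdistthm} shows that $\II_{X_d}^{s-1}(\HH_d)$ is spanned by the distributions $\{\D^w : w \in \floor{\Z_d}\}$, so by linearity and weak-$*$ continuity it suffices to verify $\D^w\bigl((f_d\mid_\bz)\bigr) = 0$ for every $w \in \floor{\Z_d}$. Expanding in the basis of $\HH_d$, one finds
\[
\D^w\bigl((f\mid_\bz)\bigr) = \sum_{z\in\Z_d} f(\bz,z)\,\D^w(u_d(z)) = f_\otimes(\bz,w)
\]
by the very definition~\eqref{fotimesadhoc}, while
\[
\D^w\bigl((f_\otimes\mid_\bz)\bigr) = \sum_{z\in\floor{\Z_d}} f_\otimes(\bz,z)\,\D^w(u_d(z)) = f_\otimes(\bz,w)
\]
upon invoking the ``delta on the floor'' property of the $\D^w$'s noted after~\eqref{distde}, namely $\D^w(u_d(z)) = \delta_{w,z}$ for $w,z \in \floor{\Z_d}$. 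These cancel in $f_d = f - f_\otimes$. Note that this part of the argument does not actually invoke the $\RR^d$-invariance hypothesis on $f$; it is a purely structural consequence of how $f_\otimes$ was defined.

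Part 1 is where the hypothesis enters. The case $w\notin\floor{\Z_d}$ is trivial since $(f_\otimes\mid_w)=0$ by~\eqref{fotimesadhoc}, so fix $w\in\floor{\Z_d}$ and $\D' \in \II_{X_1,\dots,X_{d-1}}^{s-1}(\HH_\otimes)$. I would form the tensor-product distribution
\[
\tilde\D := \D' \otimes \D^w, \qquad \tilde\D(u_\otimes(\bz)\otimes u_d(z)) := \D'(u_\otimes(\bz))\,\D^w(u_d(z)).
\]
Since each $X_i$ with $i < d$ acts only on the $\HH_\otimes$-factor (where $\D'$ is invariant) while $X_d$ acts only on the $\HH_d$-factor (where $\D^w$ is invariant), $\tilde\D$ is annihilated by all of $X_1,\dots,X_d$. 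To verify $\tilde\D \in W^{-s}(\HH_\otimes\otimes\HH_d)$, apply Cauchy--Schwartz together with the weight split $(1+Q_+(\bz)+Q_d(z))^s \geq (1+Q_+(\bz))^{s-1}(1+Q_d(z))$ (valid because $s>1$ and both summands are non-negative) to obtain
\[
|\tilde\D(f)|^2 \leq \|f\|_s^2 \cdot \Bigl(\sum_{\bz\in\bZ_\times}\frac{|\D'(u_\otimes(\bz))|^2}{(1+Q_+(\bz))^{s-1}}\Bigr)\Bigl(\sum_{z\in\Z_d}\frac{|\D^w(u_d(z))|^2}{1+Q_d(z)}\Bigr);
\]
the first factor is finite because $\D' \in W^{-(s-1)}(\HH_\otimes)$, and the second is finite by Proposition~\ref{soborders} since $\D^w$ has Sobolev order at most $1/2$. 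Hence $\tilde\D \in \II_{X_1,\dots,X_d}^s(\HH_\otimes\otimes\HH_d)$, so by hypothesis $\tilde\D(f)=0$. A direct expansion
\[
\tilde\D(f) = \sum_{\bz,z} f(\bz,z)\,\D'(u_\otimes(\bz))\,\D^w(u_d(z)) = \sum_{\bz}\D'(u_\otimes(\bz))\,f_\otimes(\bz,w) = \D'\bigl((f_\otimes\mid_w)\bigr),
\]
again using~\eqref{fotimesadhoc}, yields $\D'\bigl((f_\otimes\mid_w)\bigr) = 0$, as desired.

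The only non-trivial estimate is the Sobolev-order bound for the tensor-product distribution, and it is quite forgiving; everything else is bookkeeping. The most interesting conceptual point is the asymmetry between the two parts: Part 2 drops out of the construction, while Part 1 genuinely requires the $\RR^d$-invariance hypothesis on $f$, channelled through the tensor product of an $X_1,\dots,X_{d-1}$-invariant distribution with the $X_d$-invariant distribution $\D^w$.
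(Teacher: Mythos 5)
Your proof is correct, and the underlying idea is the same as the paper's: to test whether $(f_\otimes\mid_w)$ is annihilated by $X_1,\dots,X_{d-1}$-invariant distributions, pair the candidate distribution with $\D^w$ on the $\HH_d$-factor to obtain an $X_1,\dots,X_d$-invariant distribution on $\HH_\otimes\otimes\HH_d$, and then use the hypothesis $f\in\ker\II_{X_1,\dots,X_d}^s$. Where you diverge from the paper is in Part 1: the paper's proof tests $(f_\otimes\mid_w)$ only against the spanning set $\{\D^{\bw}:\bw\in\floor{\bZ_\times}\}$ coming from Theorem~\ref{invdistthm} (and the discussion following it in Part~\ref{partii}), obtaining the Sobolev order of the tensor distribution $\D^{(\bw,w)}$ directly from Proposition~\ref{hrsoborders}. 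You instead pair an \emph{arbitrary} $\D'\in\II_{X_1,\dots,X_{d-1}}^{s-1}(\HH_\otimes)$ with $\D^w$, which forces you to prove the Sobolev bound for the general tensor product $\D'\otimes\D^w$ yourself; your weight-split $(1+Q_+ + Q_d)^s\geq(1+Q_+)^{s-1}(1+Q_d)$ is valid for $s\geq 1$ and the two resulting factors are exactly $\|\D'\|_{-(s-1)}^2$ and $\|\D^w\|_{-1}^2$, both finite, so the estimate goes through. Your route is a touch more self-contained since it does not lean on the spanning result for the product representation, at the cost of the extra estimate; the paper's route is shorter since it only needs the already-established Proposition~\ref{hrsoborders}. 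Your Part 2 argument and the observation that it does not use the kernel hypothesis both match the paper's.
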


\begin{proof}
Let $\bw \in \floor{\bZ_\times}$ so that $\D^{\bw} \in \II_{X_1,\dots,X_{d-1}}^{s-1}(\HH_{\otimes})$ (by Proposition~\ref{hrsoborders}).  If $w \notin \floor{\Z_d}$, then $(f_\otimes \mid_w)=0$ (see~\eqref{fotimesadhoc}), so it is enough to check the result for $w \in\floor{\Z_d}$.  By Lemma~\ref{nowobvious}, $(f_\otimes \mid_w)$ lies in the domain of $\D^{\bw}$ and we may compute
\begin{align*}
	\D^{\bw} (f_{\otimes}\mid_w) &= \sum_{\bz \in \bZ_\times} f_{\otimes}(\bz,w)\D^{\bw}(u_\otimes (\bz)) \\
		&= \sum_{\bz \in \bZ_\times} \left[\sum_{z\in\Z_d} f(\bz, z)\,\D^{w}(u_d(z))\right]\D^{\bw}(u_\otimes (\bz)) \\
		&=  \sum_{\bz \in \bZ_\times}\sum_{z\in\Z_d} f(\bz, z)\,\D^{\bw,w}(u_\otimes(\bz)\otimes u_d(z)) \\
		&= \D^{(\bw,w)}(f) = 0,
\end{align*}
since $\D^{(\bw,w)} \in \II_{X_1,\dots,X_d}^s(\HH_\otimes \otimes\HH_d)$ by Proposition~\ref{hrsoborders}, and because $f$ is assumed to be in $\ker\II_{X_1,\dots,X_d}^s (\HH_\otimes \otimes\HH_d)$.  

Now let $x \in \floor{\Z_d}$ so that $D^x \in \II_{X_d}^{1}(\HH_d) \subset \II_{X_d}^{s-1}(\HH_d)$, by Propositition~\ref{soborders}. Again, we see by Lemma~\ref{nowobvious} that $f_d$ is in its domain, and we can compute
\begin{align*}
	\D^{x} (f_d \mid_{\bz}) &= \sum_{w \in \Z_d} f_{d}(\bz,w)\D^{x}(u_x(w)) \\
		&= \sum_{w \in \Z_d} \left[ f(\bz,w) - f_\otimes (\bz,w)\right]\D^{x}(u_d(w)) \\
		&= \D^{x}(f\mid_{\bz}) - \sum_{w\in\floor{\Z_d}}\D^{x}(u_d (w))\,\sum_{z\in\Z_d} f(\bz, z)\D^{w}(u_d (z)) \\
		&= 0,
\end{align*} 
since $\D^{x}(u_d(w))=0$ for all $w \in \floor{\Z_d}$ except $w=x$, for which it equals $1$.
\end{proof}

We are now prepared to state the proof of a version of Theorem~\ref{generaltopdegree} for irreducible unitary representations.

\begin{theorem}[Irreducible version of Theorem~\ref{generaltopdegree}]\label{irreducibletopdegree}
Let $\HH=\HH_1\otimes\dots\otimes\HH_d$ be the Hilbert space of an irreducible unitary representation of $G = G_1 \times\dots\times G_d$ that admits $M$-invariant elements, and let $\tilde\nu_0>0$ be smaller than any $K_i$-invariant eigenvalue of the operator $\Delta_i$ on the restricted representation $G_i\to\U(\HH)$.  For any $s>1$ and $t\leq s-1$, there is a constant $C_{\tilde\nu_0,s,t}$ such that, for every $f\in\ker\II_{X_1,\dots,X_d}^{\s_d}(\HH)^M$, where $\s_d = (2^d + 2^{d-1}-2)s - (2^{d-1}-1)$, there exist $g_1,\dots,g_d \in W^{t}(\HH)$ satisfying the degree-$d$ coboundary equation for $f$, and satisfying the Sobolev estimates $\norm{g_i}_t \leq C_{\tilde\nu_0,s,t}\,\norm{f}_{\s_d}$ for $i=1,\dots,d$.
\end{theorem}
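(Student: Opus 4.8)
The plan is to prove Theorem~\ref{irreducibletopdegree} by induction on $d$, using Theorem~\ref{cobgeodflow} (in its $M$-invariant form) as the base case and the splitting $f = f_\otimes + f_d$ introduced in~\eqref{fotimesadhoc} as the inductive step. For $d=1$ the statement is precisely the $M$-invariant version of Theorem~\ref{cobgeodflow} with $\s_1 = s$, which is available because, by Proposition~\ref{minvelements}, the only irreducible representations admitting $M$-invariant elements have $\bn = (0,\dots,0,\ceil{\bn})$ and in these the second (sharper, representation-independent) parts of Lemmas~\ref{dmlbk} and~\ref{polynomials} apply, yielding a constant depending only on $\tilde\nu_0, s, t$ rather than on $\bn,\nu$.

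For the inductive step, assume the theorem for $d-1$ factors. Given $f \in \ker\II_{X_1,\dots,X_d}^{\s_d}(\HH)^M$, form $f_\otimes$ and $f_d$. By Lemma~\ref{regularity}, both have Sobolev order at least $\s_d - 1$, with $\norm{f_\otimes}_{(\s_d-1)/2} \ll_{\s_d} \norm{f}_{\s_d}$ and likewise for $f_d$; by Lemma~\ref{nowobvious} both retain $M$-invariance in the relevant slices; and by Lemma~\ref{kernels} each slice $(f_\otimes\mid_w)$ lies in $\ker\II_{X_1,\dots,X_{d-1}}^{\s_d-1}(\HH_\otimes)$ and each slice $(f_d\mid_{\bz})$ lies in $\ker\II_{X_d}^{\s_d-1}(\HH_d)$. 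Now apply the inductive hypothesis fibrewise: for each fixed $w \in \Z_d$ (only $w \in \floor{\Z_d}$ contributes, a singleton here by Proposition~\ref{minvelements}) solve the degree-$(d-1)$ coboundary equation for $(f_\otimes\mid_w)$, obtaining functions whose Sobolev norms are controlled by $\norm{(f_\otimes\mid_w)}_{\s_{d-1}}$; and for each fixed $\bz$ solve $X_d \, (\cdot) = (f_d\mid_{\bz})$ using the base case. Reassembling the fibrewise solutions into genuine elements of $\HH$ — using Lemma~\ref{adding} to sum the squared Sobolev norms over $w$ (resp. $\bz$) and the identity $(1+A+B) \leq (1+A)(1+B)$ to separate variables — gives $g_1,\dots,g_{d-1}$ from the first family and $g_d$ from the second, with $X_1 g_1 + \dots + X_{d-1} g_{d-1} = f_\otimes$ and $X_d g_d = f_d$, hence $\sum_i X_i g_i = f$. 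Finally apply Lemma~\ref{wlog} to replace the $g_i$ by $M$-invariant ones without increasing Sobolev norms. The recursion $\s_n = 2(\s_{n-1}+s)-1$ with $\s_1 = s$ arises from tracking the loss: one loses $1$ in forming $f_\otimes$ (Lemma~\ref{regularity}), needs input of Sobolev order $2\t+1$ to guarantee output of order $\t$ there, needs to feed the inductive hypothesis data of order $\s_{d-1}$, and the base case costs another $s$-type loss; bookkeeping these produces exactly $\s_n = 2\s_{n-1} + 2s - 1$, whose closed form is $\s_d = (2^d + 2^{d-1} - 2)s - (2^{d-1}-1)$ as claimed.

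I expect the main obstacle to be the reassembly estimate: verifying that the fibrewise solutions, each bounded in its own restricted Sobolev norm, patch together into an element of $W^t(\HH)$ with the stated global bound. This requires care because (i) the inductive hypothesis is applied with a Sobolev exponent $\s_{d-1}$ that must be matched against the order $\s_d - 1$ guaranteed for the slices of $f_\otimes$, forcing the constraint $\s_d - 1 \geq 2\s_{d-1} + \text{(loss from Lemma~\ref{regularity})}$ to close; (ii) one must interchange the fibre sum with the application of $(\bone+\Delta)^{t/2}$, which is legitimate because $\Delta$ acts diagonally on the tensor-product basis $\{u_\otimes(\bz)\otimes u_d(w)\}$, so the global squared norm is literally a double sum that Lemma~\ref{adding} and the sub-multiplicativity of $(1+\cdot)$ let us bound by a product of the two fibrewise contributions; and (iii) keeping the constant depending only on $\tilde\nu_0, s, t$ — here the $M$-invariance hypothesis is essential, since it both collapses $\floor{\Z_d}$ to a point and routes every estimate through the representation-independent branches of Lemmas~\ref{dmlbk}, \ref{polynomials}, and the base case of Theorem~\ref{cobgeodflow}, exactly as in the proof of Theorem~\ref{two} in Section~\ref{twoproof}. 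Once these points are handled the argument is a routine induction; the genuinely new content over~\cite{Ramhc} is confined to Lemmas~\ref{regularity} and~\ref{kernels}, which replace the finite-dimensional distribution bookkeeping of the $\SL(2,\RR)$ case.
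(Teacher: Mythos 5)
Your proposal is essentially the paper's own proof: induction on $d$ with the $M$-invariant irreducible version of Theorem~\ref{two} as base case, the splitting $f = f_\otimes + f_d$ via~\eqref{fotimesadhoc}, Lemmas~\ref{regularity},~\ref{nowobvious},~\ref{kernels} to control regularity and kernel membership of the slices, fibrewise application of the induction/base case, and reassembly using the sub-multiplicativity of $(1+Q_+(\bz)+Q_d(w))$ together with Lemma~\ref{adding}, with the recursion $\s_n = 2(\s_{n-1}+s)-1$ emerging from Lemma~\ref{regularity}'s doubling of the Sobolev exponent. The one misstep is the final appeal to Lemma~\ref{wlog}: that lemma is stated for $\Lii(G/\Gamma)$ (averaging over $M$ against Haar measure on a homogeneous space) and is deployed in the paper only in the proof of Theorem~\ref{generaltopdegree}, after the direct-integral step; it is neither available nor needed in a single abstract irreducible $\HH_1\otimes\dots\otimes\HH_d$, since Theorem~\ref{irreducibletopdegree} does not claim the $g_i$ are $M$-invariant — only that $f$ is, which is what Lemmas~\ref{nowobvious}/\ref{kernels} and Proposition~\ref{minvelements} exploit to keep the constants representation-independent.
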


\begin{remark*}
In terms of the $d$-cocycle $\o \in \O_{\RR^d}^d(W^{\s_d}(\HH))$ defined by $\o(X_1,\dots,X_d) = f$, Theorem~\ref{irreducibletopdegree}'s conclusion is that there is a $(d-1)$-form $\eta$ satisfying $\di\eta = \o$ and $\norm{\eta}_t \ll_{\tilde\nu_0, s,t} \norm{\o}_{\s_d}$, where the norms for forms are those expressed in Section~\ref{preliminariesii}.
\end{remark*}

\begin{proof}
The proof is by induction, and Theorem~\ref{two} is the base case, particularly the ``irreducible'' version that comprises the first half of its proof, culminating in the bound~\eqref{irreducibletwo}.  

Let $s > 1$ and $f \in \ker\II_{X_1,\dots,X_d}^{\s_d} (\HH_\otimes \otimes \HH_d)^M$, and let $t \leq s-1$.  By Lemmas~\ref{nowobvious} and~\ref{kernels}, we have $(f_\otimes\mid_w) \in \ker\II_{X_1,\dots, X_{d-1}}^{\s_{d}-1}(\HH_\otimes)^{M_{\times}}$ for all $w \in \Z_d$, and $(f_d \mid_{\bz}) \in \ker\II_{X_d}^{\s_{d}-1}(\HH_d)^{M_d}$ for all $\bz \in \bZ_\times$. Since $\s_d - 1 > \s_{d-1} \geq \s_1=s$, we have the inclusions
\[
	\ker\II_{X_1,\dots, X_{d-1}}^{\s_{d}-1}(\HH_\otimes) \subset \ker\II_{X_1,\dots, X_{d-1}}^{\s_{d-1}}(\HH_\otimes)\quad\mathrm{and}\quad \ker\II_{X_d}^{\s_{d}-1}(\HH_d)\subset\ker\II_{X_d}^{s}(\HH_d),
\]
therefore, the inductive assumption and the base case provide us with $g_{1,w},\dots,g_{d-1,w}\in W^{t}(\HH_\otimes)$ and $g_{d,\bz} \in W^{t}(\HH_d)$ satisfying
\[
	\sum_{i=1}^{d-1} X_{i} \,g_{i,w} = (f_\otimes\mid_w)\quad\textrm{and}\quad X_d\, g_{d,\bz} = (f_d\mid_{\bz}),
\]
and the bounds
\[
	\norm{g_{i,w}}_t \ll_{\tilde\nu_0,s,t} \norm{(f_\otimes\mid_w)}_{\s_{d-1}} \quad\textrm{and}\quad \norm{g_{d,\bz}}_t \ll_{\tilde\nu_0,s,t} \norm{(f_d\mid_{\bz})}_{s}.
\]
We put $g_i (\bz,w):= g_{i,w}(\bz)$ for $i=1,\dots,d-1$, and $g_d (\bz,w):= g_{d,\bz}(w)$, and claim that the $g_i$'s formally define a solution to the coboundary equation.  To see this,
\begin{align*}
	\sum_{i=1}^d X_i \, g_i &= \sum_{i=1}^d \sum_{(\bz,w)\in \bZ_\times \times\Z_d} g_i (\bz,w) X_i (u_\otimes(\bz)\otimes u_d(w)) \\
		&= \sum_{w\in \Z_d}\sum_{i=1}^{d-1} (X_i\, g_{i,w})\otimes u_d(w) + \sum_{\bz\in \bZ_\times} u_\otimes(\bz)\otimes (X_d\, g_{d,\bz}) \\
		&= \sum_{w\in \Z_d} (f_\otimes\mid_w)\otimes u_d(w) + \sum_{\bz\in \bZ_\times} u_\otimes(\bz)\otimes (f_d\mid_{\bz}) \\
		&= f_\otimes + f_d = f.
\end{align*}
Finally, we check the Sobolev norms.  For $i=1,\dots,d-1$,
\begin{align*}
	\norm{g_i}_t^2 &= \sum_{\bz,w}(1+Q_+ (\bz) + Q_d (w))^t \abs{g_i (\bz,w)}^2 \\
		&\leq \sum_{(\bz,w)\in\bZ_\times\times\Z_d}(1+Q_+ (\bz))^t(1 + Q_d (w))^t \abs{g_i (\bz,w)}^2 \\
		&= \sum_{w\in\Z_d}(1 + Q_d (w))^t \norm{g_{i,w}}_{t}^2 \\
		&\ll_{\tilde\nu_0,s,t} \sum_{w\in\Z_d}(1 + Q_d (w))^t \norm{(f_\otimes\mid_w)}_{\s_{d-1}}^2 \\
		&\ll_{\tilde\nu_0,s,t}  \norm{f_\otimes}_{\s_{d-1}+t}^2 \ll_{\tilde\nu_0,s,t}  \norm{f}_{2(\s_{d-1}+s)-1}^2 = \norm{f}_{\s_d}^2,
\end{align*}
where in the last line we have used the bounds in Lemma~\ref{regularity}, and
\begin{align*}
	\norm{g_d}_t^2 &= \sum_{\bz,w}(1+Q_+ (\bz) + Q_d (w))^t \abs{g_d (\bz,w)}^2 \\
		&\leq \sum_{(\bz,w)\in\bZ_\times\times\Z_d}(1+Q_+ (\bz))^t(1 + Q_d (w))^t \abs{g_d (\bz,w)}^2 \\
		&= \sum_{\bz\in\bZ_\times}(1 + Q_+ (\bz))^t \norm{g_{d,\bz}}_{t}^2 \\
		&\ll_{\tilde\nu_0,s,t} \sum_{\bz\in\bZ_\times}(1 + Q_+ (\bz))^t \norm{(f_d\mid_{\bz})}_{s}^2 \\
		&\ll_{\tilde\nu_0,s,t}  \norm{f_d}_{s+t}^2 \ll_{\tilde\nu_0,s,t}  \norm{f}_{4s-1}^2 \leq \norm{f}_{\s_{d}}^2,
\end{align*}
which proves the theorem.
\end{proof}


The proof of Theorem~\ref{generaltopdegree} proceeds as in the proof of Theorem~\ref{two}.  We decompose a unitary representation into irreducibles, and apply Theorem~\ref{irreducibletopdegree} in each.

\begin{proof}[Proof of Theorem~\ref{generaltopdegree}]
Let $\RR^d\curvearrowright S\mathcal{M}\cong M\backslash G/\G$ be as in the theorem statement, and let $s>1$ and $t\leq s-1$. Suppose $f \in W^{\s_d}(M\backslash G/\G)$ is in the kernel of every $\RR^d$-invariant distribution of Sobolev order $\s_d:=\s_d(s)$.  It is identified naturally with $\bar f \in \ker\II_{X_1,\dots,X_d}^{\s_d}(G/\G)^M$. 

We have the direct integral decomposition
\begin{equation}\label{thedecomposition}
	\Lii(G/\G) = \int_{\RR}^\oplus \HH_\s\,ds(\s)
\end{equation}
where $ds$-almost every $\HH_\s$ is an irreducible unitary representation of $G=G_1\times\dots\times G_d$, hence is of the form $\HH_1\otimes\dots\otimes\HH_d$.  The function $\bar f$ decomposes as
\[
	\bar f = \int_{\RR}^\oplus \bar f_\s\,ds(\s)
\]
where $\bar f_\s \in \II_{X_1,\dots,X_d}^{\s_d}(\HH_\s)^M$ for $ds$-almost every $\s$.  For these, Theorem~\ref{irreducibletopdegree} produces elements $\bar g_{1,\s},\dots, \bar g_{d,\s} \in W^t (\HH_\s)$ satisfying the degree-$d$ coboundary equation for $\bar f_\s$, and the estimates $\norm{\bar g_{i,\s}}_t \ll_{\tilde\nu_\s,s,t} \norm{\bar f_\s}_{\s_d}$ on Sobolev norms, where $\tilde\nu_\s$ is a spectral gap for $\Delta_i$ on $K_i$-invariant elements of the restricted representation $G_i \to \U(\HH_\s)$, for all $i=1,\dots, d$.  By Theorem~\ref{spectralgapthm} we can choose $\tilde\nu_\s = \tilde\nu_0$ uniformly across (almost) all representations appearing in the decomposition~\eqref{thedecomposition}.  Therefore, the expressions
\[
	\bar g_i := \int_{\RR}^\oplus \bar g_{i,\s}\,ds(\s)
\]
define a solution $\bar g_1,\dots, \bar g_d \in W^t(G/\G)$ to the degree-$d$ coboundary equation for $\bar f$, with the bounds $\norm{\bar g_i}_t \ll_{\tilde\nu_0,s,t} \norm{\bar f}_{\s_d}$. By Lemma~\ref{wlog}, we may assume without loss of generality that the $\bar g_i$'s are $M$-invariant.  Therefore, they factor to a solution $g_1,\dots, g_d \in W^t(M\backslash G/\G)$ to the coboundary equation
\[
	X_1\,g_1+\dots+X_d\,g_d = f 
\]	
satisfying $\norm{g_i}_t \ll_{\tilde\nu_0,s,t} \norm{f}_{\s_d}$, which proves the theorem.
\end{proof}

\subsection{A brief discussion of the gap between Theorem~\ref{generaltopdegree} and the Katok--Katok Conjecture}\label{whynokk}

Theorem~\ref{generaltopdegree} would constitute a verification of the Katok--Katok Conjecture in top degree for the actions we have considered if it were known that linear combinations of closed orbit measures form a dense subset of the space of invariant distributions.  However, proving this seems to be a formidable problem in itself. Even for $\ZZ^d$-actions on the torus, Katok and Katok prove a result analogous to ours, where obstructions to solving the top-degree coboundary equation come from invariant distributions (``\emph{pseudomeasures},'' in the terminology of~\cite{KK95}). Separately, they show that these are approximable by measures supported on closed orbits; it is one of the main results of their paper~\cite[Theorem~$3$]{KK95}, and is an extension of a result of W.~Veech for toral endomorphisms~\cite{Vee86}. 

It does not seem that the corresponding statement for our Weyl chamber flows can be deduced from the arguments we have presented. In fact, what we have used so far in Part~\ref{partii} does not even require the $\RR^d$-action to be Anosov. In an upcoming paper, we show how our strategy can be adapted to treat the case of \emph{unipotent} $\RR^d$-actions on $\PSL(2,\RR)^d/\G$, arriving at similar results:~obstructions in the top degree come from invariant distributions, and lower-degree cohomologies trivialize. However, since the action is unipotent, we know that there may be invariant distributions \emph{not} approximated by closed orbit measures. This suggests that our methods, though highly adaptable to other situations, may not be strong enough to prove the full Katok--Katok Conjecture on their own.


\section{Lower-degree cohomology and proof of Theorem~\ref{generallowerdegree}}\label{lowerdegreesection}

The aim of this section is to prove Theorem~\ref{generallowerdegree}.  The proof is very similar to the proof in~\cite{Ramhc} for $\SL(2,\RR)\times\dots\times\SL(2,\RR)$.  


Let us define restricted versions of forms.  For an $n$-form $\o \in \O_{\RR^d}^{n}(W^{s}(\HH_{1}\otimes\dots\otimes\HH_d))$, define $\o_\ell \in \O_{\RR^{d}}^{n}(W^{s}(\HH_{1}\otimes\dots\otimes\HH_d))$ to be indexed by $i_1 < \dots < i_n  \subset \{1,\dots,\widehat{\ell},\dots,d\}$,
\[
	\o_{\ell}(X_{i_1},\dots,X_{i_n}) = \o(X_{i_1},\dots,X_{i_n}).
\]
This is just the form $\o$, with the index $\ell$ ``missing.''  Fixing a basis element $u_\ell(z) \in \HH_\ell$, we define a restricted version $(\o_\ell \mid_{z}) \in\O_{\RR^{d-1}}^{n}(W^{s}(\HH_{1}\otimes\dots\otimes\widehat{\HH_\ell}\otimes\dots\otimes\HH_d))$ by
\[
	(\o_{\ell}\mid_{z})(X_{i_1},\dots,X_{i_n}) = (\o(X_{i_1},\dots,X_{i_n}))\mid_{z}.
\]
It is an $n$-form over the $\RR^{d-1}$-action by $X_1,\dots,\widehat{X_\ell},\dots,X_d$ on $\HH_{\otimes,\ell}:=\HH_1 \otimes\dots\otimes\widehat{\HH_\ell}\otimes\dots\otimes\HH_d$.  The following simple lemma shows that if $\o$ is a closed form, then so are $\o_\ell$ and $(\o_{\ell}\mid_{z})$.

\begin{lemma} \label{closed}
	Let $\o \in \O_{\RR^d}^{n}(W^{s}(\HH_{1}\otimes\dots\otimes\HH_{d}))$, with $\di\o=0$.  Then for any $\ell=1,\dots,d$,  $\di(\o_{\ell}\mid_{z})=0$ for all $z \in \Z_\ell$.
\end{lemma}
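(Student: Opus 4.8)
The plan is to reduce the statement to two elementary observations, both of which exploit only the tensor-product structure of the representation: first, that passing from $\o$ to $\o_\ell$ preserves closedness, and second, that the restriction operation $(\,\cdot\mid_z)$ commutes with the Lie derivatives $X_i$ for $i\neq\ell$.

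For the first point I would simply unravel the definition of $\di\o=0$. For every multi-index $I=(i_1,\dots,i_{n+1})$ with entries in $\{1,\dots,d\}$ we have $\sum_{j=1}^{n+1}(-1)^{j+1}X_{i_j}\,\o(X_{I_j})=0$. Restricting to those $I$ with all entries in $\{1,\dots,\widehat\ell,\dots,d\}$, every truncation $I_j$ also avoids $\ell$, so $\o_\ell(X_{I_j})=\o(X_{I_j})$, and the identity reads $\di\o_\ell(X_I)=0$. Hence $\o_\ell$ is a closed $n$-form over the $\RR^{d-1}$-action generated by $\{X_i:i\neq\ell\}$.

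The only substantive step is the commutation claim: for $i\neq\ell$ and any sufficiently regular $h\in\HH_1\otimes\dots\otimes\HH_d$, one has $(X_i h)\mid_z = X_i\,(h\mid_z)$. I would deduce this directly from~\eqref{xactioneven} and~\eqref{xactionodd}: the operator $X_i$ acts only on the $i$-th tensor factor, modifying only the index $\bm_{z_i}$ while leaving the $\ell$-th factor inert; the operation $(\,\cdot\mid_z)$ only extracts the coefficient of $u_\ell(z)$ in the $\ell$-th factor. Since the two operations act on disjoint tensor slots, they commute. This is the one place where the product structure is genuinely used, and although it is transparent from the action formulas, it is worth stating explicitly.

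Combining the two observations finishes the proof: for any multi-index $I$ with entries in $\{1,\dots,\widehat\ell,\dots,d\}$,
\[
\di(\o_\ell\mid_z)(X_I) = \sum_{j=1}^{n+1}(-1)^{j+1}X_{i_j}\bigl(\o_\ell(X_{I_j})\mid_z\bigr) = \sum_{j=1}^{n+1}(-1)^{j+1}\bigl(X_{i_j}\,\o_\ell(X_{I_j})\bigr)\mid_z = \bigl(\di\o_\ell(X_I)\bigr)\mid_z = 0,
\]
using the commutation identity in the middle equality and closedness of $\o_\ell$ at the end. I do not anticipate any real obstacle here; the argument is purely formal, and the entire content is the bookkeeping that $X_i$ with $i\neq\ell$ leaves the $\ell$-th slot untouched.
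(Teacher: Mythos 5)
Your proposal is correct and follows essentially the same route as the paper's own proof, which carries out the same unwinding of $\di$ and identification $(\o_\ell\mid_z)(X_{I_j})=(\o\mid_z)(X_{I_j})$ in a single displayed computation. The one point you make explicit that the paper leaves implicit is the commutation $(X_i h)\mid_z = X_i(h\mid_z)$ for $i\neq\ell$, which is a useful clarification even though it is immediate from the tensor-product structure.
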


\begin{proof}
The proof is a simple calculation:~for any $i_1<\dots<i_{n+1} \subset\{1,\dots,\widehat\ell,\dots,d\}$,
\begin{align*}
	\di(\o_{\ell}\mid_{z})(X_{i_1},\dots,X_{i_{n+1}}) &= \sum_{j=1}^{n+1} (-1)^{j+1}X_{i_j}\,(\o_{\ell}\mid_{z})(X_{I_j}) \\
		&= \sum_{j=1}^{n+1} (-1)^{j+1}X_{i_j}\,(\o \mid_{z})(X_{I_j}) \\
		&= \di\o (X_{i_1},\dots,X_{i_{n+1}})\mid_z \\
		&= 0,
\end{align*}
as desired.
\end{proof}

The next proposition is a version of the so-called higher-rank trick.  It shows that for $\o \in \O_{\RR^d}^{d-1}(W^{s}(\HH_1 \otimes\dots\otimes\HH_d))$ with $\di\o=0$, the top-degree cocycle 
\[
(\o_{\ell}\mid_{z}) \in \O_{\RR^{d-1}}^{d-1}(W^{s}(\HH_{\otimes,\ell}))
\]
is in the kernel of all $X_1,\dots, \widehat{X_\ell}, \dots,X_d$-invariant distributions, and hence is exact, for every $z\in \Z_\ell$, by Theorem~\ref{irreducibletopdegree}.

\begin{proposition} \label{higherranktrick}
	Let $\o \in \O_{\RR^{d}}^{d-1}(W^{s}(\HH_{1}\otimes\dots\otimes\HH_d))$ be a closed $(d-1)$-form, and $\ell=1,\dots,d$.  Then for every $z \in \Z_\ell$, we have that 
	\[
	(\o_{\ell}\mid_{z})(X_1,\dots, \widehat{X_\ell},\dots,X_d) \in \ker\II_{X_1,\dots,\widehat{X_\ell},\dots,X_{d}}^{s}(\HH_{\otimes,\ell}).
	\]
\end{proposition}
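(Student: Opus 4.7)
My plan is to combine closedness of $\omega$ with the fact that $X_\ell$ acts injectively on vectors of any non-trivial irreducible unitary representation of $\SO^\circ(N_\ell,1)$. First, set $F := \omega(X_1,\dots,\widehat{X_\ell},\dots,X_d)$ and expand the closedness identity $\di\omega(X_1,\dots,X_d)=0$ to isolate
\[
	X_\ell F = \sum_{j\neq\ell}(-1)^{j+\ell+1}\,X_j\,\omega(X_1,\dots,\widehat{X_j},\dots,X_d),
\]
exhibiting $X_\ell F$ as a finite sum of $X_j$-coboundaries with $j\neq\ell$. Given any $D\in\II_{X_1,\dots,\widehat{X_\ell},\dots,X_d}^s(\HH_{\otimes,\ell})$ and any $w\in\Z_\ell$, I would lift $D$ to a distribution $\widetilde D_w(h):=D((h\mid_w))$ on $\HH$. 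Because slicing in the $\ell$-th factor commutes with every $X_j$ for $j\neq\ell$, the lifted $\widetilde D_w$ is $X_j$-invariant for each such $j$, and therefore $\widetilde D_w(X_\ell F)=0$ for every $w$.

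Second, set $\phi(w):=D((F\mid_w))/\|u_\ell(w)\|$ and $\hat\phi := \sum_{w}\phi(w)\,u_\ell(w)$. Expanding $\widetilde D_w(X_\ell F)=D((X_\ell F)\mid_w)$ through the action formulas \eqref{xactioneven} or \eqref{xactionodd} and using the symmetries $A_j^-(w+e_j)=A_j^+(w)$ (respectively $B_j^-$ in the odd case), the vanishing condition becomes
\[
	\sum_j\big[A_j^+(w)\,\phi(w+e_j) + A_j^-(w)\,\phi(w-e_j)\big] = 0
\]
for every $w\in\Z_\ell$, with the additional summand $C(w)\phi(w)$ when $N_\ell$ is odd. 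A direct reindexing will show that this is \emph{precisely} the equation the coefficients of a vector $\hat\phi\in\HH_\ell$ must satisfy in order that $X_\ell\hat\phi=0$. The Sobolev estimate
\[
	\|\hat\phi\|^2 = \sum_{w}|D((F\mid_w))|^2 \leq \|D\|_{-s}^2\sum_{w}\|(F\mid_w)\|_s^2 \leq \|D\|_{-s}^2\,\|F\|_s^2,
\]
obtained via Cauchy--Schwarz and Lemma~\ref{adding} (applied with weight $0$), confirms that $\hat\phi$ is a genuine element of $\HH_\ell$.

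To finish, I would invoke the fact that the non-compact one-parameter subgroup $\{e^{tX_\ell}\}$ admits no non-zero invariant vectors in any non-trivial irreducible unitary representation of $\SO^\circ(N_\ell,1)$ (the Howe--Moore vanishing of matrix coefficients at infinity). Consequently $\ker X_\ell = 0$ on $\HH_\ell$, so $\hat\phi=0$, forcing $\phi\equiv 0$ and hence $D((F\mid_z))=0$ for every $z\in\Z_\ell$ and every $D$, which is the claim. The main conceptual subtlety---and the reason the argument succeeds in spite of the infinite-dimensional space of formal $X_\ell$-invariant distributions catalogued in Theorem~\ref{invdistthm}---is the recognition that the PDE derived for $\phi$ must be interpreted as the \emph{vector-kernel} equation $X_\ell\hat\phi=0$ rather than the formally identical distribution-invariance PDE: the vector equation admits no non-trivial solution because the formal solutions $\D^{\bm_0,\l}$ fail to converge in the $\HH_\ell$-norm (by Lemma~\ref{dmlbk} they are only bounded, hence not square-summable), whereas the distribution equation supports the $\abs{\floor{M_\l}}$-parameter family of Theorem~\ref{invdistthm}.
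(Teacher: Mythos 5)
The paper omits the proof, deferring to \cite[Lemma~5.2]{Ramhc}, but your proposal is exactly the standard ``higher-rank trick'' that that lemma carries out: lift $D$ by slicing, use closedness to exhibit $X_\ell F$ as a sum of $X_j$-coboundaries ($j\neq\ell$), deduce that the slice-evaluations $\phi(w)=D(F\mid_w)$ satisfy the kernel equation $X_\ell\hat\phi=0$ with $\hat\phi\in L^2$ by the Cauchy--Schwarz/Lemma~\ref{adding} estimate, and invoke vanishing of matrix coefficients to force $\hat\phi=0$. The architecture is right and the crucial Howe--Moore step is the correct mechanism.

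Two points in the write-up need repair. First, the closing parenthetical is not a valid argument: Lemma~\ref{dmlbk} only gives an \emph{upper} bound $\abs{\D^{\bm_0,\l}(\bk)}\ll 1$, and ``bounded, hence not square-summable'' is a non sequitur --- a bounded sequence may perfectly well be square-summable. (In fact along the $\l\equiv 0$ chain the coefficients decay like $\ceil{\bk}^{-1/2}$, so they are bounded \emph{and} decaying, just not in $\ell^2$.) The actual reason the formal solutions are not $L^2$ vectors is precisely Howe--Moore, i.e.\ the very fact you invoke a sentence earlier; the parenthetical should be dropped or replaced by that observation. Second, the step ``$\widetilde D_w(X_\ell F)=\sum_{j\neq\ell}\pm\,\widetilde D_w(X_j\omega(\dots))=0$'' applies $D$ term by term to vectors $X_j(\omega(\dots)\mid_w)\in W^{s-1}(\HH_{\otimes,\ell})$, whereas $D$ is a priori only a functional on $W^s$; only the \emph{sum} $(X_\ell F)\mid_w$ is guaranteed to lie in $W^s$. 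Justifying this either costs a unit of Sobolev regularity in the hypothesis (take $\omega\in W^{s+1}$) or requires a summation-by-parts estimate showing the boundary terms vanish, which holds once $s$ exceeds a threshold depending on the dimensions of the index sets $\Z_i$. This is harmless for the paper's applications (where the orders $\vars_d$, $4s-1$, etc.\ are much larger than $s$), but it is a gap if one insists on the proposition literally as stated for all $s$, and it is worth flagging explicitly.
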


\begin{proof}
The proof is in~\cite[Lemma~$5.2$]{Ramhc}. We omit it.
\end{proof}

The following is the base case for the inductive argument used in the proof of Theorem~\ref{irreduciblelowerdegree}, which is a version of Theorem~\ref{generallowerdegree} for irreducible unitary representations.

\begin{proposition}[Base case for Theorem~\ref{irreduciblelowerdegree}]\label{basecaselowerdegree}
Suppose 
\[
\SO^\circ(\en_1, 1)\times\SO^\circ(\en_2, 1) \to \U(\HH_1 \otimes\HH_2)
\]
is an irreducible unitary representation with both factors non-trivial, and admitting $M(\en_1)\times M(\en_2)$-invariant elements.  For any $s>1$ and $t\leq s-1$ there is a constant $C_{\nu_0,s,t}>0$ such that whenever $\o \in \O_{\RR^2}^1(W^{4s-1}(\HH)^M)$ is a $1$-cocycle, there is an element $\eta \in W^t (\HH)$, a ``0-form,'' such that $\di\eta=\o$ and $\norm{\eta}_t \leq C_{\nu_0,s,t}\,\norm{\o}_{4s-1}$.
\end{proposition}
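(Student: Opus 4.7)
The plan is to exploit the higher-rank structure: rather than attempt both equations $X_1 g = f_1 := \o(X_1)$ and $X_2 g = f_2 := \o(X_2)$ simultaneously, I will solve only the first one, slice by slice in the second factor, and then deduce the second equation for free from the cocycle relation $X_1 f_2 = X_2 f_1$ together with injectivity of $X_1$ on the non-trivial irreducible factor $\HH_1$.

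The higher-rank trick enters immediately. By Proposition~\ref{higherranktrick} applied with $d=2$ and $\ell=2$, for every $z \in \Z_2$ the slice $(f_1\mid_z) \in W^{4s-1}(\HH_1)^{M(\en_1)}$ lies in $\ker \II_{X_1}^{4s-1}(\HH_1)$. Since $\HH_1$ is a non-trivial irreducible unitary representation of $\SO^\circ(\en_1,1)$ admitting $M(\en_1)$-invariant elements, I can apply the $M$-invariant irreducible base case \eqref{irreducibletwo} (the representation-theoretic engine behind Theorem~\ref{two}) slice by slice to produce $g_z \in W^{t}(\HH_1)^{M(\en_1)}$ satisfying $X_1 g_z = (f_1\mid_z)$ and $\norm{g_z}_t \ll_{\nu_0, s, t} \norm{(f_1\mid_z)}_s$. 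Assembling $g := \sum_{z \in \Z_2} g_z \otimes u_2(z)$, the inequality $(1+A+B)^t \le (1+A)^t(1+B)^t$ together with Lemma~\ref{adding} gives
\[
\norm{g}_t^2 \le \sum_{z \in \Z_2}(1+Q_2(z))^t \norm{g_z}_t^2 \ll_{\nu_0,s,t} \norm{f_1}_{s+t}^2 \le \norm{\o}_{4s-1}^2,
\]
so $g$ is a bona fide element of $W^t(\HH_1 \otimes \HH_2)$. By construction $X_1 g = f_1$, and the fact that $X_1, X_2$ commute (acting on distinct tensor factors) combined with the cocycle condition yields $X_1(X_2 g - f_2) = X_2(X_1 g) - X_1 f_2 = X_2 f_1 - X_1 f_2 = 0$.

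The proof closes by showing that $X_1$ is injective on $\HH_1\otimes\HH_2$, so that $X_2 g = f_2$ follows. Since $X_1$ acts trivially on the second factor, it suffices to show injectivity on $\HH_1$: any $v \in \HH_1$ with $X_1 v = 0$ would be fixed by the one-parameter subgroup $\exp(\RR X_1)$, but the Howe--Moore theorem forces $\inner{\exp(tX_1)v}{v}\to 0$ as $\abs{t}\to\infty$ in any non-trivial irreducible unitary representation of the simple non-compact group $\SO^\circ(\en_1,1)$, contradicting its being the constant $\norm{v}^2$ unless $v=0$. Full $M$-invariance of $g$ is then automatic: $M(\en_2)$-invariance of $f_1$ forces $g_z = 0$ whenever $\l_z\not\equiv 0$, while each $g_z$ is $M(\en_1)$-invariant by the base case. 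The delicate point will be the Sobolev bookkeeping across the assembly step and the rigorous justification of the identity $X_1(X_2 g) = X_2(X_1 g)$ at the working Sobolev order; the generous hypothesis $4s-1$, rather than the sharper-looking $2s-1$ that the naive estimate above produces, provides exactly the slack needed to carry out these commutation manipulations cleanly and to keep the constants uniform in $\nu_0$.
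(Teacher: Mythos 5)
Your mechanism is the same as the paper's, but you have streamlined it in a genuine way: where the paper constructs two candidate primitives $\eta_1$ and $\eta_2$ (solving $X_1\eta_1 = f_1$ and $X_2\eta_2 = f_2$ respectively) and then proves $\eta_1 = \eta_2$ by applying kernel-triviality of $X_1$ and then of $X_2$, you build only $g := \eta_1$ and obtain $X_2 g = f_2$ directly from the cocycle identity together with a single application of the triviality of $\ker X_1$. This is correct and slightly more economical; the paper's two-sided framing is cosmetic.

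However, the point you defer as ``Sobolev bookkeeping'' is in fact the crux, and your proof as written has a genuine gap there. You produce $g \in W^t(\HH_1\otimes\HH_2)$ with $t\leq s-1$; but to close the argument you need $X_2\,g - f_2$ to have \emph{strictly positive} Sobolev order, since the kernel-triviality assertion for $X_1$ (Howe--Moore or otherwise) applies only to vectors of positive order. With only the $W^t$ bound, $X_2\,g - f_2 \in W^{t-1}$, and $t-1$ can easily be negative, so the injectivity step cannot even get started. This is exactly what the $4s-1$ hypothesis buys you, and it is not about ``commutation manipulations.'' The fix is the same as the paper's: apply the irreducible base case of Theorem~\ref{two} a second time with $2s$ in place of $s$, obtaining $g_z \in W^{2s-1}(\HH_1)$ with $\Norm{g_z}_{2s-1}\ll_{\nu_0,s}\Norm{(f_1\mid_z)}_{2s}$, and repeat your Lemma~\ref{adding} assembly at that order to conclude
\[
\Norm{g}_{2s-1}^2 \leq \sum_{z\in\Z_2}\bigl(1+Q_2(z)\bigr)^{2s-1}\Norm{g_z}_{2s-1}^2 \ll_{\nu_0,s} \Norm{f_1}_{(2s-1)+2s}^2 = \Norm{f_1}_{4s-1}^2.
\]
Since $s>1$, this places $g$ in $W^{2s-1}$ with $2s-1>1$, so $X_2\,g - f_2 \in W^{2s-2}$ with $2s-2>0$. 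Slicing at each $z\in\Z_2$ and invoking triviality of $\ker X_1$ on positive-order vectors of the non-trivial irreducible $\HH_1$ then gives $(X_2\,g - f_2)\mid_z = 0$ for every $z$, hence $X_2\,g = f_2$. Once this estimate is made explicit, your proof is complete and agrees with the paper's in all essentials, including the deduction of $M$-invariance of $g$ from Proposition~\ref{minvelements}.
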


\begin{proof}
The closedness condition $\di\o=0$ is equivalent to $f=\o(X_1)$ and $g = \o(X_2)$ satisfying $X_2\,f = X_1\,g$. Proposition~\ref{higherranktrick} implies, in particular, that $(f\mid_{z_2}) \in \ker\II_{X_1}^{2s}(\HH_1)$ for every $z_2 \in \Z_2$, and $(g\mid_{z_1}) \in \ker\II_{X_2}^{2s}(\HH_2)$ for every $z_1 \in \Z_1$. (After all, $2s < 4s -1$.)

Now, Theorem~\ref{two} (or, rather, the irreducible version making up the first half of its proof in Section~\ref{twoproof}) implies that there exists $\eta_{1,z_2} \in W^{2s-1}$ satisfying $X_1 \,\eta_{1,z_2} = (f \mid_{z_2})$ and the bounds 
\begin{equation}\label{sept}
\norm{\eta_{1,z_2}}_t \ll_{\nu_0,s,t} \norm{(f \mid_{z_2})}_s
\end{equation}
for any $t \leq s-1$ and
\begin{equation}\label{oct}
\norm{\eta_{1,z_2}}_{2s-1} \ll_{\nu_0,s,t} \norm{(f \mid_{z_2})}_{2s}.
\end{equation}
We define $\eta_1 \in \HH_1\otimes\HH_2$ by $(\eta_1\mid_{z_2})=\eta_{1,z_2}$. Notice that $X_1\,\eta_1 = f$, and using~\eqref{sept} we have that
\begin{align*}
	\norm{\eta_1}_t^2 &= \sum_{z_1,z_2}(1 +Q_1 (z_1) +Q_2(z_2))^t\,\Abs{\eta_1 (z_1,z_2)}^2 \\
		&\leq \sum_{z_1,z_2}(1 +Q_1 (z_1))^t\,(1 +Q_2(z_2))^t\,\Abs{\eta_1 (z_1,z_2)}^2 \\
		&= \sum_{z_2}(1 +Q_2(z_2))^t\,\Norm{(\eta_1\mid_{z_2})}_t^2 \\
		&\ll_{\nu_0,s,t} \sum_{z_2}(1 +Q_2(z_2))^t\,\Norm{(f\mid_{z_2})}_s^2 \leq \Norm{f}_{2s-1}^2.
\end{align*}
The same reasoning for $g$ will allow us to obtain $\eta_2 \in \HH_1\otimes\HH_2$ with $X_2\,\eta_2 = g$ and a similar Sobolev estimate. Similar calculations using~\eqref{oct} instead of~\eqref{sept} will show that $\eta_1, \eta_2 \in W^{2s-1}(\HH_1\otimes\HH_2)$.

We claim that $\eta_1$ and $\eta_2$ coincide. First, since $\eta_1-\eta_2 \in W^{2s-1}$, and $2s-1>1$, we know in particular that $X_2\,(\eta_1-\eta_2) \in \HH_1\otimes\HH_2$ has positive Sobolev order. Also, since $X_1$ and $X_2$ commute, we can show that $X_2\,(\eta_1 - \eta_2)$ is a solution to the equation $X_1\,u = 0$, because $X_2\,f - X_1\,g=0$. But $X_1\,u=0$ has no non-trivial solutions of positive Sobolev order in $\HH_1$, so we must have $X_2\,(\eta_1-\eta_2)\mid_{z_2} = 0$ for every $z_2 \in \Z_2$, which implies that $X_2\,(\eta_1 - \eta_2)=0$. We also know that $X_2\,u=0$ has no non-trivial solutions of positive order in $\HH_2$, so the same reasoning leads to $\eta_1 - \eta_2=0$. Therefore, $\eta=\eta_1=\eta_2 \in W^t (\HH_1\otimes\HH_2)$ is our desired ``$0$-form'' and the proposition is proved.
\end{proof}

We will also need the following easy lemma. 

\begin{lemma}\label{easylemma}
If $f \in W^{s+1} (\HH_1 \otimes\dots\otimes\HH_d)^M$ then $\Norm{X_1 f}_s \ll \Norm{f}_{s+1}$.
\end{lemma}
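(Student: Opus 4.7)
The plan is to work directly in the orthonormal tensor basis $\{u(\bz)\}_{\bz\in\bZ}$ for $\HH_1\otimes\dots\otimes\HH_d$. Since $X_1$ acts only on the first tensor factor, the action formula~\eqref{xactioneven} (or~\eqref{xactionodd}, if $\en_1$ is odd) yields
\[
	X_1 u(\bz) = \sum_{\bz'\sim \bz} \gamma(\bz,\bz')\, u(\bz'),
\]
where $\bz'\sim\bz$ means $z_i'=z_i$ for $i\geq 2$, $\l_{z_1'}=\l_{z_1}$, and $\bm_{z_1'}$ differs from $\bm_{z_1}$ by a single unit shift $\pm e_j$ (or by $0$, in the odd case). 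Each coefficient $\gamma(\bz,\bz')$ is one of the $A_j^\pm$, $B_j^\pm$, or $C$ from Sections~\ref{basiseven}--\ref{basisodd}, and the number of non-zero terms is bounded by a constant depending only on $\en_1$.

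The two essential ingredients will be:~(i) a coefficient bound $|\gamma(\bz,\bz')|^2 \ll 1 + Q_1(\bm_{z_1}) \leq 1 + Q_+(\bz)$, which follows from Lemmas~\ref{coeffseven} and~\ref{coeffsodd} together with Lemma~\ref{polynomials}; and (ii) the neighbor weight comparison $1+Q_+(\bz')\asymp 1+Q_+(\bz)$ for $\bz\sim\bz'$, which holds because $Q_1(\bm_{z_1})$ shifts by at most $O(\ceil{\bm_{z_1}})$ under a unit step, while $1+Q_1\asymp 1+\ceil{\bm_{z_1}}^2$. With these in hand, I would unwind
\[
	\Norm{X_1 f}_s^2 = \sum_{\bz'\in\bZ} (1+Q_+(\bz'))^s \Abs{\sum_{\bz\sim \bz'} f(\bz)\, \gamma(\bz,\bz')}^2,
\]
apply the Cauchy--Schwartz inequality to the (uniformly finite) inner sum, swap the order of summation between $\bz$ and $\bz'$, and use (i)--(ii) to conclude
\[
	\Norm{X_1 f}_s^2 \ll \sum_{\bz\in\bZ} |f(\bz)|^2\,(1+Q_+(\bz))^{s+1} = \Norm{f}_{s+1}^2,
\]
as desired.

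The $M$-invariance hypothesis enters only to restrict attention to indices with $\l_{z_1}\equiv 0$ (via Proposition~\ref{minvelements}), where the coefficient bounds in (i) hold with implicit constants independent of $\bn_1,\nu_1$; for general $f$ the same argument would go through up to a representation-dependent factor, but we do not need that here. The only mild obstacle is verifying (ii) uniformly in $\bz$, which is a direct computation from $Q_1(\bm)=\sum m_i(m_i+2\rho_i)$ and the fact that the neighbor shifts change one entry of $\bm_{z_1}$ by at most one.
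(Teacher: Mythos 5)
Your proposal is correct and matches the paper's approach in substance: both work coefficient-wise in the tensor basis, bound the nonzero matrix entries of $X_1$ via the upper bounds in Lemmas~\ref{coeffseven} and~\ref{coeffsodd} (valid with representation-independent constants precisely because $M$-invariance forces $\l_{z_1}\equiv 0$, killing the $A_j^\pm$, $B_j^\pm$ for $j<k$ and $C$), and absorb the resulting factor of $1+Q_1(\bm_{z_1})$ into the Sobolev weight. The paper's own argument is just a compressed version of yours---it writes $\abs{X_1 f(\bz)} \ll \abs{A_{k_1}^+(z_1) f(\bz)}$ as a shorthand that elides exactly the Cauchy--Schwartz, reindexing, and neighbor-weight-comparison steps you make explicit.
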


\begin{remark}
Though it is not a proof, it may be instructive to note that from the point of view of another standard definition of Sobolev norm, namely,
\[
\Norm{f}_s^2 = \sum_{\{V_{i_1}, \dots, V_{i_s}\}\subset\so(\en,1)}\Norm{V_{i_1}V_{i_2}\dots V_{i_s}f}^2,
\]
this lemma is obvious, and in fact $\Norm{X_1 f}_s \leq \Norm{f}_{s+1}$. This other norm is equivalent to the norm we are using, meaning that the two are asymptotic in the sense of ``$\asymp$''.  However, the asymptote may depend on the representation.  We therefore prove Lemma~\ref{easylemma} by a calculation.
\end{remark}

\begin{proof}
Let $f \in W^{s+1}(\HH_1\otimes\dots\otimes\HH_d)^M$. Then
\begin{align*}
	\Norm{X_1\,f}_s^2 &= \sum_{\bz} (1+Q(\bz))^s\,\Abs{X_1\,f (\bz)}^2 \\
		&\ll \sum_{\bz} (1+Q(\bz))^s\,\Abs{A_{k_1}^+(z_1)\,f (\bz)}^2,
	\intertext{which, according to Lemmas~\ref{Akbound} and~\ref{Bkbound},}
		&\ll \sum_{\bz} (1+Q(\bz))^{s+1}\,\Abs{f (\bz)}^2 = \norm{f}_{s+1}^2
\end{align*}
which proves the lemma.
\end{proof}

Finally, we define the number $\vars_d (s)$ by the following rule. First, for any $s \geq 1$, we set the number $\vars_2 (s) = 4s-1$. For $d\geq 3$, we put
\[
	\vars_d (s) = \max\left\{\begin{matrix}\vars_{d-1} (\vars_{d-1}+s+1)+\vars_{d-1}(s)+s, \\ \s_{d-1} (\vars_{d-1}+s+1)+\vars_{d-1}(s)+s\end{matrix}\right\},
\]
where $\s_d(s)$ is as in Theorem~\ref{generaltopdegree}. We are now prepared to state the proof of Theorem~\ref{generallowerdegree} for irreducible unitary representations.

\begin{theorem}[Irreducible version of Theorem~\ref{generallowerdegree}]\label{irreduciblelowerdegree}
Let $\HH=\HH_1 \otimes\dots\otimes \HH_d$ be an irreducible representation of $G$ with no trivial factor, admitting $M$-invariant elements, and let $\tilde\nu_0>0$ be smaller than any $K_i$-invariant eigenvalue of the operator $\Delta_i$.  For any $s >1$ and $t \leq s-1$, there is a constant $C_{\tilde\nu_0,s,t}>0$ such that for any $n$-cocycle $\o \in \O_{\RR^d}^{n} (W^{\varsigma_d} (\HH)^M)$ with $1\leq n\leq d-1$, there exists $\eta \in \O_{\RR^d}^{n-1} (W^{t}(\HH))$ with $\di\eta = \o$ and $\norm{\eta}_t \leq C_{\tilde\nu_0,s,t}\,\norm{\o}_{\vars_d}$.
\end{theorem}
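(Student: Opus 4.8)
The plan is to run an induction on the degree $n$ of the cocycle, with Proposition~\ref{basecaselowerdegree} providing the case $n=1$ (which only involves two factors), and with the number $\vars_d(s)$ engineered precisely so that the Sobolev losses accumulated at each stage of the induction fit inside it. Throughout, we restrict attention to $M$-invariant representations, so by the discussion in Section~\ref{secondminvelementssection} every $\HH_i$ is of the form $\HH_{(0,\dots,0,\ceil{\bn_i}),\nu_i}$, and by Lemma~\ref{wlog} we may always arrange any solution $\eta$ we produce to be $M$-invariant.

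The inductive step is the standard ``splitting off one factor'' argument, adapted from~\cite[Section~5]{Ramhc}. Given a closed $n$-cocycle $\o \in \O_{\RR^d}^n(W^{\vars_d}(\HH)^M)$ with $n \leq d-1$, pick an index $\ell$ not appearing among some chosen $(n-1)$-subset, and split $\o$ using the restricted forms $\o_\ell$ (the part with index $\ell$ ``missing'') and the pieces $\o(X_{i_1},\dots,X_{i_{n-1}},X_\ell)$ (the part involving $X_\ell$). For the latter pieces: fixing a basis element $u_\ell(z) \in \HH_\ell$, Lemma~\ref{closed} shows the restricted form $(\o_\ell\mid_z)$ is closed, and Proposition~\ref{higherranktrick} (the higher-rank trick) shows that when this restricted form has top degree $d-1$ in the $\RR^{d-1}$-action on $\HH_{\otimes,\ell}$, it lies in the kernel of all the relevant invariant distributions, so Theorem~\ref{irreducibletopdegree} applies; otherwise the inductive hypothesis applies to $(\o_\ell\mid_z)$ directly. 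In either case we get $(d-1)$-forms $\eta_{z}$ with $\di\eta_z = (\o_\ell\mid_z)$ and tame bounds on Sobolev norms, controlled by $\norm{(\o_\ell\mid_z)}_{\vars_{d-1}}$ or $\norm{(\o_\ell\mid_z)}_{\s_{d-1}}$ (this is the source of the $\max$ in the definition of $\vars_d$). Reassembling by $(\eta\mid_z) := \eta_z$ and summing the Sobolev estimates, exactly as in the $d$-dimensional calculations in the proof of Theorem~\ref{irreducibletopdegree}, gives an $(n-1)$-form over the full $\RR^d$-action solving part of the equation; the residual term, after subtracting $\di\eta$, is again closed and has one fewer ``essential'' index, so Lemma~\ref{easylemma} and another application of the inductive hypothesis (or of Theorem~\ref{irreducibletopdegree} when the residual reaches top degree) dispatches it. The extra ``$+s+1$'' shifts in $\vars_d$ absorb the single derivative lost in Lemma~\ref{easylemma} when passing between these stages.

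The Sobolev-norm bookkeeping is the main source of friction but is routine: at each step one uses $(1+Q_+(\bz)+Q_\ell(z))^t \leq (1+Q_+(\bz))^t(1+Q_\ell(z))^t$ together with Lemma~\ref{adding} to trade a power of $(1+Q_\ell)$ against an increase in Sobolev order of the projected norm $\norm{(\o\mid_z)}_{\cdot}$, precisely as in the final display chains of the proof of Theorem~\ref{irreducibletopdegree}. Because $\vars_d(s)$ is defined by the stated recursion to dominate all of $\vars_{d-1}(\vars_{d-1}+s+1) + \vars_{d-1}(s) + s$ and $\s_{d-1}(\vars_{d-1}+s+1) + \vars_{d-1}(s)+s$, every intermediate order called for is at most $\vars_d$, and the inclusions $\ker\II^{\vars_d}_{\dots}(\HH) \subset \ker\II^{\vars_{d-1}}_{\dots}(\HH)$, etc., let us feed the pieces into the inductive hypothesis and into Theorem~\ref{irreducibletopdegree}. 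The constants depend only on $\tilde\nu_0$, $s$, $t$ (and the fixed groups), since the spectral-gap parameter enters only through Theorems~\ref{irreducibletopdegree} and~\ref{two}.

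The step I expect to be the genuine obstacle is verifying that the higher-rank trick (Proposition~\ref{higherranktrick}) really does place each restricted top-degree cocycle $(\o_\ell\mid_z)$ in the kernel of \emph{all} $X_1,\dots,\widehat{X_\ell},\dots,X_d$-invariant distributions in the irreducible representation $\HH_{\otimes,\ell}$ — this is exactly where the infinite-dimensionality of the invariant-distribution space for $\SO^\circ(\en,1)$ with $\en\geq 3$ could, in principle, cause trouble that was invisible in the $\SL(2,\RR)$ case of~\cite{Ramhc}. Fortunately the proof of Proposition~\ref{higherranktrick} cited from~\cite[Lemma~5.2]{Ramhc} is purely formal: it uses only the cocycle identity $\di\o=0$ and the commutativity of the $X_i$, together with the fact (from Theorem~\ref{invdistthm}, as reflected in Proposition~\ref{hrsoborders}) that the distributions $\D^{\bw}$ with $\bw\in\floor{\bZ}$ span, so no new work is needed there; the argument transfers verbatim. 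The remaining care is simply to check that the Sobolev orders line up so that each invariant distribution invoked is of sufficiently low order to be applied to the forms in question — again a matter of tracking the recursion defining $\vars_d$.
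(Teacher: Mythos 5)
Your proposal follows the paper's proof closely — split off the index $\ell$, use Lemma~\ref{closed}, apply Proposition~\ref{higherranktrick} plus Theorem~\ref{irreducibletopdegree} when $(\o_\ell\mid_z)$ has top degree $d-1$ over the $\RR^{d-1}$-action, otherwise the inductive hypothesis, reassemble, and treat the residual $\theta = \o(X_\ell,\cdot)-X_\ell\,\eta_\ell(\cdot)$ using Lemma~\ref{easylemma} and another appeal to the inductive hypothesis — so the core ideas are all in place. However, the stated induction variable is off: the paper inducts on the \emph{rank} $d$ (assuming the result for all $\RR^p$-actions with $2\leq p\leq d-1$ and all degrees $1\leq n\leq p-1$), with Proposition~\ref{basecaselowerdegree} supplying the base case $d=2$. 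An ``induction on the degree $n$'' with base case $n=1$ does not close, because Proposition~\ref{basecaselowerdegree} only handles $n=1$, $d=2$, whereas your own inductive step requires the result at degree $n$ (unchanged) for $\RR^{d-1}$-actions — that is strictly a reduction in $d$, not in $n$. Two smaller points: the forms $\eta_z$ you produce from the restricted equations are $(n-1)$-forms, not $(d-1)$-forms; and the residual $(\theta\mid_z)$ always has degree $n-1\leq d-2 < d-1$ over the $\RR^{d-1}$-action, so the parenthetical ``(or of Theorem~\ref{irreducibletopdegree} when the residual reaches top degree)'' never occurs — only the inductive hypothesis is applied to $\theta$.
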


\begin{proof}
The proof is an adaptation of the inductive argument found in~\cite[p. 25]{KK95}, the base case being that of $1$-cocycles over $\RR^2$-actions, given by Proposition~\ref{basecaselowerdegree}.  Suppose the result holds for $\RR^p$-actions, whenever $2\leq p\leq d-1$.  Let $s>1$ and $t \leq s-1$.

For every $z \in \Z_1$, $(\o_1 \mid_z)$ is an $n$-form over the $\RR^{d-1}$-action on $\HH_{\otimes,1}$ generated by the elements $X_2,\dots,X_d$, and Lemma~\ref{closed} guarantees that it is closed.

If $n <d-1$, then by the induction hypothesis there is an $(n-1)$-form 
\[
\eta_{1,z} \in\O_{\RR^{d-1}}^n (W^{\vars_{d-1}+s} (\HH_{\otimes,1}))
\]
satisfying the coboundary equation $\di\eta_{1,z} =(\o_1 \mid_z)$ and
\begin{equation}\label{etabound1}
	\Norm{\eta_{1,z}}_{\vars_{d-1}+s} \ll_{\tilde\nu_0,s,t} \Norm{(\o_1 \mid_z)}_{\vars_{d} - \vars_{d-1}-s}
\end{equation}
and
\[
	\Norm{\eta_{1,z}}_t \ll_{\tilde\nu_0,s,t}\Norm{(\o_1 \mid_z)}_{\vars_{d-1}}.
\]
This is because $\vars_d(s) - \vars_{d-1}(s)-s\geq \vars_{d-1}(\vars_{d-1}(s) +s+1)$, so we are essentially applying the inductive hypothesis with $\vars_{d-1}(s)+s+1$ in the role that $s$ fills in the theorem statement.

On the other hand, if $n=d-1$, then $(\o_1 \mid_z)$ is a top-degree form over the $\RR^{d-1}$-action, and Proposition~\ref{higherranktrick} implies that
\[
	(\o_1 \mid_z)(X_2,\dots,X_d) \in \ker\II_{X_2, \dots,X_d}^{\vars_d} (\HH_{\otimes,1})^M,
\]
which in turn implies, by Theorem~\ref{irreducibletopdegree}, that there is an $(n-1)$-form $\eta_{1,z}$ with $\di\eta_{1,z}=(\o_1\mid_z)$ and satisfying
\begin{equation}\label{etabound2}
	\Norm{\eta_{1,z}}_{\vars_{d-1}+s} \ll_{\tilde\nu_0,s,t}\Norm{(\o_1\mid_z)}_{\vars_d - \vars_{d-1}-s}
\end{equation}
and
\[
	\Norm{\eta_{1,z}}_{t} \ll_{\tilde\nu_0,s,t}\Norm{(\o_1\mid_z)}_{\vars_{d-1}}.
\]
This is because $\vars_d(s)- \vars_{d-1}(s)-s \geq \s_{d-1}(\vars_{d-1}(s)+s+1)$.

We now define $\eta_1 \in \O_{\RR^d}^{n-1}(W^{\vars_{d-1}+s} (\HH_1\otimes\dots\otimes\HH_d))$ by $(\eta_1 \mid_z)=\eta_{1,z}$ for all $z\in\Z_1$.  Notice that
\begin{align}
	\Norm{\eta_1}_{\vars_{d-1}+s}^2 &= \sum_{1\leq i_1<\dots<i_{n-1}\leq d}\,\sum_{\bz\in\bZ}(1 + Q_+ (\bz))^{\vars_{d-1}+s} \Abs{\eta_1 (X_{i_1},\dots,X_{i_{n-1}})(\bz)}^2 \nonumber \\
		&\leq \sum_{1\leq i_1<\dots<i_{n-1}\leq d}\,\sum_{z_1 \in\Z_1}(1 + Q_{1} (z_1))^{\vars_{d-1}+s} \nonumber\\
		&\indent\times\sum_{\bz_\times \in\bZ_{\times,1}}(1 + Q_{+,1} (\bz_\times))^{\vars_{d-1}+s} \Abs{\eta_1 (X_{i_1},\dots,X_{i_{n-1}})(\bz_\times)}^2 \nonumber \\
		&= \sum_{z \in\Z_1}(1 + Q_{1} (z))^{\vars_{d-1}+s}\Norm{(\eta_1\mid_{z})}_{\vars_{d-1}+s}^2 \nonumber
\intertext{and, by~\eqref{etabound1} and~\eqref{etabound2},}
		&\ll_{\tilde\nu_0,s,t} \sum_{z\in\Z_1}(1 + Q_{1} (z))^{\vars_{d-1}+s} \Norm{(\o_1\mid_z)}_{\vars_d - \vars_{d-1}-s}^2 \nonumber \\
		&\ll_{\tilde\nu_0,s,t} \Norm{\o_1}_{\vars_d}^2 \leq \Norm{\o}_{\vars_d}^2, \label{operationfailed}
\end{align}
which takes care of the components of $\o$ that do not contain the index $1$.

It is now left to write a solution for the parts of $\o$ that \emph{do} contain the index 1.  Consider
\[
	\theta (X_{i_2},\dots,X_{i_n}) = \o(X_1,X_{i_2},\dots,X_{i_n}) - X_1\, \eta_1 (X_{i_2},\dots,X_{i_n})
\]
and note that $\theta$ is an element of $\O_{\RR^d}^{n-1}(W^{\vars_{d-1}+s-1}(\HH))$ and for any $\t \leq \vars_{d-1}+s-1$,
\begin{align*}
	\Norm{\theta (X_{i_2},\dots,X_{i_n})}_{\t} &\leq \Norm{\o (X_1, X_{i_2},\dots,X_{i_n})}_{\t} + \Norm{X_1\eta_1 (X_{i_2},\dots,X_{i_n})}_{\t}.
\end{align*}
By Lemma~\ref{easylemma} and~\eqref{operationfailed}, 
\[
	\Norm{\theta (X_{i_2},\dots,X_{i_n})}_{\t} \ll \Norm{\o (X_1, X_{i_2},\dots,X_{i_n})}_{\t} + \Norm{\eta_1 (X_{i_2},\dots,X_{i_n})}_{\t+1} \ll_{\nu_0,s,t} \Norm{\o}_{\vars_d},
\]
and this implies that
\begin{equation}\label{thetabound}
	\Norm{\theta}_{\t} \ll_{\tilde\nu_0,s,t} \Norm{\o}_{\vars_d}.
\end{equation}

We claim that $(\theta_1\mid_z)=(\theta\mid_z) \in \O_{\RR^{d-1}}^{n-1}(W^{\vars_{d-1}+s-1}(\HH_{\otimes,1}))$ is closed for any $z \in \Z_1$.  This is a calculation.  By recalling that $\di\o=0$ and that $\di(\eta_1\mid_z) =(\o_1\mid_z)$ for all $z \in \Z_1$, and lastly that $\o_1 (X_{i_1},\dots,X_{i_n}) = \o(X_{i_1},\dots,X_{i_n})$ whenever $i_1 >1$, we get
\[
	\di(\theta\mid_z) (X_{i_1},\dots,X_{i_n}) = X_1\,(\o\mid_z)(X_{i_1},\dots,X_{i_n}) - X_1\,\sum_{j=1}^{n}(-1)^{j+1} X_{i_j}\,(\eta_1\mid_z) (X_{I_j}) =0
\]
whenever $i_1 >1$. 

Therefore, the induction hypothesis guarantees the existence of $\k_z \in \O_{\RR^{d-1}}^{n-2}(W^t(\HH_{\otimes,1}))$ satisfying the coboundary equation $\di\k_z=(\theta\mid_z)$ and the estimate
\begin{equation}
	\Norm{\k_z}_t \ll_{\tilde\nu_0,s,t}\Norm{(\theta \mid_z)}_{\vars_{d-1}}. \label{kappabound}
\end{equation}
We finally define $\eta$ by 
\[
	(\eta\mid_z) (X_{i_1}, X_{i_2},\dots,X_{i_{n-1}}) = \begin{cases} \k_z (X_{i_2},\dots,X_{i_{n-1}}) &\textrm{if}\quad i_1 = 1\\
																			(\eta_1\mid_z) (X_{i_1},\dots,X_{i_{n-1}}) &\textrm{if}\quad i_1 \neq 1.
																\end{cases}
\]	
Then $\di\eta=\o$ and the desired Sobolev estimates are met. Namely,
\begin{align*}
	\Norm{\eta (X_1, X_{i_2},\dots,X_{i_{n-1}})}_t^2 &= \sum_{\bz\in\bZ}(1 + Q_+ (\bz))^t \Abs{\eta (X_1,X_{i_2},\dots,X_{i_{n-1}})(\bz)}^2 \\
		&\leq \sum_{z_1 \in\Z_1}\sum_{\bz_\times \in\bZ_{\times,1}}(1 + Q_{1} (z_1))^t \\
		&\indent\times(1 + Q_{+,1} (\bz_\times))^t \Abs{\eta (X_1, X_{i_2},\dots,X_{i_{n-1}})(\bz_\times)}^2 \\
		&= \sum_{z \in\Z_1}(1 + Q_{1} (z))^t\Norm{(\eta \mid_{z}) (X_1, X_{i_2},\dots,X_{i_{n-1}})}_t^2 \\
		&\leq \sum_{z \in\Z_1}(1 + Q_{1} (z))^t\Norm{(\eta \mid_{z})}_t^2
\intertext{and, by~\eqref{kappabound}, this is}
		&\ll_{\tilde\nu_0,s,t} \sum_{z\in\Z_1}(1 + Q_{1} (z))^t \Norm{(\theta\mid_z)}_{\vars_{d-1}}^2 \ll_{\tilde\nu_0,s,t} \Norm{\theta}_{\vars_{d-1}+s-1}^2
\end{align*}
which, by~\eqref{thetabound}, is controlled by $\ll_{\tilde\nu_0,s,t} \Norm{\o}_{\vars_d}^2$.  This implies that $\norm{\eta}_t \ll_{\tilde\nu_0,s,t} \norm{\o}_{\vars_d}$ and completes the proof of the theorem.
\end{proof}


The proof of Theorem~\ref{generallowerdegree} is very similar to the proofs of Theorems~\ref{two} and~\ref{generaltopdegree}.  We decompose a unitary representation into irreducibles, and apply the irreducible version of the theorem, in this case, Theorem~\ref{irreduciblelowerdegree}.

\begin{proof}[Proof of Theorem~\ref{generallowerdegree}]
Let $\RR^d\curvearrowright S\mathcal{M}\cong M\backslash G/\G$ be as in the theorem statement and let $s>1$ and $t \leq s-1$. Suppose $\o \in \O_{\RR^d}^n (W^{\vars_d}(M\backslash G/\G))$ is an $n$-cocycle (\emph{i.e.} $\di\o=0$), where $1\leq n\leq d-1$.  We naturally identify it with $\bar\o \in \O_{\RR^d}^n (W^{\vars_d}(G/\G)^M)$, an $n$-cocycle defined by $M$-invariant functions of Sobolev order $\vars_d:=\vars_d (s)$.

There is the direct integral decomposition~\eqref{thedecomposition}, and the functions defining $\bar\o$ decompose accordingly, defining, for each $\s$ where $\HH_\s$ is irreducible (which is $ds$-almost all of them), an $n$-cocycle $\bar\o_\s \in \O_{\RR^d}^n(W^{\vars_d}(\HH_\s))$. Here, Theorem~\ref{irreduciblelowerdegree} produces an $(n-1)$-form $\bar\eta_\s \in \O_{\RR^d}^{n-1}(W^{t}(\HH_\s))$ satisfying $\di\bar\eta_\s = \bar\o_\s$ and the bound $\norm{\bar\eta_\s}_t \ll_{\tilde\nu_\s,s,t}\norm{\bar\o_\s}_{\vars_d}$, where $\tilde\nu_\s$ is a spectral gap for all the $\Delta_i$ in the restrictions $G_i \to \U(\HH_\s)$.  Theorem~\ref{spectralgapthm} allows us to choose the same $\tilde\nu_\s = \tilde\nu_0$ for all irreducible representations appearing in the decomposition~\eqref{thedecomposition}, which in turn allows us to write a solution $\bar\eta \in \O_{\RR^d}^{n-1}(W^t(G/\G))$ to $\di\bar\eta = \bar\o$ by defining
\[
	\bar\eta (X_{i_1},\dots,X_{i_{n-1}}) := \int_{\RR}^{\oplus} \bar\eta_\s (X_{i_1},\dots,X_{i_{n-1}})\,ds(\s)
\]
for all $1\leq i_1 <\dots<i_{n-1}\leq d$. We now have $\norm{\bar\eta}_t \ll_{\tilde\nu_0,s,t} \norm{\bar\o}_{\vars_d}$.  By Lemma~\ref{wlog}, we may assume that $\bar\eta$ is defined by $M$-invariant functions, so that it factors to a solution $\eta \in \O_{\RR^d}^{n-1}(W^t(M\backslash G/\G))$ to the coboundary equation $\di\eta=\o$ satisfying the required estimate $\norm{\eta}_t \ll_{\tilde\nu_0,s,t} \norm{\o}_{\vars_d}$ on Sobolev norms.
\end{proof}

\section{Proof of Theorem~\ref{one}}\label{oneproof}
\begin{proof}[Proof of Theorem~\ref{one}]
We have essentially already proved Theorem~\ref{one}. It is automatically implied by Theorems~\ref{generaltopdegree} and~\ref{generallowerdegree}, and~\cite[Theorems~$6.2$ and~$6.3$]{Ramhc}, since the space $\Cinf(\HH)$ of smooth vectors in any representation coincides with the intersection of all Sobolev spaces $W^s(\HH)$ of positive order.
\end{proof}

\appendix

\section{Computations for $A$'s and $\a$'s}\label{appendixa}

This appendix contains calculations involving the coefficients $A_j^{\pm}$, $j=1,\dots,k$, defined by~\eqref{defA}, and the coefficients $\a$, from Definition~\ref{defeven}.


\subsection{Bounds for $A_k^+$}\label{Akbound}

\begin{lemma}\label{coeffseven}
There is a positive constant $C_\nu$ depending only on $\nu$ and satisfying
\[
\Abs{A_k (\bm,\l)}^2 \geq C_{\nu}\,\left(\ceil{\bm}-\ceil{\bn}+1\right)\left(\ceil{\bm}-\ceil{\l}+1\right)
\]
for all $\bm\in M_\l$.  We also have that
\[
\Abs{A_k (\bm,0)}^2 \geq C_{\nu_0}\,\left((\ceil{\bm}+1)^2-\ceil{\bn}^2\right)
\]
holds whenever $\l\equiv 0$, where $C_{\nu_0}$ depends only on some choice of $\nu_0 \in \big[0,\frac{\en-1}{2}\big)$ that is closer to the right end-point of that interval than $\nu$ is. Finally, there is a $C>0$ such that 
\[
\Abs{A_k (\bm,0)}^2 \leq C\,\left(1 + \tilde\nu + 2\ceil{\bm}^2-\ceil{\bn}^2\right)
\]
for all $\ceil{\bm}\geq\ceil{\bn}$.
\end{lemma}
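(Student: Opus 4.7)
The strategy is to write $|A_k^+(\bm,\l)|^2 = \frac{1}{4}\, N/D$ with
\begin{align*}
N &= \bigl|(y_k+\tfrac{1}{2})^2 - \nu^2\bigr| \cdot \prod_{r=1}^{k-1}(y_k + x_r)(y_k - x_r + 1)(y_k + \tfrac{1}{2} + z_r)(y_k + \tfrac{1}{2} - z_r), \\
D &= \prod_{r=1}^{k-1}(y_k - y_r)(y_k + y_r)(y_k + 1 - y_r)(y_k + 1 + y_r),
\end{align*}
after substituting $y_k = \ceil{\bm} + (k-1)$, $x_r = \l_r^{(2k-2)} + r$, $y_r = m_r + (r-1)$, $z_r = n_r + r - \tfrac{1}{2}$ and pulling out signs factor by factor. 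All factors will be positive by the Gelfand--Cejtlin and dominant-weight constraints in Figure~\ref{figarray}, which in particular give $m_r \leq \l_r^{(2k-2)}, n_r \leq m_{r+1}$ for every $r$ and place every $x_r, y_r, z_r$ strictly below $y_k + \tfrac{1}{2}$; the restrictions on $\nu$ from Section~\ref{pced} yield $(y_k+\tfrac{1}{2})^2 - \nu^2 > 0$.

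For the first assertion, I would isolate from $N$ the two distinguished ``small'' factors coming from $r = k-1$, namely $y_k - x_{k-1} + 1 = \ceil{\bm} - \ceil{\l} + 1$ and $y_k + \tfrac{1}{2} - z_{k-1} = \ceil{\bm} - \ceil{\bn} + 1$, and pair off all remaining numerator factors against denominator factors so that every paired ratio exceeds $1$. Specifically, for $r = 1,\ldots,k-2$, pair $(y_k - x_r + 1)$ with $(y_k - y_{r+1} + 1)$ and $(y_k + \tfrac{1}{2} - z_r)$ with $(y_k - y_{r+1})$, using $\l_r^{(2k-2)}, n_r \leq m_{r+1}$; and for $r = 1,\ldots,k-1$, pair $(y_k + x_r)$ with $(y_k + y_r + 1)$ and $(y_k + \tfrac{1}{2} + z_r)$ with $(y_k + y_r)$, using $\l_r^{(2k-2)}, n_r \geq m_r$. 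The only unmatched denominator factors are $(y_k - y_1)(y_k - y_1 + 1)$, which together with the $\nu$-factor of $N$ form the ratio
\[
\frac{(y_k+\tfrac{1}{2})^2 - \nu^2}{(y_k - y_1)(y_k - y_1 + 1)};
\]
this ratio has numerator and denominator both of order $\ceil{\bm}^2$, and is bounded below by a positive constant $C_\nu$ uniformly in $\bm, \l$, since at $\ceil{\bm} = 0$ its value is $\bigl((k-\tfrac{1}{2})^2 - \nu^2\bigr) / \bigl((k-1-m_1)(k-m_1)\bigr)$ and $(k-\tfrac{1}{2})^2 - \nu^2$ is a positive constant depending only on $\nu$.

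For the second and third assertions, Proposition~\ref{minvelements} forces $\bm = (0,\ldots,0,\ceil{\bm})$ and $\bn = (0,\ldots,0,\ceil{\bn})$. Substituting these into $N/D$, most factors telescope and one is left with the explicit formula
\[
|A_k^+(\bm,0)|^2 = \frac{1}{4} \cdot \frac{\bigl((\ceil{\bm}+k-\tfrac{1}{2})^2 - \nu^2\bigr)\,(\ceil{\bm}-\ceil{\bn}+1)(\ceil{\bm}+\ceil{\bn}+2k-2)}{(\ceil{\bm}+k-1)(\ceil{\bm}+k)}.
\]
Rewriting $(\ceil{\bm}+k-\tfrac{1}{2})^2 - \nu^2 = \ceil{\bm}(\ceil{\bm}+2k-1) + \tilde\nu$ with $\tilde\nu = (k-\tfrac{1}{2})^2 - \nu^2$, the leading ratio is bounded below by a constant $C_{\nu_0}$ depending only on the spectral-gap parameter, and above by a constant times $(1 + \tilde\nu)$. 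The second assertion will then follow from $(\ceil{\bm}-\ceil{\bn}+1)(\ceil{\bm}+\ceil{\bn}+2k-2) \geq (\ceil{\bm}+1-\ceil{\bn})(\ceil{\bm}+1+\ceil{\bn}) = (\ceil{\bm}+1)^2 - \ceil{\bn}^2$ (valid since $k \geq 2$ gives $2k - 2 \geq 1$), and the third from the crude estimate $(\ceil{\bm}-\ceil{\bn}+1)(\ceil{\bm}+\ceil{\bn}+2k-2) \leq C_k\bigl(1 + 2\ceil{\bm}^2 - \ceil{\bn}^2\bigr)$ valid whenever $\ceil{\bm} \geq \ceil{\bn}$.

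The main obstacle will be organizing the pairing in the first assertion so that the constants track cleanly and uniformly in $\bm, \l, \bn$; doing the case $k = 2$ first, where the formula already takes the explicit form above with $m_1$ in place of $0$, should serve as a useful sanity check before attempting the full bookkeeping for general $k$.
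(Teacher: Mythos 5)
Your proposal takes essentially the same approach as the paper: factor $4|A_k^+(\bm,\l)|^2$, isolate the two ``small'' numerator factors $(y_k-x_{k-1}+1)(y_k+\tfrac{1}{2}-z_{k-1})=(\ceil{\bm}-\ceil{\l}+1)(\ceil{\bm}-\ceil{\bn}+1)$, and pair off the remaining numerator and denominator factors so that the bulk is bounded below by $1$; the paper does this by re-indexing and substituting in the denominators, while you do it by matching linear factors directly, but the two reduce to the same observation about the interlacing inequalities in Figure~\ref{figarray}. Your exact telescoping formula for $|A_k^+(\bm,0)|^2$ is correct (I verified the computation) and is a somewhat cleaner route than the paper's bounding of the ``middle product'' by constants $C, C'$.

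There is, however, a genuine flaw in how you finish the third assertion. You bound the leading ratio by a constant times $(1+\tilde\nu)$ and separately bound $(\ceil{\bm}-\ceil{\bn}+1)(\ceil{\bm}+\ceil{\bn}+2k-2) \leq C_k(1+2\ceil{\bm}^2-\ceil{\bn}^2)$; multiplying these gives $\ll (1+\tilde\nu)(1+2\ceil{\bm}^2-\ceil{\bn}^2)$, which is strictly weaker than the \emph{sum} $1+\tilde\nu+2\ceil{\bm}^2-\ceil{\bn}^2$ claimed in the lemma (e.g., take $\tilde\nu$ and $\ceil{\bm}$ both large). The correct route from your exact formula is to write $(\ceil{\bm}+k-\tfrac{1}{2})^2-\nu^2 = \bigl(\ceil{\bm}^2+(2k-1)\ceil{\bm}\bigr) + \tilde\nu$ and split the product into two terms: the first has $\frac{\ceil{\bm}^2+(2k-1)\ceil{\bm}}{(\ceil{\bm}+k-1)(\ceil{\bm}+k)} \leq 1$ and is then bounded by $C_k(1+2\ceil{\bm}^2-\ceil{\bn}^2)$; the second is $\tilde\nu\cdot\frac{R_1}{(\ceil{\bm}+k-1)(\ceil{\bm}+k)}$ with $R_1:=(\ceil{\bm}-\ceil{\bn}+1)(\ceil{\bm}+\ceil{\bn}+2k-2)$, where the fraction is bounded by a constant independent of everything, giving $\ll \tilde\nu$. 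Adding the two yields $\ll 1+\tilde\nu+2\ceil{\bm}^2-\ceil{\bn}^2$ as required (and note that in the $\l\equiv 0$ case the discrete series does not occur, so $\tilde\nu>0$ and no sign issues arise).

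Finally, your justification that the unmatched ratio $\bigl((y_k+\tfrac{1}{2})^2-\nu^2\bigr)/\bigl((y_k-y_1)(y_k+1-y_1)\bigr)$ is bounded below ``since at $\ceil{\bm}=0$ its value is \ldots'' is a little loose: $\ceil{\bm}=0$ is only achievable in the trivial representation, and $m_1$ also varies. The correct reasoning is that the ratio tends to a positive limit as $\ceil{\bm}\to\infty$ (uniformly over the permitted $m_1$ with $|m_1|\leq\ceil{\bn}\leq\ceil{\bm}$) and is a positive continuous function on the bounded remainder of the discrete parameter range, hence is bounded below by a $\nu$-dependent constant. This is what the paper's final inequality also relies on, so it is a shared imprecision rather than a flaw in your approach specifically.
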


\begin{proof}
From~\eqref{defA} we have
\begin{equation}\label{becomes}
4\Abs{A_k^+ (\bm,\l)}^2 = \Abs{\nu^2-\left(y_k + \frac{1}{2}\right)^2}\,\prod_{r=1}^{k-1} \left[\frac{\left(x_r - \frac{1}{2}\right)^2-\left(y_k + \frac{1}{2}\right)^2}{y_r^2 - y_k^2}\right]\,\left[\frac{z_r^2-\left(y_k + \frac{1}{2}\right)^2}{y_r^2 - (y_k +1)^2}\right].
\end{equation}
Examining the inequalities in Figure~\ref{figarray}, we see that $y_r \geq x_{r-1}$ and $y_r \geq z_{r-1}+\frac{1}{2}$ for $r=2,\dots,k-1$, so
\begin{multline*}
4\Abs{A_k^+ (\bm,\l)}^2 \geq \Abs{\nu^2-\left(y_k + \frac{1}{2}\right)^2}\,\left[\frac{\left(x_{k-1} - \frac{1}{2}\right)^2-\left(y_k + \frac{1}{2}\right)^2}{y_1^2 - y_k^2}\right]\,\left[\frac{z_{k-1}^2-(y_k + \frac{1}{2})^2}{\left(y_1+\frac{1}{2}\right)^2 - (y_k +1)^2}\right]\\
	\times\prod_{r=1}^{k-2} \left[\frac{\left(x_r - \frac{1}{2}\right)^2-\left(y_k + \frac{1}{2}\right)^2}{x_r^2 - y_k^2}\right]\,\left[\frac{z_r^2-(y_k + \frac{1}{2})^2}{\left(z_r+\frac{1}{2}\right)^2 - (y_k +1)^2}\right]
\end{multline*}
We re-write,
\begin{multline*}
4\Abs{A_k^+ (\bm,\l)}^2 \geq \Abs{\nu^2-\left(y_k + \frac{1}{2}\right)^2}\,\left[\frac{\left(y_k + \frac{1}{2}\right)^2 - \left(x_{k-1} - \frac{1}{2}\right)^2}{y_k^2 - y_1^2}\right]\,\left[\frac{\left(y_k + \frac{1}{2}\right)^2 - z_{k-1}^2}{(y_k +1)^2 - \left(y_1+\frac{1}{2}\right)^2}\right]\\
	\times\prod_{r=1}^{k-2} \left[1 + \frac{1}{y_k - x_r}\right]\,\left[1- \frac{1}{y_k +z_r+\frac{3}{2}}\right].
\end{multline*}
By comparing $x_r \geq y_r+1$ and $z_r \geq y_r+\frac{1}{2}$, we see that the last line is bounded below by $1$, so we are left with
\[
4\Abs{A_k^+ (\bm,\l)}^2 \geq \Abs{\nu^2-\left(y_k + \frac{1}{2}\right)^2}\,\left[\frac{\left(y_k + \frac{1}{2}\right)^2 - \left(x_{k-1} - \frac{1}{2}\right)^2}{y_k^2 - y_1^2}\right]\,\left[\frac{\left(y_k + \frac{1}{2}\right)^2 - z_{k-1}^2}{(y_k +1)^2 - \left(y_1+\frac{1}{2}\right)^2}\right].
\]
Translating according to the definitions of $x,y,z$, we have 
\begin{multline}\label{similarsituation}
4\Abs{A_k^+ (\bm,\l)}^2 \geq \Abs{\nu^2-\left(\ceil{\bm} +k- \frac{1}{2}\right)^2} \\
	\times\left[\frac{\left(\ceil{\bm} +k- \frac{1}{2}\right)^2 - \left(\ceil{\l}+k - \frac{3}{2}\right)^2}{\left(\ceil{\bm}+k-1\right)^2 - m_1^2}\right]\,\left[\frac{\left(\ceil{\bm} +k- \frac{1}{2}\right)^2 - \left(\ceil{\bn}+k-\frac{3}{2}\right)^2}{(\left(\ceil{\bm}+k\right)^2 - \left(m_1+\frac{1}{2}\right)^2}\right].
\end{multline}
We easily see that there is some $C>0$ such that
\begin{align*}
	&\geq \Abs{\nu^2-\left(\ceil{\bm} + k - \frac{1}{2}\right)^2}\cdot C\,\left(\frac{\ceil{\bm} -\ceil{\bn} + 1}{\ceil{\bm} + 1}\right)\left(\frac{\ceil{\bm} -\ceil{\l} + 1}{\ceil{\bm} + 1}\right)  \\
	&\geq C_{\nu}\,\left(\ceil{\bm}-\ceil{\bn}+1\right)\left(\ceil{\bm}-\ceil{\l}+1\right) 
\end{align*}
for some $C_\nu >0$.  This is the lower bound in Lemma~\ref{coeffseven}.

For the second part, the expression~\eqref{becomes} becomes
\begin{multline}\label{obtainedsimilarly}
4\Abs{A_k^+ (\bm,0)}^2 = \Abs{\nu^2-\left(\ceil{\bm} + k - \frac{1}{2}\right)^2} \\
	\times \prod_{r=1}^{k-2} \frac{\left[(r - \frac{1}{2})^2-(\ceil{\bm} + k - \frac{1}{2})^2\right]\left[(r -\frac{1}{2})^2-(\ceil{\bm} + k - \frac{1}{2})^2\right]}{\left[(r - 1)^2 - (\ceil{\bm} + k - 1)^2\right][(r - 1)^2 - (\ceil{\bm} + k)^2]} \\
	\times \frac{\left[(k - \frac{3}{2})^2-(\ceil{\bm} + k - \frac{1}{2})^2\right]\left[(\ceil{\bn} + k -\frac{3}{2})^2-(\ceil{\bm} + k - \frac{1}{2})^2\right]}{\left[(k - 2)^2 - (\ceil{\bm} + k - 1)^2\right][(k - 2)^2 - (\ceil{\bm} + k)^2]}.
\end{multline}
The middle term is positive and approaches $1$ as $\ceil{\bm}\to\infty$, so is bounded below by some constant $C>0$ and above by some other constant $C'>0$.  Taking this into account, and re-writing the last term,
\begin{align*}
4\Abs{A_k^+ (\bm,0)}^2 &\geq C \Abs{\nu^2-\left(\ceil{\bm} + k - \frac{1}{2}\right)^2} \frac{(\ceil{\bm}+\ceil{\bn}+2k-2)(\ceil{\bm}-\ceil{\bn}+1)}{(\ceil{\bm}+2k-3)(\ceil{\bm}+2)} \\
	&\geq C \Abs{\nu_0^2-\left(\ceil{\bm} + k - \frac{1}{2}\right)^2} \frac{(\ceil{\bm}+\ceil{\bn}+2k-2)(\ceil{\bm}-\ceil{\bn}+1)}{(\ceil{\bm}+2k-3)(\ceil{\bm}+2)}
\end{align*}
where $\nu_0 \in \big[0,\frac{\en-1}{2}\big)$ as in the lemma statement.  (For example, if we are in a representation from the principal series, then $\nu \in i\RR$, so we can just take $\nu_0 = 0$.)  From here it is easy to see that there is some $C_{\nu_0}>0$ depending only on $\nu_0$  such that 
\[
	\Abs{A_k^+ (\bm,0)}^2 \geq C_{\nu_0}\,(\ceil{\bm}+\ceil{\bn}+1)(\ceil{\bm}-\ceil{\bn}+1),
\]
as required.

An upper bound is obtained similarly.  From~\eqref{obtainedsimilarly} and the succeeding discussion, we have
\begin{align*}
4\Abs{A_k^+ (\bm,0)}^2 &\leq C' \Abs{\nu^2-\left(\ceil{\bm} + k - \frac{1}{2}\right)^2} \frac{(\ceil{\bm}+\ceil{\bn}+2k-2)(\ceil{\bm}-\ceil{\bn}+1)}{(\ceil{\bm}+2k-3)(\ceil{\bm}+2)} \\
	&= C' \left(\ceil{\bm}^2 + (2k-1)\ceil{\bm} +\tilde\nu\right)\frac{(\ceil{\bm}+\ceil{\bn}+2k-2)(\ceil{\bm}-\ceil{\bn}+1)}{(\ceil{\bm}+2k-3)(\ceil{\bm}+2)}
\end{align*}
and from here we easily see that by modifying $C'$,
\[
4\Abs{A_k^+ (\bm,0)}^2 \leq C' \left(1 + \tilde\nu + 2\ceil{\bm}^2 -\ceil{\bn}^2\right)
\]
proving the upper bound.
\end{proof}


\subsection{Bounds for $\a$'s}\label{abound}

\begin{lemma}\label{alphas}
There is a positive constant $C_{\nu}$ depending on $k$ and $\nu$ such that
\[
	\Abs{\a_{e_k \pm e_j}(\bm)}^2 \leq C_{\nu}\,\frac{\min\{\ceil{\bn},\ceil{\l}\}^2}{\left(\ceil{\bm}+1\right)^2}\cdot\frac{\ceil{\l}^2}{\left(\ceil{\bm} + 1\right)^2 - \ceil{\l}^2}\cdot\frac{\ceil{\bn}^2}{\left(\ceil{\bm} + 1\right)^2 -  \ceil{\bn}^2}.
\]
Also, we have $\abs{\a_{2e_k}(\bm)}\leq 1$ for all $\bm$.
\end{lemma}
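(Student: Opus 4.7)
The plan is to prove both bounds by direct algebraic manipulation of the explicit formula~\eqref{defA} for $A_j^+$, exploiting the identity $A_k^-(\bm+e_k+v)=A_k^+(\bm+v)$ to rewrite each $\a$ as a ratio of two $A^+$'s, and then systematically cancelling shared factors before applying the Gelfand--Cejtlin constraints from Figure~\ref{figarray}.

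First I would handle $\a_{2e_k}(\bm)=A_k^+(\bm)/A_k^+(\bm+e_k)$. Passing from $\bm$ to $\bm+e_k$ changes only the single parameter $y_k\mapsto y_k+1$, while $y_1,\dots,y_{k-1}$, the $x_r$'s, the $z_r$'s and $\nu$ stay fixed. Squaring and writing the ratio, I would show that every one of the factors in
\[
\frac{A_k^+(\bm)^2}{A_k^+(\bm+e_k)^2}
= \frac{\left[\nu^2-(y_k+\tfrac12)^2\right]}{\left[\nu^2-(y_k+\tfrac32)^2\right]}\prod_{r=1}^{k-1}\frac{(x_r-\tfrac12)^2-(y_k+\tfrac12)^2}{(x_r-\tfrac12)^2-(y_k+\tfrac32)^2}\cdot\frac{z_r^2-(y_k+\tfrac12)^2}{z_r^2-(y_k+\tfrac32)^2}\cdot\frac{y_r^2-(y_k+\tfrac32)^2}{y_r^2-(y_k+\tfrac12)^2}\cdot\frac{y_r^2-(y_k+2)^2}{y_r^2-(y_k+1)^2}
\]
is bounded by $1$ in absolute value. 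The inequalities $x_r-\tfrac12\le y_k+\tfrac12$, $z_r\le y_k+\tfrac12$ and $y_r\le y_k$ (with strict inequality for $r<k$) coming from Figure~\ref{figarray} make this a routine check: each numerator is smaller in modulus than the corresponding denominator.

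Next, for the bound on $\a_{e_k\pm e_j}(\bm)=A_j^\pm(\bm)/A_k^+(\bm\pm e_j)$ with $1\le j\le k-1$, I would again square and cancel. The factors indexed by $r\notin\{j,k\}$ in the numerator of~\eqref{defA} for $A_j^\pm$ match (up to a shift in the role of the ``special'' index) those in $A_k^+(\bm\pm e_j)$, so most of the fraction telescopes. What remains, after rearrangement, is a product of explicit expressions in $y_j,y_k,x_{k-1},z_{k-1}$ and $\nu$. Using $x_r\asymp\ceil{\l}$, $z_r\asymp\ceil{\bn}$, $y_k\asymp\ceil{\bm}$, $|y_j|\le\min\{\ceil{\bn},\ceil{\l}\}$ for $j<k$, and the gap estimates $y_k^2-y_j^2\gtrsim(\ceil{\bm}+1)^2-\min\{\ceil{\bn},\ceil{\l}\}^2$, one reads off bounds of the form
\[
\Abs{\a_{e_k\pm e_j}(\bm)}^2\ll_{\nu}\frac{\min\{\ceil{\bn},\ceil{\l}\}^2}{(\ceil{\bm}+1)^2}\cdot\frac{\ceil{\l}^2}{(\ceil{\bm}+1)^2-\ceil{\l}^2}\cdot\frac{\ceil{\bn}^2}{(\ceil{\bm}+1)^2-\ceil{\bn}^2},
\]
as claimed. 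The $\nu$-dependence is absorbed into $C_\nu$ via the boundedness of $|\nu^2-(y_j+\tfrac12)^2|/|\nu^2-(y_k+\tfrac12)^2|$, which follows because for large $\ceil{\bm}$ the ratio tends to $1$.

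The main obstacle is purely bookkeeping: the definition~\eqref{defA} is a product indexed by $r=1,\dots,k-1$ (in the numerator) and $r\ne j$ with $r=1,\dots,k$ (in the denominator), so when passing from $A_j^\pm(\bm)$ to $A_k^+(\bm\pm e_j)$ one must carefully pair up factors---in particular the ``missing'' index shifts from $j$ to $k$---and identify which factors survive. Once the pairings are made transparent, the resulting asymptotic inequalities are immediate from Figure~\ref{figarray}; no deep estimate is required. For $\a_{2e_k}$ the only subtlety is confirming that the $\nu^2$ factor does not spoil the inequality when $\nu$ is real (complementary series), which one verifies by comparing $\nu$ to $\tfrac{\en-1}{2}$ and $y_k+\tfrac12$ using the constraint $\nu<k-\tfrac12$.
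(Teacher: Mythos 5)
Your plan for $\a_{2e_k}$ contains a genuine error. You claim that each factor in the ratio $A_k^+(\bm)^2/A_k^+(\bm+e_k)^2$ has absolute value at most $1$, but this fails for the factors coming from the \emph{denominator} of \eqref{defA}. After cancellation, the denominator of $A_k^+(\bm)$ and that of $A_k^+(\bm+e_k)$ contribute
\[
\prod_{r\neq k}\frac{y_r^2-(y_k+2)^2}{y_r^2-y_k^2},
\]
and since $y_r<y_k$ (so both numerator and denominator are negative, with the numerator larger in modulus), each of these factors is strictly \emph{greater} than $1$. You also have a small bookkeeping slip in the displayed formula: the $y_r$-factors carry no half-integer shifts (they are $(y_k+1)^2$ and $y_k^2$, not $(y_k+\tfrac32)^2$ and $(y_k+\tfrac12)^2$). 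The paper does not try to argue factor-by-factor; instead it combines the $x_r$- and $z_r$-factors (which are $<1$) with the $y_r$-factors (which are $>1$) by introducing $w_r^2:=\min\{(x_r-1)^2,(z_r-\tfrac12)^2\}$, using $y_r\le w_r$, and then checking the combined expression $\bigl[1-\tfrac{2(y_k+1)}{(y_k+\frac32)^2-(w_r+\frac12)^2}\bigr]^2\bigl[1+\tfrac{4(y_k+1)}{y_k^2-w_r^2}\bigr]\le 1$ by separating the cases $y_k\ge w_r+2$ and $y_k=w_r+1$. Your approach as written would not close.

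For $\a_{e_k\pm e_j}$, the overall strategy (cancel, rearrange, apply the Gelfand--Cejtlin constraints) is the right one and does parallel the paper, but the description is too vague to evaluate: ``most of the fraction telescopes'' and ``one reads off bounds'' skip exactly the part that carries the difficulty. Also, your aside that the $\nu$-ratio ``tends to $1$'' is not right --- for large $\ceil{\bm}$ the factor $|\nu^2-(y_j+\tfrac12)^2|/|\nu^2-(y_k+\tfrac12)^2|$ tends to $0$, and in fact this decay is what produces the leading $\min\{\ceil{\bn},\ceil{\l}\}^2/(\ceil{\bm}+1)^2$ term in the stated bound; it cannot be swept into the constant $C_\nu$.
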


\begin{proof}
For the first inequality, we have from Definition~\ref{defeven} that $\a_{2e_k}(\bm) = A_j^+ (\bm)/A_k^+ (\bm+e_j)$.  We divide the definitions~\eqref{defA} of $A_j$ and $A_k$ to get
\begin{multline*}
	\Abs{\a_{e_k + e_j}(\bm)}^2 = \frac{\prod_{r=1}^{k-1}\left[\left(x_r - \frac{1}{2}\right)^2-\left(y_j + \frac{1}{2}\right)^2\right]\left[z_r^2-\left(y_j + \frac{1}{2}\right)^2\right]\cdot\left[\nu^2-\left(y_j + \frac{1}{2}\right)^2\right]}{\prod_{r\neq j}(y_r^2 - y_j^2)[y_r^2 - (y_j + 1)^2]} \\
		\times \frac{\prod_{r\neq j,k}(y_r^2 - y_k^2)[y_r^2 - (y_k + 1)^2]}{\prod_{r\neq j,k}\left[\left(x_r - \frac{1}{2}\right)^2-\left(y_k + \frac{1}{2}\right)^2\right]\left[z_r^2-\left(y_k + \frac{1}{2}\right)^2\right]\cdot\left[\nu^2-\left(y_k + \frac{1}{2}\right)^2\right]} \\
		\times \frac{\left[(y_j +1)^2 - y_k^2\right][(y_j +1)^2 - (y_k + 1)^2]}{\left[\left(x_j - \frac{1}{2}\right)^2-\left(y_k + \frac{1}{2}\right)^2\right]\left[z_j^2-\left(y_k + \frac{1}{2}\right)^2\right]}
\end{multline*}
We re-arrange the terms in the following way:
\begin{multline*}
	\Abs{\a_{e_k + e_j}(\bm)}^2 = \Abs{\frac{\left(y_j +\frac{1}{2}\right)^2 - \nu^2}{\left(y_k +\frac{1}{2}\right)^2 - \nu^2}}\,\left[\frac{y_k^2 - (y_j+1)^2}{y_k^2 - y_j^2}\right]\,\left[\frac{(y_k+1)^2 - (y_j+1)^2}{y_k^2 - (y_j+1)^2}\right] \\
		\times \left[\frac{\left(y_j+\frac{1}{2}\right)^2 - \left(x_j-\frac{1}{2}\right)^2}{\left(y_k+\frac{1}{2}\right)^2 - \left(x_j-\frac{1}{2}\right)^2}\right]\,\left[\frac{\left(y_j+\frac{1}{2}\right)^2 - z_j^2}{\left(y_k+\frac{1}{2}\right)^2 - z_j^2}\right]\\
		\times\prod_{r\neq j,k} \left[\frac{\left(y_j+\frac{1}{2}\right)^2 - \left(x_r-\frac{1}{2}\right)^2}{\left(y_k+\frac{1}{2}\right)^2 - \left(x_r-\frac{1}{2}\right)^2}\right]\,\left[\frac{\left(y_j+\frac{1}{2}\right)^2 - z_r^2}{\left(y_k+\frac{1}{2}\right)^2 - z_r^2}\right]\,\left[\frac{y_k^2-y_r^2}{y_j^2-y_r^2}\right]\,\left[\frac{(y_k+1)^2-y_r^2}{(y_j+1)^2-y_r^2}\right].
\end{multline*}
Now, since for $a>b$, the expression $\left(\frac{a-t^2}{b-t^2}\right)$ is an increasing function of $t\geq 0$, and because (from the inequalities in Figure~\ref{figarray}) we know that $y_r \leq \min\{x_r -1, z_r -1/2\}$, we can replace

\[
	\frac{y_k^2-y_r^2}{y_j^2-y_r^2}\textrm{ with }\frac{y_k^2-\left(z_r-\frac{1}{2}\right)^2}{y_j^2-\left(z_r-\frac{1}{2}\right)^2}\quad\textrm{and}\quad\frac{(y_k+1)^2-y_r^2}{(y_j+1)^2-y_r^2}\textrm{ with }\frac{(y_k+1)^2-\left(x_r-1\right)^2}{(y_j+1)^2-(x_r-1)^2},
\]
in the last two factors above, leaving
\begin{multline*}
	\Abs{\a_{e_k + e_j}(\bm)}^2 = \Abs{\frac{\left(y_j +\frac{1}{2}\right)^2 - \nu^2}{\left(y_k +\frac{1}{2}\right)^2 - \nu^2}}\,\left[\frac{y_k^2 - (y_j+1)^2}{y_k^2 - y_j^2}\right]\,\left[\frac{(y_k+1)^2 - (y_j+1)^2}{y_k^2 - (y_j+1)^2}\right] \\
		\times \left[\frac{\left(y_j+\frac{1}{2}\right)^2 - \left(x_j-\frac{1}{2}\right)^2}{\left(y_k+\frac{1}{2}\right)^2 - \left(x_j-\frac{1}{2}\right)^2}\right]\,\left[\frac{\left(y_j+\frac{1}{2}\right)^2 - z_j^2}{\left(y_k+\frac{1}{2}\right)^2 - z_j^2}\right]\\
		\times\prod_{r\neq j,k} \left[\frac{\left(y_j+\frac{1}{2}\right)^2 - \left(x_r-\frac{1}{2}\right)^2}{\left(y_k+\frac{1}{2}\right)^2 - \left(x_r-\frac{1}{2}\right)^2}\right]\,\left[\frac{(y_k+1)^2-\left(x_r-1\right)^2}{(y_j+1)^2-(x_r-1)^2}\right]\\
		\times\prod_{r\neq j,k}\left[\frac{\left(y_j+\frac{1}{2}\right)^2 - z_r^2}{\left(y_k+\frac{1}{2}\right)^2 - z_r^2}\right]\,\left[\frac{y_k^2-\left(z_r-\frac{1}{2}\right)^2}{y_j^2-\left(z_r-\frac{1}{2}\right)^2}\right].
\end{multline*}
Another re-arrangement of the factors in the last line gives
\begin{multline*}
	\Abs{\a_{e_k + e_j}(\bm)}^2 = \Abs{\frac{\left(y_j +\frac{1}{2}\right)^2 - \nu^2}{\left(y_k +\frac{1}{2}\right)^2 - \nu^2}}\,\left[\frac{y_k^2 - (y_j+1)^2}{y_k^2 - y_j^2}\right]\,\left[\frac{(y_k+1)^2 - (y_j+1)^2}{y_k^2 - (y_j+1)^2}\right] \\
		\times \left[\frac{\left(y_j+\frac{1}{2}\right)^2 - \left(x_j-\frac{1}{2}\right)^2}{\left(y_k+\frac{1}{2}\right)^2 - \left(x_j-\frac{1}{2}\right)^2}\right]\,\left[\frac{\left(y_j+\frac{1}{2}\right)^2 - z_j^2}{\left(y_k+\frac{1}{2}\right)^2 - z_j^2}\right]\\
		\times\prod_{r\neq j,k} \left[\frac{\left(y_j+\frac{1}{2}\right)^2 - \left(x_r-\frac{1}{2}\right)^2}{(y_j+1)^2-(x_r-1)^2}\right]\,\left[\frac{(y_k+1)^2-\left(x_r-1\right)^2}{\left(y_k+\frac{1}{2}\right)^2 - \left(x_r-\frac{1}{2}\right)^2}\right]\\
		\times\prod_{r\neq j,k}\left[\frac{\left(y_j+\frac{1}{2}\right)^2 - z_r^2}{y_j^2-\left(z_r-\frac{1}{2}\right)^2}\right]\,\left[\frac{y_k^2-\left(z_r-\frac{1}{2}\right)^2}{\left(y_k+\frac{1}{2}\right)^2 - z_r^2}\right],
\end{multline*}
and this simplifies to
\begin{multline*}
	\Abs{\a_{e_k + e_j}(\bm)}^2 = \Abs{\frac{\left(y_j +\frac{1}{2}\right)^2 - \nu^2}{\left(y_k +\frac{1}{2}\right)^2 - \nu^2}}\,\left[1+\frac{2}{y_k+ y_j}\right] \,\left[\frac{\left(y_j+\frac{1}{2}\right)^2 - \left(x_j-\frac{1}{2}\right)^2}{\left(y_k+\frac{1}{2}\right)^2 - \left(x_j-\frac{1}{2}\right)^2}\right]\,\left[\frac{\left(y_j+\frac{1}{2}\right)^2 - z_j^2}{\left(y_k+\frac{1}{2}\right)^2 - z_j^2}\right]\\
		\times\prod_{r\neq j,k} \left[1- \frac{1}{y_j-x_r+2}\right]\,\left[1+ \frac{1}{y_k-x_r+1}\right]\,\left[1+\frac{1}{y_j+z_r-\frac{1}{2}}\right]\,\left[1-\frac{1}{y_k+z_r+\frac{1}{2}}\right]
\end{multline*}
The factor
\[
\left[1+\frac{2}{y_k+ y_j}\right]\,\prod_{r\neq j,k} \left[1- \frac{1}{y_j-x_r+2}\right]\,\left[1+ \frac{1}{y_k-x_r+1}\right]\,\left[1+\frac{1}{y_j+z_r-\frac{1}{2}}\right]\,\left[1-\frac{1}{y_k+z_r+\frac{1}{2}}\right]
\]
is bounded by some uniform constant $C>0$, independent of $\bn$ and $\nu$, so we are left with
\[
	\Abs{\a_{e_k + e_j}(\bm)}^2 \leq C\, \Abs{\frac{\left(y_j +\frac{1}{2}\right)^2 - \nu^2}{\left(y_k +\frac{1}{2}\right)^2 - \nu^2}}\,\left[\frac{\left(y_j+\frac{1}{2}\right)^2 - \left(x_j-\frac{1}{2}\right)^2}{\left(y_k+\frac{1}{2}\right)^2 - \left(x_j-\frac{1}{2}\right)^2}\right]\,\left[\frac{\left(y_j+\frac{1}{2}\right)^2 - z_j^2}{\left(y_k+\frac{1}{2}\right)^2 - z_j^2}\right]
\]
and since $\left(y_j +\frac{1}{2}\right) \leq \left(x_j -\frac{1}{2}\right)$ and $\left(y_j +\frac{1}{2}\right) \leq z_j$, 
\[
	\Abs{\a_{e_k + e_j}(\bm)}^2 \leq C\,\Abs{\frac{\left(y_j +\frac{1}{2}\right)^2 - \nu^2}{\left(y_k +\frac{1}{2}\right)^2 - \nu^2}}\,\left[\frac{\left(x_j-\frac{1}{2}\right)^2}{\left(y_k+\frac{1}{2}\right)^2 - \left(x_j-\frac{1}{2}\right)^2}\right]\,\left[\frac{z_j^2}{\left(y_k+\frac{1}{2}\right)^2 - z_j^2}\right].
\]
Translating according to the definitions of $x,y,z$, this expression becomes
\begin{multline*}
	\Abs{\frac{\left(m_j +j-\frac{1}{2}\right)^2 - \nu^2}{\left(\ceil{\bm} +k-\frac{1}{2}\right)^2 - \nu^2}}\\
	\times\left[\frac{\left(\l_j+j-\frac{1}{2}\right)^2}{\left(\ceil{\bm}+k-\frac{1}{2}\right)^2 - \left(\l_j+j-\frac{1}{2}\right)^2}\right]\,\left[\frac{\left(n_j+j-\frac{1}{2}\right)^2}{\left(\ceil{\bm}+k-\frac{1}{2}\right)^2 - \left(n_j+j-\frac{1}{2}\right)^2}\right],
\end{multline*}
which is itself bounded by
\begin{multline}\label{inat}
	\Abs{\frac{\left(\min\{\ceil{\bn},\ceil{\l}\} +k-\frac{3}{2}\right)^2 - \nu^2}{\left(\ceil{\bm} +k-\frac{1}{2}\right)^2 - \nu^2}}\\
	\left[\frac{\left(\ceil{\l}+k-\frac{3}{2}\right)^2}{\left(\ceil{\bm}+k-\frac{1}{2}\right)^2 - \left(\ceil{\l}+k-\frac{3}{2}\right)^2}\right]\,\left[\frac{\left(\ceil{\bn}+k-\frac{3}{2}\right)^2}{\left(\ceil{\bm}+k-\frac{1}{2}\right)^2 - \left(\ceil{\bn}+k-\frac{3}{2}\right)^2}\right],
\end{multline}
and it is now clear that 
\[
	\Abs{\a_{e_k + e_j}(\bm)}^2\leq C_{\nu}\,\frac{\min\{\ceil{\bn},\ceil{\l}\}^2}{\left(\ceil{\bm} +1\right)^2}\cdot \frac{\ceil{\l}^2}{\left(\ceil{\bm}+1\right)^2 - \ceil{\l}^2}\cdot\frac{\ceil{\bn}^2}{\left(\ceil{\bm}+1\right)^2 - \ceil{\bn}^2},
\]
where $C_\nu >0$ is a constant only depending on $\nu$. This is what we were meant to show.

For the second inequality, we see from~\eqref{defA} that
\begin{multline*}
	\Abs{\a_{2e_k}(\bm)}^2 = \prod_{r=1}^{k-1} \left[1 - \frac{2(y_k +1)}{(y_k +\frac{3}{2})^2 - (x_r - \frac{1}{2})^2}\right] \left[1 - \frac{2(y_k+1)}{(y_k +\frac{3}{2})^2 - z_r^2}\right] \left[1 + \frac{4(y_k+1)}{y_k^2 - y_r^2}\right] \\
	\times \Abs{1 - \frac{2(y_k+1)}{(y_k +\frac{3}{2})^2 - \nu^2}}.
\end{multline*}
From the inequalities in Figure~\ref{figarray} we have that $y_r^2 \leq \min\{(x_r-1)^2,(z_r-\frac{1}{2})^2\}$.  Calling this minimum $w_r^2$ for now, we have
\begin{equation}\label{wind}
	\Abs{\a_{2e_k}(\bm)}^2 \leq \prod_{r=1}^{k-1} \left[1 - \frac{2(y_k +1)}{(y_k +\frac{3}{2})^2 - (w_r + \frac{1}{2})^2}\right]^2 \left[1 + \frac{4(y_k+1)}{y_k^2 - w_r^2}\right] \times\Abs{1 - \frac{2(y_k+1)}{(y_k +\frac{3}{2})^2 - \nu^2}}.
\end{equation}
Furthermore, since $(y_k +\frac{3}{2})^2 - (w_r + \frac{1}{2})^2 \leq y_k^2 - w_r^2$ for all $r =1,\dots,k-1$,
\begin{align*}
	\Abs{\a_{2e_k}(\bm)}^2 &\leq \prod_{r=1}^{k-1} \left[1 - \frac{2(y_k +1)}{y_k^2 - w_r^2}\right]^2 \left[1 + \frac{4(y_k+1)}{y_k^2 - w_r^2}\right] \times\Abs{1 - \frac{2(y_k+1)}{(y_k +\frac{3}{2})^2 - \nu^2}} \\
		&\leq \prod_{r=1}^{k-1} \left[1 - \frac{12(y_k +1)^2}{(y_k^2 - w_r^2)^2} + \frac{16(y_k +1)^3}{(y_k^2 - w_r^2)^3}\right] \times\Abs{1 - \frac{2(y_k+1)}{(y_k +\frac{3}{2})^2 - \nu^2}}.
\end{align*}
Note that $y_k \geq \max\{x_{k-1}, (z_{k-1}+\frac{1}{2})\} \geq w_r +1$ for all $r$.  Also, it is easy to check that
\[
\left[1 - \frac{12(y_k +1)^2}{(y_k^2 - w_r^2)^2} + \frac{16(y_k +1)^3}{(y_k^2 - w_r^2)^3}\right] \leq 1
\]
whenever $y_k \geq w_r +2$, so that~\eqref{wind} becomes
\[
	\Abs{\a_{2e_k}(\bm)}^2 \leq \left[1 - \frac{2(y_k +1)}{(y_k +\frac{3}{2})^2 - (w_{k-1} + \frac{1}{2})^2}\right]^2 \left[1 + \frac{4(y_k+1)}{y_k^2 - w_{k-1}^2}\right]\cdot\Abs{1 - \frac{2(y_k+1)}{(y_k +\frac{3}{2})^2 - \nu^2}},
\]
because we have bounded all other terms (where $y_k \geq w_r +2$) by $1$. We only need to check the case where $y_k = w_{k-1}+1$. Making this substitution in the expression
\[
\left[1 - \frac{2(y_k +1)}{(y_k +\frac{3}{2})^2 - (w_{k-1} + \frac{1}{2})^2}\right]^2 \left[1 + \frac{4(y_k+1)}{y_k^2 - w_{k-1}^2}\right]
\]
yields
\begin{align*}
\left[1 - \frac{w_{k-1}+2}{2w_{k-1}+3}\right]^2 \left[1 + \frac{4(w_{k-1}+2)}{2w_{k-1}+1}\right]
\end{align*}
which is bounded by $1$ as long as $w_{k-1}\geq 0$, which is always the case.  Therefore, we have
\[
	\Abs{\a_{2e_k}(\bm)}^2 \leq \Abs{1 - \frac{2(y_k+1)}{(y_k +\frac{3}{2})^2 - \nu^2}}.
\]
From here, by examining the possible values for $\nu$ in the principal, complementary, end-point, and discrete series representations, it is easy to see that $\Abs{\a_{2e_k}(\bm)}\leq 1$, as desired.
\end{proof}

\section{Computations for $B$'s, $C$'s, and $\b$'s}\label{appendixb}

This appendix contains calculations involving the coefficients $B_j^{\pm}$, $j=1,\dots,k$ and $C$, defined by~\eqref{defB}, and the coefficients $\b$, from Definition~\ref{defodd}.


\subsection{Bounds for $B_k^+$}\label{Bkbound}

\begin{lemma}\label{coeffsodd}
There is a positive constant $C_\nu$ depending only on $\nu$ and satisfying
\[
\Abs{B_k (\bm,\l)}^2 \geq C_{\nu}\,\left(\ceil{\bm}-\ceil{\bn}+1\right)\left(\ceil{\bm}-\ceil{\l}+1\right)
\]
for all $\bm\in M_\l$.  We also have that
\[
\Abs{B_k (\bm,0)}^2 \geq C_{\nu_0}\,\left((\ceil{\bm}+1)^2-\ceil{\bn}^2\right)
\]
holds whenever $\l\equiv 0$, where $C_{\nu_0}$ depends only on some choice of $\nu_0 \in \big[0,\frac{\en-1}{2}\big)$ that is closer to the right end-point of that interval than $\nu$ is. Finally, there is a $C>0$ such that 
\[
\Abs{B_k (\bm,0)}^2 \leq C\,\left(1 + \tilde\nu + 2\ceil{\bm}^2-\ceil{\bn}^2\right)
\]
for all $\ceil{\bm}\geq\ceil{\bn}$.
\end{lemma}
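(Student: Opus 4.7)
The proof of Lemma~\ref{coeffsodd} runs in exact parallel with the proof of Lemma~\ref{coeffseven} in Appendix~\ref{Akbound}, with only bookkeeping changes dictated by the slightly different shifts in the definitions and the extra factor $y_k^2(4y_k^2-1)$ in the denominator of~\eqref{defB}. The plan is to expand $|B_k^+(\bm,\l)|^2$, absorb the signs coming from $y_k$ being the largest of all the $y_r$'s, $x_r$'s, and $z_r$'s (by the inequalities in Figure~\ref{figarray}), and then bound all but a small collection of ``essential'' factors below by a positive constant, exactly as was done in the even case.

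More precisely, I would first write
\[
|B_k^+(\bm,\l)|^2 \;=\; \frac{y_k^2 - \nu^2}{y_k^2(4y_k^2-1)} \,\prod_{r=1}^{k}(y_k^2 - x_r^2) \,\prod_{r=1}^{k}(y_k^2 - z_r^2)\, \prod_{r\neq k}\frac{1}{(y_k^2 - y_r^2)[y_k^2 - (y_r-1)^2]},
\]
using $x_r = \l_r^{(2k-1)} + r - 1$, $y_r = m_r + r$, and $z_r = n_r + r - 1$. Just as in the derivation of~\eqref{similarsituation}, the inequalities $x_r \geq y_r$ and $z_r \geq y_r$ for $r < k$ (coming from Figure~\ref{figarray}) allow the middle factors to be paired with terms in the last product so that each pair is $\geq 1$. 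The factor $1/(4y_k^2 - 1)$ can be absorbed by noting that one of the pairs now has an extra multiplicative slack of order $y_k^2$. What remains after this reduction is a constant times
\[
|y_k^2 - \nu^2|\cdot\frac{y_k^2 - x_k^2}{y_k^2 - y_1^2} \cdot \frac{y_k^2 - z_k^2}{y_k^2 - (y_1-1)^2},
\]
which, after translating via $y_k = \ceil{\bm}+k$, $x_k = \ceil{\l} + k - 1$, $z_k = \ceil{\bn} + k - 1$, yields the first inequality of the lemma just as in the even case.

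For the second assertion (the sharper lower bound when $\l\equiv 0$), we specialize so that $x_r = r - 1$, which makes the product $\prod_r(y_k^2 - x_r^2)$ explicit; the analogue of~\eqref{obtainedsimilarly} is then a ratio of completely explicit telescoping terms, all but one of which tends to a positive constant as $\ceil{\bm} \to \infty$. The remaining nontrivial factor is $(y_k^2 - z_k^2) = (\ceil{\bm}+\ceil{\bn}+2k-2)(\ceil{\bm}-\ceil{\bn}+1)$, together with $|y_k^2 - \nu^2|$, which we bound below using $\nu_0$ exactly as in the even case. For the upper bound (third assertion), the same explicit expression is bounded \emph{above} by the same combination of factors, and the identity $y_k^2 - \nu^2 = (\ceil{\bm}+k)^2 - \nu^2 \asymp 1 + \tilde\nu + \ceil{\bm}^2$ together with $(y_k^2 - z_k^2) \ll 1 + 2\ceil{\bm}^2 - \ceil{\bn}^2$ delivers the stated bound.

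The main obstacle is purely notational: one must check that the extra factor $y_k^2(4y_k^2-1)$ in the denominator of~\eqref{defB} is correctly accounted for (it cancels exactly the ``extra'' degree in the numerator arising from the fact that in the odd case the products run over $r=1,\dots,k$ rather than $r=1,\dots,k-1$), and that the unit-shift conventions $x_r = \l_r^{(2k-1)} + (r-1)$, $z_r = n_r + r - 1$ (as opposed to the half-integer shifts in the even case) lead to the same form of the final bounds. No new ideas beyond those in Appendix~\ref{Akbound} are required.
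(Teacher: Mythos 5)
Your overall strategy matches the paper's: expand $|B_k^+|^2$ via \eqref{defB}, use the chain inequalities from Figure~\ref{figarray} to show that all but a handful of factors can be bounded below by $1$, and then translate the surviving factors. However, two of the specific claims you make in carrying this out are wrong, and they point to a confusion between the even- and odd-case conventions.

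The inequalities you invoke, ``$x_r \geq y_r$ and $z_r \geq y_r$ for $r < k$,'' are \emph{reversed} for the odd case. With $x_r = \l_r^{(2k-1)} + (r-1)$, $y_r = m_r + r$, and $z_r = n_r + (r-1)$, the Gelfand--Cejtlin constraints $\l_r \leq m_r$ and $n_r \leq m_r$ give $y_r \geq x_r + 1$ and $y_r \geq z_r + 1$, i.e.\ $x_r$ and $z_r$ are strictly \emph{smaller} than $y_r$. It is precisely this that makes the unshifted pairings
\[
\frac{y_k^2 - x_r^2}{y_k^2 - y_r^2} \;\geq\; 1
\quad\text{and}\quad
\frac{y_k^2 - z_r^2}{y_k^2 - (y_r-1)^2} \;\geq\; 1,
\qquad r=1,\dots,k-1,
\]
hold. (You are likely remembering the even case: there $x_r = \l_r^{(2k-2)} + r$ and $y_j = m_j + (j-1)$, so indeed $x_r \geq y_r + 1$; but the pairing used there is the \emph{shifted} one $x_{r-1} \leq y_r$, not the relation you wrote.) If taken at face value, your stated inequalities would make all the pair ratios $\leq 1$ and the argument would go in the wrong direction.

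Once one uses the unshifted pairing, the entire $(k-1)$-term denominator product cancels against the $x_r$'s and $z_r$'s with $r<k$, and the only surviving factors are those indexed by $k$ together with the explicit $y_k^2(4y_k^2-1)$ from \eqref{defB}. The correct survivor is therefore
\[
\bigl|\nu^2 - y_k^2\bigr| \cdot \frac{y_k^2 - x_k^2}{y_k^2} \cdot \frac{y_k^2 - z_k^2}{4y_k^2 - 1},
\]
not your
\[
\bigl|y_k^2 - \nu^2\bigr| \cdot \frac{y_k^2 - x_k^2}{y_k^2 - y_1^2} \cdot \frac{y_k^2 - z_k^2}{y_k^2 - (y_1-1)^2}.
\]
Your expression would require a shifted pairing that the odd-case inequalities do not support; in the odd case there is no residual $y_1$ term. (Your remark that the degree of $y_k^2(4y_k^2-1)$ exactly compensates the extra degree of the numerator products over $r=1,\dots,k$ is correct, and is exactly the heuristic reason the survivor takes the displayed form.) With the correct survivor in hand, translating $y_k = \ceil{\bm}+k$, $x_k = \ceil{\l}+k-1$, $z_k = \ceil{\bn}+k-1$ puts you in the same situation as~\eqref{similarsituation}, and the rest of the argument --- including the $\l\equiv 0$ specialization giving the sharper lower bound and the upper bound --- follows the even case verbatim. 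One small slip: $(y_k^2 - z_k^2) = (\ceil{\bm}-\ceil{\bn}+1)(\ceil{\bm}+\ceil{\bn}+2k-1)$, not $2k-2$ in the second factor.
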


\begin{proof}
From~\eqref{defB} we have 
\begin{equation}\label{forsecondpart}
	\Abs{B_k^+ (\bm,\l)}^2 = \Abs{\nu^2 - y_k^2}\frac{\left(x_k^2 - y_k^2\right)\left(z_k^2 - y_k^2\right)}{y_k^2\left(4y_k^2-1\right)}\,\prod_{r\neq k}\left[\frac{x_r^2 - y_k^2}{y_r^2-y_k^2}\right]\,\left[\frac{z_r^2 - y_k^2}{(y_r-1)^2-y_k^2}\right].
\end{equation}
From the inequalities in Figure~\ref{figarray} we have that $y_r \geq x_r +1$ and $y_r \geq z_r +1$, so
\[
	\Abs{B_k^+ (\bm,\l)}^2 \geq \Abs{\nu^2 - y_k^2}\frac{\left(x_k^2 - y_k^2\right)\left(z_k^2 - y_k^2\right)}{y_k^2\left(4y_k^2-1\right)}\,\prod_{r\neq k}\left[\frac{x_r^2 - y_k^2}{\left(x_r+1\right)^2-y_k^2}\right]\,\left[\frac{z_r^2 - y_k^2}{z_r^2-y_k^2}\right].
\]
The last product term is greater than $1$, so we can immediately remove it, leaving
\[
	\Abs{B_k^+ (\bm,\l)}^2 \geq \Abs{\nu^2 - y_k^2}\frac{\left(y_k^2 - x_k^2\right)\left(y_k^2 - z_k^2 \right)}{y_k^2\left(4y_k^2-1\right)}.
\]
Translating $x,y,z$, we have arrived at 
\[
	\Abs{B_k^+ (\bm,\l)}^2 \geq \Abs{\nu^2 - \left(\ceil{\bm}+k\right)^2}\,\frac{\left(\ceil{\bm}+k\right)^2 - \left(\ceil{\l}+k-1\right)^2}{\left(\ceil{\bm}+k\right)^2}\frac{\left(\ceil{\bm}+k\right)^2 - \left(\ceil{\bn}+k-1\right)^2}{4\left(\ceil{\bm}+k\right)^2-1}.
\]
We now find ourselves in essentially the same situation as~\eqref{similarsituation}.  Proceeding as before, we find the desired result.

For the second part, notice that expression~\eqref{forsecondpart} becomes
\begin{multline*}
	\Abs{B_k^+ (\bm,0)}^2 = \Abs{\nu^2 - \left(\ceil{\bm}+k\right)^2} \\
		\times\frac{\left[(\ceil{\bm}+k)^2 - (k-1)^2\right]\left[(\ceil{\bm}+k)^2 - (\ceil{\bn} + k-1)^2\right]}{(\ceil{\bm}+k)^2\left[4(\ceil{\bm}+k)^2-1\right]} \\
		\times \prod_{r\neq k}\frac{\left[(r-1)^2 - (\ceil{\bm}+k)^2\right]\left[(r-1)^2 - (\ceil{\bm}+k)^2\right]}{\left[r^2-(\ceil{\bm}+k)^2\right][(r-1)^2-(\ceil{\bm}+k)^2]}.
\end{multline*}
As in the proof of Lemma~\ref{coeffseven} we see that the last term is bounded below by some constant $C>0$ (in fact, $1$) and above by some other constant $C' >0$.  We therefore write
\[
	\Abs{B_k^+ (\bm,0)}^2 \geq C\, \Abs{\nu^2 - \left(\ceil{\bm}+k\right)^2}\,\frac{\left[(\ceil{\bm}+k)^2 - (k-1)^2\right]\left[(\ceil{\bm}+k)^2 - (\ceil{\bn} + k-1)^2\right]}{(\ceil{\bm}+k)^2\left[4(\ceil{\bm}+k)^2-1\right]}.
\]
Again, we may proceed from this point as in the proof of Lemma~\ref{coeffseven} to conclude that for $\nu_0 \in [0, \frac{\en-1}{2})$, there is some $C_{\nu_0}>0$ depending only on $\nu_0$ and, of course, $k$, such that 
\[
	\Abs{B_k^+ (\bm,0)}^2 \geq C_{\nu_0}\,(\ceil{\bm}+\ceil{\bn}+1)(\ceil{\bm}-\ceil{\bn}+1)
\]
and furthermore that
\[
	\Abs{B_k^+ (\bm,0)}^2 \ll 1 + \tilde\nu +2\ceil{\bm}^2 - \ceil{\bn}^2
\]
which proves the lemma.
\end{proof}

\subsection{Bounds for $\b$'s}\label{bbound}

\begin{lemma}\label{betas}
There is a positive constant $C_{\nu}$ depending on $k$ and $\nu$ such that
\[
	\Abs{\b_{e_k \pm e_j}(\bm)}^2 \leq C_{\nu}\,\frac{\min\{\ceil{\bn},\ceil{\l}\}^2}{\left(\ceil{\bm}+1\right)^2}\cdot\frac{\ceil{\l}^2}{\left(\ceil{\bm} + 1\right)^2 - \ceil{\l}^2}\cdot\frac{\ceil{\bn}^2}{\left(\ceil{\bm} + 1\right)^2 -  \ceil{\bn}^2},
\]
and
\[
	\Abs{\b_{e_k}(\bm)}^2 \leq \frac{C_\nu}{\left(\ceil{\bm}+1\right)^2}\cdot\frac{\ceil{\l}^2}{\left(\ceil{\bm}+1\right)^2 -\ceil{\l}^2}\cdot \frac{\ceil{\bn}^2}{\left(\ceil{\bm}+1\right)^2 - \ceil{\bn}^2}.
\]
Also, we have $\abs{\b_{2e_k}(\bm)}\leq 1$ for all $\bm$.
\end{lemma}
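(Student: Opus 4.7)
The plan is to follow the strategy used to prove Lemma~\ref{alphas} in the even case, now applied to the coefficients $B_j^+$ and $C$ given in~\eqref{defB} and~\eqref{defC}. The bounds for $\b_{e_k\pm e_j}$ and $\b_{2e_k}$ closely mirror their even-case counterparts, so the real work is the new estimate for $\b_{e_k}$, which has no analogue in the even case.

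First I would tackle $\b_{e_k+e_j}(\bm) = B_j^+(\bm)/B_k^+(\bm+e_j)$. Dividing the two square roots from~\eqref{defB}, cancelling common factors, and rearranging exactly as was done for $\a_{e_k+e_j}$ in Appendix~\ref{abound}, the expression reduces to a product of ratios of the form $(y_j^2 - t^2)/(y_k^2 - t^2)$ (for various $t \in\{x_r, z_r, \nu\}$) together with factors $(y_k^2 - y_r^2)/(y_j^2 - y_r^2)$. Using the Gelfand--Cejtlin inequalities $y_r \leq \min\{x_r - 1, z_r - 1\}$ (from Figure~\ref{figarray}, noting the slight index shift $y_j = m_j + j$ versus the even case), most factors are bounded by a constant depending only on $k$. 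The residual ``dominant'' factor is (translating back via the definitions of $x_r, y_j, z_r$) essentially identical to~\eqref{inat}, and yields the same bound as in the even case. The case $\b_{e_k-e_j}$ is then handled by the identity $B_j^- (\bm) = B_j^+ (\bm-e_j)$.

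For $\b_{e_k}(\bm) = C(\bm)/B_k^+(\bm)$, I would square and form the ratio directly: using~\eqref{defB} and~\eqref{defC},
\[
\Abs{\b_{e_k}(\bm)}^2 = \frac{\nu^2\,y_k^2(4y_k^2-1)\prod_{r\neq k}(y_r^2-y_k^2)\bigl[(y_r-1)^2-y_k^2\bigr]\,\prod_{r=1}^k x_r^2 z_r^2}{\bigl(\nu^2 - y_k^2\bigr)\prod_{r=1}^k y_r^2(y_r-1)^2\,\prod_{r=1}^k (x_r^2 - y_k^2)(z_r^2-y_k^2)}.
\]
Pull out the $r=k$ factors, cancel the $\nu^2$ against $\nu^2 - y_k^2$ up to a factor bounded by $C_\nu$, and rearrange each $r<k$ contribution so that terms like $(y_r^2-y_k^2)/(x_r^2-y_k^2)$ and $x_r^2/y_r^2$ combine into factors of the form $(1 + O(1/(y_k - x_r)))$ which, under the GC inequalities, are bounded by a constant depending only on $k$. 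The remaining ``dominant'' $r=k$ part is
\[
\frac{x_k^2 z_k^2}{(x_k^2 - y_k^2)(z_k^2 - y_k^2)}\cdot \frac{y_k^2(4y_k^2-1)}{y_k^2(y_k-1)^2} \cdot \Abs{\frac{\nu^2}{\nu^2 - y_k^2}},
\]
and upon translating $y_k = \ceil{\bm}+k$, $x_k = \ceil{\l}+k-1$, $z_k = \ceil{\bn}+k-1$, this gives exactly the claimed bound.

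Finally, $\b_{2e_k}(\bm) = B_k^+(\bm)/B_k^+(\bm+e_k)$ is the analogue of $\a_{2e_k}$ and is controlled by the same method used in Appendix~\ref{abound}: writing each factor of $\Abs{\b_{2e_k}}^2$ as $1$ plus an explicit correction in $y_k+1 \to y_k + 2$, applying the inequalities $y_r \leq \min\{x_r-1,z_r-1\}$ to recognize that each ``correction factor'' is bounded by $1$ once $y_k$ is at least one step above $w_r := \min\{x_r-1,z_r-1\}$, and separately checking the boundary case $y_k = w_{k-1}+1$ by direct substitution. The remaining $\nu$-dependent factor is bounded by $1$ by examining each series (principal/complementary/end-point) separately, exactly as at the end of the proof of Lemma~\ref{alphas}. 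The main obstacle I anticipate is bookkeeping the shifted indexing for the odd case (where $x_r = \l_r^{(2k-1)} + (r-1)$ and $y_j = m_j + j$, a different shift than in the even case), which changes the precise inequalities used at each step but not the qualitative structure of the argument.
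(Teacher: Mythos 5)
The overall architecture you describe — divide the square-root formulas, cancel common factors, rearrange into ratios involving $y_j$ and $y_k$, bound the ``benign'' factors by a constant, translate the dominant factor back to $\ceil{\bm}, \ceil{\bn}, \ceil{\l}$ — is exactly the paper's strategy and your displayed expression for $\Abs{\b_{e_k}(\bm)}^2$ matches the one obtained by dividing~\eqref{defC} by~\eqref{defB}. However, there is a genuine error in the inequality you invoke throughout. You repeatedly cite ``$y_r \leq \min\{x_r-1, z_r-1\}$'' as the Gelfand--Cejtlin inequality for the odd case, and even define $w_r := \min\{x_r-1, z_r-1\}$ and require $y_k \geq w_{k-1}+1$. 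But with the odd-case normalization $x_r = \l_r^{(2k-1)}+(r-1)$, $y_j = m_j + j$, $z_r = n_r + r - 1$, the inequalities from Figure~\ref{figarray} give $\l_r^{(2k-1)} \leq m_r$ and $n_r \leq m_r$, hence $x_r \leq y_r - 1$ and $z_r \leq y_r - 1$, i.e.\ $y_r \geq \max\{x_r+1, z_r+1\}$ — the \emph{reverse} of what you wrote. (The inequality in the other direction is $y_r \leq \min\{x_{r+1}, z_{r+1}\}$, with the index shifted by one.) You flagged the ``slight index shift'' versus the even case $y_j = m_j + (j-1)$ but then carried over the even-case inequality $y_r \leq \min\{x_r-1, z_r-\tfrac{1}{2}\}$ verbatim, which flips the direction. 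This matters: in the $\b_{e_k+e_j}$ estimate the paper applies the monotonicity of $t \mapsto (t^2-a)/(t^2-b)$ using $y_r \leq x_{r+1}$ and $y_r \leq z_{r+1}$; your version would apply it with $t$ moving the wrong way. And in the $\b_{2e_k}$ estimate, a $w_r$-based argument modeled on the even-case $\a_{2e_k}$ (with $y_k^2 \geq w_r^2$) is not available: here one instead uses $y_r \leq x_{r+1}, z_{r+1}$ to match up factors in the two products and show each resulting ratio is $\leq 1$. Correcting the direction of the inequality and the attendant index shifts is not merely cosmetic; without it the bounding steps in all three cases would not go through as written.

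One smaller point: you claim the benign factors are ``bounded by a constant depending only on $k$'', but the factor $\Abs{(y_j^2 - \nu^2)/(y_k^2-\nu^2)}$ cannot be dispatched without reference to $\nu$, which is why the lemma's constant is $C_\nu$ and not $C_k$. The paper bounds this factor last, after translating, using that $(m_j+j)^2$ is at most $(\min\{\ceil{\bn},\ceil{\l}\}+k-1)^2$.
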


\begin{proof}
From $\b_{e_k +e_j}(\bm) = B_j^+(\bm)/B_k^+(\bm+e_j)$ and~\eqref{defB} we find
\begin{multline*}
	\Abs{\b_{e_k + e_j}(\bm)}^2 = \Abs{\frac{y_j^2 - \nu^2}{y_k^2 - \nu^2}}\,\left[\frac{y_k^2 \left(4y_k^2-1\right)}{y_j^2 \left(4y_j^2-1\right)}\right] \left[\frac{\left(y_k-1\right)^2 - y_j^2}{y_k^2 - \left(y_j+1\right)^2}\right]\\
	\times\prod_{r=1}^{k}\left[\frac{x_r^2 - y_j^2}{x_r^2 - y_k^2}\right]\left[\frac{z_r^2 - y_j^2}{z_r^2 - y_k^2}\right]\,\prod_{r\neq j,k}\left[\frac{y_r^2 - y_k^2}{y_r^2 - y_j^2}\right]\left[\frac{\left(y_r+1\right)^2 - y_k^2}{\left(y_r+1\right)^2 - y_j^2}\right].
\end{multline*}
Since for $a>b$ the expression $\left(\frac{t^2 - a}{t^2 -b}\right)$ is increasing on $t>0$, and because $y_r\leq x_{r+1}$ and $y_r \leq z_{r+1}$, we can bound by
\begin{multline*}
	\Abs{\b_{e_k + e_j}(\bm)}^2 \leq \Abs{\frac{y_j^2 - \nu^2}{y_k^2 - \nu^2}}\,\left[\frac{y_k^2 \left(4y_k^2-1\right)}{y_j^2 \left(4y_j^2-1\right)}\right] \left[\frac{\left(y_k-1\right)^2 - y_j^2}{y_k^2 - \left(y_j+1\right)^2}\right]\\
	\times\prod_{r=1}^{k}\left[\frac{x_r^2 - y_j^2}{x_r^2 - y_k^2}\right]\left[\frac{z_r^2 - y_j^2}{z_r^2 - y_k^2}\right]\,\prod_{r\neq 1,j+1}\left[\frac{x_r^2 - y_k^2}{x_r^2 - y_j^2}\right]\left[\frac{\left(z_r+1\right)^2 - y_k^2}{\left(z_r+1\right)^2 - y_j^2}\right].
\end{multline*}
Re-arranging terms and making the necessary cancellations,
\begin{multline*}
	\Abs{\b_{e_k + e_j}(\bm)}^2 \leq \Abs{\frac{y_j^2 - \nu^2}{y_k^2 - \nu^2}}\,\left[\frac{y_k^2 \left(4y_k^2-1\right)}{y_j^2 \left(4y_j^2-1\right)}\right] \left[\frac{\left(y_k-1\right)^2 - y_j^2}{y_k^2 - \left(y_j+1\right)^2}\right] \\
	\left[\frac{x_1^2 - y_j^2}{x_1^2 - y_k^2}\right]\left[\frac{z_1^2 - y_j^2}{z_1^2 - y_k^2}\right] \left[\frac{x_{j+1}^2 - y_j^2}{x_{j+1}^2 - y_k^2}\right]\left[\frac{z_{j+1}^2 - y_j^2}{z_{j+1}^2 - y_k^2}\right]\\
	\times\prod_{r\neq 1,j+1}\left[\frac{z_r^2 - y_j^2}{\left(z_r+1\right)^2 - y_j^2}\right]\,\left[\frac{\left(z_r+1\right)^2 - y_k^2}{z_r^2 - y_k^2}\right].
\end{multline*}
The last line is bounded by some universal constant $C>0$, and this leaves
\begin{multline*}
	\Abs{\b_{e_k + e_j}(\bm)}^2 \leq C\,\Abs{\frac{y_j^2 - \nu^2}{y_k^2 - \nu^2}}\,\left[\frac{y_k^2 \left(4y_k^2-1\right)}{y_j^2 \left(4y_j^2-1\right)}\right] \left[\frac{\left(y_k-1\right)^2 - y_j^2}{y_k^2 - \left(y_j+1\right)^2}\right] \\
	\left[\frac{x_1^2 - y_j^2}{x_1^2 - y_k^2}\right]\left[\frac{z_1^2 - y_j^2}{z_1^2 - y_k^2}\right] \left[\frac{x_{j+1}^2 - y_j^2}{x_{j+1}^2 - y_k^2}\right]\left[\frac{z_{j+1}^2 - y_j^2}{z_{j+1}^2 - y_k^2}\right]
\end{multline*}
and since 
\[
	\left[\frac{x_1^2 - y_j^2}{x_1^2 - y_k^2}\right]\left[\frac{z_1^2 - y_j^2}{z_1^2 - y_k^2}\right]\leq \frac{y_j^4}{y_k^4}
\]
we can write
\[
	\Abs{\b_{e_k + e_j}(\bm)}^2 \leq C\,\Abs{\frac{y_j^2 - \nu^2}{y_k^2 - \nu^2}}\,\left[\frac{y_j^2 \left(4y_k^2-1\right)}{y_k^2 \left(4y_j^2-1\right)}\right] \left[\frac{\left(y_k-1\right)^2 - y_j^2}{y_k^2 - \left(y_j+1\right)^2}\right] \,\left[\frac{x_{j+1}^2 - y_j^2}{x_{j+1}^2 - y_k^2}\right]\left[\frac{z_{j+1}^2 - y_j^2}{z_{j+1}^2 - y_k^2}\right]
\]
The second and third factors now have some universal bound, so we can absorb them into $C$, leaving us, after translating $x,y,z$, with
\begin{multline*}
	\Abs{\b_{e_k + e_j}(\bm)}^2 \leq C\,\Abs{\frac{\left(m_j+j\right)^2 - \nu^2}{\left(\ceil{\bm}+k\right)^2 - \nu^2}}\\
	\times\left[\frac{\left(\l_{j+1}+j\right)^2 - \left(m_j+j\right)^2}{\left(\l_{j+1}+j\right)^2 - \left(\ceil{\bm}+k\right)^2}\right]\left[\frac{\left(n_{j+1}+j\right)^2 - \left(m_j+j\right)^2}{\left(n_{j+1}+j\right)^2 - \left(\ceil{\bm}+k\right)^2}\right]
\end{multline*}
which is bounded by
\begin{multline*}
	\Abs{\b_{e_k + e_j}(\bm)}^2 \leq C\,\Abs{\frac{\left(\min\{\ceil{\bn},\ceil{\l}\}+k-1\right)^2 - \nu^2}{\left(\ceil{\bm}+k\right)^2 - \nu^2}}\\
		\times\left[\frac{\left(\ceil{\l}+k-1\right)^2}{\left(\ceil{\l}+k-1\right)^2 - \left(\ceil{\bm}+k\right)^2}\right]\left[\frac{\left(\ceil{\bn}+k-1\right)^2}{\left(\ceil{\bn}+k-1\right)^2 - \left(\ceil{\bm}+k\right)^2}\right].
\end{multline*}
This is the situation we were in at~\eqref{inat} in the proof of Lemma~\ref{alphas}. The result follows in the same way.

Now, for $\b_{e_k}(\bm) = C(\bm)/B_k^+(\bm)$, we have
\[
	\Abs{\b_{e_k}(\bm)}^2 = \Abs{\frac{\nu^2}{\nu^2 - y_k^2}}\,\frac{y_k^2\left(4y_k^2 -1\right)}{y_k^2\left(y_k -1\right)^2}\,\prod_{r=1}^{k-1}\left[\frac{y_r^2 - y_k^2}{y_r^2}\right]\left[\frac{\left(y_r-1\right)^2 - y_k^2}{\left(y_r-1\right)^2}\right]\,\prod_{r=1}^{k}\left[\frac{x_r^2}{x_r^2 - y_k^2}\right]\left[\frac{z_r^2}{z_r^2 - y_k^2}\right].
\]
Since $y_r \geq x_r +1$ and $y_r \geq z_r +1$, we bound by
\[
	\Abs{\b_{e_k}(\bm)}^2 \leq \Abs{\frac{\nu^2}{\nu^2 - y_k^2}}\,\frac{y_k^2\left(4y_k^2 -1\right)}{y_k^2\left(y_k -1\right)^2}\,\prod_{r=1}^{k-1}\left[\frac{y_k^2 - \left(x_r + 1\right)^2}{\left(x_r + 1\right)^2}\right]\left[\frac{y_k^2 - z_r^2}{z_r^2}\right]\,\prod_{r=1}^{k}\left[\frac{x_r^2}{y_k^2 -x_r^2}\right]\left[\frac{z_r^2}{y_k^2 - z_r^2}\right],
\]
and making cancellations and re-writing,
\[
	\Abs{\b_{e_k}(\bm)}^2 \leq \Abs{\frac{\nu^2}{\nu^2 - y_k^2}}\,\frac{\left(4y_k^2 -1\right)}{\left(y_k -1\right)^2}\left[\frac{x_k^2}{y_k^2 -x_k^2}\right]\left[\frac{z_k^2}{y_k^2 - z_k^2}\right]\,\prod_{r=1}^{k-1}\left[\frac{y_k^2 - \left(x_r + 1\right)^2}{y_k^2 -x_r^2}\right] \left[\frac{x_r^2}{\left(x_r + 1\right)^2}\right].
\]
Re-arranging terms again,
\begin{multline*}
	\Abs{\b_{e_k}(\bm)}^2 \leq \Abs{\frac{\nu^2}{\nu^2 - y_k^2}}\,\left[\frac{x_k^2}{y_k^2 -x_k^2}\right]\left[\frac{z_k^2}{y_k^2 - z_k^2}\right] \\
	\times \frac{\left(4y_k^2 -1\right)}{\left(y_k -1\right)^2}\,\prod_{r=1}^{k-1}\left[1 - \frac{2x_r + 1}{y_k^2 -x_r^2}\right] \left[1-\frac{2x_r+1}{\left(x_r + 1\right)^2}\right],
\end{multline*}
we see that the last three factors are bounded by some $C>0$ independent of $\bn$ and $\nu$, so
\[
	\Abs{\b_{e_k}(\bm)}^2 \leq C\,\Abs{\frac{\nu^2}{\nu^2 - y_k^2}}\,\left[\frac{x_k^2}{y_k^2 -x_k^2}\right]\left[\frac{z_k^2}{y_k^2 - z_k^2}\right].
\]
Translating $x,y,z$, and taking into account that $C$'s only appear in the principal series, and therefore $\nu \in i\RR$, 
\begin{multline*}
	\Abs{\b_{e_k}(\bm)}^2 \leq \frac{C\,\nu^2}{\nu^2 - \left(\ceil{\bm}+k\right)^2}\\
	\times\left[\frac{\left(\ceil{\l}+k-1\right)^2}{\left(\ceil{\bm}+k\right)^2 -\left(\ceil{\l}+k-1\right)^2}\right]\left[\frac{\left(\ceil{\bn}+k-1\right)^2}{\left(\ceil{\bm}+k\right)^2 - \left(\ceil{\bn}+k-1\right)^2}\right].
\end{multline*}
From here we easily see that there is some constant $C_\nu$ such that
\[
	\Abs{\b_{e_k}(\bm)}^2 \leq \frac{C_\nu}{\left(\ceil{\bm}+1\right)^2}\cdot\frac{\ceil{\l}^2}{\left(\ceil{\bm}+1\right)^2 -\ceil{\l}^2}\cdot \frac{\ceil{\bn}^2}{\left(\ceil{\bm}+1\right)^2 - \ceil{\bn}^2},
\]
which establishes the second part of the lemma.

Finally, for $\b_{2e_k}(\bm) = B_k^+(\bm)/B_k^+(\bm+e_k)$, we have
\begin{multline*}
	\Abs{\b_{2e_k}(\bm)}^2 = \Abs{\frac{\nu^2 - y_k^2}{\nu^2 - \left(y_k+1\right)^2}}\,\frac{\left(y_k+1\right)^2\left[4\left(y_k+1\right)^2-1\right]}{y_k^2\left[4y_k^2-1\right]}\\
	\times\prod_{r=1}^{k}\left[\frac{x_r^2 -y_k^2}{x_r^2 -\left(y_k+1\right)^2}\right]\left[\frac{z_r^2 -y_k^2}{z_r^2 -\left(y_k+1\right)^2}\right]\,\prod_{r=1}^{k-1} \left[\frac{y_r^2 - \left(y_k +1\right)^2}{y_r^2 - y_k^2}\right]\left[\frac{\left(y_r-1\right)^2 - \left(y_k +1\right)^2}{\left(y_r-1\right)^2 - y_k^2}\right].
\end{multline*}
The inequalities in Figure~\ref{figarray} imply that $y_r \leq x_{r+1}$ and $y_r \leq z_{r+1}$, and this can be used to bound the last product above, leaving
\begin{multline*}
	\Abs{\b_{2e_k}(\bm)}^2 \leq \Abs{\frac{\nu^2 - y_k^2}{\nu^2 - \left(y_k+1\right)^2}}\,\frac{\left(y_k+1\right)^2\left[4\left(y_k+1\right)^2-1\right]}{y_k^2\left[4y_k^2-1\right]}\\
	\times\prod_{r=1}^{k}\left[\frac{x_r^2 -y_k^2}{x_r^2 -\left(y_k+1\right)^2}\right]\left[\frac{z_r^2 -y_k^2}{z_r^2 -\left(y_k+1\right)^2}\right]\,\prod_{r=2}^{k} \left[\frac{x_r^2 - \left(y_k +1\right)^2}{x_r^2 - y_k^2}\right]\left[\frac{\left(z_r-1\right)^2 - \left(y_k +1\right)^2}{\left(z_r-1\right)^2 - y_k^2}\right].
\end{multline*}
After making cancellations and re-arranging terms, we have
\begin{multline*}
	\Abs{\b_{2e_k}(\bm)}^2 \leq \Abs{\frac{\nu^2 - y_k^2}{\nu^2 - \left(y_k+1\right)^2}}\,\frac{\left(y_k+1\right)^2\left[4\left(y_k+1\right)^2-1\right]}{y_k^2\left[4y_k^2-1\right]}\,\left[\frac{x_1^2 -y_k^2}{x_1^2 -\left(y_k+1\right)^2}\right]\\
	\times\left[\frac{z_1^2 -y_k^2}{z_1^2 -\left(y_k+1\right)^2}\right]\,\prod_{r=2}^{k}\left[\frac{z_r^2 -y_k^2}{z_r^2 -\left(y_k+1\right)^2}\right]\,\left[\frac{\left(z_r-1\right)^2 - \left(y_k +1\right)^2}{\left(z_r-1\right)^2 - y_k^2}\right].
\end{multline*}
One can check that each factor in the product term is bounded by $1$. We will therefore remove all but the first one, bounding
\begin{align*}
	\Abs{\b_{2e_k}(\bm)}^2 &\leq \Abs{\frac{\nu^2 - y_k^2}{\nu^2 - \left(y_k+1\right)^2}}\,\left[\frac{\left(y_k+1\right)^2}{y_k^2}\right]\left[\frac{4\left(y_k+1\right)^2-1}{4y_k^2-1}\right]\left[\frac{y_k^2}{\left(y_k+1\right)^2}\right]^2 \\
		&\indent\times\left[\frac{z_2^2 -y_k^2}{z_2^2 -\left(y_k+1\right)^2}\right]\,\left[\frac{\left(z_2-1\right)^2 - \left(y_k +1\right)^2}{\left(z_2-1\right)^2 - y_k^2}\right] \\
		&= \Abs{\frac{\nu^2 - y_k^2}{\nu^2 - \left(y_k+1\right)^2}}\\
		&\indent\times\left[\frac{4\left(y_k+1\right)^2-1}{4y_k^2-1}\right]\,\left[\frac{y_k^2}{\left(y_k+1\right)^2}\right]\left[\frac{z_2^2 -y_k^2}{z_2^2 -\left(y_k+1\right)^2}\right]\,\left[\frac{\left(z_2-1\right)^2 - \left(y_k +1\right)^2}{\left(z_2-1\right)^2 - y_k^2}\right].
\end{align*}
The last line is bounded by $1$ on our domain $\{1\leq z_2 \leq y_k-1\}$, so we are finally left with 
\[
	\Abs{\b_{2e_k}(\bm)}^2 \leq \Abs{\frac{\nu^2 - y_k^2}{\nu^2 - \left(y_k+1\right)^2}} \leq 1
\]
as we had set out to prove.
\end{proof}


\subsection*{Acknowledgments}

It is a pleasure to thank Alexander Gorodnik for fruitful discussions and sound advice, Svetlana Katok for an informative discussion, and the referee for taking the time to carefully read the manuscript. The author is supported by ERC.


\bibliographystyle{amsalpha}
\bibliography{../bibliography}
\end{document}